\documentclass[10pt]{amsart}
      \usepackage{amsmath,amsfonts}
      %

             \hoffset -1.3cm
      \voffset -1cm
\textwidth 16truecm
      \textheight 22.5truecm

\def\rg{\hbox to 30pt{\rightarrowfill}}
\def\lg{\hbox to 30pt{\leftarrowfill}}

      \parskip\smallskipamount
          \newtheorem{theorem}{Theorem}[section]
      \newtheorem{definition}[theorem]{Definition}
      \newtheorem{proposition}[theorem]{Proposition}
      \newtheorem{corollary}[theorem]{Corollary}
      \newtheorem{lemma}[theorem]{Lemma}

      \newtheorem{remark}[theorem]{Remark}
      \makeatletter
      \@addtoreset{equation}{section}
      \makeatother

      \newcommand{\BB}{{\mathbb B}}
      \newcommand{\CC}{{\mathbb C}}
      \newcommand{\NN}{{\mathbb N}}
      
      \newcommand{\ZZ}{{\mathbb Z}}
      \newcommand{\DD}{{\mathbb D}}
      
      \newcommand{\FF}{{\mathbb F}}
      \newcommand{\TT}{{\mathbb T}}

      \newcommand{\cA}{{\mathcal A}}
      \newcommand{\cB}{{\mathcal B}}
      \newcommand{\cC}{{\mathcal C}}
      \newcommand{\cD}{{\mathcal D}}
      \newcommand{\cE}{{\mathcal E}}
      \newcommand{\cF}{{\mathcal F}}
      \newcommand{\cG}{{\mathcal G}}
      \newcommand{\cH}{{\mathcal H}}
      \newcommand{\cK}{{\mathcal K}}
      \newcommand{\cL}{{\mathcal L}}
      \newcommand{\cM}{{\mathcal M}}
      \newcommand{\cN}{{\mathcal N}}
      
      \newcommand{\cQ}{{\mathcal Q}}
      \newcommand{\cP}{{\mathcal P}}
      \newcommand{\cR}{{\mathcal R}}
      
      \newcommand{\cT}{{\mathcal T}}
      
      \newcommand{\cV}{{\mathcal V}}
      \newcommand{\cY}{{\mathcal Y}}
      \newcommand{\cX}{{\mathcal X}}

      \newcommand{\supp}{\hbox{\rm{supp}}\,}
      \newcommand{\rank}{\hbox{\rm{rank}}\,}

      \newdimen\expt
      \expt=.1ex
      \def\boxit#1{\setbox0\hbox{$\displaystyle{#1}$}
            \hbox{\lower.4\expt
       \hbox{\lower3\expt\hbox{\lower\dp0
            \hbox{\vbox{\hrule height.4\expt
       \hbox{\vrule width.4\expt\hskip3\expt
            \vbox{\vskip3\expt\box0\vskip2\expt}%
       \hskip3\expt\vrule width.4\expt}\hrule height.4\expt}}}}}}
      \begin{document}
       \pagestyle{myheadings}
      \markboth{ Gelu Popescu}{  Pluriharmonic functions on noncommutative polyballs  }

      \title [ Free pluriharmonic functions on noncommutative  polyballs   ]
      { Free pluriharmonic functions on noncommutative   polyballs }
        \author{Gelu Popescu}
\date{October 12, 2015}
      \thanks{Research supported in part by  NSF grant DMS 1500922}
      \subjclass[2010]{Primary:  47A56; 47A13;   Secondary: 47B35; 46L52.}
      \keywords{Noncommutative polyball;      Berezin transform; Poisson transform;  Fock space; Multi-Toeplitz operator; Naimark dilation; Completely bounded map;  Pluriharmonic function; Free holomorphic function; Herglotz-Riesz representation.
}

      \address{Department of Mathematics, The University of Texas
      at San Antonio \\ San Antonio, TX 78249, USA}
      \email{\tt gelu.popescu@utsa.edu}

\begin{abstract} In this paper, we study   free $k$-pluriharmonic functions on the noncommutative regular polyball   ${\bf B_n}$, ${\bf n}=(n_1,\ldots, n_k)\in \NN^k$,  which  is an  analogue of the scalar polyball \, $(\CC^{n_1})_1\times\cdots \times  (\CC^{n_k})_1$. The regular  polyball   has a universal model $ {\bf S}:=\{{\bf S}_{i,j}\}$ consisting of   left creation
operators acting on the  tensor product $F^2(H_{n_1})\otimes \cdots \otimes F^2(H_{n_k})$ of full Fock spaces. We introduce the class $\boldsymbol{\cT_{\bf n}}$ of
 $k$-multi-Toeplitz operators on  this tensor product  and prove that
  $\boldsymbol{\cT_{\bf n}}=\text{\rm span} \{\boldsymbol\cA_{\bf n}^* \boldsymbol\cA_{\bf n}\}^{- \text{\rm SOT}}$,
 where $\boldsymbol\cA_{\bf n}$ is the noncommutative polyball algebra generated by ${\bf S}$ and the identity. We show that the bounded free $k$-pluriharmonic functions on ${\bf B_n}$ are precisely the noncommutative Berezin transforms of $k$-multi-Toeplitz operators.
 The Dirichlet extension problem on regular polyballs is also solved. It is proved that a free $k$-pluriharmonic function has continuous extension to the closed polyball ${\bf B}_{\bf n}^-$ if and only if it is the noncommutative Berezin transform of a $k$-multi-Toeplitz operator in $\text{\rm span} \{\boldsymbol\cA_{\bf n}^* \boldsymbol\cA_{\bf n}\}^{-\|\cdot\|}$.

We  provide a Naimark type dilation theorem for  direct products
  $\FF_{n_1}^+\times \cdots \times \FF_{n_k}^+$  of unital free semigroups, and use it to obtain a structure theorem which  characterizes the positive free $k$-pluriharmonic functions on the regular polyball with operator-valued coefficients. We define the noncommutative  Berezin (resp.~ Poisson) transform of a  completely bounded linear map on $C^*({\bf S})$, the $C^*$-algebra generated by ${\bf S}_{i,j}$, and give necessary and sufficient conditions for a function to be the   Poisson transform of a completely bounded (resp.~completely positive) map. In the last section of the paper, we obtain Herglotz-Riesz representation theorems for free holomorphic functions on regular polyballs with positive real parts, extending the classical result as well as  Kor\' annyi-Puk\' anszky   version in  scalar polydisks.
\end{abstract}

      \maketitle

\section*{Contents}
{\it

\quad Introduction

\begin{enumerate}
   \item[1.]     $k$-multi-Toeplitz operators on tensor products of full  Fock spaces
\item[2.]  Bounded free $k$-pluriharmonic functions
and  Dirichlet extension problem
\item[3.] Naimark type dilation theorem for direct products of free semigroups
\item[4.] Berezin transforms of completely bounded maps in regular polyballs
\item[5.]   Herglotz-Riesz representations for free holomorphic functions with positive real parts
\end{enumerate}

\quad References

}

\section*{Introduction}

A multivariable  operator model theory and a theory of free holomorphic functions on polydomains which admit universal operator models have been recently developed  in \cite{Po-Berezin3} and \cite{Po-Berezin-poly}. An important feature of these theories is that they  are related, via noncommutative Berezin transforms, to the study of the operator algebras generated by the universal models as well as to the theory of functions in several complex variable. These results played a crucial role in   our work on
  the curvature invariant \cite{Po-curvature-polyball}, the Euler characteristic  \cite{Po-Euler-charact}, and the  group of  free holomorphic automorphisms  on  noncommutative regular polyballs \cite{Po-automorphisms-polyball}.

The main goal of the present paper is to continue our investigation along these lines and to study the class of free $k$-pluriharmonic functions      of  the form
$$
F({\bf X})= \sum_{m_1\in \ZZ}\cdots \sum_{m_k\in \ZZ} \sum_{{\alpha_i,\beta_i\in \FF_{n_i}^+}\atop{|\alpha_i|=m_i^-, |\beta_i|=m_i^+}}a_{(\boldsymbol\alpha;\boldsymbol\beta)}
 {\bf X}_{1,\alpha_1}\cdots {\bf X}_{k,\alpha_k}{\bf X}_{1,\beta_1}^*\cdots {\bf X}_{k,\beta_k}^*,\qquad a_{(\boldsymbol\alpha;\boldsymbol\beta)}\in \CC,
$$
where the series converge in the operator norm topology for any  ${\bf X}=\{X_{i,j}\}$ in the   regular polyball $ {\bf B_n}(\cH)$ and any Hilbert space $\cH$.  The  results of this paper will play an important role   in the hyperbolic geometry of noncommutative  polyballs, a project in preparation \cite{Po-hyperbolic-polyballs}.
To present our results we need some notation and preliminaries on regular polyballs and their universal models.

Throughout this paper, $B(\cH)$ stands for the algebra of all bounded linear operators on a Hilbert space $\cH$. We denote by  $B(\cH)^{n_1}\times_c\cdots \times_c B(\cH)^{n_k}$, where $n_i \in\NN:=\{1,2,\ldots\}$,
   the set of all tuples  ${\bf X}:=({ X}_1,\ldots, { X}_k)$ in $B(\cH)^{n_1}\times\cdots \times B(\cH)^{n_k}$
     with the property that the entries of ${X}_s:=(X_{s,1},\ldots, X_{s,n_s})$  are commuting with the entries of
      ${X}_t:=(X_{t,1},\ldots, X_{t,n_t})$  for any $s,t\in \{1,\ldots, k\}$, $s\neq t$.
  Note that  the operators $X_{s,1},\ldots, X_{s,n_s}$ are not necessarily commuting.
  Let ${\bf n}:=(n_1,\ldots, n_k)$ and define  the polyball
  $${\bf P_n}(\cH):=[B(\cH)^{n_1}]_1\times_c \cdots \times_c [B(\cH)^{n_k}]_1,
  $$
  where
    $$[B(\cH)^{n}]_1:=\{(X_1,\ldots, X_n)\in B(\cH)^{n}:\ \|  X_1X_1^*+\cdots +X_nX_n^*\|<1\}, \quad n\in \NN.
    $$
    If $A$ is  a positive invertible operator, we write $A>0$. The {\it regular polyball} on the Hilbert space $\cH$  is defined by
$$
{\bf B_n}(\cH):=\left\{ {\bf X}\in {\bf P_n}(\cH) : \ {\bf \Delta_{X}}(I)> 0  \right\},
$$
where
 the {\it defect mapping} ${\bf \Delta_{X}}:B(\cH)\to  B(\cH)$ is given by
$$
{\bf \Delta_{X}}:=\left(id -\Phi_{X_1}\right)\circ \cdots \circ\left(id -\Phi_{ X_k}\right),
$$
 and
$\Phi_{X_i}:B(\cH)\to B(\cH)$  is the completely positive linear map defined by
$$\Phi_{X_i}(Y):=\sum_{j=1}^{n_i}   X_{i,j} Y X_{i,j} ^*, \qquad Y\in B(\cH).
$$
 Note that if $k=1$, then ${\bf B_n}(\cH)$ coincides with the noncommutative unit ball $[B(\cH)^{n_1}]_1$.
   We remark that the scalar representation of
  the the ({\it abstract})
 {\it regular polyball} ${\bf B}_{\bf n}:=\{{\bf B_n}(\cH):\ \cH \text{\ is a Hilbert space} \}$ is
   ${\bf B_n}(\CC)={\bf P_n}(\CC)= (\CC^{n_1})_1\times \cdots \times (\CC^{n_k})_1$.

Let $H_{n_i}$ be
an $n_i$-dimensional complex  Hilbert space with orthonormal basis $e^i_1,\ldots, e^i_{n_i}$.
  We consider the {\it full Fock space}  of $H_{n_i}$ defined by
$F^2(H_{n_i}):=\CC 1 \oplus\bigoplus_{p\geq 1} H_{n_i}^{\otimes p},$
where  $H_{n_i}^{\otimes p}$ is the
(Hilbert) tensor product of $p$ copies of $H_{n_i}$. Let $\FF_{n_i}^+$ be the unital free semigroup on $n_i$ generators
$g_{1}^i,\ldots, g_{n_i}^i$ and the identity $g_{0}^i$.
  Set $e_\alpha^i :=
e^i_{j_1}\otimes \cdots \otimes e^i_{j_p}$ if
$\alpha=g^i_{j_1}\cdots g^i_{j_p}\in \FF_{n_i}^+$
 and $e^i_{g^i_0}:= 1\in \CC$.
  The length of $\alpha\in
\FF_{n_i}^+$ is defined by $|\alpha|:=0$ if $\alpha=g_0^i$  and
$|\alpha|:=p$ if
 $\alpha=g_{j_1}^i\cdots g_{j_p}^i$, where $j_1,\ldots, j_p\in \{1,\ldots, n_i\}$.
 We  define
 the {\it left creation  operator} $S_{i,j}$ acting on the  Fock space $F^2(H_{n_i})$  by setting
$
S_{i,j} e_\alpha^i:=  e^i_{g_j^i \alpha}$, $\alpha\in \FF_{n_i}^+,
$
 and
  the operator  ${\bf S}_{i,j}$ acting on the Hilbert tensor  product
$F^2(H_{n_1})\otimes\cdots\otimes F^2(H_{n_k})$ by setting
$${\bf S}_{i,j}:=\underbrace{I\otimes\cdots\otimes I}_{\text{${i-1}$
times}}\otimes \,S_{i,j}\otimes \underbrace{I\otimes\cdots\otimes
I}_{\text{${k-i}$ times}},
$$
where  $i\in\{1,\ldots,k\}$ and  $j\in\{1,\ldots,n_i\}$.  We denote ${\bf S}:=({\bf S}_1,\ldots, {\bf S}_k)$, where  ${\bf S}_i:=({\bf S}_{i,1},\ldots,{\bf S}_{i,n_i})$, or ${\bf S}:=\{{\bf S}_{i,j}\}$.   The noncommutative Hardy algebra ${\bf F}_{\bf n}^\infty$ (resp.~the polyball algebra $\boldsymbol\cA_{\bf n}$) is the weakly closed (resp.~norm closed) non-selfadjoint  algebra generated by $\{{\bf S}_{i,j}\}$ and the identity.
Similarly, we   define
 the {\it right creation  operator} $R_{i,j}:F^2(H_{n_i})\to
F^2(H_{n_i})$     by setting
 $
R_{i,j} e_\alpha^i:=  e^i_ {\alpha {g_j}}$ for $ \alpha\in \FF_{n_i}^+,
$
 and
 its ampliation  ${\bf R}_{i,j}$ acting on
$F^2(H_{n_1})\otimes\cdots\otimes F^2(H_{n_k})$. The polyball algebra $\boldsymbol\cR_{\bf n}$ is the norm closed non-selfadjoint  algebra generated by $\{{\bf R}_{i,j}\}$ and the identity.

 We proved in \cite{Po-Berezin-poly} (in a more general setting)   that
    ${\bf X}\in B(\cH)^{n_1}\times\cdots \times B(\cH)^{n_k}$
    is a {\it pure} element in the regular polyball  ${\bf B_n}(\cH)^-$, i.e.  $\lim_{q_i\to \infty}\Phi_{X_i}^{q_i}(I)=0$ in the weak operator topology,  if and only if
there is a Hilbert space $\cD$ and a subspace $\cM\subset F^2(H_{n_1})\otimes \cdots \otimes  F^2(H_{n_k})\otimes \cD$ invariant under each  operator ${\bf S}_{i,j}\otimes I$ such that
$X_{i,j}^*=({\bf S}_{i,j}^*\otimes I)|_{\cM^\perp}$, under an appropriate identification of $\cH$ with $\cM^\perp$.
The $k$-tuple ${\bf S}:=({\bf S}_1,\ldots, {\bf S}_k)$, where  ${\bf S}_i:=({\bf S}_{i,1},\ldots,{\bf S}_{i,n_i})$, is  an element  in the
regular polyball $ {\bf B_n}(\otimes_{i=1}^kF^2(H_{n_i}))^-$ and  plays the role of  {\it left universal model} for
   the abstract
  polyball ${\bf B}_{\bf n}^-:=\{{\bf B_n}(\cH)^-:\ \cH \text{\ is a Hilbert space} \}$. The existence of the universal model  will play an important role in this paper, since it will make the connection between noncommutative function theory, operator algebras, and complex function theory in several variables.

In \cite{BH}, Brown and Halmos showed that a bounded linear operator $T$ on the Hardy space $H^2(\DD)$ is a Toeplitz operator if and only if $S^*TS=T$, where $S$ is the unilateral shift. Expanding on this idea, a study of noncommutative multi-Toeplitz operators on the full Fock space with $n$ generators $F^2(H_n)$ was initiated in \cite{Po-multi}, \cite{Po-analytic} and has had an important impact in multivariable operator theory  and the structure of free semigroups algebras (see \cite{DP2}, \cite{DKP}, \cite{DLP}, \cite{Po-entropy}, \cite{Po-pluriharmonic}, \cite{Ken1}, \cite{Ken2}).

In Section 1, we introduce and study the class $\boldsymbol{\cT_{\bf n}}$, ${\bf n}:=(n_1,\ldots, n_k)\in \NN^k$,  of $k$-multi-Toeplitz operators.
 A bounded linear operator $T$ on  the tensor product $F^2(H_{n_1})\otimes \cdots \otimes F^2(H_{n_k})$ of full Fock spaces is called $k$-multi-Toeplitz operator with respect to the {\it right universal model} ${\bf R}=\{{\bf R}_{i,j}\}$
  if
 $$
 {\bf R}_{i,s}^*T {\bf R}_{i,t}=\delta_{st}T,\qquad s,t\in \{1,\ldots, n_i\},
 $$
  for every $i\in\{1,\ldots, k\}$.
  We associate with each $k$-multi-Toeplitz operator  $T$  a   formal power series in several variables and show  that we can recapture $T$  from its noncommutative "Fourier series". Moreover, we characterize the noncommutative formal power series which are Fourier series of $k$-multi-Toeplitz operators (see Theorem \ref{structure-Toeplitz} and Theorem \ref{caract2}). Using these results,  we prove that
  the set of all   $k$-multi-Toeplitz operators  on $\bigotimes_{i=1}^k F^2(H_{n_i})$ coincides with
\begin{equation*}\begin{split}
\text{\rm span} \{\boldsymbol\cA_{\bf n}^* \boldsymbol\cA_{\bf n}\}^{- \text{\rm SOT}}
=\text{\rm span} \{\boldsymbol\cA_{\bf n}^* \boldsymbol\cA_{\bf n}\}^{- \text{\rm WOT}},
\end{split}
\end{equation*}
where $\boldsymbol\cA_{\bf n}$ is the noncommutative polyball algebra.

In Section 2, we characterize the bounded free $k$-pluriharmonic functions on regular polyballs.
 We prove that a function $F:{\bf B_n}(\cH)\to B(\cH)$ is a bounded free $k$-pluriharmonic  function if and only if there is a $k$-multi-Toeplitz operator $A\in \boldsymbol{\cT_{\bf n}}$ such that
$
F({\bf X})=\boldsymbol\cB_{\bf X}[A]$  for $ {\bf X}\in {\bf B_n}(\cH),
$
where $\boldsymbol\cB_{\bf X}$ is the noncommutative Berezin transform at ${\bf X}$ (see Section 1 for definition). In this case, $A=\text{\rm SOT-}\lim_{r\to 1}F(r{\bf S})$ and there is a completely isometric isomorphism of operator spaces
$$
\Phi:{\bf PH}^\infty({\bf B_n})\to \boldsymbol{\cT_{\bf n}},\qquad \Phi(F):=A,
$$
where ${\bf PH}^\infty({\bf B_n})$ is the operator space of all bounded free $k$-pluriharmonic functions on the polyball.

   The  Dirichlet \cite{H}  extension problem  on noncommutative regular polyballs is solved. We show that  a mapping $F:{\bf B_n}(\cH)\to B(\cH)$
is a  free $k$-pluriharmonic  function which has continuous extension  (in the operator norm topology) to
the closed polyball ${\bf B}_{\bf n}(\cH)^-$, and write $F\in {\bf PH}^c({\bf B}_{\bf n})$,  if and only
there exists a $k$-multi-Toeplitz operator $A\in \text{\rm span} \{\boldsymbol\cA_{\bf n}^* \boldsymbol\cA_{\bf n}\}^{-\|\cdot\|}$ such
that $F({\bf X})=\boldsymbol{\cB}_{\bf X}[A]$ for  ${\bf X}\in {\bf B}_{\bf n}(\cH).
  $
In this case, $A=\lim\limits_{r\to 1}F(r{\bf S})$, where
the convergence is in the operator norm, and the map
$$
\Phi:{\bf PH}^c({\bf B}_{\bf n}) \to
\text{\rm span} \{\boldsymbol\cA_{\bf n}^* \boldsymbol\cA_{\bf n}\}^{-\|\cdot\|},\qquad \Phi(F):=A
$$ is a  completely   isometric isomorphism of
operator spaces.

In Section 3, we  provide a Naimark \cite{N} type dilation theorem for  direct products
  ${\bf F}^+_{\bf n}:=\FF_{n_1}^+\times \cdots \times \FF_{n_k}^+$  of free semigroups. We show that a map $K:{\bf F}^+_{\bf n}\times {\bf F}^+_{\bf n}\to B(\cE)$  is a  positive semi-definite
 left $k$-multi-Toeplitz kernel on ${\bf F}^+_{\bf n}$
if and only if
 there exists a
$k$-tuple   of  {\it commuting  row isometries} ${\bf V}=(V_1,\ldots, V_k)$, $V_i=(V_{i,1},\ldots, V_{i,n_i})$, on  a Hilbert space $\cK\supset \cE$, i.e  the non-selfadjoint algebra $Alg(V_i)$ commutes with $Alg(V_s)$ for any $i,s\in \{1,\ldots, k\}$ with $i\neq s$, such that
$$K(\boldsymbol\sigma, \boldsymbol\omega)=P_\cE{\bf V}_{\boldsymbol \sigma}^*{\bf V}_{\boldsymbol \omega}|_\cE,\qquad \boldsymbol \sigma, \boldsymbol \omega\in {\bf F}^+_{\bf n},
 $$
 and
  $\cK=\bigvee_{\boldsymbol \omega\in {\bf F}^+_{\bf n}} {\bf V}_{\boldsymbol \omega}\cE$.
 In this case, the minimal dilation is unique up to an isomorphism. Here, we  use the notation ${\bf V}_{\boldsymbol \sigma}:=V_{1,\sigma_1}\cdots V_{k,\sigma_k}$ if
  ${\boldsymbol \sigma}=(\sigma_1,\ldots, \sigma_k)\in {\bf F}_{\bf n}^+$, and $V_{i,\sigma_i}:=V_{i,j_1}\cdots V_{i,j_p}$
  if  $\sigma_i=g_{j_1}^i\cdots g_{j_p}^i\in \FF_{n_i}^+$ and
   $V_{i,g_0^i}:=I$. For more information on kernels in various noncommutative settings we refer the reader to the work of Ball and Vinnikov \cite{BV1} (see also \cite{BV2} and the references there in).

 We prove a Schur  \cite{Sc} type result which states  that a free $k$-pluriharmonic function  $F$ on the polyball ${\bf B_n}$ is positive  if and only if a certain  right $k$-multi-Toeplitz kernel $\Gamma_{F_r}$  associated with the mapping ${\bf S}\mapsto F(r{\bf S})$ is positive semi-definite for any $r\in [0,1)$. Our Naimark type result for  positive semi-definite
 right $k$-multi-Toeplitz kernels on ${\bf F}^+_{\bf n}$ is used to provide a structure theorem for positive free $k$-pluriharmonic functions.
We show that a free $k$-pluriharmonic function $F:{\bf B_n}(\cH)\to B(\cE)\otimes_{min} B(\cH)$, with $F(0)=I$,   is  positive    if and only if   it has the form
 $$
 F({\bf X})=
 \sum_{(\boldsymbol \alpha, \boldsymbol \beta)\in {\bf F}_{\bf n}^+\times {\bf F}_{\bf n}^+}
 P_\cE{\bf V}_{\widetilde{\boldsymbol\alpha}}^*{\bf V}_{\widetilde{\boldsymbol\beta}}|_\cE\otimes
 {\bf X}_{\boldsymbol \alpha} {\bf X}_{\boldsymbol\beta}^*,\qquad {\bf X}\in {\bf B_n}(\cH),
 $$
 where ${\bf V}=(V_1,\ldots, V_k)$ is a
 $k$-tuple   of commuting row isometries on a space $\cK\supset \cE$,
 and
 $\widetilde{\boldsymbol\alpha}=(\widetilde{\alpha}_1,\ldots, \widetilde{\alpha}_k)$ is the reverse of ${\boldsymbol\alpha}=({\alpha_1},\ldots, {\alpha_k})$, i.e.
  $\widetilde \alpha_i= g^i_{i_k}\cdots g^i_{i_1}$ if
   $\alpha_i=g^i_{i_1}\cdots g^i_{i_k}\in\FF_{n_i}^+$. The general case when $F(0)\geq 0$ is also considered. As a consequence of these results, we obtain   a structure theorem for  positive  $k$-harmonic functions on the regular polydisk included in $[B(\cH)]_1\times_c\cdots \times_c[B(\cH)]_1$,
    which extends the corresponding  classical result in scalar polydisks \cite{Ru1}.

  In Section 4, we define  the {\it free pluriharmonic Poisson kernel}   on the regular polyball  ${\bf B_n}$  by setting
\begin{equation*}
 \boldsymbol{\cP}({\bf R}, {\bf X}):=\sum_{\alpha_1,\beta_1\in \FF_{n_1}^+}\cdots \sum_{\alpha_k, \beta_k\in \FF_{n_k}^+} {\bf R}_{1,\widetilde\alpha_1}^*\cdots {\bf R}_{k,\widetilde\alpha_k}^*{\bf R}_{1,\widetilde\beta_1}\cdots {\bf R}_{k,\widetilde\beta_k}
\otimes { X}_{1,\alpha_1}\cdots {X}_{k,\alpha_k}{X}_{1,\beta_1}^*\cdots {X}_{k,\beta_k}^*
\end{equation*}
for any ${\bf X}\in {\bf B_n}(\cH)$,
where the convergence is in the operator norm topology.
Given a completely  bounded linear  map $\mu:\text{\rm span}\{\boldsymbol\cR_{\bf n}^*\boldsymbol\cR_{\bf n}\}\to B(\cE)$, we  introduce   the {\it noncommutative Poisson transform} of $\mu$ to
be the map \ $\boldsymbol\cP\mu : {\bf B}_{\bf n}(\cH)\to B(\cE)\otimes_{min}B(\cH)$
defined by
$$
(\boldsymbol\cP \mu)({\bf X}):=\widehat \mu[\boldsymbol{\cP}({\bf R}, {\bf X})],\qquad
{\bf X} \in {\bf B}_{\bf n}(\cH),
$$
 where  the  completely bounded linear map
 $$\widehat
\mu:=\mu\otimes \text{\rm id} : \text{\rm span}\{\boldsymbol\cR_{\bf n}^*\boldsymbol\cR_{\bf n}\}^{-\|\cdot \|} \otimes_{min} B(\cH)\to
B(\cE)\otimes_{min} B(\cH)
$$
is uniquely defined by  $ \widehat \mu(A\otimes Y):= \mu(A)\otimes Y$ for any $A\in
\text{\rm span}\{\boldsymbol\cR_{\bf n}^*\boldsymbol\cR_{\bf n}\} $ and  $Y\in B(\cH)$.
We remark that, in  the particular case when $n_1=\cdots=n_k=1$,
$\cH=\cK=\CC$, ${\bf X}=(X_1,\ldots, X_k)\in \DD^k$,  and $\mu$ is a complex
Borel measure on $\TT^k$ (which can be seen as a bounded linear
functional on $C(\TT^k)$),  the noncommutative Poisson transform of $\mu$ coincides with the classical Poisson transform of $\mu$ \cite{Ru1}.

 In Section 4, we give necessary and sufficient conditions
 on a function $F:{\bf B_n}(\cH)\to B(\cE)\otimes_{min} B(\cH)$ to be  the noncommutative Poisson transform of
 a completely  bounded linear map $\mu:C^*({\bf R})\to B(\cE)$,  where $C^*({\bf R})$ is the $C^*$-algebra generated by the operators ${\bf R}_{i,j}$.
 In this case, we show that  there exists a
$k$-tuple   ${\bf V}=(V_1,\ldots, V_k)$, $V_i=(V_{i,1},\ldots, V_{i,n_i})$, of {\it doubly commuting row isometries} acting on  a Hilbert space $\cK$, i.e.
$C^*(V_i)$ commutes with $C^*(V_j)$ if $i\neq j$,   and  bounded linear operators $W_1,W_2:\cE\to \cK$ such that
$$
F({\bf X})=  (W_1^*\otimes I)\left[ C_{\bf X}({\bf V})^*
C_{\bf X}({\bf V}) \right] (W_2\otimes I), \qquad {\bf X}\in {\bf B_n}(\cH),
$$
where $$C_{\bf X}({\bf V}):=(I\otimes \boldsymbol\Delta_{\bf X}(I)^{1/2})\prod_{i=1}^k(I-V_{i,1}\otimes
X_{i,1}^*-\cdots -{V}_{i,n_i}\otimes X_{i,n_i}^*)^{-1}.$$
 In particular, we obtain necessary and sufficient conditions  for a function $F:{\bf B_n}(\cH)\to B(\cE)\otimes_{min} B(\cH)$ to be  the noncommutative Poisson transform of
 a completely  positive  linear map $\mu:C^*({\bf R})\to B(\cE)$.
 In this case, we have the representation
 $$F({\bf X})=  (W^*\otimes I)\left[ C_{\bf X}({\bf V})^*
C_{\bf X}({\bf V}) \right](W\otimes I), \qquad {\bf X}\in {\bf B_n}(\cH).$$

In Section 5,
 we introduce the {\it noncommutative Herglotz-Riesz
transform} of  a completely positive linear map $\mu:\text{\rm span}\{\boldsymbol\cR_{\bf n}^*\boldsymbol\cR_{\bf n}\}\to B(\cE)$   to be the map ${\bf H}\mu: {\bf B}_{\bf n}(\cH) \to B(\cE)\otimes_{min}B(\cH)$  defined by
$$
({\bf H}\mu)({\bf X} ):=\widehat\mu\left[ 2\prod_{i=1}^k(I-{\bf R}_{i,1}^*\otimes X_{i,1}-\cdots
-{\bf R}_{i,n_i}^*\otimes X_{i,n_i})^{-1}-I\right]
$$
for ${\bf X}:=(X_1,\ldots, X_n)\in {\bf B}_{\bf n}(\cH)$.
The main result of this section provides necessary and sufficient conditions for a function $f$ from the polyball ${\bf B_n}(\cH)$ to  $B(\cE)\otimes_{min}B(\cH)$ to admit a Herglotz-Riesz type representation \cite{Her}, \cite{Ri}, i.e.
$$f({\bf X})=({\bf H}\mu)({\bf X} )+i\Im f(0), \qquad {\bf X}\in {\bf B_n}(\cH),
$$
where
  $\mu:C^*({\bf R})\to B(\cE)$ is a completely positive linear map with the property that
    $\mu({\bf R}_{\boldsymbol\alpha}^* {\bf R}_{\boldsymbol\beta})=0$   if \
 ${\bf R}_{\boldsymbol\alpha}^* {\bf R}_{\boldsymbol\beta}$ is not equal to ${\bf R}_{\boldsymbol\gamma}$ or ${\bf R}_{\boldsymbol\gamma}^*$ for some ${\boldsymbol\gamma}\in {\bf F}_{\bf n}^+$.
In this case,  we show that there exist a
$k$-tuple   ${\bf V}=(V_1,\ldots, V_k)$, $V_i=(V_{i,1},\ldots, V_{i,n_i})$, of doubly commuting row isometries on  a Hilbert space $\cK$, and a  bounded linear operator $W:\cE\to \cK$, such that
$$
f({\bf X})=(W^*\otimes I)\left[ 2\prod_{i=1}^k(I-V_{i,1}^*\otimes X_{i,1}-\cdots
-V_{i,n_i}^*\otimes X_{i,n_i})^{-1}-I\right](W\otimes I)+i \Im f(0)
$$
and $W^* {\bf V}_{\boldsymbol\alpha}^*{\bf V}_{\boldsymbol\beta }W=0$   if \
${\bf R}_{\boldsymbol\alpha}^* {\bf R}_{\boldsymbol\beta}$ is not equal to ${\bf R}_{\boldsymbol\gamma}$ or ${\bf R}_{\boldsymbol\gamma}^*$ for some $\boldsymbol\gamma\in {\bf F}_{\bf n}^+$.

We remark that, in the particular  case when $n_1=\cdots =n_k=1$,  we obtain an operator-valued extension of Kor\' annyi-Puk\' anszky  \cite{KP} integral representation  for holomorphic functions with positive real parts in polydisks.

\bigskip

\section{  $k$-multi-Toeplitz operators  on tensor products of full  Fock spaces }

In this section, we introduce the class $\boldsymbol{\cT_{\bf n}}$ of
 $k$-multi-Toeplitz operators on  tensor products of full Fock spaces.
 We associate with each $k$-multi-Toeplitz operator $T$  a   formal power series in several variables and show  that we can recapture $T$ from its noncommutative Fourier series. Moreover, we characterize the noncommutative formal power series which are Fourier series of $k$-multi-Toeplitz operators
    and prove that
  $\boldsymbol{\cT_{\bf n}}=\text{\rm span} \{\boldsymbol\cA_{\bf n}^* \boldsymbol\cA_{\bf n}\}^{- \text{\rm SOT}}$,
 where $\boldsymbol\cA_{\bf n}$ is the noncommutative polyball algebra.

First, we recall (see \cite{Po-poisson}, \cite{Po-Berezin-poly}) some basic properties for  a class of  noncommutative Berezin \cite{Be} type  transforms associated  with regular polyballs.
 Let  ${\bf X}=({ X}_1,\ldots, { X}_k)\in {\bf B_n}(\cH)^-$ with $X_i:=(X_{i,1},\ldots, X_{i,n_i})$.
We  use the notation
$X_{i,\alpha_i}:=X_{i,j_1}\cdots X_{i,j_p}$
  if  $\alpha_i=g_{j_1}^i\cdots g_{j_p}^i\in \FF_{n_i}^+$ and
   $X_{i,g_0^i}:=I$.
The {\it noncommutative Berezin kernel} associated with any element
   ${\bf X}$ in the noncommutative polyball ${\bf B_n}(\cH)^-$ is the operator
   $${\bf K_{X}}: \cH \to F^2(H_{n_1})\otimes \cdots \otimes  F^2(H_{n_k}) \otimes  \overline{{\bf \Delta_{X}}(I) (\cH)}$$
   defined by
   $$
   {\bf K_{X}}h:=\sum_{\beta_i\in \FF_{n_i}^+, i=1,\ldots,k}
   e^1_{\beta_1}\otimes \cdots \otimes  e^k_{\beta_k}\otimes {\bf \Delta_{X}}(I)^{1/2} X_{1,\beta_1}^*\cdots X_{k,\beta_k}^*h, \qquad h\in \cH,
   $$
   where the defect operator $ {\bf \Delta_{X}}(I)$ was defined in the introduction.
A very  important property of the Berezin kernel is that
     ${\bf K_{X}} { X}^*_{i,j}= ({\bf S}_{i,j}^*\otimes I)  {\bf K_{X}}$ for any  $i\in \{1,\ldots, k\}$ and $ j\in \{1,\ldots, n_i\}.
    $
    The {\it Berezin transform at} $X\in {\bf B_n}(\cH)$ is the map $ \boldsymbol{\cB_{\bf X}}: B(\otimes_{i=1}^k F^2(H_{n_i}))\to B(\cH)$
 defined by
\begin{equation*}
 {\boldsymbol\cB_{\bf X}}[g]:= {\bf K^*_{\bf X}} (g\otimes I_\cH) {\bf K_{\bf X}},
 \qquad g\in B(\otimes_{i=1}^k F^2(H_{n_i})).
 \end{equation*}
  If $g$ is in   the $C^*$-algebra $C^*({\bf S})$ generated by ${\bf S}_{i,1},\ldots,{\bf S}_{i,n_i}$, where $i\in \{1,\ldots, k\}$, we  define the Berezin transform at  $X\in {\bf B_n}(\cH)^-$  by
  $${\boldsymbol\cB_{\bf X}}[g]:=\lim_{r\to 1} {\bf K^*_{r\bf X}} (g\otimes I_\cH) {\bf K_{r\bf X}},
 \qquad g\in  C^*({\bf S}),
 $$
 where the limit is in the operator norm topology.
In this case, the Berezin transform at $X$ is a unital  completely positive linear  map such that
 $${\boldsymbol\cB_{\bf X}}({\bf S}_{\boldsymbol\alpha} {\bf S}_{\boldsymbol\beta}^*)={\bf X}_{\boldsymbol\alpha} {\bf X}_{\boldsymbol\beta}^*, \qquad \boldsymbol\alpha, \boldsymbol\beta \in \FF_{n_1}^+\times \cdots \times\FF_{n_k}^+,
 $$
 where  ${\bf S}_{\boldsymbol\alpha}:= {\bf S}_{1,\alpha_1}\cdots {\bf S}_{k,\alpha_k}$ if  $\boldsymbol\alpha:=(\alpha_1,\ldots, \alpha_k)\in \FF_{n_1}^+\times \cdots \times\FF_{n_k}^+$.

The  Berezin transform will play an important role in this paper.
   More properties  concerning  noncommutative Berezin transforms and multivariable operator theory on noncommutative balls and  polydomains, can be found in \cite{Po-poisson},     \cite{Po-Berezin3}, and \cite{Po-Berezin-poly}.
For basic results on completely positive (resp.~bounded)  maps  we refer the reader to \cite{Pa-book}, \cite{Pi-book}, and \cite{ER}.

 \begin{definition} Let $\cE$ be a Hilbert space.
 A bounded linear operator $A\in B(\cE\otimes \bigotimes_{i=1}^k F^2(H_{n_i}))$ is called $k$-multi-Toeplitz with respect to the universal model
 ${\bf R}:=({\bf R}_1,\ldots, {\bf R}_k)$,  where  ${\bf R}_i:=({\bf R}_{i,1},\ldots,{\bf R}_{i,n_i})$, if
 $$
 (I_\cE\otimes {\bf R}_{i,s}^*)A(I_\cE\otimes {\bf R}_{i,t})=\delta_{st}A,\qquad s,t\in \{1,\ldots, n_i\},
 $$
  for every $i\in\{1,\ldots, k\}$.
 \end{definition}

 A few more notations are necessary. If $\omega, \gamma\in \FF_n^+$,
we say that $\gamma<_r \omega$ if there is $\sigma\in
\FF_n^+\backslash\{g_0\}$ such that $\omega=\sigma \gamma$. In this
case,  we set $\omega\backslash_r \gamma:=\sigma$. Similarly, we say
that $\gamma
<_{l}\omega$ if there is $\sigma\in
\FF_n^+\backslash\{g_0\}$ such that $\omega= \gamma \sigma$ and set
$\omega\backslash_l \gamma:=\sigma$.
 We denote by
$\widetilde\alpha$  the reverse of $\alpha\in \FF_n^+$, i.e.
  $\widetilde \alpha= g_{i_k}\cdots g_{i_1}$ if
   $\alpha=g_{i_1}\cdots g_{i_k}\in\FF_n^+$.
  Notice that $\gamma<_r \omega$ if and only if $\widetilde \gamma<_l\widetilde \omega$. In this case we have
 $ \widetilde{\omega\backslash_r\gamma}=\widetilde\omega \backslash_l\widetilde\gamma$.
  We say that $\omega$ is {\it right comparable} with $\gamma$, and write $\omega\sim_{rc} \gamma$, if  either one of the conditions  $\omega<_r \gamma$, $\gamma<_r \omega$, or $\omega=\gamma$ holds.  In this case, we define
  $$
  c_r^+(\omega, \gamma):=
  \begin{cases}\omega\backslash_r \gamma&; \quad \text{ if } \gamma<_r \omega\\
   g_0&; \quad \text{ if } \omega<_r \gamma \ \text{ or } \ \omega=\gamma
   \end{cases}\quad \text{ and } \quad
   c_r^-(\omega, \gamma):=
  \begin{cases}\gamma\backslash_r\omega&; \quad \text{ if } \omega<_r \gamma\\
   g_0&; \quad \text{ if } \gamma<_r \omega \ \text{ or } \ \omega=\gamma.
   \end{cases}
   $$
  Let $\boldsymbol\omega=(\omega_1,\ldots, \omega_k)$ and $\boldsymbol\gamma=(\gamma_1,\ldots, \gamma_k)$ be in $\FF_{n_1}^+\times\cdots \times \FF_{n_k}^+$. We say that $\boldsymbol\omega$ and $\boldsymbol\gamma$ are right comparable, and write  $\boldsymbol\omega\sim_{rc} \boldsymbol\gamma$, if for each $i\in \{1,\ldots, k\}$,  either one of the conditions  $\omega_i<_r \gamma_i$, $\gamma_i<_r \omega_i$, or $\omega_i=\gamma_i$ holds.  In this case, we define
\begin{equation}
\label{c+}
c_r^+(\boldsymbol\omega, \boldsymbol\gamma):=(c_r^+(\omega_1, \gamma_1),\ldots, c_r^+(\omega_k, \gamma_k))\quad \text{ and } \quad
c_r^-(\boldsymbol\omega, \boldsymbol\gamma):=(c_r^-(\omega_1, \gamma_1),\ldots, c_r^-(\omega_k, \gamma_k)).
\end{equation}
Similarly, we say that that $\boldsymbol\omega$ and $\boldsymbol\gamma$ are {\it left comparable}, and write  $\boldsymbol\omega\sim_{lc} \boldsymbol\gamma$, if
$\widetilde{\boldsymbol\omega}\sim_{rc} \widetilde{\boldsymbol\gamma}$.
The definitions of $c_l^+(\boldsymbol\omega, \boldsymbol\gamma)$ and
$c_l^-(\boldsymbol\omega, \boldsymbol\gamma)$ are now clear.
Note that
$$
\widetilde{c_r^+(\boldsymbol\omega, \boldsymbol \gamma)}=
c_l^+(\widetilde{\boldsymbol\omega}, \widetilde{\boldsymbol \gamma})\quad \text{ and } \quad
\widetilde{c_r^-(\boldsymbol\omega, \boldsymbol \gamma)}=
c_l^-(\widetilde{\boldsymbol\omega}, \widetilde{\boldsymbol \gamma}).
$$

For each $m\in \ZZ$, we set $m^+:=\max\{m, 0\}$ and $m^-:=\max\{-m, 0\}$.

 \begin{lemma}\label{inner}
  Let $\boldsymbol \alpha=(\alpha_1,\ldots, \alpha_k)$ and  $\boldsymbol\beta=(\beta_1,\ldots, \beta_k)$ be $k$-tuples in $\FF_{n_1}^+\times\cdots \times \FF_{n_k}^+$ such that
$\alpha_i,\beta_i\in \FF_{n_i}^+, i\in \{1,\ldots, k\} $,   $|\alpha_i|=m_i^-$,  $ |\beta_i|=m_i^+$, and $m_i\in \ZZ$.
If $\boldsymbol\gamma=(\gamma_1,\ldots, \gamma_k)$ and $\boldsymbol \omega=(\omega_1,\ldots, \omega_k)$ are   $k$-tuples in $\FF_{n_1}^+\times\cdots \times \FF_{n_k}^+$, then  the inner product
\begin{equation*}
\left<{\bf S}_{1,\alpha_1}\cdots {\bf S}_{k,\alpha_k}{\bf S}_{1,\beta_1}^*\cdots {\bf S}_{k,\beta_k}^*(e_{\gamma_1}^1\otimes \cdots \otimes e_{\gamma_k}^k), e_{\omega_1}^1\otimes \cdots \otimes e_{\omega_k}^k\right>
\end{equation*}
is different from zero if and only if
$\boldsymbol \omega \sim_{rc} \boldsymbol\gamma $ and $(\alpha_1,\ldots, \alpha_k; \beta_1,\ldots, \beta_k)=(c_r^+(\boldsymbol\omega, \boldsymbol\gamma);c_r^-(\boldsymbol\omega, \boldsymbol\gamma))$.
    \end{lemma}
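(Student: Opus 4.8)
The plan is to exploit the tensor structure of the universal model to reduce the $k$-variable inner product to a product of $k$ one-variable computations, and then to read off the combinatorial condition factor by factor. The crucial observation is that each ${\bf S}_{i,j}$ acts nontrivially only on the $i$-th tensor factor $F^2(H_{n_i})$, so that operators carrying different superscripts $i$ commute, whether or not adjoints are involved. First I would use this to reorder the monomial and write
$$
{\bf S}_{1,\alpha_1}\cdots {\bf S}_{k,\alpha_k}{\bf S}_{1,\beta_1}^*\cdots {\bf S}_{k,\beta_k}^*=\bigotimes_{i=1}^k S_{i,\alpha_i}S_{i,\beta_i}^*
$$
on $\bigotimes_{i=1}^k F^2(H_{n_i})$. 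Since the test vectors are simple tensors, the inner product in the statement factors as $\prod_{i=1}^k\left\langle S_{i,\alpha_i}S_{i,\beta_i}^* e^i_{\gamma_i},e^i_{\omega_i}\right\rangle$, and the whole quantity is nonzero precisely when every one of the $k$ scalar factors is nonzero.

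Next I would carry out the single-variable computation. From $S_{i,j}e^i_\sigma=e^i_{g^i_j\sigma}$ one gets $S_{i,\beta_i}e^i_\sigma=e^i_{\beta_i\sigma}$, and hence $S_{i,\beta_i}^* e^i_{\gamma_i}=e^i_{\gamma_i\backslash_l\beta_i}$ when $\beta_i<_l\gamma_i$ or $\beta_i=\gamma_i$, and $0$ otherwise; applying $S_{i,\alpha_i}$ then prepends $\alpha_i$. Thus the $i$-th factor equals $1$ exactly when $\beta_i$ is a left prefix of $\gamma_i$ and $\omega_i=\alpha_i(\gamma_i\backslash_l\beta_i)$, i.e.\ when there is a common word $\sigma_i$ with $\gamma_i=\beta_i\sigma_i$ and $\omega_i=\alpha_i\sigma_i$, and is $0$ in all other cases.

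Finally I would convert this left-cancellation picture into the right-comparability bookkeeping of the statement, and here is where the normalization $|\alpha_i|=m_i^-$, $|\beta_i|=m_i^+$ does the real work: since $m_i^+m_i^-=0$, at least one of $\alpha_i,\beta_i$ equals the neutral element $g^i_0$. If $m_i\geq 0$ then $\alpha_i=g^i_0$, so the factor is nonzero iff $\gamma_i=\beta_i\omega_i$, that is $\omega_i<_r\gamma_i$ (or $\omega_i=\gamma_i$ when $m_i=0$), with $c_r^+(\omega_i,\gamma_i)=g^i_0=\alpha_i$ and $c_r^-(\omega_i,\gamma_i)=\gamma_i\backslash_r\omega_i=\beta_i$. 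If $m_i<0$ then $\beta_i=g^i_0$, so the factor is nonzero iff $\omega_i=\alpha_i\gamma_i$, that is $\gamma_i<_r\omega_i$, with $c_r^+(\omega_i,\gamma_i)=\omega_i\backslash_r\gamma_i=\alpha_i$ and $c_r^-(\omega_i,\gamma_i)=g^i_0=\beta_i$. In either case the $i$-th factor is nonzero if and only if $\omega_i\sim_{rc}\gamma_i$ and $(\alpha_i;\beta_i)=(c_r^+(\omega_i,\gamma_i);c_r^-(\omega_i,\gamma_i))$. Taking the product over $i$ and appealing to the componentwise definitions of $\sim_{rc}$ and of $c_r^\pm$ in \eqref{c+} yields exactly the asserted equivalence. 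The computation is routine; the only genuinely delicate point is this last translation, and the length hypotheses $|\alpha_i|=m_i^-,\ |\beta_i|=m_i^+$ are precisely what makes the common suffix $\sigma_i$ coincide with the shorter of $\omega_i,\gamma_i$ and thereby forces right-comparability rather than a more general overlap.
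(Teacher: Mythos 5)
Your proposal is correct and follows essentially the same route as the paper's proof: reduce the inner product to a componentwise computation using the tensor structure (the paper phrases the nonvanishing condition as $\beta_i\omega_i=\alpha_i\gamma_i$, which under the normalization $m_i^+m_i^-=0$ is exactly your common-suffix condition), then case-split on the sign of $m_i$ to identify $(\alpha_i;\beta_i)$ with $(c_r^+(\omega_i,\gamma_i);c_r^-(\omega_i,\gamma_i))$. The only cosmetic difference is that you make the single-variable prefix computation explicit where the paper states it directly.
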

    \begin{proof}  Under the conditions of the lemma, ${\bf S}_{i,\alpha_i}{\bf S}_{j,\beta_j}^*={\bf S}_{j,\beta_j}^*{\bf S}_{i,\alpha_i}$ for any $i,j\in \{1,\ldots,k\}$,
    $\alpha_i\in \FF_{n_i}^+$, and $\beta_j\in \FF_{n_j}^+$.
     Note that the inner product
is different from zero if and only if $\beta_i\omega_i=\alpha_i\gamma_i$ for any $i\in \{1,\ldots, k\}$. Let  $m_i\in \ZZ$ and assume that  $|\alpha_i|=m_i^->0$. Then $\beta_i=g_0^i$ and, consequently, $\omega_i=\alpha_i\gamma_i$. This shows that $\gamma_i<_r \omega_i$, $c_r^+(\omega_i, \gamma_i)=\alpha_i$, and $c_r^-(\omega_i, \gamma_i)=g_0^i$.
In the case, when  $|\beta_i|=m_i^+>0$, we have $\alpha_i=g_0^i$ and $\beta_i\omega_i=\gamma_i$. Consequently, $\omega_i<_r \gamma_i$,
$c_r^+(\omega_i, \gamma_i)=g_0^i$, and $c_r^-(\omega_i, \gamma_i)=\beta_i$. When $\alpha_i=\beta_i=g_0^i$, we have $\omega_i=\gamma_i$. Therefore, the scalar product above is different from zero if and only if
$\boldsymbol \omega \sim_{rc} \boldsymbol\gamma $ and $(\alpha_1,\ldots, \alpha_k; \beta_1,\ldots, \beta_k)=(c_r^+(\boldsymbol\omega, \boldsymbol\gamma);c_r^-(\boldsymbol\omega, \boldsymbol\gamma))$.
    \end{proof}

If $\beta_i, \gamma_i\in \FF_{n_i}^+$ and, for each $i\in \{1,\ldots, k\}$,   $\beta_i<_\ell \gamma_i$ or $\beta_i=\gamma_i$, we write $\boldsymbol\beta\leq_\ell \boldsymbol\gamma$.

 \begin{lemma}
 \label{ortho}
 Given  a $k$-tuple $\boldsymbol\gamma=(\gamma_1,\ldots, \gamma_k)\in \FF_{n_1}^+\times\cdots \times \FF_{n_k}^+$, the sequence
$$\left\{{\bf S}_{1,\alpha_1}\cdots {\bf S}_{k,\alpha_k}{\bf S}_{1,\beta_1}^*\cdots {\bf S}_{k,\beta_k}^*(e_{\gamma_1}^1\otimes \cdots \otimes e_{\gamma_k}^k)\right\}
$$
consists of orthonormal vectors,
 if
$\alpha_i,\beta_i\in \FF_{n_i}^+, i\in \{1,\ldots, k\} $ with $m_i\in \ZZ$, $|\alpha_i|=m_i^-$,  $ |\beta_i|=m_i^+$, and  $\boldsymbol\beta\leq_\ell \boldsymbol\gamma$.
 \end{lemma}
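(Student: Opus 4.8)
The plan is to reduce the whole assertion to a one-variable computation on each Fock space factor, exploiting that ${\bf S}_{i,j}$ acts as the identity outside the $i$-th tensor slot. As already observed in the proof of Lemma \ref{inner}, under the present hypotheses ${\bf S}_{i,\alpha_i}$ and ${\bf S}_{j,\beta_j}^*$ commute whenever $i\neq j$; grouping together the operators that act on a common factor yields the factorization
\[
{\bf S}_{1,\alpha_1}\cdots {\bf S}_{k,\alpha_k}{\bf S}_{1,\beta_1}^*\cdots {\bf S}_{k,\beta_k}^*(e_{\gamma_1}^1\otimes \cdots \otimes e_{\gamma_k}^k)=\bigotimes_{i=1}^k S_{i,\alpha_i}S_{i,\beta_i}^* e_{\gamma_i}^i .
\]
Thus each vector of the family is a pure tensor of one-variable vectors. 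Since a tensor product of orthonormal systems is orthonormal, and since two pure tensors of standard basis vectors coincide precisely when they coincide in every factor, it suffices to prove the single-variable statement: for a fixed $\gamma\in\FF_n^+$, the map $(\alpha,\beta)\mapsto S_\alpha S_\beta^* e_\gamma$ sends the admissible indices (those with $|\alpha|=m^-$, $|\beta|=m^+$ for some $m\in\ZZ$ and $\beta$ a prefix of $\gamma$) injectively into the orthonormal basis $\{e_\delta:\delta\in\FF_n^+\}$.

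First I would record the one-variable computation. Because $S_\beta^*$ deletes a left factor, we have $S_\beta^* e_\gamma=e_{\gamma\backslash_l\beta}$ exactly when $\beta$ is a prefix of $\gamma$ (and $S_\beta^* e_\gamma=0$ otherwise, which is precisely why the hypothesis $\boldsymbol\beta\leq_\ell\boldsymbol\gamma$ is imposed), while $S_\alpha$ prepends $\alpha$; hence $S_\alpha S_\beta^* e_\gamma=e_{\alpha(\gamma\backslash_l\beta)}$ is a single basis vector of norm one. The admissibility constraint $|\alpha|=m^-$, $|\beta|=m^+$ forces exactly one of $\alpha,\beta$ to be nontrivial: for $m>0$ we get $\alpha=g_0$ and $\beta$ the unique length-$m$ prefix of $\gamma$, producing a proper suffix of $\gamma$; for $m=0$ the vector is $e_\gamma$; for $m<0$ we get $\beta=g_0$ and $\alpha$ an arbitrary word of length $-m$, producing $e_{\alpha\gamma}$.

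The main point, and the step I expect to require the most care, is injectivity, which I would settle by a length count. The vectors arising from $m>0$ are proper suffixes of $\gamma$, hence of word-length strictly less than $|\gamma|$; the $m=0$ vector has length exactly $|\gamma|$; and the $m<0$ vectors have length strictly greater than $|\gamma|$. Consequently the three regimes occupy disjoint length ranges and cannot collide. Within $m>0$, distinct values of $m$ give suffixes of distinct lengths, hence distinct words; within $m<0$, the word $\alpha\gamma$ determines $\alpha$ uniquely as the prefix complementary to the fixed suffix $\gamma$, so distinct $\alpha$ give distinct vectors. This establishes injectivity of the one-variable map into the orthonormal basis, and transporting the conclusion back through the tensor factorization shows that the full family is orthonormal. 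The only delicate ingredient is this length bookkeeping, which rules out any coincidence between the co-analytic ($m>0$) and analytic ($m<0$) contributions.
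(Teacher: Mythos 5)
Your proof is correct and follows essentially the same route as the paper's: both reduce to the single Fock-space factor via the tensor factorization and then verify that distinct admissible index pairs $(\alpha_i,\beta_i)$ produce distinct basis vectors $e_{\alpha_i(\gamma_i\backslash_\ell\beta_i)}$. The only difference is cosmetic — you organize the injectivity check by stratifying on word length, whereas the paper runs a pairwise case analysis on $(\alpha_i,\beta_i)$ versus $(\alpha_i',\beta_i')$; both rest on the same observation that the analytic and co-analytic contributions occupy disjoint length ranges.
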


 \begin{proof}
 First, note that ${\bf S}_{1,\alpha_1}\cdots {\bf S}_{k,\alpha_k}{\bf S}_{1,\beta_1}^*\cdots {\bf S}_{k,\beta_k}^*(e_{\gamma_1}^1\otimes \cdots \otimes e_{\gamma_k}^k)\neq 0$ if and only if $S_{i,\beta_i}^*(e^i_{\gamma_i})\neq 0$ for each $i\in \{1,\ldots, k\}$, which is equivalent to $\beta_i<_\ell \gamma_i$ or $\beta_i=\gamma_i$. Therefore, $\boldsymbol\beta\leq_\ell \boldsymbol\gamma$.

Fix $i\in \{1,\ldots,k\}$ and $\gamma_i\in \FF_{n_i}^+$. We prove that the sequence
$\{S_{i,\alpha_i}S_{i,\beta_i}^* e_{\gamma_i}^i\}$ consists of orthonormal vectors, if $\alpha_i, \beta_i \in \FF_{n_i}^+$ have the following properties:
 \begin{enumerate}
 \item[(i)] if $|\alpha_i|>0$ then $\beta_i=g_0^i$, and if  $|\beta_i|>0$ then $\alpha_i=g_0^i$;
 \item[(ii)]  $\beta_i\leq_\ell \gamma_i$.
 \end{enumerate}
 Indeed, let $(\alpha_i, \beta_i)$ and $(\alpha_i', \beta_i')$  be two distinct pairs with the above-mentioned properties.
First, we consider the case when $g_0^i\neq \beta_i<_\ell \gamma_i$. Then $\alpha_i=g_0^i$ and, consequently,
$S_{i,\alpha_i}S_{i,\beta_i}^* e_{\gamma_i}^i=e^i_{\gamma_i\backslash_\ell \beta_i}$.
Similarly, if
$g_0^i\neq \beta_i'<_\ell \gamma_i$, then $\alpha_i'=g_0^i$ and, consequently,
$S_{i,\alpha_i'}S_{i,\beta_i'}^* e_{\gamma_i}^i=e^i_{\gamma_i\backslash_\ell \beta_i'}$.
Since $(\alpha_i, \beta_i)\neq (\alpha_i', \beta_i')$, we must have $\beta_i\neq \beta_i'$, which implies $e^i_{\gamma_i\backslash_\ell \beta_i}\perp e^i_{\gamma_i\backslash_\ell \beta_i'}$.
On the other hand, if $\beta_i'=g_0^i$, then $\alpha_i'\in \FF_{n_i}^+$ and
$S_{i,\alpha_i'}S_{i,\beta_i'}^* e_{\gamma_i}^i=e_{\alpha_i'}e_{\gamma_i}\perp e^i_{\gamma_i\backslash_\ell \beta_i'}$. Therefore, $S_{i,\alpha_i'}S_{i,\beta_i'}^* e_{\gamma_i}^i\perp S_{i,\alpha_i'}S_{i,\beta_i'}^* e_{\gamma_i}^i$.

The second case is when $\beta_i=g_0^i$. Then $\alpha_i\in \FF_{n_i}^+$ and
$S_{i,\alpha_i}S_{i,\beta_i}^* e_{\gamma_i}^i=e_{\alpha_i}e_{\gamma_i}$. As we saw above,
$S_{i,\alpha_i'}S_{i,\beta_i'}^* e_{\gamma_i}^i$   is equal to either $e_{\alpha_i'}e_{\gamma_i}$ (when $\beta_i'=g_0^i$) or $e^i_{\gamma_i\backslash_\ell \beta_i'}$ (when $g_0^i\neq \beta_i'<_\ell \gamma_i$). In each case, we have  $S_{i,\alpha_i'}S_{i,\beta_i'}^* e_{\gamma_i}^i\perp S_{i,\alpha_i'}S_{i,\beta_i'}^* e_{\gamma_i}^i$, which completes the proof of our assertion.
Using this result one can easily complete the proof of the lemma.
\end{proof}

 We associate with  each $k$-multi-Toeplitz operator
$A\in B(\cE\otimes \bigotimes_{i=1}^k F^2(H_{n_i}))$  a formal power series
$$
\varphi_A({\bf S}):=\sum_{m_1\in \ZZ}\cdots \sum_{m_k\in \ZZ}
\sum_{{\alpha_i,\beta_i\in \FF_{n_i}^+, i\in \{1,\ldots, k\}}\atop{|\alpha_i|=m_i^-, |\beta_i|=m_i^+}} A_{(\alpha_1,\ldots,\alpha_k;\beta_1,\ldots, \beta_k)}\otimes {\bf S}_{1,\alpha_1}\cdots {\bf S}_{k,\alpha_k}{\bf S}_{1,\beta_1}^*\cdots {\bf S}_{k,\beta_k}^*,
$$
where the coefficients are given by
\begin{equation} \label{AA}
\left<A_{(\alpha_1,\ldots,\alpha_k;\beta_1,\ldots, \beta_k)}h,\ell\right>:=\left<A(h\otimes x), \ell\otimes y\right>,\qquad  h,\ell\in \cE,
\end{equation}
and $x:=x_1\otimes \cdots \otimes x_k$, $y=y_1\otimes \cdots \otimes y_k$ with \begin{equation}
\label{xy}
\begin{cases} x_i=e^i_{\beta_i} \text{ and } y_i=1&; \quad \text{if } m_i\geq 0\\
 x_i=1 \text{ and } y_i=e^i_{\alpha_i}&; \quad \text{if } m_i<0
 \end{cases}
 \end{equation}
 for every $i\in \{1,\ldots, k\}$.

The next result shows that a $k$-multi-Toeplitz operator is uniquely determined by is Fourier series.
\begin{theorem}\label{Fourier}
If $A,B$ are $k$-multi-Toeplitz operators on $\cE\otimes \bigotimes_{i=1}^k F^2(H_{n_i})$, then $A=B$ if and only if the corresponding formal  Fourier series $\varphi_A({\bf S})$ and $\varphi_B({\bf S})$ are equal.
Moreover,
$Aq=\varphi_A({\bf S})q$ for any vector-valued polynomial
$$q=\sum_{{\omega_i \in \FF_{n_i}^+, i\in \{1,\ldots, k\}}\atop{|\omega_i|\leq p_i }}h_{(\omega_1,\ldots, \omega_k)}\otimes e_{\omega_1}^1\otimes \cdots \otimes e_{\omega_k}^k,
 $$
 where $h_{(\omega_1,\ldots, \omega_k)}\in \cE$ and $ (p_1,\ldots, p_k)\in \NN^k$.
\end{theorem}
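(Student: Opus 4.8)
The plan is to establish the quantitative assertion $Aq=\varphi_A({\bf S})q$ first, since the stated equivalence is then immediate: if the two Fourier series coincide, all coefficients $A_{(\boldsymbol\alpha;\boldsymbol\beta)}=B_{(\boldsymbol\alpha;\boldsymbol\beta)}$ agree, so $Aq=\varphi_A({\bf S})q=\varphi_B({\bf S})q=Bq$ on every vector-valued polynomial $q$; as the polynomials are dense in $\cE\otimes\bigotimes_i F^2(H_{n_i})$ and $A,B$ are bounded, $A=B$. The converse is trivial, because by (\ref{AA}) the coefficients are defined purely through inner products involving $A$. By linearity it therefore suffices to prove $A(\ell\otimes e_{\boldsymbol\gamma})=\varphi_A({\bf S})(\ell\otimes e_{\boldsymbol\gamma})$ for every $\ell\in\cE$ and every $\boldsymbol\gamma=(\gamma_1,\dots,\gamma_k)$, and I would do this by testing both sides against an arbitrary basis vector $\ell'\otimes e_{\boldsymbol\omega}$.

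For the right-hand side I would invoke Lemma \ref{inner}: among all monomials occurring in $\varphi_A({\bf S})$, the pairing $\left<{\bf S}_{1,\alpha_1}\cdots{\bf S}_{k,\alpha_k}{\bf S}_{1,\beta_1}^*\cdots{\bf S}_{k,\beta_k}^* e_{\boldsymbol\gamma},e_{\boldsymbol\omega}\right>$ is nonzero, and then equal to $1$, precisely when $\boldsymbol\omega\sim_{rc}\boldsymbol\gamma$ and $(\boldsymbol\alpha;\boldsymbol\beta)=(c_r^+(\boldsymbol\omega,\boldsymbol\gamma);c_r^-(\boldsymbol\omega,\boldsymbol\gamma))$. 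Hence exactly one term of the series survives the pairing, giving
$$
\left<\varphi_A({\bf S})(\ell\otimes e_{\boldsymbol\gamma}),\ell'\otimes e_{\boldsymbol\omega}\right>=
\begin{cases}\left<A_{(c_r^+(\boldsymbol\omega,\boldsymbol\gamma);c_r^-(\boldsymbol\omega,\boldsymbol\gamma))}\ell,\ell'\right>&\text{if }\boldsymbol\omega\sim_{rc}\boldsymbol\gamma,\\[2pt] 0&\text{otherwise.}\end{cases}
$$

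The heart of the argument, and the step I expect to be the main obstacle, is to compute the left-hand side $\left<A(\ell\otimes e_{\boldsymbol\gamma}),\ell'\otimes e_{\boldsymbol\omega}\right>$ directly from the multi-Toeplitz relation and to match it with the display above. Rewriting the defining identity as $\left<A(I_\cE\otimes{\bf R}_{i,t})u,(I_\cE\otimes{\bf R}_{i,s})v\right>=\delta_{st}\left<Au,v\right>$ and using $e^i_{\gamma_i}=R_{i,t}e^i_{\gamma_i'}$ when $\gamma_i=\gamma_i' g_t^i$, I would peel letters off the right ends of $\gamma_i$ and $\omega_i$ one coordinate at a time; since the relations for distinct indices $i$ act on different tensor factors, they do not interfere. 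At each step the last letters of $\gamma_i$ and $\omega_i$ must coincide, for otherwise $\delta_{st}$ forces the inner product to vanish. Continuing until one of the two words is exhausted leaves, in coordinate $i$, a residual pair equal to $(g_0^i,c_r^+(\omega_i,\gamma_i))$, $(c_r^-(\omega_i,\gamma_i),g_0^i)$, or $(g_0^i,g_0^i)$ according to whether $\gamma_i<_r\omega_i$, $\omega_i<_r\gamma_i$, or $\omega_i=\gamma_i$. If some coordinate fails to be right comparable the process aborts with value $0$; otherwise a routine comparison with the choice of test vectors in (\ref{xy}) shows that the surviving reduced inner product is exactly the one defining $\left<A_{(c_r^+(\boldsymbol\omega,\boldsymbol\gamma);c_r^-(\boldsymbol\omega,\boldsymbol\gamma))}\ell,\ell'\right>$, reproducing the right-hand side verbatim.

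It remains to justify that $\varphi_A({\bf S})(\ell\otimes e_{\boldsymbol\gamma})$ is well defined. By Lemma \ref{ortho} the nonzero vectors ${\bf S}_{1,\alpha_1}\cdots{\bf S}_{k,\alpha_k}{\bf S}_{1,\beta_1}^*\cdots{\bf S}_{k,\beta_k}^* e_{\boldsymbol\gamma}$ are orthonormal, so the terms of the series are mutually orthogonal in $\cE\otimes\bigotimes_i F^2(H_{n_i})$. The inner-product computation above identifies their coefficients with the Fourier coefficients of the genuine vector $A(\ell\otimes e_{\boldsymbol\gamma})$ relative to the orthonormal system $\{\ell_j'\otimes e_{\boldsymbol\omega}\}$, so Bessel's inequality yields norm convergence and forces the limit to equal $A(\ell\otimes e_{\boldsymbol\gamma})$. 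Summing over the finitely many basis vectors comprising $q$ completes the proof.
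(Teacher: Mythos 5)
Your proposal is correct and follows essentially the same route as the paper's proof: both compute $\left<A(h\otimes e_{\boldsymbol\gamma}),h'\otimes e_{\boldsymbol\omega}\right>$ by reducing along the multi-Toeplitz relations (your letter-by-letter peeling is exactly the iteration that justifies the paper's one-step reduction via $e_{\boldsymbol\gamma}=(I_\cE\otimes{\bf R}_{1,\widetilde\gamma_1}\cdots{\bf R}_{k,\widetilde\gamma_k})(1)$), obtain convergence of $\varphi_A({\bf S})(h\otimes e_{\boldsymbol\gamma})$ from Lemma \ref{ortho} together with the square-summability of the coefficients of the genuine vector $A(h\otimes e_{\boldsymbol\gamma})$, and then match the two sides using Lemma \ref{inner}. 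No gaps.
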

\begin{proof}
   Let $\boldsymbol\omega=(\omega_1,\ldots, \omega_k)$ and $\boldsymbol\gamma=(\gamma_1,\ldots, \gamma_k)$ be $k$-tuples in $\FF_{n_1}^+\times\cdots \times \FF_{n_k}^+$, and let $h, h'\in \cE$. Since $A$ is a $k$-multi-Toeplitz operator on $\cE\otimes \bigotimes_{i=1}^kF^2(H_{n_i})$, we have
 \begin{equation*}
 \begin{split}
 &\left< A(h\otimes e_{\gamma_1}^1\otimes \cdots \otimes e_{\gamma_k}^k), h'\otimes e_{\omega_1}^1\otimes \cdots \otimes e_{\omega_k}^k\right>\\
 &=
 \left< A(I_\cE\otimes {\bf R}_{1,\widetilde\gamma_1}\cdots {\bf R}_{k,\widetilde\gamma_k})(h\otimes 1), (I_\cE\otimes {\bf R}_{1,\widetilde\omega_1}\cdots {\bf R}_{k,\widetilde\omega_k})(h'\otimes 1)\right>\\
 &=\begin{cases} \left<A_{(c_r^+(\boldsymbol\omega, \boldsymbol\gamma); c_r^-(\boldsymbol\omega, \boldsymbol\gamma))}h,h'\right>,&\quad \boldsymbol\omega\sim_{rc} \boldsymbol\gamma\\
 0,& \text{otherwise},
 \end{cases}
 \end{split}
 \end{equation*}
 where $c_r^+(\boldsymbol\omega, \boldsymbol\gamma)$ and  $ c_r^-(\boldsymbol\omega, \boldsymbol\gamma)$ are defined by  relation \eqref{c+}.
 Consequently,
 $$
  A(h\otimes e_{\gamma_1}^1\otimes \cdots \otimes e_{\gamma_k}^k)=
   \sum_{{\boldsymbol\omega=(\omega_1,\ldots, \omega_k)\in \FF_{n_1}^+\times\cdots \times \FF_{n_k}^+}\atop{\boldsymbol\omega\sim_{rc} \boldsymbol\gamma}}
    A_{(c_r^+(\boldsymbol\omega, \boldsymbol\gamma); c_r^-(\boldsymbol\omega, \boldsymbol\gamma))}h\otimes  e_{\omega_1}^1\otimes \cdots \otimes e_{\omega_k}^k
  $$
  is a vector in $\cE\otimes \bigotimes_{i=1}^kF^2(H_{n_i})$. Hence, we deduce that, for each $\boldsymbol\gamma=(\gamma_1,\ldots, \gamma_k)\in \FF_{n_1}^+\times\cdots \times \FF_{n_k}^+$, the series
  \begin{equation}
  \label{WOT}
    \sum_{{\boldsymbol\omega\in \FF_{n_1}^+\times\cdots \times \FF_{n_k}^+}\atop{\boldsymbol\omega\sim_{rc} \boldsymbol\gamma}}
    A_{(c_r^+(\boldsymbol\omega, \boldsymbol\gamma);c_r^-(\boldsymbol\omega, \boldsymbol\gamma))}^*A_{(c_r^+(\boldsymbol\omega, \boldsymbol\gamma);c_r^-(\boldsymbol\omega, \boldsymbol\gamma))}
    \end{equation}
    is WOT-convergent.
Due to Lemma \ref{ortho}, given $\boldsymbol\gamma=(\gamma_1,\ldots, \gamma_k)\in \FF_{n_1}^+\times\cdots \times \FF_{n_k}^+$, the sequence
$\left\{{\bf S}_{1,\alpha_1}\cdots {\bf S}_{k,\alpha_k}{\bf S}_{1,\beta_1}^*\cdots {\bf S}_{k,\beta_k}^*(e_{\gamma_1}^1\otimes \cdots \otimes e_{\gamma_k}^k)\right\}$, where
$\alpha_i,\beta_i\in \FF_{n_i}^+, i\in \{1,\ldots, k\} $ with $m_i\in \ZZ$, $|\alpha_i|=m_i^-$,  $ |\beta_i|=m_i^+$, and  $\boldsymbol\beta\leq_\ell \boldsymbol\gamma$,  consists of orthonormal vectors.
Note that, in this case,  we also have $\boldsymbol\alpha\sim_{rc} \boldsymbol \beta$ and, consequently, $A_{(\alpha_1,\ldots,\alpha_k;\beta_1,\ldots, \beta_k)}=
A_{(c_r^+(\boldsymbol\alpha, \boldsymbol\beta);c_r^-(\boldsymbol\alpha, \boldsymbol\beta))}$. Hence and taking into account that the series \eqref{WOT} is WOT-convergent, we deduce that
\begin{equation*}
\begin{split}
 &\varphi_A({\bf S})(h\otimes e_{\gamma_1}^1\otimes \cdots \otimes e_{\gamma_k}^k)\\
 &:=\sum_{m_1\in \ZZ}\cdots \sum_{m_k\in \ZZ} \sum_{{\alpha_i,\beta_i\in \FF_{n_i}^+, i\in \{1,\ldots, k\}}\atop{|\alpha_i| =m_i^-, |\beta_i|=m_i^+}}A_{(\alpha_1,\ldots,\alpha_k;\beta_1,\ldots, \beta_k)}h
\otimes {\bf S}_{1,\alpha_1}\cdots {\bf S}_{k,\alpha_k}{\bf S}_{1,\beta_1}^*\cdots {\bf S}_{k,\beta_k}^*(e_{\gamma_1}^1\otimes \cdots \otimes e_{\gamma_k}^k)
\end{split}
\end{equation*}
is a convergent series  in $\cE\otimes \bigotimes_{i=1}^kF^2(H_{n_i})$.
Let $\boldsymbol\gamma=(\gamma_1,\ldots, \gamma_k)$ and $\boldsymbol \omega=(\omega_1,\ldots, \omega_k)$ be  $k$-tuples in $\FF_{n_1}^+\times\cdots \times \FF_{n_k}^+$. According to Lemma \ref{inner}, the inner product
\begin{equation*}
\left<{\bf S}_{1,\alpha_1}\cdots {\bf S}_{k,\alpha_k}{\bf S}_{1,\beta_1}^*\cdots {\bf S}_{k,\beta_k}^*(e_{\gamma_1}^1\otimes \cdots \otimes e_{\gamma_k}^k), e_{\omega_1}^1\otimes \cdots \otimes e_{\omega_k}^k\right>
\end{equation*}
is different from zero if and only if
$\boldsymbol \omega \sim_{rc} \boldsymbol\gamma $ and $(\alpha_1,\ldots, \alpha_k, \beta_1,\ldots, \beta_k)=(c_r^+(\boldsymbol\omega, \boldsymbol\gamma);c_r^-(\boldsymbol\omega, \boldsymbol\gamma))$.
Now, using relation \eqref{WOT}, one can see that
\begin{equation*}
\begin{split}
 &\left<\varphi_A({\bf S})(h\otimes e_{\gamma_1}^1\otimes \cdots \otimes e_{\gamma_k}^k), h'\otimes e_{\omega_1}^1\otimes \cdots \otimes e_{\omega_k}^k\right>\\
 &\qquad =
 \sum_{m_1\in \ZZ}\cdots \sum_{m_k\in \ZZ} \sum_{{\alpha_i,\beta_i\in \FF_{n_i}^+, i\in \{1,\ldots, k\}}\atop{|\alpha_i| =m_i^-, |\beta_i|=m_i^+}}\left<A_{(\alpha_1,\ldots,\alpha_k;\beta_1,\ldots, \beta_k)}h,h'\right>\\
 &
 \qquad \times\left<{\bf S}_{1,\alpha_1}\cdots {\bf S}_{k,\alpha_k}{\bf S}_{1,\beta_1}^*\cdots {\bf S}_{k,\beta_k}^*(e_{\gamma_1}^1\otimes \cdots \otimes e_{\gamma_k}^k), e_{\omega_1}^1\otimes \cdots \otimes e_{\omega_k}^k\right>\\
 &\qquad=\begin{cases} \left<A_{(c_r^+(\boldsymbol\omega, \boldsymbol\gamma); c_r^-(\boldsymbol\omega, \boldsymbol\gamma))}h,h'\right>,&\quad \boldsymbol\omega\sim_{rc} \boldsymbol\gamma\\
 0,& \text{otherwise}
 \end{cases}\\
  &\qquad =\left< A(h\otimes e_{\gamma_1}^1\otimes \cdots \otimes e_{\gamma_k}^k), h'\otimes e_{\omega_1}^1\otimes \cdots \otimes e_{\omega_k}^k\right>
\end{split}
\end{equation*}
for any $h,h'\in \cE$ and $\boldsymbol\gamma=(\gamma_1,\ldots, \gamma_k)$ and $\boldsymbol \omega=(\omega_1,\ldots, \omega_k)$   in $\FF_{n_1}^+\times\cdots \times \FF_{n_k}^+$, which shows that
$Aq=\varphi_A({\bf S})q$ for any vector-valued polynomial in $\cE\otimes \bigotimes_{i=1}^k F^2(H_{n_i})$. Therefore, if the formal  Fourier series $\varphi_A({\bf S})$ and $\varphi_B({\bf S})$ are equal, then $A=B$. The proof is complete.
\end{proof}

When  $\cG$ is a Hilbert space, $C_{(\alpha; \beta)}\in B(\cG)$,  and the series
$$\Sigma_1:=\sum_{m\in \ZZ, m<0} \sum_{{\alpha,\beta\in \FF_{n}^+}\atop{|\alpha|=m^-, |\beta|=m^+}}C_{(\alpha; \beta)},\qquad \Sigma_2:=\sum_{m\in \ZZ, m\geq 0} \sum_{{\alpha,\beta\in \FF_{n}^+}\atop{|\alpha|=m^-, |\beta|=m^+}}C_{(\alpha; \beta)}
$$
are convergent in the operator topology,
 we say that  the series
$$
\sum_{m\in \ZZ} \sum_{{\alpha,\beta\in \FF_{n}^+}\atop{|\alpha|=m^-, |\beta|=m^+}}C_{(\alpha; \beta)}:= \Sigma_1 +\Sigma_2
$$
is convergent in the operator topology.
In what follows, we show how we can recapture  the $k$-multi-Toeplitz operators from their Fourier series. Moreover,  we characterize the formal series which are Fourier series of $k$-multi-Toeplitz operators.
Let $\cP$ denote  the set of all vector-valued polynomials in $\cE\otimes \bigotimes_{i=1}^k F^2(H_{n_i})$, i.e. each $p\in\cP$ has the form
$$q=\sum_{{\omega_i \in \FF_{n_i}^+, i\in \{1,\ldots, k\}}\atop{|\omega_i|\leq p_i }}h_{(\omega_1,\ldots, \omega_k)}\otimes e_{\omega_1}^1\otimes \cdots \otimes e_{\omega_k}^k,
 $$
 where $h_{(\omega_1,\ldots, \omega_k)}\in \cE$ and $ (p_1,\ldots, p_k)\in \NN^k$.

\begin{theorem} \label{structure-Toeplitz}
Let
$\{ A_{(\alpha_1,\ldots,\alpha_k;\beta_1,\ldots, \beta_k)}\}$  be a family of operators in $B(\cE)$, where $\alpha_i,\beta_i\in \FF_{n_i}^+$, $|\alpha_i|=m_i^-, |\beta_i|=m_i^+$, $m_i\in \ZZ$, and $i\in \{1,\ldots, k\}$. Then
$$\varphi({\bf S}):= \sum_{m_1\in \ZZ}\cdots \sum_{m_k\in \ZZ} \sum_{{\alpha_i,\beta_i\in \FF_{n_i}^+, i\in \{1,\ldots, k\}}\atop{|\alpha_i|=m_i^-, |\beta_i|=m_i^+}}A_{(\alpha_1,\ldots,\alpha_k;\beta_1,\ldots, \beta_k)}
\otimes {\bf S}_{1,\alpha_1}\cdots {\bf S}_{k,\alpha_k}{\bf S}_{1,\beta_1}^*\cdots {\bf S}_{k,\beta_k}^*
$$
is the formal Fourier series of a $k$-multi-Toeplitz operator on $\cE\otimes \bigotimes_{i=1}^k F^2(H_{n_i})$ if and only if the following conditions are satisfied.
\begin{enumerate}
\item[(i)] For each $\boldsymbol\gamma=(\gamma_1,\ldots, \gamma_k)\in \FF_{n_1}^+\times\cdots \times \FF_{n_k}^+$, the series
    $$
    \sum_{{\boldsymbol\omega\in \FF_{n_1}^+\times\cdots \times \FF_{n_k}^+}\atop{\boldsymbol\omega\sim_{rc} \boldsymbol\gamma}}
    A_{(c_r^+(\boldsymbol\omega, \boldsymbol\gamma);c_r^-(\boldsymbol\omega, \boldsymbol\gamma))}^*A_{(c_r^+(\boldsymbol\omega, \boldsymbol\gamma);c_r^-(\boldsymbol\omega, \boldsymbol\gamma))}
    $$
    is WOT-convergent.
\item[(ii)]    If $\cP$ is the set of all vector-valued polynomials in $\cE\otimes \bigotimes_{i=1}^k F^2(H_{n_i})$, then
    $$\sup_{r\in [0,1)} \sup_{p\in  \cP, \|p\|\leq 1} \|\varphi(r{\bf S})p\|<\infty.$$
\end{enumerate}
Moreover, if there is a $k$-multi-Toeplitz operator $A\in B(\cE\otimes \bigotimes_{i=1}^k F^2(H_{n_i}))$  such $\varphi({\bf S})=\varphi_A({\bf S})$, then the following statements hold:
\begin{enumerate}
\item[(a)]  the series
$$
\varphi(r{\bf S}):= \sum_{m_1\in \ZZ}\cdots \sum_{m_k\in \ZZ} \sum_{{\alpha_i,\beta_i\in \FF_{n_i}^+, i\in \{1,\ldots, k\}}\atop{|\alpha_i|=m_i^-, |\beta_i|=m_i^+}}A_{(\alpha_1,\ldots,\alpha_k;\beta_1,\ldots, \beta_k)}
\otimes r^{\sum_{i=1}^k(|\alpha_i|+|\beta_i|)}
{\bf S}_{1,\alpha_1}\cdots {\bf S}_{k,\alpha_k}{\bf S}_{1,\beta_1}^*\cdots {\bf S}_{k,\beta_k}^*
$$
is convergent in the operator norm topology and its sum, which does not depend on the order of the series, is an operator in
$$
\text{\rm span} \{f^*g:\ f, g\in B(\cE)\otimes_{min} \boldsymbol\cA_{\bf n}\}^{-\|\cdot\|},
$$
where   $\boldsymbol\cA_{\bf n}$ is the polyball algebra.
\item[(b)]
$A=\text{\rm SOT-}\lim_{r\to 1} \varphi(r{\bf S})
$
and
$$
\|A\|=\sup_{r\in [0,1)}\|\varphi(r{\bf S})\|=\lim_{r\to 1}\|\varphi(r{\bf S})\|=
\sup_{q\in \cP, \|q\|\leq 1} \|\varphi({\bf S})q\|.
$$
\end{enumerate}
\end{theorem}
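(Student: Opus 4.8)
The engine of the whole argument is the identity $\varphi(r{\bf S})=(\mathrm{id}_{B(\cE)}\otimes\boldsymbol\cB_{r{\bf S}})[A]$ for $r\in[0,1)$, valid once $\varphi({\bf S})=\varphi_A({\bf S})$ for a $k$-multi-Toeplitz operator $A$. Writing $e_{\boldsymbol\gamma}:=e^1_{\gamma_1}\otimes\cdots\otimes e^k_{\gamma_k}$, I would verify it by matching matrix entries: by Lemma \ref{inner}, exactly as in the proof of Theorem \ref{Fourier}, one gets $\langle\varphi(r{\bf S})(h\otimes e_{\boldsymbol\gamma}),h'\otimes e_{\boldsymbol\omega}\rangle=r^{\sum_{i=1}^k\bigl||\omega_i|-|\gamma_i|\bigr|}\langle A_{(c_r^+(\boldsymbol\omega,\boldsymbol\gamma);c_r^-(\boldsymbol\omega,\boldsymbol\gamma))}h,h'\rangle$ when $\boldsymbol\omega\sim_{rc}\boldsymbol\gamma$ and $0$ otherwise, and the same numbers are produced by $(\mathrm{id}\otimes\boldsymbol\cB_{r{\bf S}})[A]={\bf K}_{r{\bf S}}^*(A\otimes I){\bf K}_{r{\bf S}}$ since $\boldsymbol\cB_{r{\bf S}}({\bf S}_{\boldsymbol\mu}{\bf S}_{\boldsymbol\nu}^*)=r^{|\mu|+|\nu|}{\bf S}_{\boldsymbol\mu}{\bf S}_{\boldsymbol\nu}^*$ and $\boldsymbol\Delta_{r{\bf S}}(I)$ is diagonal in $\{e_{\boldsymbol\gamma}\}$. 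Because $r{\bf S}$ lies in the open polyball, ${\bf K}_{r{\bf S}}$ is an isometry, so $\boldsymbol\cB_{r{\bf S}}$ is unital and completely positive, $\mathrm{id}\otimes\boldsymbol\cB_{r{\bf S}}$ is a unital complete contraction, and the identity gives at once $\|\varphi(r{\bf S})\|\le\|A\|$ for every $r$.

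For the \emph{moreover} part (a) I would group the series by homogeneity. Let $\Phi_{\boldsymbol m}$ denote the inner block of the series (the terms with $|\alpha_i|=m_i^-,\ |\beta_i|=m_i^+$); it is the $\boldsymbol m$-th Fourier coefficient of $\boldsymbol\theta\mapsto W_{\boldsymbol\theta}AW_{\boldsymbol\theta}^*$ for the gauge action $W_{\boldsymbol\theta}(h\otimes e_{\boldsymbol\gamma})=\bigl(\prod_i e^{i|\gamma_i|\theta_i}\bigr)h\otimes e_{\boldsymbol\gamma}$, whence $\|\Phi_{\boldsymbol m}\|\le\|A\|$ and $\varphi(r{\bf S})=\sum_{\boldsymbol m\in\ZZ^k}r^{\|\boldsymbol m\|_1}\Phi_{\boldsymbol m}$ with $\|\boldsymbol m\|_1=\sum_i|m_i|$. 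Since $\sum_{\boldsymbol m\in\ZZ^k}r^{\|\boldsymbol m\|_1}=\prod_{i=1}^k\frac{1+r}{1-r}<\infty$, this series converges absolutely, hence unconditionally, in operator norm for $r\in[0,1)$, giving the norm convergence and order-independence asserted in (a). For the membership in $\text{\rm span}\{f^*g:f,g\in B(\cE)\otimes_{min}\boldsymbol\cA_{\bf n}\}^{-\|\cdot\|}$ I would insert the expansion ${\bf K}_{r{\bf S}}=\sum_{\boldsymbol\beta}(e_{\boldsymbol\beta}\otimes\,\cdot\,)\,\boldsymbol\Delta_{r{\bf S}}(I)^{1/2}r^{|\boldsymbol\beta|}{\bf S}_{1,\beta_1}^*\cdots{\bf S}_{k,\beta_k}^*$ (norm-convergent because $\sum_{\|\boldsymbol m\|_1>N}r^{2\|\boldsymbol m\|_1}\to0$) into ${\bf K}_{r{\bf S}}^*(A\otimes I){\bf K}_{r{\bf S}}$; each partial sum is a finite combination of terms $(\text{product of }{\bf S}_{i,\cdot})\cdot(\text{coefficient})\cdot(\text{product of }{\bf S}_{j,\cdot}^*)$, and since factors for distinct indices commute while within one index at most one of $\alpha_i,\beta_i$ is nontrivial, each such term can be reordered to put the co-analytic factors on the left, exhibiting it as $f^*g$ with $f,g$ rows/columns over $B(\cE)\otimes_{min}\boldsymbol\cA_{\bf n}$; letting the truncation tend to $\infty$ places $\varphi(r{\bf S})$ in the norm closure.

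For (b), on the dense subspace $\cP$ a dominated-convergence argument in $\bigotimes_{i=1}^k F^2(H_{n_i})$ — the relevant vectors are orthonormal by Lemma \ref{ortho} and the coefficient norms are square-summable by (i) — yields $\varphi(r{\bf S})q\to\varphi({\bf S})q=Aq$ as $r\to1$ for each $q\in\cP$; together with $\|\varphi(r{\bf S})\|\le\|A\|$ this gives $A=\text{\rm SOT-}\lim_{r\to1}\varphi(r{\bf S})$. Lower semicontinuity of the operator norm for the strong topology then forces $\|A\|\le\sup_r\|\varphi(r{\bf S})\|\le\|A\|$, so all quantities in (b) agree, the last one because $\varphi({\bf S})q=Aq$ on the dense set $\cP$. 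The same facts settle the forward implication of the characterization: if $\varphi=\varphi_A$ then (i) is precisely the WOT-convergence established inside the proof of Theorem \ref{Fourier}, and (ii) holds since $\sup_r\sup_{p\in\cP,\,\|p\|\le1}\|\varphi(r{\bf S})p\|=\sup_r\|\varphi(r{\bf S})\|\le\|A\|$.

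For the converse, assuming (i) and (ii), I would build $A$ by hand. By (i) and Lemma \ref{ortho} the series $\varphi({\bf S})q$ converges for every $q\in\cP$, defining $A_0:\cP\to\cE\otimes\bigotimes_{i=1}^k F^2(H_{n_i})$; for fixed $q$ the same dominated-convergence argument gives $\|\varphi({\bf S})q\|=\lim_{r\to1}\|\varphi(r{\bf S})q\|\le\bigl(\sup_r\sup_{\|p\|\le1}\|\varphi(r{\bf S})p\|\bigr)\|q\|$, so (ii) makes $A_0$ bounded and it extends to $A\in B(\cE\otimes\bigotimes_{i=1}^k F^2(H_{n_i}))$. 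That $A$ is $k$-multi-Toeplitz and that $\varphi_A=\varphi$ then follow by computing $\langle A(h\otimes e_{\boldsymbol\gamma}),h'\otimes e_{\boldsymbol\omega}\rangle$ and the coefficients \eqref{AA} directly, reversing the matrix-entry computation of Theorem \ref{Fourier} and using ${\bf R}_{i,t}e^i_{\gamma_i}=e^i_{\gamma_i g^i_t}$. I expect the genuinely delicate step to be the span membership in (a) for \emph{every} $r\in[0,1)$ rather than only for small $r$: the blocks $\Phi_{\boldsymbol m}$ are merely SOT-sums of the monomials $A_{(\boldsymbol\alpha;\boldsymbol\beta)}\otimes{\bf S}_{\boldsymbol\alpha}{\bf S}_{\boldsymbol\beta}^*$ and are not norm-approximable by finite sub-sums (the monomial series converges absolutely only for $r<1/\max_i n_i$), so the argument must route through the norm-convergent Berezin-kernel factorization together with the row/column (Haagerup-type) description of $B(\cE)\otimes_{min}\boldsymbol\cA_{\bf n}$.
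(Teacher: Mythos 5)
Your architecture is sound and reaches the theorem, but the central step — norm convergence of $\varphi(r{\bf S})$ and the uniform bound $\|\varphi(r{\bf S})\|\le\|A\|$ — is handled by a genuinely different mechanism than the paper's. You decompose $\varphi(r{\bf S})=\sum_{\boldsymbol m\in\ZZ^k}r^{\|\boldsymbol m\|_1}\Phi_{\boldsymbol m}$ into gauge-homogeneous blocks, bound each block by $\|A\|$ via averaging $W_{\boldsymbol\theta}AW_{\boldsymbol\theta}^*$ over the torus, and sum the geometric series; the uniform-in-$r$ bound then comes from the single global identity $\varphi(r{\bf S})=(\mathrm{id}\otimes\boldsymbol\cB_{r{\bf S}})[A]$ and unitality of the Berezin transform. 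The paper instead peels off one tensor factor at a time: it treats $A$ as a $1$-multi-Toeplitz operator with respect to $R_k$, expands it with operator coefficients $C_{(\alpha_k;\beta_k)}$, proves norm convergence of $\psi_A(r_kS_k)$ from the row/column isometry estimates, identifies $\psi_A(r_kS_k)=\cB^{ext}_{r_kS_k}[A]$, shows the coefficients are again $1$-multi-Toeplitz with respect to $R_{k-1}$, and iterates, arriving at the nested factorization \eqref{BBBBB} with independent radii $r_1,\dots,r_k$. Your route is shorter and gets the single-radius statement directly (and your lower-semicontinuity argument for $\lim_{r\to1}\|\varphi(r{\bf S})\|=\|A\|$ avoids the paper's monotonicity step via the von Neumann inequality); the paper's route additionally produces the multi-radius version and the iterated-Berezin identity that are reused elsewhere. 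Parts (b) and the converse direction in your write-up coincide with the paper's arguments.

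One sub-argument is wrong as stated, and your closing worry is misplaced. For the membership of $\varphi(r{\bf S})$ in $\text{\rm span}\{f^*g:\ f,g\in B(\cE)\otimes_{min}\boldsymbol\cA_{\bf n}\}^{-\|\cdot\|}$ you expand ${\bf K}_{r{\bf S}}$ inside ${\bf K}_{r{\bf S}}^*(A\otimes I){\bf K}_{r{\bf S}}$ and claim each resulting term has "at most one of $\alpha_i,\beta_i$ nontrivial" per index. That is false for the kernel expansion: there $\boldsymbol\alpha$ and $\boldsymbol\beta$ range independently over all of $\FF_{n_1}^+\times\cdots\times\FF_{n_k}^+$, and the terms have the shape $(I_\cE\otimes{\bf S}_{\boldsymbol\alpha}\boldsymbol\Delta_{r{\bf S}}(I)^{1/2})\,T\,(I_\cE\otimes\boldsymbol\Delta_{r{\bf S}}(I)^{1/2}{\bf S}_{\boldsymbol\beta}^*)$ with a general middle compression $T$ of $A\otimes I$, which does not reorder into $f^*g$. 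The "one nontrivial per index" property belongs to the monomials of the Fourier series (since at most one of $m_i^-,m_i^+$ is nonzero), not to the kernel expansion. But the correct argument is already in your hands and is much simpler: each block $\Phi_{\boldsymbol m}$ is a \emph{finite} sum of $\prod_i n_i^{|m_i|}$ monomials $A_{(\boldsymbol\alpha;\boldsymbol\beta)}\otimes{\bf S}_{\boldsymbol\alpha}{\bf S}_{\boldsymbol\beta}^*$, each of which commutes into the form $f^*g$ with $f,g\in B(\cE)\otimes_{min}\boldsymbol\cA_{\bf n}$, so $\Phi_{\boldsymbol m}$ lies in the span and the norm-convergent series $\sum_{\boldsymbol m}r^{\|\boldsymbol m\|_1}\Phi_{\boldsymbol m}$ lies in its closure. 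In particular your final paragraph's concern that the blocks "are not norm-approximable by finite sub-sums" is vacuous — they are finite sums — and no Haagerup-type description of the tensor product is needed.
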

\begin{proof} First, we assume that $A\in B(\cE\otimes \bigotimes_{i=1}^k F^2(H_{n_i}))$  is a $k$-multi-Toeplitz operator and $\varphi({\bf S})=\varphi_A({\bf S})$, where the coefficients $A_{(\alpha_1,\ldots,\alpha_k;\beta_1,\ldots, \beta_k)}$ are given by relations \eqref{AA} and \eqref{xy}. Note that item (i) follows from the proof of Theorem \ref{Fourier}. Moreover, from the same proof and Lemma \ref{ortho} we have
$\varphi_A({\bf S})(h\otimes e_{\gamma_1}^1\otimes \cdots \otimes e_{\gamma_k}^k)$ and, consequently,
$\varphi_A(r{\bf S})(h\otimes e_{\gamma_1}^1\otimes \cdots \otimes e_{\gamma_k}^k)$, $r\in [0,1)$, are vectors in  $\cE\otimes \bigotimes_{i=1}^k F^2(H_{n_i})$ and
\begin{equation}
\begin{split}
\label{fifia}
\lim_{r\to 1}\varphi_A(r{\bf S})(h\otimes e_{\gamma_1}^1\otimes \cdots \otimes e_{\gamma_k}^k)
&=\varphi_A({\bf S})(h\otimes e_{\gamma_1}^1\otimes \cdots \otimes e_{\gamma_k}^k)\\
&=A(h\otimes e_{\gamma_1}^1\otimes \cdots \otimes e_{\gamma_k}^k)
\end{split}
\end{equation}
for any $\boldsymbol\gamma=(\gamma_1,\ldots, \gamma_k)$    in $\FF_{n_1}^+\times\cdots \times \FF_{n_k}^+$ and $h\in \cE$. Note also that, due to item (i) and Lemma \ref{ortho}, we also have
$\varphi_A(r_1{\bf S}_1, \ldots, r_k{\bf S}_k)(h\otimes e_{\gamma_1}^1\otimes \cdots \otimes e_{\gamma_k}^k)\in \cE\otimes \bigotimes_{i=1}^k F^2(H_{n_i})$ for any $r_i\in [0,1)$, $i\in \{1,\ldots, k\}$.
Now, we show that the series
$$
 \sum_{m_1\in \ZZ}\cdots \sum_{m_k\in \ZZ} \sum_{{\alpha_i,\beta_i\in \FF_{n_i}^+, i\in \{1,\ldots, k\}}\atop{|\alpha_i|=m_i^-, |\beta_i|=m_i^+}}A_{(\alpha_1,\ldots,\alpha_k;\beta_1,\ldots, \beta_k)}
\otimes \left(\prod_{i=1}^k r_i^{|\alpha_i|+|\beta_i|}\right)
{\bf S}_{1,\alpha_1}\cdots {\bf S}_{k,\alpha_k}{\bf S}_{1,\beta_1}^*\cdots {\bf S}_{k,\beta_k}^*
$$
is convergent in the operator norm topology and its sum is in
$
\text{\rm span} \{f^*g:\ f, g\in B(\cE)\otimes_{min} \boldsymbol\cA_{\bf n}\}^{-\|\cdot\|},
$
where   $\boldsymbol\cA_{\bf n}$ is the polyball algebra.
We denote the series above by $\varphi_A(r_1{\bf S}_1, \ldots, r_k{\bf S}_k)$.
Since  $A$ is a $k$-multi-Toeplitz operator, it is also  a $1$-multi-Toeplitz operator  with respect to $R_k:=(R_{k,1},\ldots, R_{k,n_k})$, the right creation operators on the Fock space $F^2(H_{n_k})$. Applying Theorem \ref{Fourier} to $1$-multi-Toeplitz operators, we deduce  that $A$ has a unique Fourier representation
\begin{equation*}
\psi_A(S_{k} ):=\sum_{m_k\in \ZZ}
 \sum_{{\alpha_k,\beta_k\in \FF_{n_k}^+}\atop{|\alpha_k|=m_k^-, |\beta_k|=m_k^+}}C_{(\alpha_k ; \beta_k)}\otimes S_{k,\alpha_k} S_{k,\beta_k}^*,
\end{equation*}
where
$C_{(\alpha_k ; \beta_k)}\in B(\cE\otimes \bigotimes_{i=1}^{k-1} F^2(H_{n_i}))$.
Moreover, we can prove that, for any $r_k\in [0,1)$,
\begin{equation} \label{F2}
\psi_A(r_kS_{k} ):=\sum_{m_k\in \ZZ}
 \sum_{{\alpha_k,\beta_k\in \FF_{n_k}^+}\atop{|\alpha_k|=m_k^-, |\beta_k|=m_k^+}}r_k^{|\alpha_k|+|\beta_k|}C_{(\alpha_k ; \beta_k)}\otimes S_{k,\alpha_k} S_{k,\beta_k}^*
\end{equation}
is convergent in the operator norm topology. Indeed, since $\psi_A(S_{k} )$ is the Fourier representation of the $1$-multi-Toeplitz operator $A$  with respect to $R_k:=(R_{k,1},\ldots, R_{k,n_k})$, item (i) implies, in the particular case when $\gamma_k=g_0^k$, that
$\sum_{\alpha_k\in \FF_{n_k}^+}C_{(\alpha_k ; g_0^k)}^*C_{(\alpha_k ; g_0^k)}$ is WOT-convergent. Since $A^*$ is also a $1$-multi-Toeplitz operator  we can similarly deduce that the series $\sum_{\beta_k\in \FF_{n_k}^+}C_{(g_0^k;\beta_k) }C_{(g_0^k;\beta_k)}^*$ is WOT-convergent.
Since $S_{k,1},\ldots, S_{k,n_k}$ are isometries with orthogonal ranges  we have
$$
\left\|\sum_{\alpha_k\in \FF_{n_k}^+, |\alpha_k|=m}C_{(\alpha_k ; g_0^k)}\otimes r_k^{|\alpha_k|}S_{k,\alpha_k}\right\|
=r_k^m\left\|\sum_{\alpha_k\in \FF_{n_k}^+}C_{(\alpha_k ; g_0^k)}^*C_{(\alpha_k ; g_0^k)}\right\|^{1/2}
$$
 and
$$
\left\|\sum_{\beta_k\in \FF_{n_k}^+, |\beta_k|=m}C_{( g_0^k;\beta_k)}\otimes r_k^{|\alpha_k|}S_{k,\beta_k}^*\right\|
=r_k^m\left\|\sum_{\alpha_k\in \FF_{n_k}^+}C_{(g_0^k;\beta_k) }C_{(g_0^k;\beta_k)}^*\right\|^{1/2}
$$
for any $m\in \NN$.
Now, it is clear that the series defining
$\psi_A(r_kS_{k} )$ is convergent in the operator norm topology and, consequently, $\psi_A(r_kS_{k} )$  belongs  to
$$
\text{\rm span} \{f^*g:\ f, g\in B(\cE\otimes \bigotimes_{i=1}^{k-1} F^2(H_{n_i}))\otimes_{min} \cA_{ n_k}\}^{-\|\cdot\|},
$$
where $\cA_{n_k}$ is the noncommutative disc algebra generated by $S_{k,1},\ldots, S_{k,n_k}$, and the  identity.
For each $i\in \{1,\ldots, k\}$,  we set $\cE_i:=\cE\otimes F^2(H_{n_1})\otimes \cdots \otimes F^2(H_{n_i})$.

The next step in our proof is to show that
\begin{equation}
\label{B-k}
\psi_A(r_k S_k)=\cB_{r_k S_k}^{ext}[A]:=(I_{\cE_{k-1}}\otimes K_{r_kS_k}^*)(A\otimes I_{F^2(H_{n_k})})
(I_{\cE_{k-1}}\otimes K_{r_kS_k}),
\end{equation}
where
$K_{r_kS_k}:F^2(H_{n_k})\to F^2(H_{n_k})\otimes \cD_{r_kS_k}\subset F^2(H_{n_k})\otimes F^2(H_{n_k})$
is the noncommutative Berezin kernel defined by
$$
K_{r_k S_k}\xi:=\sum_{\beta_k \in \FF_{n_k}^+}e_{\beta_k}^k\otimes \Delta_{r_k S_k}(I)^{1/2} S_{k, \beta_k}^*\xi,\qquad \xi\in F^2(H_{n_k}),
$$
and $\cD_{r_kS_k}:=\overline{\Delta_{r_k S_k}(I)(F^2(H_{n_k}))}$.
Let $\boldsymbol\gamma=(\gamma_1,\ldots, \gamma_k)$ and $\boldsymbol \omega=(\omega_1,\ldots, \omega_k)$ be  $k$-tuples in $\FF_{n_1}^+\times\cdots \times \FF_{n_k}^+$, set $q:=\max\{|\gamma_k|, |\omega_k|\}$, and define the operator
$$
Q_q:=\sum_{m_k\in \ZZ, |m_k|\leq q}\sum_{{\alpha_k,\beta_k\in \FF_{n_k}^+}\atop{|\alpha_k|=m_k^-, |\beta_k|=m_k^+}}C_{(\alpha_k ; \beta_k)}\otimes S_{k,\alpha_k} S_{k,\beta_k}^*.
$$
Since $\psi_A(S_k)p=Ap$ for any polynomial $p\in \cP$, a careful computation reveals that
\begin{equation*}
\begin{split}
&\left<\cB_{r_k S_k}^{ext}[A](h\otimes e_{\gamma_1}^1\otimes \cdots \otimes e_{\gamma_k}^k), h'\otimes e_{\omega_1}^1\otimes \cdots \otimes e_{\omega_k}^k\right>\\
&=
\left<(A\otimes I_{F^2(H_{n_k})})
(h\otimes e_{\gamma_1}^1\otimes \cdots \otimes e_{\gamma_{k-1}}^{k-1}\otimes K_{r_kS_k}(e_{\gamma_k}^k)), h'\otimes e_{\omega_1}^1\otimes \cdots \otimes e_{\omega_{k-1}}^{k-1}\otimes K_{r_k S_k}(e_{\omega_k}^k)\right>\\
&=
\left<(A\otimes I_{F^2(H_{n_k})})
\left( \sum_{\alpha_k\in \FF_{n_k}^+}h\otimes e_{\gamma_1}^1\otimes \cdots \otimes  e_{\gamma_{k-1}}^{k-1}\otimes e_{\alpha_k}^k\otimes \Delta_{r_kS_k}(I)^{1/2}S_{k, \alpha_k}^*(e_{\gamma_k}^k)\right)\right.,\\
&\qquad \qquad \qquad \qquad
\left.
\sum_{\beta_k\in \FF_{n_k}^+} h'\otimes e_{\omega_1}^1\otimes \cdots \otimes e_{\omega_{k-1}}^{k-1}\otimes e_{\beta_k}^k\otimes \Delta_{r_k S_k}(I)^{1/2}S_{k, \beta_k}^*(e_{\omega_k}^k)\right>\\
&=
\left<
 \sum_{\alpha_k\in \FF_{n_k}^+}A(h\otimes e_{\gamma_1}^1\otimes \cdots \otimes  e_{\gamma_{k-1}}^{k-1}\otimes e_{\alpha_k}^k)\otimes \Delta_{r_kS_k}(I)^{1/2}S_{k, \alpha_k}^*(e_{\gamma_k}^k)\right.,\\
&\qquad \qquad \qquad \qquad
\left.
\sum_{\beta_k\in \FF_{n_k}^+} h'\otimes e_{\omega_1}^1\otimes \cdots \otimes e_{\omega_{k-1}}^{k-1}\otimes e_{\beta_k}^k\otimes \Delta_{r_k S_k}(I)^{1/2}S_{k, \beta_k}^*(e_{\omega_k}^k)\right>\\
&=
\sum_{m=0}^\infty \sum_{p=0}^\infty \sum_{\alpha_k\in \FF_{n_k}^+, |\alpha_k|=m} \sum_{\beta_k\in \FF_{n_k}^+, |\beta_k|=p}
\left<
A(h\otimes e_{\gamma_1}^1\otimes \cdots \otimes  e_{\gamma_{k-1}}^{k-1}\otimes e_{\alpha_k}^k),
 h'\otimes e_{\omega_1}^1\otimes \cdots \otimes e_{\omega_{k-1}}^{k-1}\otimes e_{\beta_k}^k\right>\\
 &
\qquad \qquad \qquad \qquad \times \left<\Delta_{r_kS_k}(I)^{1/2}S_{k, \alpha_k}^*(e_{\gamma_k}^k),
  \Delta_{r_k S_k}(I)^{1/2}S_{k, \beta_k}^*(e_{\omega_k}^k)\right>\\
  &=
  \sum_{m=0}^q \sum_{p=0}^q \sum_{\alpha_k\in \FF_{n_k}^+, |\alpha_k|=m} \sum_{\beta_k\in \FF_{n_k}^+, |\beta_k|=p}
\left<
Q_q(h\otimes e_{\gamma_1}^1\otimes \cdots \otimes  e_{\gamma_{k-1}}^{k-1}\otimes e_{\alpha_k}^k),
 h'\otimes e_{\omega_1}^1\otimes \cdots \otimes e_{\omega_{k-1}}^{k-1}\otimes e_{\beta_k}^k\right>\\
 &
\qquad \qquad \qquad \qquad \times \left<\Delta_{r_kS_k}(I)^{1/2}S_{k, \alpha_k}^*(e_{\gamma_k}^k),
  \Delta_{r_k S_k}(I)^{1/2}S_{k, \beta_k}^*(e_{\omega_k}^k)\right>\\
  &=
  \sum_{m=0}^\infty \sum_{p=0}^\infty \sum_{\alpha_k\in \FF_{n_k}^+, |\alpha_k|=m} \sum_{\beta_k\in \FF_{n_k}^+, |\beta_k|=p}
\left<
Q_q(h\otimes e_{\gamma_1}^1\otimes \cdots \otimes  e_{\gamma_{k-1}}^{k-1}\otimes e_{\alpha_k}^k),
 h'\otimes e_{\omega_1}^1\otimes \cdots \otimes e_{\omega_{k-1}}^{k-1}\otimes e_{\beta_k}^k\right>\\
 &
\qquad \qquad \qquad \qquad \times \left<\Delta_{r_kS_k}(I)^{1/2}S_{k, \alpha_k}^*(e_{\gamma_k}^k),
  \Delta_{r_k S_k}(I)^{1/2}S_{k, \beta_k}^*(e_{\omega_k}^k)\right>\\
  &=
  \left<(Q_q\otimes I_{F^2(H_{n_k})})
(h\otimes e_{\gamma_1}^1\otimes \cdots \otimes  e_{\gamma_{k-1}}^{k-1}\otimes K_{r_kS_k}(e_{\gamma_k}^k)), h'\otimes e_{\omega_1}^1\otimes \cdots \otimes e_{\omega_{k-1}}^{k-1}\otimes K_{r_k S_k}(e_{\omega_k}^k)\right>\\
&=
\left<\cB_{r_k S_k}^{ext}[Q_q](h\otimes e_{\gamma_1}^1\otimes \cdots \otimes e_{\gamma_k}^k), h'\otimes e_{\omega_1}^1\otimes \cdots \otimes e_{\omega_k}^k\right>\\
&=
\sum_{m_k\in \ZZ, |m_k|\leq q}\sum_{{\alpha_k,\beta_k\in \FF_{n_k}^+}\atop{|\alpha_k|=m_k^-, |\beta_k|=m_k^+}}
\left<\left(C_{(\alpha_k ; \beta_k)}\otimes r_k^{|\alpha_k|+|\beta_k|}S_{k,\alpha_k} S_{k,\beta_k}^*\right)(h\otimes e_{\gamma_1}^1\otimes \cdots \otimes e_{\gamma_k}^k), h'\otimes e_{\omega_1}^1\otimes \cdots \otimes e_{\omega_k}^k\right>\\
&=
\left<\psi_A(r_k S_k)(h\otimes e_{\gamma_1}^1\otimes \cdots \otimes e_{\gamma_k}^k), h'\otimes e_{\omega_1}^1\otimes \cdots \otimes e_{\omega_k}^k\right>
  \end{split}
\end{equation*}
for any $h,h'\in \cE$.
Consequently, relation \eqref{B-k} holds for any $r_k\in [0,1)$. Hence and using the fact  the noncommutative Berezin kernel $K_{rS_k}$ is an isometry we deduce that
\begin{equation*}
\|\psi_A(r_k S_k)\|\leq \|A\|,\qquad r_k\in [0,1).
\end{equation*}
Moreover, one can show that
$$
A=\text{\rm SOT-}\lim_{r_k\to 1} \psi_A(r_k S_k).
$$
Indeed,  due to part (i) (for 1-multi-Toeplitz operators), we have
$\|\psi_A(r_k S_k)p-\psi_A( S_k)p\|\to 0$ as $r_k\to 1$, for any polynomial $p\in \cE_{k-1}\otimes F^2(H_{n_k})$ with coefficients in $\cE_{k-1}$.
Since $\psi_A(S_k)p=Ap$ and $\|\psi_A(r_k S_k)\|\leq \|A\|$ for any $ r_k\in [0,1)$, an approximation argument proves our assertion.

Now, we prove that the coefficients  $C_{(\alpha_k ; \beta_k)}\in B(\cE\otimes \bigotimes_{i=1}^{k-1} F^2(H_{n_i}))$ of  the Fourier series
$\psi_A(S_{k})$ are $1$-multi-Toeplitz operators with respect to $R_{k-1}:=(R_{k-1,1},\ldots, R_{k-1,n_{k-1}})$.
For each $i\in \{1,\ldots, k-1\}$, $s,t\in \{1,\ldots, n_i\}$, and
any vector-valued polynomial $p\in \cE\otimes \bigotimes_{i=1}^k F^2(H_{n_i})$ with coefficients in $\cE$, Theorem \ref{Fourier} implies
\begin{equation*}
\begin{split}
\sum_{m_k\in \ZZ}\sum_{{\alpha_k,\beta_k\in \FF_{n_k}^+}\atop{|\alpha_k|=m_k^-, |\beta_k|=m_k^+}}&\left[ (I_{\cE_{k-2}}\otimes R_{i,s}^*)C_{(\alpha_k ; \beta_k)}(I_{\cE_{k-2}}\otimes R_{i,t})\otimes S_{k,\alpha_k} S_{k,\beta_k}^*\right](p)\\
&=
(I_\cE\otimes {\bf R}_{i,s}^*)\psi_A(S_k) (I_\cE\otimes {\bf R}_{i,t})(p)
=
(I_\cE\otimes {\bf R}_{i,s}^*) A(I_\cE\otimes {\bf R}_{i,t})(p)\\
&=\delta_{st}A(p)=\delta_{st} \psi_A(S_k) (p)\\
&=
\delta_{st} \sum_{m_k\in \ZZ}\sum_{{\alpha_k,\beta_k\in \FF_{n_k}^+}\atop{|\alpha_k|=m_k^-, |\beta_k|=m_k^+}}(C_{(\alpha_k ; \beta_k)}\otimes S_{k,\alpha_k} S_{k,\beta_k}^*)(p).
\end{split}
\end{equation*}
Hence, we deduce that
$$
(I_{\cE_{k-2}}\otimes R_{i,s}^*)C_{(\alpha_k ; \beta_k)}(I_{\cE_{k-2}}\otimes R_{i,t})=\delta_{st}C_{(\alpha_k ; \beta_k)}
$$
for any $i\in \{1,\ldots, k-1\}$ and  $s,t\in \{1,\ldots, n_i\}$, which proves that $C_{(\alpha_k ; \beta_k)}$ is a $1$-multi-Toeplitz operator with respect to $R_{k-1}:=(R_{k-1,1},\ldots, R_{k-1,n_{k-1}})$.
Consequently, similarly to the first part of the proof, $C_{(\alpha_k ; \beta_k)}$ has a Fourier representation
\begin{equation}
\label{F3}
\psi_{(\alpha_k ; \beta_k)}(S_{k-1} ):=\sum_{m_{k-1}\in \ZZ}
 \sum_{{\alpha_{k-1},\beta_{k-1}\in \FF_{n_{k-1}}^+}\atop{|\alpha_{k-1}|=m_{k-1}^-, |\beta_{k-1}|=m_{k-1}^+}}C_{(\alpha_{k-1},\alpha_k ; \beta_{k-1},\beta_k)}\otimes S_{k-1,\alpha_{k-1}} S_{k-1,\beta_{k-1}}^*,
\end{equation}
where
$C_{(\alpha_{k-1},\alpha_k ; \beta_{k-1},\beta_k)}\in B(\cE_{k-2})$.
Moreover, as above, one can prove that, for any $r_{k-1}\in [0,1)$, the series
$\psi_{(\alpha_k ; \beta_k)}(r_{k-1}S_{k-1} )$ is convergent in the operator norm topology, and its limit  is an element in
$$
\text{\rm span} \{f^*g:\ f, g\in B(\cE_{k-2})\otimes_{min} \cA_{ n_{k-1}}\}^{-\|\cdot\|},
$$
where $ \cA_{ n_{k-1}}$ is the noncommutative disc algebra generated by $S_{k-1,1},\ldots, S_{k-1, n_{k-1}}$ and the identity.
We also have
$$\lim_{r_{k-1}\to 1} \psi_{(\alpha_{k} ; \beta_{k})}(r_{k-1}S_{k-1} )p=C_{( \alpha_k ;  \beta_k)}p$$
 for any vector-valued  polynomial $p\in \cE_{k-2}\otimes F^2(H_{n_{k-1}})$.
 As in the first part of the proof, setting
 $$
 \cB_{r_{k-1} S_{k-1}}^{ext}[u]:=(I_{\cE_{k-2}}\otimes K_{r_{k-1}S_{k-1}}^*)(u\otimes I_{F^2(H_{n_{k-1}})})
(I_{\cE_{k-2}}\otimes K_{r_{k-1}S_{k-1}}), \qquad u\in B(\cE_{n-1}),
$$
one can prove that
\begin{equation}
\label{ext}
 \psi_{(\alpha_{k} ; \beta_{k})}(r_{k-1}S_{k-1} )= \cB_{r_{k-1} S_{k-1}}^{ext}[C_{(\alpha_k ;  \beta_k)}]\quad \text{ and } \quad
\|\psi_{(\alpha_{k},\beta_{k})}(r_{k-1}S_{k-1} )||\leq \|C_{(\alpha_k ;  \beta_k)}\|
\end{equation}
 for any $r_{k-1}\in [0,1).
$
Moreover, we can also show that
$$
C_{(\alpha_k ;  \beta_k)}=\text{\rm SOT-}\lim_{r_{k-1}\to 1} \psi_{(\alpha_{k} ; \beta_{k})}(r_{k-1}S_{k-1} ).
$$
Now, due to relations  \eqref{F2}, \eqref{B-k}, \eqref{F3}, and \eqref{ext},  we obtain
\begin{equation*}
\begin{split}
&\left(\left[\cB_{r_{k-1} S_{k-1}}^{ext}\otimes id_{B(F^2(H_{n_k}))}\right]
\circ \cB_{r_{k} S_{k}}^{ext}\right)[A]\\
&\qquad = \sum_{m_k\in \ZZ}\sum_{{\alpha_k,\beta_k\in \FF_{n_k}^+}\atop{|\alpha_k|=m_k^-, |\beta_k|=m_k^+}}\cB_{r_{k-1} S_{k-1}}^{ext}\left[C_{(\alpha_k ;  \beta_k)}\right]\otimes r_k^{|\alpha_k|+|\beta_k}S_{k,\alpha_k} S_{k,\beta_k}^*\\
&\qquad
=\sum_{m_k\in \ZZ}\sum_{m_{k-1}\in \ZZ}\sum_{{\alpha_k,\beta_k\in \FF_{n_k}^+}\atop{|\alpha_k|=m_k^-, |\beta_k|=m_k^+}}
\sum_{{\alpha_{k-1},\beta_{k-1}\in \FF_{n_{k-1}}^+}\atop{|\alpha_{k-1}|=m_{k-1}^-, |\beta_{k-1}|=m_{k-1}^+}}\\
&\qquad \qquad \qquad
r_k^{|m_k|} r_{k-1}^{|m_{k-1}|}
C_{(\alpha_{k-1},\alpha_k ;  \beta_{k-1},\beta_k)}\otimes S_{k-1,\alpha_{k-1}} S_{k-1,\beta_{k-1}}^*\otimes S_{k,\alpha_{k}} S_{k,\beta_{k}}^*,
\end{split}
\end{equation*}
where the series are convergent in the operator norm topology.
Continuing this process, one can prove that there are some operators $C_{(\alpha_1,\ldots, \alpha_k; \beta_1,\ldots, \beta_k)}\in B(\cE)$ such that
the series $\varphi(r_1{\bf S}_1,\ldots, r_k{\bf S}_k)$ given by
\begin{equation*}
 \sum_{m_k\in \ZZ}\cdots \sum_{m_1\in \ZZ} \sum_{{\alpha_i,\beta_i\in \FF_{n_i}^+, i\in \{1,\ldots, k\}}\atop{|\alpha_i|=m_i^-, |\beta_i|=m_i^+}}r_k^{|m_k|}\cdots r_1^{|m_1|}C_{(\alpha_1,\ldots,\alpha_k;\beta_1,\ldots, \beta_k)}
\otimes
{\bf S}_{1,\alpha_1}\cdots {\bf S}_{k,\alpha_k}{\bf S}_{1,\beta_1}^*\cdots {\bf S}_{k,\beta_k}^*
 \end{equation*}
is convergent in the operator norm topology and
\begin{equation}
\label{BBBBB}
\begin{split}
\varphi(r_1{\bf S}_1,\ldots, r_k {\bf S}_k)=
\left[\cB_{r_{1} S_{1}}^{ext}\otimes id_{B(\otimes_{i=2}^k F^2(H_{n_i}))}\right]\circ \left[\cB_{r_{2} S_{2}}^{ext}\otimes id_{B(\otimes_{i=3}^k F^2(H_{n_i}))}\right]\circ\cdots \circ \cB_{r_{k} S_{k}}^{ext} [A].
\end{split}
\end{equation}
Since the noncommutative Berezin kernels $K_{r_iS_i}$, $i\in \{1,\ldots, k\}$, are isometries, we deduce that
$$\|\varphi(r_1{\bf S}_1,\ldots, r_k{\bf S}_k)\|\leq \|A\|, \qquad r_i\in [0,1).
  $$
  Note that the coefficients of the $k$-multi-Toeplitz operator $ \varphi(r_1{\bf S}_1,\ldots, r_k{\bf S}_k)$ satisfy relation
\begin{equation}
\label{rrr}
\left< r_k^{|m_k|}\cdots r_1^{|m_1|}C_{(\alpha_1,\ldots,\alpha_k;\beta_1,\ldots, \beta_k)}h,\ell\right>=\left<\varphi(r_1{\bf S}_1,\ldots, r_k{\bf S}_k)(h\otimes x), (\ell\otimes y)\right>,
\end{equation}
where $x,y$ are defined  as in relation \eqref{xy}.
Since $A$ is a $k$-multi-Toeplitz operator, so is $Y_{r_k}:=\cB_{r_{k} S_{k}}^{ext} [A]=\psi_A(r_k S_k)$ and, iterating the argument, we deduce that
$$Y_{r_2,\ldots, r_k}:=   \left[\cB_{r_{2} S_{2}}^{ext}\otimes id_{B(\otimes_{i=3}^k F^2(H_{n_i}))}\right]\circ\cdots \circ \cB_{r_{k} S_{k}}^{ext} [A]
$$
is a $k$-multi-Toeplitz operator. In particular, $Y_{r_2,\ldots, r_k}$ is a 1-multi-Toeplitz operator with respect to $R_1:=(R_{1,1},\ldots, R_{1,n_1})$. Applying the first part of the proof to $Y_{r_2,\ldots, r_k}$, we deduce that
$$\text{\rm SOT-}\lim_{r_1\to 1}\left[\cB_{r_{1} S_{1}}^{ext}\otimes id_{B(\otimes_{i=2}^k F^2(H_{n_i}))}\right][Y_{r_2,\ldots, r_k}]=Y_{r_2,\ldots, r_k}.
$$
 Continuing this process, we obtain
$$
\text{\rm SOT-}\lim_{r_k\to 1}\cdots \text{\rm SOT-}\lim_{r_1\to 1} \left[\cB_{r_{1} S_{1}}^{ext}\otimes id_{B(\otimes_{i=2}^k F^2(H_{n_i}))}\right][Y_{r_2,\ldots, r_k}]=A.
$$
Consequently, using   relations \eqref{BBBBB}, \eqref{rrr} and  \eqref{AA}, we deduce that
$$
\left< C_{(\alpha_1,\ldots,\alpha_k;\beta_1,\ldots, \beta_k)}h,\ell\right>
=\left<A(h\otimes x, \ell\otimes y\right>
=\left< A_{(\alpha_1,\ldots, \alpha_k,\beta_1,\ldots, \beta_k)}h,\ell\right>,
$$
which shows that $\varphi_A(r_1{\bf S}_1,\ldots, r_k{\bf S}_k)=\varphi(r_1{\bf S}_1,\ldots, r_k{\bf S}_k)$ for any $r_i\in [0,1)$. Hence,   we obtain
\begin{equation*}
\begin{split}
&\varphi_A(r_1{\bf S}_1,\ldots, r_k{\bf S}_k)\\
&=
\sum_{m_k\in \ZZ}\cdots \sum_{m_1\in \ZZ} \sum_{{\alpha_i,\beta_i\in \FF_{n_i}^+, i\in \{1,\ldots, k\}}\atop{|\alpha_i|=m_i^-, |\beta_i|=m_i^+}}r_k^{|m_k|}\cdots r_1^{|m_1|}A_{(\alpha_1,\ldots,\alpha_k;\beta_1,\ldots, \beta_k)}
\otimes
{\bf S}_{1,\alpha_1}\cdots {\bf S}_{k,\alpha_k}{\bf S}_{1,\beta_1}^*\cdots {\bf S}_{k,\beta_k}^*
\end{split}
 \end{equation*}
where the series are  convergent in the operator norm topology. Moreover, due to  relation \eqref{BBBBB}, we have
$$\|\varphi_A(r_1{\bf S}_1,\ldots, r_k{\bf S}_k)\|\leq \|A\|, \qquad  r_i\in [0,1).
$$
Due to relation \eqref{fifia}, we have
\begin{equation*}
 \lim_{r\to 1}\varphi_A(r{\bf S})(h\otimes e_{\gamma_1}^1\otimes \cdots \otimes e_{\gamma_k}^k)
 =A(h\otimes e_{\gamma_1}^1\otimes \cdots \otimes e_{\gamma_k}^k).
 \end{equation*}
Since $\|\varphi_A(r{\bf S}_1,\ldots, r{\bf S}_k)\|\leq \|A\|$, an approximation argument shows that

\begin{equation}
\label{SOT-phi}\text{\rm SOT-}\lim_{r\to 1} \varphi_A(r{\bf S}_1,\ldots, r{\bf S}_k)=A.
\end{equation}
Let $\epsilon>0$ and choose a vector-valued polynomial $q\in  \cP$ with $\|q\|=1$ and $\|Aq\|>\|A\|-\epsilon$. Due to relation \eqref{SOT-phi}, there is $r_0\in (0,1)$ such that $\|\varphi_A(r_0{\bf S}_1,\ldots, r_0{\bf S}_k)q\|>\|A\|-\epsilon$. Hence, we deduce that $\sup_{r\in [0,1)} \|\varphi_A(r{\bf S}_1,\ldots, r{\bf S}_k)\|=\|A\|$.

Now, let $r_1,r_2\in [0,1)$ with $r_1<r_2$. We already proved that $g({\bf S}):=\varphi_A(r_2{\bf S}_1,\ldots, r_2{\bf S}_k)$ is in
$
\text{\rm span} \{f^*g:\ f, g\in B(\cE)\otimes_{min} \boldsymbol\cA_{\bf n}\}^{-\|\cdot\|}.
$
Due to the von Neumann \cite{vN} type inequality  from \cite{Po-Berezin-poly}, we have $\|g(r{\bf S})\|\leq \|g({\bf S})\|$ for any $r\in [0,1)$. In particular, setting $r=\frac{r_1}{r_2}$, we deduce that
$$\|\varphi_A(r_1{\bf S}_1,\ldots, r_1{\bf S}_k)\|\leq \|\varphi_A(r_2{\bf S}_1,\ldots, r_2{\bf S}_k)\|.
$$
Now, it is clear that $\lim_{r\to 1}\|\varphi_A(r{\bf S}_1,\ldots, r{\bf S}_k)\|=\|A\|$.
On the other hand, since $Aq=\varphi_A({\bf S)})q$ for any vector-valued polynomial $q\in \cE\otimes \bigotimes_{i=1}^kF^2(H_{n_i})$, we deduce that
$\|A\|=\sup_{q\in \cP, \|q\|\leq 1} \|\varphi({\bf S})q\|$.

Now, we prove the converse of the theorem.
Let
$\{ A_{(\alpha_1,\ldots,\alpha_k;\beta_1,\ldots, \beta_k)}\}$ be a family of operators in $B(\cE)$, where $\alpha_i,\beta_i\in \FF_{n_i}^+$, $|\alpha_i|=m_i^-, |\beta_i|=m_i^+$, $m_i\in \ZZ$, and $i\in \{1,\ldots, k\}$, and assume that conditions (i) and (ii) hold.
Note that, due to item (i), $\varphi({\bf S})p$ and $\varphi(r{\bf S})p$, $r\in [0,1)$, are vectors in $\cE\otimes \bigotimes_{i=1}^kF^2(H_{n_i})$ and
$$
\lim_{r\to 1}\varphi(r{\bf S})p=\varphi({\bf S})p
$$
for any $p\in \cP$. Since $\sup_{p\in \cP, \|p\|\leq 1} \|\varphi(r{\bf S})p\|<\infty$, there is a unique bounded linear operator $A_r\in B(\cE\otimes \bigotimes_{i=1}^kF^2(H_{n_i}))$ such that $A_rp=\varphi(r{\bf S})p$ for any $p\in \cP$. If $f\in \cE\otimes \bigotimes_{i=1}^kF^2(H_{n_i})$ and $\{p_m\}$ is a sequence of polynomials $p_m\in \cP$ such that $p_m\to f$ as $m\to \infty$, we set $A_r(f):=\lim_{m\to \infty} \varphi(r{\bf S})p_m$.  Note that the definition is correct.
On the other hand, note that
$$
\sup_{p\in \cP, \|p\|\leq 1} \|\varphi({\bf S})p\|<\infty.
$$
Indeed, this follows from the fact that $\lim_{r\to 1}\varphi(r{\bf S})p=\varphi({\bf S})p$ and $\sup_{p\in \cP, \|p\|\leq 1} \|\varphi(r{\bf S})p\|<\infty$. Consequently, there is a unique operator $A\in B(\cE\otimes \bigotimes_{i=1}^kF^2(H_{n_i}))$ such that $Ap=\varphi ({\bf S})p$ for any $p\in \cP$.
Since $\lim_{r\to 1}A_rp=\lim_{r\to 1}\varphi(r{\bf S})p=\varphi({\bf S})p=Ap$ and
$\sup_{r\in [0,1)}  \|A_r\|<\infty,$
we deduce that
$A=\text{\rm SOT-}\lim_{r\to 1} A_r$.

Now, we show that $A$ is a $k$-multi-Toeplitz operator. First, note that
${\bf S}_{1,\alpha_1}\cdots {\bf S}_{k,\alpha_k}{\bf S}_{1,\beta_1}^*\cdots {\bf S}_{k,\beta_k}^*$   is a $k$-multi-Toeplitz operator for any
$\alpha_i,\beta_i\in \FF_{n_i}^+, i\in \{1,\ldots, k\} $ with $m_i\in \ZZ$, $|\alpha_i|=m_i^-$,  $ |\beta_i|=m_i^+$. It is enough to check this on monomials of the form $h\otimes e_{\gamma_1}^1\otimes\ldots \otimes e_{\gamma_k}^k$.
Consequently,
\begin{equation*}
(I_\cE\otimes {\bf R}_{i,s}^*)\varphi(r{\bf S})(I_\cE\otimes {\bf R}_{i,t})p=\delta_{st}\varphi(r{\bf S}) p,\qquad s,t\in \{1,\ldots, n_i\},
 \end{equation*}
  for any $p\in \cP$ and every $i\in\{1,\ldots, k\}$.  Hence, $A_r$ has the same property. Taking $r\to 1$, we conclude that $A$ is a
  $k$-multi-Toeplitz operator.
  On the other hand, if  $x:=x_1\otimes \cdots \otimes x_k$, $y=y_1\otimes \cdots \otimes y_k$ satisfy relation \eqref{xy}, and $h,\ell\in \cE$, we have
  \begin{equation*}
  \begin{split}
  \left<A(h\otimes x), \ell\otimes y\right>&=\lim_{r\to 1} \left<A_r(h\otimes x), \ell\otimes y\right>=\lim_{r\to 1} \left<\varphi(r{\bf S})(h\otimes x), \ell\otimes y\right>\\
  &=\lim_{r\to 1} \left<r^{\sum_{i=1}^k |\alpha_i|+|\beta_i|} A_{(\alpha_1,\ldots,\alpha_k;\beta_1,\ldots, \beta_k)}h, \ell\right>
  =\left< A_{(\alpha_1,\ldots,\alpha_k;\beta_1,\ldots, \beta_k)}h,\ell\right>.
  \end{split}
  \end{equation*}
Therefore,
$$\varphi({\bf S}):= \sum_{m_1\in \ZZ}\cdots \sum_{m_k\in \ZZ} \sum_{{\alpha_i,\beta_i\in \FF_{n_i}^+, i\in \{1,\ldots, k\}}\atop{|\alpha_i|=m_i^-, |\beta_i|=m_i^+}}A_{(\alpha_1,\ldots,\alpha_k;\beta_1,\ldots, \beta_k)}
\otimes {\bf S}_{1,\alpha_1}\cdots {\bf S}_{k,\alpha_k}{\bf S}_{1,\beta_1}^*\cdots {\bf S}_{k,\beta_k}^*
$$
is the formal Fourier series of the  $k$-multi-Toeplitz operator $A$  on $\cE\otimes \bigotimes_{i=1}^k F^2(H_{n_i})$.
The proof is complete.
\end{proof}

\begin{theorem}
\label{caract2}
Let
$\{ A_{(\alpha_1,\ldots,\alpha_k;\beta_1,\ldots, \beta_k)}\}$ be a family of operators in $B(\cE)$, where $\alpha_i,\beta_i\in \FF_{n_i}^+$, $|\alpha_i|=m_i^-, |\beta_i|=m_i^+$, $m_i\in \ZZ$, and $i\in \{1,\ldots, k\}$, and let
$$\varphi({\bf S}):= \sum_{m_1\in \ZZ}\cdots \sum_{m_k\in \ZZ} \sum_{{\alpha_i,\beta_i\in \FF_{n_i}^+, i\in \{1,\ldots, k\}}\atop{|\alpha_i|=m_i^-, |\beta_i|=m_i^+}}A_{(\alpha_1,\ldots,\alpha_k;\beta_1,\ldots, \beta_k)}
\otimes {\bf S}_{1,\alpha_1}\cdots {\bf S}_{k,\alpha_k}{\bf S}_{1,\beta_1}^*\cdots {\bf S}_{k,\beta_k}^*
$$
   be the associated
 formal Fourier series. Then  $
\varphi({\bf S}) $ is the formal Fourier series of a $k$-multi-Toeplitz operator $A$ on $\cE\otimes \bigotimes_{i=1}^k F^2(H_{n_i})$ if and only if  the series defining $
\varphi(r{\bf S}) $ is convergent in the operator norm topology  for any $r\in[0,1)$, and
$$
\sup_{r\in [0,1)} \|\varphi(r{\bf S})\|<\infty.
$$
 Moreover, if $A$ is a $k$-multi-Toeplitz operator   on $\cE\otimes \bigotimes_{i=1}^k F^2(H_{n_i})$, then $\varphi(r{\bf S})=\boldsymbol\cB^{ext}_{r{\bf S}}[A]$ and
$$\text{\rm SOT-}\lim_{r\to 1}\boldsymbol\cB^{ext}_{r{\bf S}}[A]=A,$$
where
$$
 \boldsymbol\cB_{r {\bf S}}^{ext}[u]:=(I_{\cE}\otimes {\bf K}_{r{\bf S}}^*)(u\otimes I_{\otimes_{i=1}^kF^2(H_{n_{i}})})
(I_{\cE}\otimes {\bf K}_{r{\bf S}}), \qquad u\in B(\cE_{k}),
$$
and ${\bf K}_{r{\bf S}}$ is the noncommutative Berezin kernel associated with $r{\bf S}\in {\bf B_n}(\otimes _{i=1}^k F^2(H_{n_i}))$.
\end{theorem}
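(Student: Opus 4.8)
The plan is to reduce this statement to Theorem \ref{structure-Toeplitz} by showing that the two hypotheses recorded here — norm-convergence of the series $\varphi(r{\bf S})$ for each $r\in[0,1)$ together with $\sup_{r\in[0,1)}\|\varphi(r{\bf S})\|<\infty$ — are precisely equivalent to conditions (i) and (ii) of that theorem. The forward implication is essentially free: if $\varphi({\bf S})=\varphi_A({\bf S})$ for a $k$-multi-Toeplitz operator $A$, then part (a) of Theorem \ref{structure-Toeplitz} already guarantees that $\varphi(r{\bf S})$ converges in the operator norm topology for each $r$, and part (b) gives $\sup_{r\in[0,1)}\|\varphi(r{\bf S})\|=\|A\|<\infty$.

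For the converse I would assume the two conditions and set $M:=\sup_{r\in[0,1)}\|\varphi(r{\bf S})\|$. Condition (ii) of Theorem \ref{structure-Toeplitz} is then immediate, since norm-convergence makes $\varphi(r{\bf S})$ a bounded operator with $\|\varphi(r{\bf S})p\|\leq M\|p\|$ for all $p\in\cP$. The real work is verifying condition (i). Fix $\boldsymbol\gamma=(\gamma_1,\dots,\gamma_k)$ and $h\in\cE$. Applying the bounded operator $\varphi(r{\bf S})$ to $h\otimes e_{\gamma_1}^1\otimes\cdots\otimes e_{\gamma_k}^k$ and using the orthonormality from Lemma \ref{ortho} together with the reindexing by right-comparability (as in Lemma \ref{inner} and the proof of Theorem \ref{Fourier}), I would expand
\begin{equation*}
\|\varphi(r{\bf S})(h\otimes e_{\gamma_1}^1\otimes\cdots\otimes e_{\gamma_k}^k)\|^2=\sum_{\boldsymbol\omega\sim_{rc}\boldsymbol\gamma} r^{2(\cdots)}\,\|A_{(c_r^+(\boldsymbol\omega,\boldsymbol\gamma);c_r^-(\boldsymbol\omega,\boldsymbol\gamma))}h\|^2\leq M^2\|h\|^2,
\end{equation*}
where the exponent $2(\cdots)=2\sum_{i=1}^k\bigl(|c_r^+(\omega_i,\gamma_i)|+|c_r^-(\omega_i,\gamma_i)|\bigr)$ is the total degree. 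For any finite set of indices $\boldsymbol\omega\sim_{rc}\boldsymbol\gamma$ the corresponding partial sum is dominated by $M^2\|h\|^2$; letting $r\to1$ in this finite sum, and then taking the supremum over finite sets, yields $\sum_{\boldsymbol\omega\sim_{rc}\boldsymbol\gamma}\|A_{(c_r^+(\boldsymbol\omega,\boldsymbol\gamma);c_r^-(\boldsymbol\omega,\boldsymbol\gamma))}h\|^2\leq M^2\|h\|^2$. Since the summands $A_{(\cdots)}^*A_{(\cdots)}$ are positive and their partial sums form an increasing net with uniformly bounded quadratic forms, the series converges in the strong, hence weak, operator topology, which is condition (i). Theorem \ref{structure-Toeplitz} then produces the desired $k$-multi-Toeplitz operator $A$ with $\varphi({\bf S})=\varphi_A({\bf S})$.

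Finally, the "moreover" clause I would read off from the proof of Theorem \ref{structure-Toeplitz}. Relation \eqref{BBBBB} already writes $\varphi(r{\bf S})$ (with all radii equal to $r$) as the iterated composition $[\cB_{rS_1}^{ext}\otimes \mathrm{id}]\circ\cdots\circ\cB_{rS_k}^{ext}[A]$ of the single-variable extended Berezin transforms, and relation \eqref{SOT-phi} is exactly $\text{\rm SOT-}\lim_{r\to1}\varphi(r{\bf S})=A$. It then remains to identify the iterated composition with the single multivariable transform $\boldsymbol\cB_{r{\bf S}}^{ext}[A]=(I_\cE\otimes{\bf K}_{r{\bf S}}^*)(A\otimes I)(I_\cE\otimes{\bf K}_{r{\bf S}})$; this follows from the factorization of the noncommutative Berezin kernel ${\bf K}_{r{\bf S}}$ of the polyball element $r{\bf S}$ into the individual kernels $K_{rS_i}$, a standard property from \cite{Po-Berezin-poly}. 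I expect the main obstacle to lie in the converse direction, namely extracting the WOT-convergence demanded by condition (i) from the merely norm-level hypothesis at each fixed $r<1$: the delicate point is that the clean orthonormal expansion is exact only for $r<1$, so the limit $r\to1$ must be taken on finite sub-sums first and then promoted to the full series via the uniform bound $M$.
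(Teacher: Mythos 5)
Your proof is correct, and for the main equivalence it follows the same route as the paper: the forward direction is read off from parts (a) and (b) of Theorem \ref{structure-Toeplitz}, and the converse reduces to verifying conditions (i) and (ii) of that theorem, with (ii) immediate from the uniform bound $M$ and (i) extracted from the orthonormal expansion of $\varphi(r{\bf S})(h\otimes e_{\boldsymbol\gamma})$ via Lemmas \ref{inner} and \ref{ortho}. In fact your handling of (i) is more careful than the paper's: the paper simply writes the quadratic-form bound $\le M\|h\|^2$ and says ``taking $r\to 1$,'' whereas you correctly pass to finite sub-sums first, let $r\to 1$ there, and then invoke monotone convergence of the resulting bounded increasing net of positive operators — this is exactly the justification the paper leaves implicit. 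The one place you genuinely diverge is the ``moreover'' identity $\varphi(r{\bf S})=\boldsymbol\cB^{ext}_{r{\bf S}}[A]$: the paper proves this by a direct matrix-coefficient computation, inserting a truncation $\Gamma_q$ of the Fourier series (with $q$ depending on $|\gamma_i|,|\omega_i|$) so that $A$ can be replaced by $\Gamma_q$ inside the sandwiched Berezin kernel, and then evaluating term by term; you instead propose to cite relation \eqref{BBBBB} — the iterated one-variable transforms $[\cB^{ext}_{rS_1}\otimes\mathrm{id}]\circ\cdots\circ\cB^{ext}_{rS_k}[A]$ — and identify that composition with the single multivariable transform via the factorization of ${\bf K}_{r{\bf S}}$ into the tensor-leg kernels $K_{rS_i}$. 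That identification is true and citable from \cite{Po-Berezin-poly} (the defect operator $\boldsymbol\Delta_{r{\bf S}}(I)$ factors as the product of the one-variable defects, and the kernels compose accordingly up to the obvious unitary identification of defect spaces), so your route is shorter but leans on an external structural fact about the polyball kernel, whereas the paper's computation is self-contained and sidesteps any bookkeeping of defect-space identifications. Either is acceptable; just make sure, if you take your route, to state the kernel factorization precisely, since $\boldsymbol\cB^{ext}_{r{\bf S}}$ as defined sandwiches with a single kernel into $\bigl(\otimes_{i}F^2(H_{n_i})\bigr)\otimes\cD_{r{\bf S}}$ while \eqref{BBBBB} sandwiches with $k$ kernels one tensor factor at a time.
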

\begin{proof}  Assume that  $
\varphi({\bf S}) $ is the formal Fourier series of a $k$-multi-Toeplitz operator $A$ on $\cE\otimes \bigotimes_{i=1}^k F^2(H_{n_i})$. Then Theorem \ref{structure-Toeplitz} implies that
$\varphi(r{\bf S})$  is convergent in the operator norm topology
and
$$\|A\|=\sup_{r\in [0,1)}\|\varphi(r{\bf S})\|.
$$
We recall that  the noncommutative Berezin kernel associated with $r{\bf S}\in {\bf B_n}(\otimes _{i=1}^k F^2(H_{n_i}))$ is defined on
$\otimes_{i=1}^kF^2(H_{n_{i}})$ with values in $ \otimes_{i=1}^kF^2(H_{n_{i}})\otimes \cD_{r{\bf S}}\subset \left(\otimes_{i=1}^kF^2(H_{n_{i}})\right)\otimes \left(\otimes_{i=1}^kF^2(H_{n_{i}})\right)$, where $\cD_{r{\bf S}}:=\overline{\boldsymbol{\Delta}_{r{\bf S}}(I)(\otimes_{i=1}^kF^2(H_{n_i}))}$.
Let $\boldsymbol\gamma=(\gamma_1,\ldots, \gamma_k)$ and $\boldsymbol \omega=(\omega_1,\ldots, \omega_k)$ be  $k$-tuples in $\FF_{n_1}^+\times\cdots \times \FF_{n_k}^+$, set $q:=\max\{|\gamma_1|, \ldots |\gamma_k|, |\omega_1|,\ldots,|\omega_k|\}$, and define the operator
$$
\Gamma_q:=\sum_{m_1\in \ZZ, |m_1|\leq q}\cdots\sum_{m_k\in \ZZ, |m_k|\leq q}\sum_{{\alpha_i,\beta_i\in \FF_{n_k}^+, i\in \{1,\ldots, k\}}\atop{|\alpha_i|=m_i^-, |\beta_i|=m_i^+}} A_{(\alpha_1,\ldots, \alpha_k, \beta_1,\ldots, \beta_k)}\otimes {\bf S}_{\boldsymbol\alpha} {\bf S}_{\boldsymbol\beta}^*,
$$
where we use the notation ${\bf S}_{\boldsymbol\alpha}:={\bf S}_{1,\alpha_1}\cdots {\bf S}_{k,\alpha_k}$ if $\boldsymbol\alpha:=(\alpha_1,\ldots, \alpha_k)\in \FF_{n_1}^+\times\cdots \times \FF_{n_k}^+$. We also set $e_{\boldsymbol\alpha}:=e_{\alpha_1}^1\otimes \cdots \otimes e_{\alpha_k}^k$.
Note that
\begin{equation*}
\begin{split}
&\left<\boldsymbol\cB_{r {\bf S}}^{ext}[A](h\otimes e_{\boldsymbol\gamma}), h'\otimes e_{\boldsymbol\omega}\right>\\
&=
\left<(I_{\cE}\otimes {\bf K}_{r{\bf S}}^*)(A\otimes I_{\otimes_{i=1}^kF^2(H_{n_{i}})})
(I_{\cE}\otimes {\bf K}_{r{\bf S}})(h\otimes e_{\boldsymbol\gamma}), h'\otimes e_{\boldsymbol\omega}\right>\\
&=\left<(A\otimes I_{\otimes_{i=1}^kF^2(H_{n_{i}})})
\sum_{\boldsymbol\alpha\in \FF_{n_1}^+\times\cdots \times \FF_{n_k}^+}h\otimes e_{\boldsymbol\alpha}\otimes \boldsymbol\Delta_{r{\bf S}}(I)^{1/2}{\bf S}_{\boldsymbol\alpha}^*(e_{\boldsymbol\gamma})\right.,\\
&\qquad\qquad \left.
\sum_{\boldsymbol\beta\in \FF_{n_1}^+\times\cdots \times \FF_{n_k}^+}h'\otimes e_{\boldsymbol\beta}\otimes \boldsymbol\Delta_{r{\bf S}}(I)^{1/2}{\bf S}_{\boldsymbol\beta}^*(e_{\boldsymbol\omega})\right>\\
&=
\sum_{\boldsymbol\alpha\in \FF_{n_1}^+\times\cdots \times \FF_{n_k}^+}
\sum_{\boldsymbol\beta\in \FF_{n_1}^+\times\cdots \times \FF_{n_k}^+}
\left<
A(h\otimes e_{\boldsymbol\alpha})\otimes \boldsymbol\Delta_{r{\bf S}}(I)^{1/2}{\bf S}_{\boldsymbol\alpha}^*(e_{\boldsymbol\gamma}),
h'\otimes e_{\boldsymbol\beta}\otimes \boldsymbol\Delta_{r{\bf S}}(I)^{1/2}{\bf S}_{\boldsymbol\beta}^*(e_{\boldsymbol\omega})\right>\\
&=
\sum_{\boldsymbol\alpha\in \FF_{n_1}^+\times\cdots \times \FF_{n_k}^+}
\sum_{\boldsymbol\beta\in \FF_{n_1}^+\times\cdots \times \FF_{n_k}^+}
\left<
A(h\otimes e_{\boldsymbol\alpha}), h'\otimes e_{\boldsymbol\beta}\right>
\left<\boldsymbol\Delta_{r{\bf S}}(I)^{1/2}{\bf S}_{\boldsymbol\alpha}^*(e_{\boldsymbol\gamma}),
 \boldsymbol\Delta_{r{\bf S}}(I)^{1/2}{\bf S}_{\boldsymbol\beta}^*(e_{\boldsymbol\omega})\right> \\
 &=
 \sum_{m_1\in \ZZ, |m_1|\leq q}\cdots\sum_{m_k\in \ZZ, |m_k|\leq q}\sum_{{\alpha_i,\beta_i\in \FF_{n_k}^+, i\in \{1,\ldots, k\}}\atop{|\alpha_i|=m_i^-, |\beta_i|=m_i^+}}
 \left<
\Gamma_q(h\otimes e_{\boldsymbol\alpha}), h'\otimes e_{\boldsymbol\beta}\right>
\left<\boldsymbol\Delta_{r{\bf S}}(I)^{1/2}{\bf S}_{\boldsymbol\alpha}^*(e_{\boldsymbol\gamma}),
 \boldsymbol\Delta_{r{\bf S}}(I)^{1/2}{\bf S}_{\boldsymbol\beta}^*(e_{\boldsymbol\omega})\right> \\
 &=
 \sum_{\boldsymbol\alpha\in \FF_{n_1}^+\times\cdots \times \FF_{n_k}^+}
\sum_{\boldsymbol\beta\in \FF_{n_1}^+\times\cdots \times \FF_{n_k}^+}
\left<
\Gamma_q(h\otimes e_{\boldsymbol\alpha}), h'\otimes e_{\boldsymbol\beta}\right>
\left<\boldsymbol\Delta_{r{\bf S}}(I)^{1/2}{\bf S}_{\boldsymbol\alpha}^*(e_{\boldsymbol\gamma}),
 \boldsymbol\Delta_{r{\bf S}}(I)^{1/2}{\bf S}_{\boldsymbol\beta}^*(e_{\boldsymbol\omega})\right> \\
 &=
 \left<(I_{\cE}\otimes {\bf K}_{r{\bf S}}^*)(\Gamma_q\otimes I_{\otimes_{i=1}^kF^2(H_{n_{i}})})
(I_{\cE}\otimes {\bf K}_{r{\bf S}})(h\otimes e_{\boldsymbol\gamma}), h'\otimes e_{\boldsymbol\omega}\right>\\
&=\left<\boldsymbol\cB_{r {\bf S}}^{ext}[\Gamma_q](h\otimes e_{\boldsymbol\gamma}), h'\otimes e_{\boldsymbol\omega}\right>\\
&=
\sum_{m_1\in \ZZ, |m_1|\leq q}\cdots\sum_{m_k\in \ZZ, |m_k|\leq q}\sum_{{\alpha_i,\beta_i\in \FF_{n_k}^+, i\in \{1,\ldots, k\}}\atop{|\alpha_i|=m_i^-, |\beta_i|=m_i^+}}
\left< \left(A_{(\alpha_1,\ldots, \alpha_k, \beta_1,\ldots, \beta_k)}\otimes r^{\sum_{i=1}^k(|\alpha_i|+|\beta_i|)} {\bf S}_{\boldsymbol\alpha} {\bf S}_{\boldsymbol\beta}^*\right)(h\otimes e_{\gamma}), h'\otimes e_{\boldsymbol\omega}\right>\\
&=\left<\varphi_A(rS_1,\ldots, rS_k)(h\otimes e_{\boldsymbol\gamma}), h'\otimes e_{\boldsymbol\omega}\right>.
\end{split}
\end{equation*}
Consequently, we obtain
$$\boldsymbol\cB_{r {\bf S}}^{ext}[A]=\varphi_A(r{\bf S}_1,\ldots, r{\bf S}_k), \qquad r\in [0,1),
$$
which proves the second part of the theorem.

To prove the converse, assume that $\{ A_{(\alpha_1,\ldots,\alpha_k;\beta_1,\ldots, \beta_k)}\}$ is a family of operators in $B(\cE)$, where $\alpha_i,\beta_i\in \FF_{n_i}^+$, $|\alpha_i|=m_i^-, |\beta_i|=m_i^+$, $m_i\in \ZZ$, and $i\in \{1,\ldots, k\}$, and let $\varphi({\bf S})$ be the associated formal Fourier series. We also assume that $\varphi(r{\bf S})$ is convergent in the operator norm topology for each $r\in [0,1)$, and that $$
M:=\sup_{r\in [0,1)} \|\varphi(r{\bf S})\|<\infty.
$$
Note that $\varphi(r{\bf S})$ is a $k$-multi-Toeplitz operator
and
$$
  \varphi(r{\bf S})(h\otimes e_{\gamma_1}^1\otimes \cdots \otimes e_{\gamma_k}^k)=
   \sum_{{\boldsymbol\omega=(\omega_1,\ldots, \omega_k)\in \FF_{n_1}^+\times\cdots \times \FF_{n_k}^+}\atop{\boldsymbol\omega\sim_{rc} \boldsymbol\gamma}} r^{\sum_{i=1}^k \left(|c_r^+(\boldsymbol\omega, \boldsymbol\gamma)|+ |c_r^-(\boldsymbol\omega, \boldsymbol\gamma)|\right)}
    A_{(c_r^+(\boldsymbol\omega, \boldsymbol\gamma); c_r^-(\boldsymbol\omega, \boldsymbol\gamma))}h\otimes  e_{\omega_1}^1\otimes \cdots \otimes e_{\omega_k}^k
  $$
  is a vector in $\cE\otimes \bigotimes_{i=1}^kF^2(H_{n_i})$. Hence, we deduce that, for each $\boldsymbol\gamma=(\gamma_1,\ldots, \gamma_k)\in \FF_{n_1}^+\times\cdots \times \FF_{n_k}^+$,
  \begin{equation*}
  \begin{split}
    &\left<r^{\sum_{i=1}^k \left(c_r^+(\boldsymbol\omega, \boldsymbol\gamma)+ c_r^-(\boldsymbol\omega, \boldsymbol\gamma)\right)}\sum_{{\boldsymbol\omega\in \FF_{n_1}^+\times\cdots \times \FF_{n_k}^+}\atop{\boldsymbol\omega\sim_{rc} \boldsymbol\gamma}}
    A_{(c_r^+(\boldsymbol\omega, \boldsymbol\gamma);c_r^-(\boldsymbol\omega, \boldsymbol\gamma))}^*A_{(c_r^+(\boldsymbol\omega, \boldsymbol\gamma);c_r^-(\boldsymbol\omega, \boldsymbol\gamma))}
    h,h\right>\\
    &\qquad \qquad \leq\|\varphi(r{\bf S})\|^2 \|h\|^2\leq M\|h\|^2,
    \end{split}
    \end{equation*}
for any $r\in [0,1)$ and $h\in \cE$. Taking $r\to 1$, we get condition (i) of Theorem \ref{structure-Toeplitz}. Applying the latter theorem, we deduce that
$\varphi({\bf S})$ is the Fourier series of a $k$-multi-Toeplitz operators.
The proof is complete.
\end{proof}

We remark that,   due to  Theorem \ref{caract2},   the order of the series in the  definition of  $\varphi_A(r{\bf S}_1,\ldots, r{\bf S}_k)$  (see item (a) of  Theorem \ref{structure-Toeplitz}) is irrelevant.

\begin{theorem} \label{characterization}  Let  ${\bf n}=(n_1,\ldots, n_k)\in \NN^k$  and let  $\boldsymbol{\cT_{\bf n}}$ be the set of all   $k$-multi-Toeplitz operators  on $\cE\otimes\bigotimes_{i=1}^k F^2(H_{n_i})$.
 Then
\begin{equation*}\begin{split}
\boldsymbol{\cT_{\bf n}}&=\text{\rm span} \{f^*g:\ f, g\in B(\cE)\otimes_{min} \boldsymbol\cA_{\bf n}\}^{- \text{\rm SOT}}\\
&=\text{\rm span} \{f^*g:\ f, g\in B(\cE)\otimes_{min} \boldsymbol\cA_{\bf n}\}^{- \text{\rm WOT}},
\end{split}
\end{equation*}
where $\boldsymbol\cA_{\bf n}$ is the polyball algebra.
\end{theorem}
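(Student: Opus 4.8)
The plan is to derive this theorem almost entirely from the structure results already established, since the genuine analytic content lives in Theorem~\ref{structure-Toeplitz}. I would first prove the inclusion $\boldsymbol{\cT_{\bf n}}\subseteq\text{\rm span}\{f^*g:\ f,g\in B(\cE)\otimes_{min}\boldsymbol\cA_{\bf n}\}^{-\text{\rm SOT}}$. Given a $k$-multi-Toeplitz operator $A$, Theorem~\ref{structure-Toeplitz}(a) tells me that for each $r\in[0,1)$ the operator $\varphi_A(r{\bf S})$ is the norm-convergent sum of the series and lies in $\text{\rm span}\{f^*g:\ f,g\in B(\cE)\otimes_{min}\boldsymbol\cA_{\bf n}\}^{-\|\cdot\|}$, which is contained in the SOT-closure of the same span. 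Theorem~\ref{structure-Toeplitz}(b) then gives $A=\text{\rm SOT-}\lim_{r\to1}\varphi_A(r{\bf S})$, so $A$ is an SOT-limit of elements of that span and hence belongs to its SOT-closure. This is the step that carries all the weight, but the work has already been done in the earlier theorem.

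For the reverse inclusion I would use the trivial fact that the SOT-closure is contained in the WOT-closure, and show $\text{\rm span}\{f^*g:\ f,g\in B(\cE)\otimes_{min}\boldsymbol\cA_{\bf n}\}^{-\text{\rm WOT}}\subseteq\boldsymbol{\cT_{\bf n}}$. The key observation is that each generator $f^*g$, with $f,g\in B(\cE)\otimes_{min}\boldsymbol\cA_{\bf n}$, is itself $k$-multi-Toeplitz. Indeed, on each Fock space $F^2(H_{n_i})$ the right creation operators commute with the left creation operators, so $I_\cE\otimes{\bf R}_{i,t}$ commutes with every element of $B(\cE)\otimes_{min}\boldsymbol\cA_{\bf n}$; taking adjoints, $I_\cE\otimes{\bf R}_{i,t}^*$ commutes with $f^*$. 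Using the isometry relation ${\bf R}_{i,s}^*{\bf R}_{i,t}=\delta_{st}I$ I then compute
$$
(I_\cE\otimes{\bf R}_{i,s}^*)\,f^*g\,(I_\cE\otimes{\bf R}_{i,t})
=f^*\,(I_\cE\otimes{\bf R}_{i,s}^*{\bf R}_{i,t})\,g
=\delta_{st}\,f^*g,
$$
so $f^*g\in\boldsymbol{\cT_{\bf n}}$, and by linearity the whole span lies in $\boldsymbol{\cT_{\bf n}}$.

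It remains to note that $\boldsymbol{\cT_{\bf n}}$ is WOT-closed. This is because for fixed $i,s,t$ the map $A\mapsto(I_\cE\otimes{\bf R}_{i,s}^*)A(I_\cE\otimes{\bf R}_{i,t})-\delta_{st}A$ is WOT-continuous (left and right multiplication by a fixed bounded operator is WOT-continuous), and $\boldsymbol{\cT_{\bf n}}$ is the intersection of the kernels of these maps over all $i,s,t$; hence it is a WOT-closed subspace. Consequently the WOT-closure of the span is contained in $\boldsymbol{\cT_{\bf n}}$. Assembling the chain
$$
\boldsymbol{\cT_{\bf n}}\subseteq\text{\rm span}\{f^*g\}^{-\text{\rm SOT}}\subseteq\text{\rm span}\{f^*g\}^{-\text{\rm WOT}}\subseteq\boldsymbol{\cT_{\bf n}}
$$
forces all three sets to coincide, which is the assertion. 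I do not expect any serious obstacle here: the only subtleties are the commutation of $\boldsymbol\cA_{\bf n}$ with the right model ${\bf R}$ and the WOT-closedness of the multi-Toeplitz class, both of which are routine; the substantive approximation argument is entirely inherited from Theorem~\ref{structure-Toeplitz}.
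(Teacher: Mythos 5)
Your proposal is correct and follows essentially the same route as the paper: the inclusion $\boldsymbol{\cT_{\bf n}}\subseteq\text{\rm span}\{f^*g\}^{-\text{\rm SOT}}$ is inherited from Theorem \ref{structure-Toeplitz} via $A=\text{\rm SOT-}\lim_{r\to1}\varphi_A(r{\bf S})$, and the reverse inclusion comes from the fact that each generator $f^*g$ satisfies the multi-Toeplitz relations and that these relations are preserved under WOT (hence SOT) limits. Your explicit observation that $\boldsymbol{\cT_{\bf n}}$ is the intersection of kernels of the WOT-continuous maps $A\mapsto(I_\cE\otimes{\bf R}_{i,s}^*)A(I_\cE\otimes{\bf R}_{i,t})-\delta_{st}A$ is just a slightly more streamlined packaging of the paper's "taking SOT-limits" step.
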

\begin{proof} Denote
$$
\cG:=\text{\rm span} \{f^*g:\ f, g\in B(\cE)\otimes_{min} \boldsymbol\cA_{\bf n}\}^{\|\cdot\|}.
$$
According to Theorem \ref{structure-Toeplitz}, if $A\in \boldsymbol{\cT_{\bf n}}$ and $\varphi_A({\bf S})$ is its Fourier series, then $\varphi_A(r{\bf S})\in \cG$ for any $r\in [0,1)$, and
$A=\text{\rm SOT-}\lim \varphi_A(r{\bf S})$. Consequently, $\boldsymbol{\cT_{\bf n}}\subseteq \overline{\cG}^{\rm SOT}$. Conversely, note that each monomial
${\bf S}_{\boldsymbol\alpha}^* {\bf S}_{\boldsymbol\beta}$, $\boldsymbol\alpha, \boldsymbol\beta\in \FF_{n_1}^+\times\cdots \times \FF_{n_k}^+$,   is a $k$-multi-Toeplitz operator. This shows that, for each $Y\in \cG$,
$$
 (I_\cE\otimes {\bf R}_{i,s}^*)Y(I_\cE\otimes {\bf R}_{i,t})=\delta_{st}Y,\qquad s,t\in \{1,\ldots, n_i\},
 $$
  for every $i\in\{1,\ldots, k\}$. Consequently, taking SOT-limits,  we deduce that $\overline{\cG}^{\rm SOT}\subseteq \boldsymbol{\cT_{\bf n}}$, which proves that  $\overline{\cG}^{\rm SOT}=\boldsymbol{\cT_{\bf n}}$.

  Now, if $T\in \overline{\cG}^{\rm WOT}$, an argument as above shows that $T\in \boldsymbol{\cT_{\bf n}}=\overline{\cG}^{\rm SOT}$. Since $\overline{\cG}^{\rm SOT}\subseteq \overline{\cG}^{\rm WOT}$, we conclude that
  $\boldsymbol{\cT_{\bf n}}
  =\overline{\cG}^{\rm SOT}=\overline{\cG}^{\rm WOT}$.
  The proof is complete.
  \end{proof}

\begin{corollary} \label{characterization2} The set of all   $k$-multi-Toeplitz operators  on $\bigotimes_{i=1}^k F^2(H_{n_i})$ coincides with

\begin{equation*}\begin{split}
\text{\rm span} \{\boldsymbol\cA_{\bf n}^* \boldsymbol\cA_{\bf n}\}^{- \text{\rm SOT}}
=\text{\rm span} \{\boldsymbol\cA_{\bf n}^* \boldsymbol\cA_{\bf n}\}^{- \text{\rm WOT}},
\end{split}
\end{equation*}
where $\boldsymbol\cA_{\bf n}$ is the polyball algebra.
\end{corollary}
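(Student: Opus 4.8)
The plan is to obtain Corollary \ref{characterization2} as the special case $\cE=\CC$ of Theorem \ref{characterization}. First I would observe that when the coefficient space $\cE$ is one-dimensional, the ambient Hilbert space $\cE\otimes\bigotimes_{i=1}^k F^2(H_{n_i})$ is canonically identified with $\bigotimes_{i=1}^k F^2(H_{n_i})$, and under this identification the operator $I_\cE\otimes{\bf R}_{i,s}$ becomes simply ${\bf R}_{i,s}$. Consequently, the defining relations $(I_\cE\otimes{\bf R}_{i,s}^*)A(I_\cE\otimes{\bf R}_{i,t})=\delta_{st}A$ for a $k$-multi-Toeplitz operator collapse to ${\bf R}_{i,s}^*A\,{\bf R}_{i,t}=\delta_{st}A$, which is exactly the notion of $k$-multi-Toeplitz operator on $\bigotimes_{i=1}^k F^2(H_{n_i})$ appearing in the statement of the corollary. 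Thus the set of such operators is precisely $\boldsymbol{\cT_{\bf n}}$ with $\cE=\CC$.

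Next I would identify the algebraic span appearing in Theorem \ref{characterization}. Since $B(\CC)=\CC$, the minimal tensor product $B(\CC)\otimes_{min}\boldsymbol\cA_{\bf n}$ is just $\boldsymbol\cA_{\bf n}$ itself, again via the canonical identification of $\CC\otimes\bigotimes_{i=1}^k F^2(H_{n_i})$ with $\bigotimes_{i=1}^k F^2(H_{n_i})$. Hence $\{f^*g:\ f,g\in B(\CC)\otimes_{min}\boldsymbol\cA_{\bf n}\}=\boldsymbol\cA_{\bf n}^*\boldsymbol\cA_{\bf n}$, so the two closures $\text{\rm span}\{\boldsymbol\cA_{\bf n}^*\boldsymbol\cA_{\bf n}\}^{-\text{\rm SOT}}$ and $\text{\rm span}\{\boldsymbol\cA_{\bf n}^*\boldsymbol\cA_{\bf n}\}^{-\text{\rm WOT}}$ coincide with the corresponding closures in Theorem \ref{characterization} specialized to $\cE=\CC$.

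Putting these two identifications together, the equalities asserted in the corollary are exactly those of Theorem \ref{characterization} in the case $\cE=\CC$, and the result follows at once. I expect essentially no genuine obstacle here beyond bookkeeping: the only point to verify carefully is that the canonical unitary $\CC\otimes\bigotimes_{i=1}^k F^2(H_{n_i})\cong\bigotimes_{i=1}^k F^2(H_{n_i})$ simultaneously intertwines all the operators ${\bf R}_{i,j}$ and ${\bf S}_{i,j}$ and carries the algebra $B(\CC)\otimes_{min}\boldsymbol\cA_{\bf n}$ onto $\boldsymbol\cA_{\bf n}$, which is immediate. In particular, no separate argument is required, since both the SOT and WOT closure equalities are inherited directly from the more general theorem.
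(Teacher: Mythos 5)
Your proposal is correct and matches the paper's (implicit) argument: the corollary is stated without a separate proof precisely because it is the scalar case $\cE=\CC$ of Theorem \ref{characterization}, with the canonical identification $\CC\otimes\bigotimes_{i=1}^k F^2(H_{n_i})\cong\bigotimes_{i=1}^k F^2(H_{n_i})$ carrying $B(\CC)\otimes_{min}\boldsymbol\cA_{\bf n}$ onto $\boldsymbol\cA_{\bf n}$. Your bookkeeping of the intertwining is exactly what is needed and nothing more is required.
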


\bigskip

\section{Bounded free $k$-pluriharmonic functions
and  Dirichlet extension problem}

In this section,
 we show that the bounded free $k$-pluriharmonic functions on ${\bf B_n}$ are precisely the noncommutative Berezin transforms of $k$-multi-Toeplitz operators and
  solve the Dirichlet extension problem for the
 regular polyball ${\bf B}_{\bf n}$.

\begin{definition} \label{pluri-def} A function $F$ with operator-valued coefficients in $B(\cE)$ is called free $k$-pluriharmonic on the polyball ${\bf B_n}$    if it has the form
$$
F({\bf X})= \sum_{m_1\in \ZZ}\cdots \sum_{m_k\in \ZZ} \sum_{{\alpha_i,\beta_i\in \FF_{n_i}^+, i\in \{1,\ldots, k\}}\atop{|\alpha_i|=m_i^-, |\beta_i|=m_i^+}}A_{(\alpha_1,\ldots,\alpha_k;\beta_1,\ldots, \beta_k)}
\otimes {\bf X}_{1,\alpha_1}\cdots {\bf X}_{k,\alpha_k}{\bf X}_{1,\beta_1}^*\cdots {\bf X}_{k,\beta_k}^*,
$$
where the series converge in the operator norm topology for any  ${\bf X}=(X_1,\ldots, X_k)\in {\bf B_n}(\cH)$, with $X_i:=(X_{i,1},\ldots, X_{i,n_i})$, and any Hilbert space $\cH$.
\end{definition}

Due to the remark following Theorem \ref{caract2}, one can prove that the order of the series in the definition above is irrelevant. Note that any free holomorphic function on ${\bf B_n}$ is $k$-pluriharmonic. Indeed, according to \cite{Po-automorphisms-polyball}, any free holomorphic function on the polyball ${\bf B_n}$  has the form
$$
f({\bf X})= \sum_{m_1\in \NN}\cdots \sum_{m_k\in \NN} \sum_{{\alpha_i\in \FF_{n_i}^+, i\in \{1,\ldots, k\}}\atop{|\alpha_i|=m_i}}A_{(\alpha_1,\ldots,\alpha_k)}
\otimes {\bf X}_{1,\alpha_1}\cdots {\bf X}_{k,\alpha_k},\qquad {\bf X}\in {\bf B_n}(\cH),
$$
where the series converge in the operator norm topology.
 A
function $F:{\bf B}_{\bf n}(\cH)\to B(\cE\otimes \cH)$ is called bounded if
$$\|F\|:=\sup_{{\bf X} \in {\bf B}_{\bf n}(\cH)}||F({\bf X} )\|<\infty.
$$
A free $k$-pluriharmonic function is bounded if its
representation on any Hilbert space is bounded.
Denote by  ${\bf PH}_\cE^\infty({\bf B}_{\bf n})$  the set of all bounded free $k$-
pluriharmonic functions on the polyball ${\bf B}_{\bf n}$ with coefficients in
$B(\cE)$.
 For each $m=1,2,\ldots$,
we define the norms $\|\cdot
\|_m:M_m\left({\bf PH}_\cE^\infty({\bf B}_{\bf n} )\right)\to [0,\infty)$ by
setting
$$
\|[F_{ij}]_m\|_m:= \sup \|[F_{ij}({\bf X})]_m\|,
$$
where the supremum is taken over all $n$-tuples ${\bf X}\in {\bf B}_{\bf n}(\cH)$ and any Hilbert space $\cH$. It is easy to see that the norms
$\|\cdot\|_m$, $m=1,2,\ldots$, determine  an operator space
structure  on ${\bf PH}_\cE^\infty({\bf B}_{\bf n})$,
 in the sense of Ruan (see e.g. \cite{ER}).

Let  $\boldsymbol{\cT_{\bf n}}$ be the set of all  of all $k$-multi-Toeplitz operators  on $\cE\otimes\bigotimes_{i=1}^k F^2(H_{n_i})$.
 According to Theorem \ref{characterization}, we have
\begin{equation*}
\boldsymbol{\cT_{\bf n}}=\text{\rm span} \{f^*g:\ f, g\in B(\cE)\otimes_{min} \boldsymbol\cA_{\bf n}\}^{- \text{\rm SOT}},
\end{equation*}
where $\boldsymbol\cA_{\bf n}$ is the polyball algebra.
The main result of this section is the following characterization of
bounded  free $k$-pluriharmonic  functions.

 \begin{theorem}\label{bounded}
  If $F: {\bf B_n}(\cH)\to B(\cE)\otimes_{min}B(\cH)$, then the following statements are equivalent:
\begin{enumerate}
\item[(i)] $F$ is a bounded free $k$-pluriharmonic function;
\item[(ii)]
there exists $A\in \boldsymbol{\cT_{\bf n}}$ such
that
$$F({\bf X})=\boldsymbol{\cB}_{\bf X}^{ext}[A]:= (I_\cE\otimes {\bf K}^*_{\bf X}) (A\otimes I_\cH)(I_\cE\otimes {\bf K}_{\bf X}), \qquad
{\bf X}\in {\bf B}_{\bf n}(\cH).
$$
\end{enumerate}
In this case,
  $A=\text{\rm SOT-}\lim\limits_{r\to 1}F(r{\bf S}).
  $
   Moreover, the map
$$
\Phi:{\bf PH}_\cE^\infty({\bf B}_{\bf n})\to \boldsymbol{\cT_{\bf n}}\quad
\text{ defined by } \quad \Phi(F):=A
$$ is a completely   isometric isomorphism of operator spaces.
\end{theorem}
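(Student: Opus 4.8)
The plan is to route both implications through the formal Fourier series
$$\varphi({\bf S}):=\sum_{m_1\in\ZZ}\cdots\sum_{m_k\in\ZZ}\sum_{{\alpha_i,\beta_i\in\FF_{n_i}^+}\atop{|\alpha_i|=m_i^-,|\beta_i|=m_i^+}}A_{(\alpha_1,\ldots,\alpha_k;\beta_1,\ldots,\beta_k)}\otimes{\bf S}_{1,\alpha_1}\cdots{\bf S}_{k,\alpha_k}{\bf S}_{1,\beta_1}^*\cdots{\bf S}_{k,\beta_k}^*$$
attached to the coefficient family $\{A_{(\boldsymbol\alpha;\boldsymbol\beta)}\}$, and to exploit the single observation that for $r\in[0,1)$ the tuple $r{\bf S}$ lies in the open polyball ${\bf B_n}(\otimes_{i=1}^kF^2(H_{n_i}))$, so that $F(r{\bf S})=\varphi(r{\bf S})$ whenever $F$ has the pluriharmonic form with these coefficients. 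The computational heart, used in both directions, is the identity $\boldsymbol{\cB}_{\bf X}^{ext}[A]=\sum_{\boldsymbol\alpha,\boldsymbol\beta}A_{(\boldsymbol\alpha;\boldsymbol\beta)}\otimes{\bf X}_{\boldsymbol\alpha}{\bf X}_{\boldsymbol\beta}^*$ for ${\bf X}\in{\bf B_n}(\cH)$. First I would establish it on monomials: for $C\in B(\cE)$, the intertwining ${\bf K}_{\bf X}X_{i,j}^*=({\bf S}_{i,j}^*\otimes I){\bf K}_{\bf X}$ and its adjoint give $\boldsymbol{\cB}_{\bf X}^{ext}[C\otimes{\bf S}_{\boldsymbol\alpha}{\bf S}_{\boldsymbol\beta}^*]=C\otimes{\bf X}_{\boldsymbol\alpha}{\bf K}_{\bf X}^*{\bf K}_{\bf X}{\bf X}_{\boldsymbol\beta}^*=C\otimes{\bf X}_{\boldsymbol\alpha}{\bf X}_{\boldsymbol\beta}^*$, the last step using that ${\bf X}$ is pure, so ${\bf K}_{\bf X}$ is isometric.

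For (i)$\Rightarrow$(ii), assuming $F$ is bounded and pluriharmonic, I observe that $\varphi(r{\bf S})=F(r{\bf S})$ converges in the operator norm (by the definition of pluriharmonicity, since $r{\bf S}\in{\bf B_n}$) and that $\sup_{r\in[0,1)}\|\varphi(r{\bf S})\|=\sup_{r}\|F(r{\bf S})\|\le\|F\|<\infty$. Theorem \ref{caract2} then produces a $k$-multi-Toeplitz operator $A\in\boldsymbol{\cT_{\bf n}}$ whose Fourier series is exactly $\varphi({\bf S})$, i.e.\ whose coefficients are the $A_{(\boldsymbol\alpha;\boldsymbol\beta)}$. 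To identify $F$ with $\boldsymbol{\cB}_{\bf X}^{ext}[A]$ I pass to the limit in $\boldsymbol{\cB}_{\bf X}^{ext}[\varphi_A(r{\bf S})]=\sum r^{\sum_i(|\alpha_i|+|\beta_i|)}A_{(\boldsymbol\alpha;\boldsymbol\beta)}\otimes{\bf X}_{\boldsymbol\alpha}{\bf X}_{\boldsymbol\beta}^*$: since $\varphi_A(r{\bf S})$ is a norm-convergent series of monomials, norm-continuity of $\boldsymbol{\cB}_{\bf X}^{ext}$ gives the left-hand side, while $A=\text{SOT-}\lim_{r\to1}\varphi_A(r{\bf S})$ with $\sup_r\|\varphi_A(r{\bf S})\|=\|A\|$ (Theorem \ref{structure-Toeplitz}) lets me conclude $\boldsymbol{\cB}_{\bf X}^{ext}[\varphi_A(r{\bf S})]\to\boldsymbol{\cB}_{\bf X}^{ext}[A]$ weakly. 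Comparing with the norm-convergent pluriharmonic series for $F({\bf X})$ yields $F({\bf X})=\boldsymbol{\cB}_{\bf X}^{ext}[A]$. The converse (ii)$\Rightarrow$(i) is the same identity read backwards: for $A\in\boldsymbol{\cT_{\bf n}}$ the map ${\bf X}\mapsto\boldsymbol{\cB}_{\bf X}^{ext}[A]$ equals the norm-convergent pluriharmonic series above and is bounded by $\|A\|$ because $\boldsymbol{\cB}_{\bf X}^{ext}$ is completely contractive.

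The limit formula $A=\text{SOT-}\lim_{r\to1}F(r{\bf S})$ is immediate from $F(r{\bf S})=\varphi_A(r{\bf S})$ together with Theorem \ref{structure-Toeplitz}(b). For the assertion about $\Phi$, linearity and injectivity (if $A=0$ then $F\equiv\boldsymbol{\cB}_{\bf X}^{ext}[0]=0$) are clear, surjectivity is the content of (ii)$\Rightarrow$(i), and isometry follows from the two-sided estimate $\|F\|\le\|A\|$ (complete contractivity of the Berezin transform) and $\|A\|=\sup_r\|F(r{\bf S})\|\le\|F\|$. To upgrade to a complete isometry I would repeat this argument with matrix entries: an element of $M_m({\bf PH}_\cE^\infty({\bf B}_{\bf n}))$ is itself a bounded free $k$-pluriharmonic function with coefficients in $M_m(B(\cE))=B(\CC^m\otimes\cE)$, its associated multi-Toeplitz operator is $[\Phi(F_{ij})]$, and the evaluation $[F_{ij}(r{\bf S})]=[\varphi_{A_{ij}}(r{\bf S})]$ together with complete contractivity of $\boldsymbol{\cB}_{\bf X}^{ext}$ gives $\|[F_{ij}]\|_m=\|[\Phi(F_{ij})]\|$.

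The step I expect to be the main obstacle is justifying the interchange of $\boldsymbol{\cB}_{\bf X}^{ext}$ with the limit $r\to1$ in the direction (i)$\Rightarrow$(ii), namely that $\boldsymbol{\cB}_{\bf X}^{ext}[\varphi_A(r{\bf S})]\to\boldsymbol{\cB}_{\bf X}^{ext}[A]$. The point is that $\varphi_A(r{\bf S})\to A$ only in the strong operator topology, not in norm, so I must sandwich the difference $(A-\varphi_A(r{\bf S}))\otimes I_\cH$ between the fixed vectors $(I_\cE\otimes{\bf K}_{\bf X})\xi$ and observe that strong convergence with the uniform norm bound $\sup_r\|\varphi_A(r{\bf S})\|=\|A\|$ propagates through the ampliation $(\,\cdot\,)\otimes I_\cH$ and hence through the Berezin transform to weak convergence; combined with the norm convergence of the pluriharmonic series for ${\bf X}$ in the \emph{open} polyball, this forces the two limits to coincide.
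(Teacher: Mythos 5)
Your proposal is correct and follows essentially the same route as the paper: both directions are mediated by Theorem \ref{caract2}/\ref{structure-Toeplitz} (bounded radial family $F(r{\bf S})\leftrightarrow$ multi-Toeplitz operator $A$ with $A=\text{SOT-}\lim_{r\to 1}F(r{\bf S})$), the identity $F(r{\bf X})=\boldsymbol{\cB}_{\bf X}^{ext}[F(r{\bf S})]$ via the Berezin-kernel intertwining, and the limit interchange justified exactly as you anticipate — the paper phrases it as SOT-continuity of $Y\mapsto Y\otimes I_\cH$ on bounded sets, which is your uniform-bound-plus-strong-convergence argument, combined with norm continuity of $F$ at ${\bf X}$. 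The complete-isometry argument via matrix amplifications and the von Neumann inequality also matches the paper's.
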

\begin{proof} Assume that $F$ is a bounded free $k$-pluriharmonic function on
${\bf B}_{\bf n}$  and has the representation from  Definition \ref{pluri-def}. Then, for any $r\in [0,1)$,
$$
F(r{\bf S})\in \text{\rm span} \{f^*g:\ f, g\in B(\cE)\otimes_{min} \boldsymbol\cA_{\bf n}\}^{-\|\cdot\|}
$$
and, due to the noncommutative von Neumann inequality \cite{Po-poisson}, we have
$\sup_{r\in [0,1)}\|F(r{\bf S})\|=\|F\|_\infty<\infty$. According to Theorem \ref{caract2}, $F({\bf S})$ is the formal Fourier  series of a $k$-multi-Toeplitz operator $A\in B(\cE\otimes \otimes_{i=1} F^2(H_{n_i}))$ and
$A=\text{\rm SOT-}\lim_{r\to 1}F(r{\bf S})\in \boldsymbol{\cT_{\bf n}}$.
Using the properties of the noncommutative Berezin kernel on polyballs, we have
\begin{equation*}
F(r{\bf X})=(I_\cE\otimes {\bf K}^*_{\bf X})[F(r{\bf S})\otimes I_\cH](I_\cE\otimes  {\bf K}_{\bf X}),\qquad {\bf X}\in {\bf B_n}(\cH).
\end{equation*}
Since the map $Y\mapsto Y\otimes I_\cH$ is  SOT-continuous on bounded subsets of $B(\cE\otimes \otimes_{i=1} F^2(H_{n_i}))$,  we deduce that
$$
\text{\rm SOT-}\lim_{r\to 1}F(r{\bf X})=(I_\cE\otimes {\bf K}^*_{\bf X})[A\otimes I_\cH](I_\cE\otimes  {\bf K}_{\bf X})=\boldsymbol{\cB}_{\bf X}^{ext}[A].
$$
Since $F$ is continuous in the norm topology on ${\bf B_n}(\cH)$, we have  $F(r{\bf X})\to F({\bf X})$ as $r\to 1$. Consequently, the relation above implies  $F({\bf X})=\boldsymbol{\cB}_{\bf X}^{ext}[A]$, which completes the proof of the implication (i)$\implies$ (ii).

To prove that (ii)$\implies$ (i), let $A\in \cT_{\bf n}$ and
$F({\bf X}):=\boldsymbol{\cB}_{\bf X}^{ext}[A]$  for
${\bf X}\in {\bf B}_{\bf n}(\cH)$.
Since $A$ is a $k$-multi-Toeplitz  operator, Theorem \ref{structure-Toeplitz} shows that it has a formal Fourier series
$$\varphi({\bf S}):= \sum_{m_1\in \ZZ}\cdots \sum_{m_k\in \ZZ} \sum_{{\alpha_i,\beta_i\in \FF_{n_i}^+, i\in \{1,\ldots, k\}}\atop{|\alpha_i|=m_i^-, |\beta_i|=m_i^+}}A_{(\alpha_1,\ldots,\alpha_k;\beta_1,\ldots, \beta_k)}
\otimes {\bf S}_{1,\alpha_1}\cdots {\bf S}_{k,\alpha_k}{\bf S}_{1,\beta_1}^*\cdots {\bf S}_{k,\beta_k}^*
$$
 with the property that the series $\varphi(r{\bf S})$ is convergent in the operator
 norm topology to an operator in $\text{\rm span} \{f^*g:\ f, g\in B(\cE)\otimes_{min} \boldsymbol\cA_{\bf n}\}^{-\|\cdot\|}$. Moreover, we have
 $A=\text{\rm SOT-}\lim_{r\to 1} \varphi(r{\bf S})
$
and
$$
\|A\|=\sup_{r\in [0,1)}\|\varphi(r{\bf S})\|.
$$
Hence, the map ${\bf X}\mapsto \varphi({\bf X})$ is a $k$-pluriharmonic function on ${\bf B_n}(\cH)$. On the other hand,
due to Theorem \ref{caract2}, we have
$\varphi(r{\bf S})=\boldsymbol\cB^{ext}_{r{\bf S}}[A]$,
where
$$
 \boldsymbol\cB_{r {\bf S}}^{ext}[u]:=(I_{\cE}\otimes {\bf K}_{r{\bf S}}^*)(u\otimes I_{\otimes_{i=1}^kF^2(H_{n_{i}})})
(I_{\cE}\otimes {\bf K}_{r{\bf S}}), \qquad u\in B(\cE_{k}),
$$
and ${\bf K}_{r{\bf S}}$ is the noncommutative Berezin kernel associated with $r{\bf S}\in {\bf B_n}(\otimes _{i=1}^k F^2(H_{n_i}))$.
Note that
\begin{equation*}
\begin{split}
\varphi(r{\bf X})&=\boldsymbol{\cB}_{\bf X}^{ext}[\varphi(r{\bf S})]
=(I_\cE\otimes {\bf K}^*_{\bf X})[\varphi(r{\bf S})\otimes I_\cH](I_\cE\otimes  {\bf K}_{\bf X}).
\end{split}
\end{equation*}
Now, using continuity of $\varphi$ on ${\bf B_n}(\cH)$ and the fact that $A=\text{\rm SOT-}\lim_{r\to 1} \varphi(r{\bf S})$, we deduce that
$$
\varphi({\bf X})=\text{\rm SOT-}\lim_{r\to 1} \varphi(r{\bf X})=\boldsymbol{\cB}_{\bf X}^{ext}[A]=F({\bf X}),\qquad {\bf X}\in {\bf B_n}(\cH).
$$
To prove the last part of the theorem, let $[F_{ij}]_m\in M_m({\bf PH}_\cE^\infty({\bf B}_{\bf n}))$ and use the noncommutative von Neumann inequality to obtain
$$
\|[F_{ij}]_m\|=\sup_{{\bf X}\in {\bf B_n}(\cH)}\|[F_{ij}({\bf X})]_m\|
=\sup_{r\in [0,1)}\|[F_{ij}(r{\bf S})]_m\|.
$$
On the other hand, $A_{ij}:=\text{\rm SOT-}\lim_{r\to 1} F_{ij}(rS)$ is a $k$-multiToeplitz operator and
$$
 F_{ij}(rS)=(I_{\cE}\otimes {\bf K}_{r{\bf S}}^*)(A_{ij}\otimes I_{\otimes_{i=1}^kF^2(H_{n_{i}})})
(I_{\cE}\otimes {\bf K}_{r{\bf S}}).
$$
Hence, we obtain
$$
\sup_{r\in [0,1)}\|[F_{ij}(r{\bf S})]_m\|\leq \|[A_{ij}]_m\|.
$$
Since  $[A_{ij}]_m:=\text{\rm SOT-}\lim_{r\to 1} [F_{ij}(rS)]_m$, we deduce that the inequality above is in fact an equality. This shows that $\Phi$ is a completely isometric isomorphisms of operator spaces. The proof is complete.
\end{proof}

As a consequence, we can obtain the following Fatou type result concerning the boundary behaviour of bounded $k$-pluriharmonic functions.

\begin{corollary}
If $F:{\bf B}_{\bf n}(\cH)\to B(\cE)\otimes_{min} B( \cH)$ is a bounded free $k$-pluriharmonic function and ${\bf X}$ is a pure element in
${\bf B}_{\bf n}(\cH)^-$, then the limit
$$
\text{\rm SOT-}\lim_{r\to 1} F(r{\bf X})
$$
exists.
\end{corollary}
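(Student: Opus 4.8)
\emph{Proof proposal.} The plan is to reduce the statement to Theorem~\ref{bounded} together with the dilation structure of pure elements recorded in the introduction (from \cite{Po-Berezin-poly}), and then to push an SOT-limit through a \emph{fixed} sandwiching by the Berezin kernel of ${\bf X}$. By Theorem~\ref{bounded} there is a $k$-multi-Toeplitz operator $A\in\boldsymbol{\cT_{\bf n}}$ with $F=\boldsymbol{\cB}^{ext}_{\bullet}[A]$ and $A=\text{\rm SOT-}\lim_{r\to1}F(r{\bf S})$; moreover, by Theorem~\ref{structure-Toeplitz}, $F(r{\bf S})=\varphi_A(r{\bf S})$ is a series convergent in the operator norm, with $\sup_{r\in[0,1)}\|F(r{\bf S})\|=\|A\|<\infty$. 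For $r\in[0,1)$ the point $r{\bf X}$ lies in the open polyball ${\bf B_n}(\cH)$ (a standard fact for the regular polyball: scaling a closed-polyball element by $r<1$ makes the row-contraction condition strict and the defect $\boldsymbol{\Delta}_{r{\bf X}}(I)$ positive invertible), so $F(r{\bf X})=\varphi_A(r{\bf X})$ is given by a norm-convergent series.

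The key step is the identity
\[
F(r{\bf X})=\boldsymbol{\cB}^{ext}_{\bf X}[F(r{\bf S})]=(I_\cE\otimes{\bf K}_{\bf X}^*)\bigl[F(r{\bf S})\otimes I_\cH\bigr](I_\cE\otimes{\bf K}_{\bf X}),\qquad r\in[0,1),
\]
with the \emph{same} kernel ${\bf K}_{\bf X}$ for every $r$. Here I would use that ${\bf X}$ is pure: the dilation theorem quoted in the introduction identifies $\cH$ with a coinvariant subspace $\cM^\perp$, which says exactly that ${\bf K}_{\bf X}$ is an \emph{isometry} satisfying ${\bf K}_{\bf X}X_{i,j}^*=({\bf S}_{i,j}^*\otimes I){\bf K}_{\bf X}$. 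Iterating this relation and taking adjoints yields ${\bf K}_{\bf X}^*({\bf S}_{\boldsymbol\alpha}{\bf S}_{\boldsymbol\beta}^*\otimes I){\bf K}_{\bf X}={\bf X}_{\boldsymbol\alpha}{\bf X}_{\boldsymbol\beta}^*$ (invoking ${\bf K}_{\bf X}^*{\bf K}_{\bf X}=I$), and hence $\boldsymbol{\cB}^{ext}_{\bf X}[A_{(\boldsymbol\alpha;\boldsymbol\beta)}\otimes{\bf S}_{\boldsymbol\alpha}{\bf S}_{\boldsymbol\beta}^*]=A_{(\boldsymbol\alpha;\boldsymbol\beta)}\otimes{\bf X}_{\boldsymbol\alpha}{\bf X}_{\boldsymbol\beta}^*$ on each monomial. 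Since $\boldsymbol{\cB}^{ext}_{\bf X}$ is a genuine (unital, contractive, norm-continuous) map for the pure element ${\bf X}$, and $\varphi_A(r{\bf S})$ converges in norm, I would apply it termwise to recover $\varphi_A(r{\bf X})=F(r{\bf X})$, establishing the displayed identity.

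Finally I would let $r\to1$. Because the map $Y\mapsto Y\otimes I_\cH$ is SOT-continuous on bounded subsets and $F(r{\bf S})\to A$ in SOT with uniformly bounded norm, for each fixed $g\in\cE\otimes\cH$ the vector $\bigl[F(r{\bf S})\otimes I_\cH\bigr](I_\cE\otimes{\bf K}_{\bf X})g$ converges in norm to $\bigl[A\otimes I_\cH\bigr](I_\cE\otimes{\bf K}_{\bf X})g$; applying the fixed bounded operator $I_\cE\otimes{\bf K}_{\bf X}^*$ gives
\[
\text{\rm SOT-}\lim_{r\to1}F(r{\bf X})=(I_\cE\otimes{\bf K}_{\bf X}^*)\bigl[A\otimes I_\cH\bigr](I_\cE\otimes{\bf K}_{\bf X})=\boldsymbol{\cB}^{ext}_{\bf X}[A],
\]
so the limit exists. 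The main obstacle is the displayed intertwining identity: one must know that a \emph{boundary} pure element still carries a genuine isometric, intertwining Berezin kernel---precisely the content of the dilation theorem from \cite{Po-Berezin-poly}---so that $\boldsymbol{\cB}^{ext}_{\bf X}$ is well defined without any limiting procedure and may be evaluated termwise on the norm-convergent Fourier series. Once that is in place, the passage to the limit is a routine SOT-continuity argument, the gain over Theorem~\ref{bounded} being simply that the kernel ${\bf K}_{\bf X}$ is held fixed while only $F(r{\bf S})$ varies with $r$.
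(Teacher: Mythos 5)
Your proposal is correct and follows essentially the same route as the paper: both use that purity of ${\bf X}$ makes the Berezin kernel ${\bf K}_{\bf X}$ an isometry, write $F(r{\bf X})=(I_\cE\otimes {\bf K}_{\bf X}^*)[F(r{\bf S})\otimes I_\cH](I_\cE\otimes {\bf K}_{\bf X})$ with the kernel held fixed, and then pass to the SOT-limit using $A=\text{\rm SOT-}\lim_{r\to1}F(r{\bf S})$ together with the uniform bound $\sup_{0\le r<1}\|F(r{\bf S})\|<\infty$ from Theorem \ref{bounded}. The only difference is that you spell out the verification of the intertwining identity on monomials, which the paper leaves implicit.
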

\begin{proof} If ${\bf X}$ is a pure element in
${\bf B}_{\bf n}(\cH)^-$, then the noncommutative Berezin kernel ${\bf K}_{\bf X}$ is an isometry (see \cite{Po-Berezin-poly}).  Since $F$ is free $k$-pluriharmonic function on ${\bf B_n}$, we have
$$
F(r{\bf S})\in \text{\rm span} \{f^*g:\ f, g\in B(\cE)\otimes_{min} \boldsymbol\cA_{\bf n}\}^{-\|\cdot\|}
$$
and $F(r{\bf S})$ converges in the operator norm topology. Consequently,
$$
F(r{\bf X})=(I_\cE\otimes {\bf K}_{\bf X}^*)[F(r{\bf S})\otimes I_\cH](I_\cE\otimes  {\bf K}_{\bf X}).
$$
Since $F$ is bounded, Theorem  \ref{bounded} implies
 $ \text{\rm SOT-}\lim\limits_{r\to 1}F(r{\bf S})=A\in \boldsymbol{\cT_{\bf n}}$
 and $\sup_{0\leq r<1}\|F(r{\bf S})\|<\infty$. Using these facts in relation above, we conclude that $ \text{\rm SOT-}\lim\limits_{r\to 1}F(r{\bf X})$
 exists. The proof is complete.
  \end{proof}

We denote by ${\bf HP}_\cE^c({\bf B}_{\bf n}) $ the set of all
  free $k$-pluriharmonic functions on ${\bf B}_{\bf n}$ with operator-valued coefficients in $B(\cE)$, which
 have continuous extensions   (in the operator norm topology) to
the closed polyball ${\bf B}_{\bf n}(\cH)^-$, for any Hilbert space $\cH$. Throughout this section, we assume
that $\cH$ is  an  infinite dimensional Hilbert space.
In what follows we solve the Dirichlet extension problem for the regular polyballs.

\begin{theorem}\label{Dirichlet}  If $F:{\bf B}_{\bf n}(\cH)\to B(\cE)\otimes_{min} B( \cH)$, then
 the following statements are equivalent:
\begin{enumerate}
\item[(i)] $F$ is a free $k$-pluriharmonic function on ${\bf B}_{\bf n}(\cH)$
such that \ $F(r{\bf S})$ converges in the operator norm
topology, as $r\to 1$;

\item[(ii)]
there exists $A\in \boldsymbol\cP:=\text{\rm span} \{f^*g:\ f, g\in B(\cE)\otimes_{min} \boldsymbol\cA_{\bf n}\}^{-\|\cdot\|}$ such
that $$F({\bf X})=\boldsymbol{\cB}^{ext}_{\bf X}[A], \qquad {\bf X}\in {\bf B}_{\bf n}(\cH);
  $$
   \item[(iii)] $F$ is a free $k$-pluriharmonic function on ${\bf B}_{\bf n}(\cH)$ which
 has a continuous extension  (in the operator norm topology) to
the closed ball ${\bf B}_{\bf n}(\cH)^-$.

\end{enumerate}
In this case, $A=\lim\limits_{r\to 1}F(r{\bf S})$, where
the convergence is in the operator norm. Moreover, the map
$$
\Phi:{\bf PH}_\cE^c({\bf B}_{\bf n}) \to \boldsymbol\cP
 \quad \text{ defined
by } \quad \Phi(F):=A
$$ is a  completely   isometric isomorphism of
operator spaces.
\end{theorem}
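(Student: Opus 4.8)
The plan is to establish the cycle $(i)\Rightarrow(ii)\Rightarrow(iii)\Rightarrow(i)$, leaning on Theorem \ref{bounded} (which already settles the bounded case and the complete isometry in the SOT-closed setting) together with the structure results Theorem \ref{structure-Toeplitz} and Theorem \ref{caract2}. The common mechanism is that for a $k$-pluriharmonic $F$ the dilate $F(r{\bf S})$ lies in $\boldsymbol\cP$ for every $r\in[0,1)$, that $F(r{\bf X})=\boldsymbol{\cB}^{ext}_{\bf X}[F(r{\bf S})]$ on ${\bf B_n}(\cH)$, and that the extended Berezin transform is completely contractive, so that norm estimates on the operator side pass verbatim to uniform estimates on the function side. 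Throughout, the distinguished operator is $A:=\lim_{r\to1}F(r{\bf S})$.

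For $(i)\Rightarrow(ii)$: since $F$ is $k$-pluriharmonic, item (a) of Theorem \ref{structure-Toeplitz} gives $F(r{\bf S})\in\boldsymbol\cP$ for each $r$; as $\boldsymbol\cP$ is norm closed and $F(r{\bf S})\to A$ in norm by hypothesis, $A\in\boldsymbol\cP$. The intertwining property of the Berezin kernel yields $F(r{\bf X})=\boldsymbol{\cB}^{ext}_{\bf X}[F(r{\bf S})]$ on the open ball; letting $r\to1$ and using norm-continuity of $\boldsymbol{\cB}^{ext}_{\bf X}$ together with continuity of $F$ (so that $F(r{\bf X})\to F({\bf X})$ for fixed ${\bf X}$) gives $F({\bf X})=\boldsymbol{\cB}^{ext}_{\bf X}[A]$.

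The implication $(ii)\Rightarrow(iii)$ is where I expect the real work, since it requires an honest norm-continuous extension to the boundary rather than just SOT behaviour. Because $\boldsymbol\cP\subset C^*({\bf S})$, the transform $\boldsymbol{\cB}_{\bf X}[A]$ is already defined for every ${\bf X}\in{\bf B_n}(\cH)^-$, so the extension is well defined and the only issue is continuity. The strategy is uniform approximation: choose $p_\varepsilon=\sum f^*g$ with $f,g\in B(\cE)\otimes_{min}\boldsymbol\cA_{\bf n}$ polynomial and $\|A-p_\varepsilon\|<\varepsilon$. For such $p_\varepsilon$ the map ${\bf X}\mapsto\boldsymbol{\cB}_{\bf X}[p_\varepsilon]$ is literally a polynomial in the entries of ${\bf X}$ and ${\bf X}^*$, hence norm-continuous on the closed polyball. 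Since $\boldsymbol{\cB}_{\bf X}$ is unital completely positive, hence completely contractive, one has $\|\boldsymbol{\cB}_{\bf X}[A]-\boldsymbol{\cB}_{\bf X}[p_\varepsilon]\|\le\|A-p_\varepsilon\|<\varepsilon$ uniformly in ${\bf X}\in{\bf B_n}(\cH)^-$, so $F$ is a uniform limit of continuous functions and extends continuously. That $F$ is $k$-pluriharmonic on the open ball follows from Theorem \ref{bounded}, using $\boldsymbol\cP\subset\boldsymbol{\cT_{\bf n}}$.

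For $(iii)\Rightarrow(i)$ I exploit that $\cH$ is infinite dimensional to fix an identification of $\bigotimes_i F^2(H_{n_i})$ with a copy inside $\cH$, so that ${\bf S}\in{\bf B_n}(\cH)^-$; then $r{\bf S}\to{\bf S}$ in the polyball and continuity of the extension forces $F(r{\bf S})$ to converge in operator norm, which gives $(i)$ and the identification $A=\lim_{r\to1}F(r{\bf S})$. Finally, for the complete isometry of $\Phi$: surjectivity is exactly $(ii)\Rightarrow(iii)$, injectivity follows from the Fourier uniqueness in Theorem \ref{Fourier}, and for $[F_{ij}]\in M_m({\bf PH}_\cE^c({\bf B}_{\bf n}))$ the noncommutative von Neumann inequality yields $\|[F_{ij}]\|_m=\sup_{\bf X}\|[F_{ij}({\bf X})]\|=\sup_{r}\|[F_{ij}(r{\bf S})]\|=\|[A_{ij}]\|$, where the last equality uses the norm convergence $[F_{ij}(r{\bf S})]\to[A_{ij}]$ together with the monotonicity of $r\mapsto\|[F_{ij}(r{\bf S})]\|$ established in Theorem \ref{structure-Toeplitz}(b); this is the closed-ball analogue of the computation in Theorem \ref{bounded}.
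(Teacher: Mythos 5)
Your proposal is correct and follows essentially the same route as the paper: reduce to Theorems \ref{structure-Toeplitz}, \ref{caract2} and \ref{bounded}, obtain $A=\lim_{r\to 1}F(r{\bf S})\in\boldsymbol\cP$ by norm-closedness, prove boundary continuity by a uniform $\varepsilon/3$ approximation combined with complete contractivity of the (extended) Berezin transform on the closed polyball, and get the complete isometry from the von Neumann inequality exactly as in the bounded case. The only cosmetic difference is that in (ii)$\implies$(iii) you approximate $A$ by polynomials in ${\bf S}_{\boldsymbol\alpha}^*{\bf S}_{\boldsymbol\beta}$ where the paper uses the continuous function ${\bf Y}\mapsto F(r_0{\bf Y})$ as the approximant (and note that $\boldsymbol\cP$ sits in $B(\cE)\otimes_{min}C^*({\bf S})$ rather than in $C^*({\bf S})$ itself, which is why the extended transform $\boldsymbol\cB^{ext}_{\bf X}$ is the right object); neither point affects the validity of the argument.
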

\begin{proof}

Assume that item (i) holds. Then    $F$ has a representation
$$
F({\bf X})= \sum_{m_1\in \ZZ}\cdots \sum_{m_k\in \ZZ} \sum_{{\alpha_i,\beta_i\in \FF_{n_i}^+, i\in \{1,\ldots, k\}}\atop{|\alpha_i|=m_i^-, |\beta_i|=m_i^+}}A_{(\alpha_1,\ldots,\alpha_k;\beta_1,\ldots, \beta_k)}
\otimes {\bf X}_{1,\alpha_1}\cdots {\bf X}_{k,\alpha_k}{\bf X}_{1,\beta_1}^*\cdots {\bf X}_{k,\beta_k}^*,
$$
where the series converge in the operator norm topology for any  ${\bf X}=(X_1,\ldots, X_k)\in {\bf B_n}(\cH)$. Since the series defining $F(r{\bf S})$ converges in the operator topology, we deduce that
\begin{equation}
\label{AA:}
A:=\lim\limits_{r\to 1}F(r{\bf S})\in \boldsymbol\cP.
\end{equation}
On the other hand, we have
$$
\boldsymbol{\cB}^{ext}_{\bf X}[A]=(I_\cE\otimes {\bf K}_{\bf X}^*)[F(r{\bf S})\otimes I_\cH](I_\cE\otimes  {\bf K}_{\bf X})=F(r{\bf X})
$$
for any $r\in [0,1)$ and ${\bf X}\in {\bf B_n}(\cH)$. Hence, and using relation \eqref{AA:}, we deduce that
$$
\boldsymbol{\cB}^{ext}_{\bf X}[A]=\lim_{r\to 1} F(r{\bf X})=F({\bf X}),
$$
which proves item (ii). Now, we show that (ii) $\implies $ (i).
Assuming item (ii) and taking into account Theorem \ref{characterization}, one can see that $A$ is a $k$-multi-Toeplitz operator.
As in the proof of Theorem \ref{bounded}, the map defined by
$F({\bf X}):=\boldsymbol{\cB}_{\bf X}^{ext}[A]$, $ {\bf X}\in {\bf B_n}(\cH)$, is a bounded free $k$-pluriharmonic function.
Moreover, we proved that
\begin{equation}\label{Frs}
F(r{\bf S})=\boldsymbol\cB^{ext}_{r{\bf S}}[A],\qquad r\in [0,1),
\end{equation}
$F(r{\bf S})\in \boldsymbol\cP$  and also that
$A=\text{\rm SOT-}\lim_{r\to 1} F(r{\bf S})
$
and
$
\|A\|=\sup_{r\in [0,1)}\|F(r{\bf S})\|.
$
Since $A\in \boldsymbol\cP $, there is a sequence of polynomials $q_m$ in  ${\bf S}_{\boldsymbol\alpha}^*{\bf S}_{\boldsymbol\beta}$ such that $q_m\to A$ in norm as $m\to \infty$.
For any $\epsilon>0$, let $N\in \NN$ be such that $\|A-q_m\|<\frac{\epsilon}{3}$ for any $m\geq N$. Choose $\delta\in (0,1)$ such that
$\|\boldsymbol\cB^{ext}_{r{\bf S}}[q_N]-q_N\|<\frac{\epsilon}{3}$ for any $r\in (\delta,1)$. Note that
\begin{equation*}
\begin{split}
\|\boldsymbol\cB^{ext}_{r{\bf S}}[A]-A\|
&\leq\|\boldsymbol\cB^{ext}_{r{\bf S}}[A-q_N]\|+\|\boldsymbol\cB^{ext}_{r{\bf S}}[q_N]-q_N\|+\|q_N-A\|\\
&\leq \|A-q_N\|+2\frac{\epsilon}{3}<\epsilon
\end{split}
\end{equation*}
for any $r\in (\delta, 1)$. Therefore, $\lim_{r\to 1}\boldsymbol\cB^{ext}_{r{\bf S}}[A]=A$ in the norm topology. Hence and due to  \eqref{Frs}, we deduce that
$\lim_{r\to 1} F(r{\bf S})=A$ in the norm topology, which shows that item (i) holds.
Since $\cH$ is infinite dimensional, the implication (iii) $\implies $ (i) is clear. It remains to prove that
(ii) $\implies$ (iii).  We assume that (ii) holds. Then
there exists $A\in \boldsymbol\cP$ such
that $F({\bf X})=\boldsymbol{\cB}^{ext}_{\bf X}[A]$ for all ${\bf X}\in {\bf B}_{\bf n}(\cH)$.
Due to Theorem \ref{bounded}, $F$ is a bounded free $k$-pluriharmonic function on ${\bf B_n}(\cH)$.
For any ${\bf Y}\in {\bf B_n}(\cH)^-$, one can show, as in the proof of the implication (ii) $\implies$ (i), that
$\widetilde F({\bf Y}):=\lim_{r\to 1}\boldsymbol\cB^{ext}_{r{\bf Y}}[A]$
exists in the operator norm topology.
Since $\|\boldsymbol\cB^{ext}_{r{\bf Y}}[A]\|\leq \|A\|$ for any $r\in [0,1)$, we deduce that
$\|\widetilde F({\bf Y})\|\leq \|A\|$ for any ${\bf Y}\in {\bf B_n}(\cH)^-$.
Note also that  $\widetilde F$ is an extension of $F$.
It remains to show that $\widetilde F$ is continuous on ${\bf B_n}(\cH)^-$. To this end, let $\epsilon>0$ and, due to the equivalence  of (ii) and (i), we can choose $r_0\in [0,1)$ such that
$\|A-F(r_0 {\bf S})\|<\frac{\epsilon}{3}$. Since $A-F(r_0{\bf S})\in \boldsymbol\cP$, we deduce that
\begin{equation*}
\begin{split}
\|\widetilde F({\bf Y})-F(r_0{\bf Y})\|&=\|\lim_{r\to1}\boldsymbol\cB^{ext}_{r{\bf Y}}[A] -F(r_0{\bf Y})\|\\
&\leq \limsup_{r\to 1} \|\boldsymbol\cB^{ext}_{r{\bf Y}}[A] -F(r_0{\bf Y})\|\\
&\leq \|A-F(r_0{\bf Y})\|<\frac{\epsilon}{3}
\end{split}
\end{equation*}
for any ${\bf Y}\in {\bf B_n}(\cH)^-$.
Since $F$ is continuous on ${\bf B_n}(\cH)$, there exists $\delta>0$ such that
$\|F(r_0{\bf Y})-F(r_0{\bf W})\|<\frac{\epsilon}{3}$ for any ${\bf W}\in {\bf B_n}(\cH)^-$ with $\|{\bf W}-{\bf Y}\|<\delta$.
Now, note that
\begin{equation*}
\begin{split}
\|\widetilde F({\bf Y})-\widetilde F({\bf W})\|&\leq
\|\widetilde F({\bf Y})- F(r_0{\bf Y})\|+\|F(r_0{\bf Y})-F(r_0{\bf W})\| + \|F(r_0{\bf W})-\widetilde F({\bf W})\|< \epsilon
\end{split}
\end{equation*}
for any ${\bf W}\in {\bf B_n}(\cH)^-$ with $\|{\bf W}-{\bf Y}\|<\delta$.
The proof is complete.
\end{proof}

\bigskip

\section{Naimark type dilation theorem for  direct products of free semigroups}

In this section, we  provide a Naimark type dilation theorem for  direct products
  $ \FF_{n_1}^+\times \cdots \times \FF_{n_k}^+$  of unital free semigroups, and use it  to obtain a structure theorem which  characterizes the positive free $k$-pluriharmonic functions on the regular polyball, with operator valued coefficients.

Consider  the unital semigroup ${\bf F}^+_{\bf n}:=\FF_{n_1}^+\times \cdots \times \FF_{n_k}^+$  with neutral element ${\bf g}:=(g_0^1,\ldots, g_0^k)$.
Let $\boldsymbol\omega=(\omega_1,\ldots, \omega_k)$ and $\boldsymbol\gamma=(\gamma_1,\ldots, \gamma_k)$ be in $\FF_{n_1}^+\times\cdots \times \FF_{n_k}^+$. We say that $\boldsymbol\omega$ and $\boldsymbol\gamma$ are left comparable, and write  $\boldsymbol\omega\sim_{lc} \boldsymbol\gamma$, if for each $i\in \{1,\ldots, k\}$,  either one of the conditions  $\omega_i<_l \gamma_i$, $\gamma_i<_l \omega_i$, or $\omega_i=\gamma_i$ holds (see the definitions preceding Lemma \ref{inner}).  In this case, we define
\begin{equation*}
c_l^+(\boldsymbol\omega, \boldsymbol\gamma):=(c_l^+(\omega_1, \gamma_1),\ldots, c_l^+(\omega_k, \gamma_k))\quad \text{ and } \quad
c_l^-(\boldsymbol\omega, \boldsymbol\gamma):=(c_l^-(\omega_1, \gamma_1),\ldots, c_l^-(\omega_k, \gamma_k)),
\end{equation*}
where
$$
  c_l^+(\omega, \gamma):=
  \begin{cases}\omega\backslash_l\gamma&; \quad \text{ if } \gamma<_l \omega\\
   g_0&; \quad \text{ if } \omega<_l \gamma \ \text{ or } \ \omega=\gamma
   \end{cases}\quad \text{ and } \quad
   c_l^-(\omega, \gamma):=
  \begin{cases}\gamma\backslash_r\omega&; \quad \text{ if } \omega<_l \gamma\\
   g_0&; \quad \text{ if } \gamma<_l \omega \ \text{ or } \ \omega=\gamma.
   \end{cases}
   $$
 We say that $K:{\bf F}^+_{\bf n}\times {\bf F}^+_{\bf n}\to B(\cE)$  is a
{\it left $k$-multi-Toeplitz kernel} if
 $K({\bf g}, {\bf g})=I_\cE$  and
 $$
 K(\boldsymbol\sigma, \boldsymbol \omega)=\begin{cases}
 K(c_l^+(\boldsymbol\sigma, \boldsymbol \omega);c_l^-(\boldsymbol\sigma, \boldsymbol \omega))&\quad \text{ if } \boldsymbol\sigma\sim_{lc}\boldsymbol \omega\\
 0& \quad \text{otherwise}.
 \end{cases}
 $$
  The kernel $K$ is   positive semi-definite   if
for each $m\in \NN$, any choice of $h_1,\ldots h_m\in \cE$, and any $\boldsymbol\sigma^{(i)}:=(\sigma_1^{(i)},\ldots, \sigma_k^{(i)})\in {\bf F}^+_{\bf n}$, it satisfies the inequality
 $$
 \sum_{i,j=1}^m\left<K(\boldsymbol\sigma^{(i)},\boldsymbol\sigma^{(j)})h_j,h_i\right> \geq 0.
 $$
 \begin{definition}
A map $K:{\bf F}^+_{\bf n}\times {\bf F}^+_{\bf n}\to B(\cE)$ has a Naimark dilation if there exists a
$k$-tuple   of   commuting row isometries ${\bf V}=(V_1,\ldots, V_k)$, $V_i=(V_{i,1},\ldots, V_{i,n_i})$, on  a Hilbert space $\cK\supset \cE$, i.e. the non-selfadjoint algebra $Alg(V_i)$ commutes with $Alg(V_s)$ for any $i,s\in \{1,\ldots, k\}$ with $i\neq s$,  such that
$$K(\boldsymbol\sigma, \boldsymbol\omega)=P_\cE{\bf V}_{\boldsymbol \sigma}^*{\bf V}_{\boldsymbol \omega}|_\cE,\qquad \boldsymbol \sigma, \boldsymbol \omega\in {\bf F}^+_{\bf n}.
 $$
 The dilation is called minimal if \
  $\cK=\bigvee_{\boldsymbol \omega\in {\bf F}^+_{\bf n}} {\bf V}_{\boldsymbol \omega}\cE$.
\end{definition}

\begin{theorem}\label{Naimark} A map $K:{\bf F}^+_{\bf n}\times {\bf F}^+_{\bf n}\to B(\cH)$  is a  positive semi-definite
 left $k$-multi-Toeplitz kernel on the direct product ${\bf F}^+_{\bf n}$ of free semigroups
if and only if it admits a Naimark dilation.
\end{theorem}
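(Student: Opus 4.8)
The plan is to prove the two implications separately, the substantial one being a Kolmogorov--GNS type construction that manufactures the commuting row isometries out of the positive semi-definite kernel.

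For the necessity I would argue directly. If $K(\boldsymbol\sigma,\boldsymbol\omega)=P_\cE{\bf V}_{\boldsymbol\sigma}^*{\bf V}_{\boldsymbol\omega}|_\cE$ for some $k$-tuple ${\bf V}=(V_1,\dots,V_k)$ of commuting row isometries on $\cK\supset\cE$, then for any $h_1,\dots,h_m\in\cE$ and $\boldsymbol\sigma^{(1)},\dots,\boldsymbol\sigma^{(m)}\in{\bf F}^+_{\bf n}$ one has $\sum_{i,j}\langle K(\boldsymbol\sigma^{(i)},\boldsymbol\sigma^{(j)})h_j,h_i\rangle=\bigl\|\sum_j{\bf V}_{\boldsymbol\sigma^{(j)}}h_j\bigr\|^2\ge 0$, so $K$ is positive semi-definite, while $K({\bf g},{\bf g})=P_\cE I|_\cE=I_\cE$ since ${\bf V}_{\bf g}=I$. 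The left multi-Toeplitz identity then follows from computing ${\bf V}_{\boldsymbol\sigma}^*{\bf V}_{\boldsymbol\omega}$: using that each $V_i$ is a row isometry (equivalently $V_{i,s}^*V_{i,t}=\delta_{st}I$) and that $Alg(V_i)$ commutes with $Alg(V_s)$ for $i\ne s$, one reduces the product coordinatewise, analogously to Lemma \ref{inner}, obtaining $K(\boldsymbol\sigma,\boldsymbol\omega)=K(c_l^+(\boldsymbol\sigma,\boldsymbol\omega);c_l^-(\boldsymbol\sigma,\boldsymbol\omega))$ when $\boldsymbol\sigma\sim_{lc}\boldsymbol\omega$ and $0$ otherwise.

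For the sufficiency I would build the dilation space by Kolmogorov decomposition. On the algebraic span of formal symbols $e_{\boldsymbol\sigma}\otimes h$, $\boldsymbol\sigma\in{\bf F}^+_{\bf n}$, $h\in\cE$, define the sesquilinear form $\langle e_{\boldsymbol\sigma}\otimes h,\, e_{\boldsymbol\omega}\otimes g\rangle:=\langle K(\boldsymbol\omega,\boldsymbol\sigma)h,g\rangle$; positive semi-definiteness of $K$ makes this a positive semi-definite form, and quotienting by its null space and completing yields a Hilbert space $\cK$. Because $K({\bf g},{\bf g})=I_\cE$, the map $h\mapsto e_{\bf g}\otimes h$ embeds $\cE$ isometrically into $\cK$. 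I then define $V_{i,j}$ as the left-creation-type operator $V_{i,j}(e_{\boldsymbol\sigma}\otimes h):=e_{\boldsymbol\tau}\otimes h$, where $\boldsymbol\tau$ agrees with $\boldsymbol\sigma$ in all coordinates except the $i$-th, where $\tau_i=g_j^i\sigma_i$.

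The crux, and the step I expect to be the main obstacle, is verifying that these $V_{i,j}$ descend to well-defined isometries on $\cK$ with the correct row and commutation structure. The key observation is that prepending the same generator $g_j^i$ on the left to both coordinates of a pair changes neither left-comparability nor the remainders $c_l^\pm$; so writing $V_{i,j}(e_{\boldsymbol\omega}\otimes g)=e_{\boldsymbol\tau'}\otimes g$ with $\tau'_i=g_j^i\omega_i$, the left multi-Toeplitz hypothesis gives $\langle V_{i,j}(e_{\boldsymbol\sigma}\otimes h),\, V_{i,j}(e_{\boldsymbol\omega}\otimes g)\rangle=\langle K(\boldsymbol\tau',\boldsymbol\tau)h,g\rangle=\langle K(\boldsymbol\omega,\boldsymbol\sigma)h,g\rangle$, so $V_{i,j}$ preserves the form, hence is isometric and well defined modulo the null space. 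Distinct generators $g_s^i\ne g_t^i$ produce left-incomparable words, whence $V_{i,s}^*V_{i,t}=\delta_{st}I$, i.e. $V_i$ is a row isometry; and since operations in different coordinates $i\ne s$ act on disjoint slots, $V_{i,j}V_{s,l}=V_{s,l}V_{i,j}$, giving the commuting structure. Finally ${\bf V}_{\boldsymbol\omega}(e_{\bf g}\otimes h)=e_{\boldsymbol\omega}\otimes h$ yields $P_\cE{\bf V}_{\boldsymbol\sigma}^*{\bf V}_{\boldsymbol\omega}|_\cE=K(\boldsymbol\sigma,\boldsymbol\omega)$, and the vectors ${\bf V}_{\boldsymbol\omega}\cE$ span $\cK=\bigvee_{\boldsymbol\omega}{\bf V}_{\boldsymbol\omega}\cE$ by construction, giving minimality. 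Uniqueness of the minimal dilation up to isomorphism then follows the standard argument: between two minimal dilations one defines $U{\bf V}_{\boldsymbol\omega}h:={\bf V}'_{\boldsymbol\omega}h$ and checks it is a well-defined unitary intertwining the isometries, both inner products being computed by the same kernel $K$.
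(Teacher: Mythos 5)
Your proposal is correct and follows essentially the same route as the paper: the necessity is the direct computation $\sum_{i,j}\langle K(\boldsymbol\sigma^{(i)},\boldsymbol\sigma^{(j)})h_j,h_i\rangle=\|\sum_j{\bf V}_{\boldsymbol\sigma^{(j)}}h_j\|^2$ together with a coordinatewise reduction of ${\bf V}_{\boldsymbol\sigma}^*{\bf V}_{\boldsymbol\omega}$, and the sufficiency is the same Kolmogorov--GNS construction on formal monomials $e_{\boldsymbol\sigma}\otimes h$ with the kernel-induced form, the creation-type operators $V_{i,j}$, and the key observation that prepending equal (resp.\ distinct) generators in the $i$-th slot preserves (resp.\ destroys) left-comparability, which yields both well-definedness and the row-isometry relations. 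The minimality and uniqueness arguments also match the paper's.
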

\begin{proof}
Let $\cK_0$ be the vector space of all sums of tensor monomials  $\sum_{\boldsymbol\sigma\in {\bf F}_{\bf n}^+} e_{\boldsymbol\sigma}\otimes h_{\boldsymbol\sigma}$, where $\{h_{\boldsymbol\sigma}\}_{\boldsymbol\sigma\in {\bf F}_{\bf n}^+}$ is a finitely supported sequence of vectors in $\cH$.
Define the sesquilinear form $\left<\cdot, \cdot\right>_{\cK_0}$ on $\cK_0$ by setting
$$
\left<\sum_{\boldsymbol\omega\in {\bf F}_{\bf n}^+} e_{\boldsymbol\omega}\otimes h_{\boldsymbol\omega},
\sum_{\boldsymbol\sigma\in {\bf F}_{\bf n}^+} e_{\boldsymbol\sigma}\otimes h_{\boldsymbol\sigma}'\right>_{\cK_0}
:=\sum_{\boldsymbol\omega,\boldsymbol \sigma\in {\bf F}_{\bf n}^+}
\left< K(\boldsymbol \sigma, \boldsymbol\omega)h_{\boldsymbol \omega}, h'_{\boldsymbol\sigma}\right>_\cH,\qquad h_{\boldsymbol \omega}, h'_{\boldsymbol\sigma}\in \cH.
$$
Since $K$ is positive semi-definite, so is $\left<\cdot, \cdot\right>_{\cK_0}$.
Set $\cN:=\{f\in \cK_0:\ \left< f,f\right>=0\}$ and define the Hilbert space obtained by completing $\cK_0/\cN$ with the induced inner product.
For each $i\in\{1,\ldots, k\}$ and $j\in\{1,\ldots, n_i\}$, define the operator
$V_{i,j}$ on $\cK_0$ by setting
$$
V_{i,j}\left(\sum_{\boldsymbol\sigma\in {\bf F}_{\bf n}^+} e_{\boldsymbol\sigma}\otimes h_{\boldsymbol\sigma}\right)
:=\sum_{\boldsymbol\sigma=(\sigma_1,\ldots, \sigma_k)\in  {\bf F}_{\bf n}^+}
e_{\sigma_1}\otimes\cdots \otimes e_{\sigma_{i-1}}\otimes e_{g_j\sigma_i}\otimes e_{\sigma_{i+1}}\otimes \cdots \otimes e_{\sigma_k}\otimes h_{\boldsymbol \sigma}.
$$
Note that if $p\in \{1,\ldots,n_i\}$, then
\begin{equation*}
\begin{split}
&\left<V_{i,j}\left(\sum_{\boldsymbol\omega\in {\bf F}_{\bf n}^+} e_{\boldsymbol\omega}\otimes h_{\boldsymbol\omega}\right),
V_{i,p}\left(\sum_{\boldsymbol\sigma\in {\bf F}_{\bf n}^+} e_{\boldsymbol\sigma}\otimes h'_{\boldsymbol\sigma}\right)\right>_{\cK_0}\\
&\quad=
\sum_{\boldsymbol\omega, \boldsymbol\sigma\in  {\bf F}_{\bf n}^+}
\left<K(\sigma_1,\ldots, \sigma_{i-1},g_j\sigma_i,\sigma_{i+1},\ldots, \sigma_k;
\omega_1,\ldots, \omega_{i-1},g_p\omega_i,\omega_{i+1},\ldots, \omega_k)
h_{\boldsymbol\omega},h'_{\boldsymbol\sigma}\right>_\cH\\
&\quad=\begin{cases}
\sum_{\boldsymbol\omega,\boldsymbol \sigma\in {\bf F}_{\bf n}^+}
\left< K(\boldsymbol \sigma, \boldsymbol\omega)h_{\boldsymbol \omega}, h'_{\boldsymbol\sigma}\right>_\cH & \text{ if } j=p\\
0& \text{otherwise}.
\end{cases}
\end{split}
\end{equation*}
Hence and using the definition of $\left<\cdot, \cdot\right>_{\cK_0}$, we deduce that, for each $i\in \{1,\ldots, k\}$, the operators $V_{i,1},\ldots, V_{i, n_i}$ can be extended by continuity to isometries on $\cK$ with orthogonal ranges.
Note also that if $i,s\in \{1,\ldots, k\}$, $i\neq s$, $j\in \{1,\ldots, n_i\}$ and $t\in \{1,\ldots, n_s\}$, then
$$
V_{i,j}V_{st}(e_{\sigma_1}\otimes\cdots\otimes e_{\sigma_k}\otimes h)=
e_{\sigma_1}\otimes\cdots\otimes e_{\sigma_{i-1}}\otimes e_{g_j\sigma_i}\otimes e_{\sigma_{i+1}}\cdots \otimes e_{\sigma_{s-1}}\otimes e_{\sigma_{g_t \sigma_s}}\otimes e_{\sigma_{s+1}}\otimes \cdots \otimes e_{\sigma_k}\otimes h,
$$
when $i<s$. This shows that $V_{ij}V_{st}=V_{st}V_{ij}$.
Since
$$
\left<e_{\bf g}\otimes h, e_{\bf g}\otimes h'\right>_\cK=\left<K({\bf g}, {\bf g})h,h'\right>_\cH=\left<h,h'\right>_\cH,\qquad h,h'\in \cH,
$$
we can embed $\cH$ into $\cK$ setting $h=e_{\bf g}\otimes h$.
Note that, for any $\boldsymbol\omega,\boldsymbol \sigma\in {\bf F}_{\bf n}^+$ and $h,h'\in \cH$, we have
\begin{equation*}
\begin{split}
\left< V_{\boldsymbol \sigma}^* V_{\boldsymbol \omega} h,h'\right>_{\cK}&=\left< V_{\boldsymbol \omega} h,  V_{\boldsymbol \sigma}h'\right>_{\cK}\\
&=\left<e_{\boldsymbol \omega}\otimes h, e_{\boldsymbol \sigma}\otimes h'\right>_\cK\\
&=\left<K(\boldsymbol \sigma, \boldsymbol \omega)h,h'\right>_\cH.
\end{split}
\end{equation*}
Therefore,
$K(\boldsymbol\sigma, \boldsymbol\omega)=P_\cH{\bf V}_{\boldsymbol \sigma}^*{\bf V}_{\boldsymbol \omega}|_\cH$ for any $\boldsymbol \sigma, \boldsymbol \omega\in {\bf F}^+_{\bf n}$.
Since any element in $\cK_0$ is a linear combination of vectors
${\bf V}_{\boldsymbol \sigma} h$, where  $\boldsymbol \sigma\in {\bf F}_{\bf n}^+$ and $h\in \cH$, we deduce that $\cK=\bigvee_{\boldsymbol \omega\in {\bf F}^+_{\bf n}} {\bf V}_{\boldsymbol \omega}\cH$, which proves the minimality of the Naimark dilation.

Now, we prove  the converse.  Let
 ${\bf V}=(V_1,\ldots, V_n)$ and  $V_i=(V_{i,1},\ldots, V_{i,n_i})$ be  $k$-tuples   of   commuting row isometries on  a Hilbert space $\cK\supset \cH$.
 Define
 $K:{\bf F}^+_{\bf n}\times {\bf F}^+_{\bf n}\to B(\cH)$
  by setting
$K(\boldsymbol\sigma, \boldsymbol\omega)=P_\cH{\bf V}_{\boldsymbol \sigma}^*{\bf V}_{\boldsymbol \omega}|_\cH$ for any $\boldsymbol \sigma, \boldsymbol \omega\in {\bf F}^+_{\bf n}$.
Assume that $\boldsymbol\sigma, \boldsymbol\omega\in {\bf F}^+_{\bf n}$  and $\boldsymbol\sigma\sim_{lc}\boldsymbol\omega$.
 Using the commutativity of the row isometries $V_1,\ldots, V_k$, we can assume without loss of generality that there is $p\in \{1,\ldots, k\}$ such that
 $\omega_1\leq_l\sigma_1,\ldots, \omega_p\leq_l\sigma_p,\sigma_{p+1}\leq_l\omega_{p+1},\ldots, \sigma_k\leq_l\omega.$
 Since each $V_i=(V_{i,1},\ldots, V_{i,n_i})$ is an isometry, we have $V_{i,t}^*V_{i,s}=\delta_{ts}I$.
 Consequently, and using the commutativity of the row isometries, we deduce that
 \begin{equation*}
 \begin{split}
 &\left<V_{1,\omega_1}\cdots V_{k,\omega_k}h, V_{1,\sigma_1}\cdots V_{k,\sigma_k}h'\right>\\
 &=\left<V_{2,\omega_2}\cdots V_{k,\omega_k}h, V_{1,\sigma_1\backslash_l\omega_1}V_{2,\sigma_2}\cdots V_{k,\sigma_k}h'\right>\\
 &=
 \left<V_{2,\omega_2}\cdots V_{k,\omega_k}h, V_{2,\sigma_2}\cdots V_{k,\sigma_k}V_{1,\sigma_1\backslash_l\omega_1}h'\right>\\
 &= \quad \cdots \cdots \cdots \cdots \\
 &=\left< V_{p+1,\omega_{p+1}}\cdots V_{k,\omega_k}h, V_{p+1,\sigma_{p+1}}\cdots V_{k,\sigma_k}V_{1,\sigma_1\backslash_l\omega_1}\cdots V_{p,\sigma_p\backslash_l\omega_p}h'\right>\\
 &=
 \left< V_{p+1,\omega_{p+1}\backslash_l\sigma_{p+1}} V_{p+2,\omega_{p+2}}\cdots V_{k,\omega_k}h, V_{p+2,\sigma_{p+2}}\cdots V_{k,\sigma_k}V_{1,\sigma_1\backslash_l\omega_1}\cdots V_{p,\sigma_p\backslash_l\omega_p}h'\right>\\
 &=
 \left< V_{p+2,\omega_{p+2}}\cdots V_{k,\omega_k} V_{p+1,\omega_{p+1}\backslash_l\sigma_{p+1}} h, V_{p+2,\sigma_{p+2}}\cdots V_{k,\sigma_k}V_{1,\sigma_1\backslash_l\omega_1}\cdots V_{p,\sigma_p\backslash_l\omega_p}h'\right>\\
 &= \quad \cdots \cdots \cdots \cdots \\
 &=
 \left< V_{p+1,\omega_{p+1}\backslash_l\sigma_{p+1}} \cdots V_{k,\omega_k\backslash_l\sigma_{k}} h,  V_{1,\sigma_1\backslash_l\omega_1}\cdots V_{p,\sigma_p\backslash_l\omega_p}h'\right>\\
 &=
 \left< V^*_{1,\sigma_1\backslash_l\omega_1}\cdots V^*_{p,\sigma_p\backslash_l\omega_p}V_{p+1,\omega_{p+1}\backslash_l\sigma_{p+1}} \cdots V_{k,\omega_k\backslash_l\sigma_{k}} h,  h'\right>
 \end{split}
 \end{equation*}
for any $h,h'\in \cH$.
Therefore, for any $\boldsymbol\sigma,\boldsymbol\omega\in {\bf F}_{\bf n}^+$,  we have
 \begin{equation*}
 \begin{split}
 K(\boldsymbol\sigma, \boldsymbol \omega)&=P_\cH{\bf V}_{\boldsymbol \sigma}^*{\bf V}_{\boldsymbol \omega}|_\cH\\
 &=
 \begin{cases}
 P_\cH{\bf V}_{c_l^+(\boldsymbol\sigma, \boldsymbol \omega)}^* {\bf V}_{c_l^-(\boldsymbol\sigma, \boldsymbol \omega)}|_\cH&\quad \text{ if } \boldsymbol\sigma\sim_{lc}\boldsymbol \omega\\
 0& \quad \text{otherwise},
 \end{cases}\\
 &=
 \begin{cases}
 K(c_l^+(\boldsymbol\sigma, \boldsymbol \omega);c_l^-(\boldsymbol\sigma, \boldsymbol \omega))&\quad \text{ if } \boldsymbol\sigma\sim_{lc}\boldsymbol \omega\\
 0& \quad \text{otherwise},
 \end{cases}
 \end{split}
 \end{equation*}
and $K({\bf g}, {\bf g})=I_\cH$.
This shows that $K$ is a left $k$-multi-Toeplitz kernel on  $ {\bf F}_{\bf n}^+$.
On the other hand, for any finitely supported sequence $\{h_{\boldsymbol\omega}\}_{\boldsymbol\omega\in {\bf F}_{\bf n}^+}$ of elements in $\cH$, we have
\begin{equation*}
\begin{split}
\sum_{\boldsymbol\omega,\boldsymbol \sigma\in {\bf F}_{\bf n}^+}
\left< K(\boldsymbol \sigma, \boldsymbol\omega)h_{\boldsymbol \omega}, h_{\boldsymbol\sigma}\right>
&=
\sum_{\boldsymbol\omega,\boldsymbol \sigma\in {\bf F}_{\bf n}^+}
\left< P_\cH{\bf V}_{\boldsymbol \sigma}^*{\bf V}_{\boldsymbol \omega}|_{\cH}h_{\boldsymbol \omega}, h_{\boldsymbol\sigma}\right>\\
&=\left\|\sum_{\boldsymbol \omega\in {\bf F}_{\bf n}^+} {\bf V}_{\boldsymbol \omega}h_{\boldsymbol \omega}\right\|^2\geq 0.
\end{split}
\end{equation*}
Therefore, $K$ is a  positive semi-definite
 left $k$-multi-Toeplitz kernel on ${\bf F}^+_{\bf n}$. The proof is complete.
\end{proof}

We remark that the Naimark dilation provided in Theorem \ref{Naimark} is minimal.
To prove the uniqueness of the minimal Naimark dilation, let
${\bf V'}=(V'_1,\ldots, V'_n)$, $V'_i=(V'_{i,1},\ldots, V'_{i,n_i})$, be a $k$-tuple of   commuting row isometries  on  a Hilbert space $\cK'\supset \cH$ such that
$K(\sigma, \omega)=P_\cH^{\cK'}({\bf V}'_{\boldsymbol \sigma})^*{\bf V}'_{\boldsymbol \omega}|_\cH$ for any $\boldsymbol \sigma, \boldsymbol \omega\in {\bf F}^+_{\bf n}$  and with the property that $\cK=\bigvee_{\boldsymbol \omega\in {\bf F}^+_{\bf n}} {\bf V}'_{\boldsymbol \omega}\cH$.
For any $x,y\in \cH$, we have
\begin{equation*}
\begin{split}
\left<{\bf V}_{\boldsymbol\omega}x,{\bf V}_{\boldsymbol\sigma}y\right>_{\cK}
&=\left<K(\boldsymbol\sigma, \boldsymbol\omega)x,y\right>_\cH\\
&=\left<P_\cH^{\cK'}({\bf V}'_{\boldsymbol \sigma})^*{\bf V}'_{\boldsymbol \omega}x,y\right>_{\cK'}=\left<{\bf V}'_{\boldsymbol\omega}x,{\bf V}'_{\boldsymbol\sigma}y\right>_{\cK'}.
\end{split}
\end{equation*}
Consequently, the map
$$
W\left(\sum_{\boldsymbol\sigma\in {\bf F}_{\bf n}^+}{\bf V}_{\boldsymbol \sigma}h_{\boldsymbol\sigma}\right):=\sum_{\boldsymbol\sigma\in {\bf F}_{\bf n}^+}{\bf V}'_{\boldsymbol \sigma}h_{\boldsymbol\sigma},
$$
 where $\{h_{\boldsymbol\sigma}\}_{\boldsymbol\sigma\in {\bf F}_{\bf n}^+}$ is any finitely supported sequence of vectors in $\cH$,
is well-defined. Due to the minimality of the spaces $\cK$ and $\cK'$, the map  extends to a unitary operator $W$ from $\cK$ onto $\cK'$.
Note also that $WV_{i,j}=V_{i,j}' W$ for any $i\in \{1,\ldots, k\}$ and $j\in \{1,\ldots, n_i\}$, which completes the proof for the uniqueness of the minimal Naimark dilation.

We should mention that there is a dual   Naimark type dilation for positive semi-definite right $k$-multi-Toeplitz kernels.
A  kernel $\Gamma:{\bf F}^+_{\bf n}\times {\bf F}^+_{\bf n} \to B(\cE)$ is called {\it right $k$-multi-Toeplitz} if
 $\Gamma({\bf g}, {\bf g})=I_\cE$  and
 $$
 \Gamma(\boldsymbol\sigma, \boldsymbol \omega)=\begin{cases}
 \Gamma(c_r^+(\boldsymbol\sigma, \boldsymbol \omega);c_r^-(\boldsymbol\sigma, \boldsymbol \omega))&\quad \text{ if } \boldsymbol\sigma\sim_{rc}\boldsymbol \omega\\
 0& \quad \text{otherwise},
 \end{cases}
 $$
where $c_r^+(\boldsymbol\sigma, \boldsymbol \omega),c_r^-(\boldsymbol\sigma, \boldsymbol \omega)$ are defined by relation \eqref{c+}.
We say that $\Gamma$ has a Naimark dilation if there exists a
$k$-tuple   ${\bf W}=(W_1,\ldots, W_n)$, $W_i=(W_{i,1},\ldots, W_{i,n_i})$, of   commuting row isometries on  a Hilbert space $\cK\supset \cE$ such that
$\Gamma(\widetilde{\boldsymbol\sigma}, \widetilde{\boldsymbol\omega})=P_\cE{\bf W}_{\boldsymbol \sigma}^*{\bf W}_{\boldsymbol \omega}|_\cE$ for any $\boldsymbol \sigma, \boldsymbol \omega\in {\bf F}^+_{\bf n}$.

\begin{theorem} \label{Naimark2}
A map $\Gamma:{\bf F}^+_{\bf n}\times {\bf F}^+_{\bf n}\to B(\cH)$  is a  positive semi-definite
 right $k$-multi-Toeplitz kernel on ${\bf F}^+_{\bf n}$
if and only if it admits a   Naimark dilation. In this case, there is a  minimal dilation which is uniquely  determined  up to an isomorphism.
\end{theorem}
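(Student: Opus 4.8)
The plan is to deduce Theorem~\ref{Naimark2} from the already-established Theorem~\ref{Naimark} by means of the reverse involution $\boldsymbol\sigma\mapsto\widetilde{\boldsymbol\sigma}$ on ${\bf F}^+_{\bf n}$, which interchanges the left and right comparability structures. Given a right $k$-multi-Toeplitz kernel $\Gamma:{\bf F}^+_{\bf n}\times{\bf F}^+_{\bf n}\to B(\cH)$, I would introduce the associated kernel
\[
K(\boldsymbol\sigma,\boldsymbol\omega):=\Gamma(\widetilde{\boldsymbol\sigma},\widetilde{\boldsymbol\omega}),\qquad \boldsymbol\sigma,\boldsymbol\omega\in{\bf F}^+_{\bf n},
\]
and show that $K$ is a positive semi-definite \emph{left} $k$-multi-Toeplitz kernel precisely when $\Gamma$ is a positive semi-definite right one.

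To verify that $K$ is left $k$-multi-Toeplitz, I would use the identities recorded before Lemma~\ref{inner}: $\boldsymbol\sigma\sim_{lc}\boldsymbol\omega$ if and only if $\widetilde{\boldsymbol\sigma}\sim_{rc}\widetilde{\boldsymbol\omega}$, together with $\widetilde{c_r^+(\boldsymbol\sigma,\boldsymbol\omega)}=c_l^+(\widetilde{\boldsymbol\sigma},\widetilde{\boldsymbol\omega})$ and the analogous relation for $c_r^-$. Applying the reverse map to these and using that it is an involution yields $c_r^{\pm}(\widetilde{\boldsymbol\sigma},\widetilde{\boldsymbol\omega})=\widetilde{c_l^{\pm}(\boldsymbol\sigma,\boldsymbol\omega)}$. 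Hence, when $\boldsymbol\sigma\sim_{lc}\boldsymbol\omega$, the right $k$-multi-Toeplitz property of $\Gamma$ gives
\[
K(\boldsymbol\sigma,\boldsymbol\omega)=\Gamma\bigl(c_r^+(\widetilde{\boldsymbol\sigma},\widetilde{\boldsymbol\omega});c_r^-(\widetilde{\boldsymbol\sigma},\widetilde{\boldsymbol\omega})\bigr)=\Gamma\bigl(\widetilde{c_l^+(\boldsymbol\sigma,\boldsymbol\omega)};\widetilde{c_l^-(\boldsymbol\sigma,\boldsymbol\omega)}\bigr)=K\bigl(c_l^+(\boldsymbol\sigma,\boldsymbol\omega);c_l^-(\boldsymbol\sigma,\boldsymbol\omega)\bigr),
\]
while $K(\boldsymbol\sigma,\boldsymbol\omega)=0$ when $\boldsymbol\sigma\not\sim_{lc}\boldsymbol\omega$, and $K({\bf g},{\bf g})=\Gamma({\bf g},{\bf g})=I_\cH$ since $\widetilde{\bf g}={\bf g}$. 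For positivity, since $\boldsymbol\sigma\mapsto\widetilde{\boldsymbol\sigma}$ is a bijection of ${\bf F}^+_{\bf n}$, relabelling $\boldsymbol\tau^{(i)}:=\widetilde{\boldsymbol\sigma^{(i)}}$ turns the quadratic form $\sum_{i,j}\langle K(\boldsymbol\sigma^{(i)},\boldsymbol\sigma^{(j)})h_j,h_i\rangle$ into $\sum_{i,j}\langle\Gamma(\boldsymbol\tau^{(i)},\boldsymbol\tau^{(j)})h_j,h_i\rangle$, so the two kernels are simultaneously positive semi-definite.

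With these reductions in place I would invoke Theorem~\ref{Naimark}: $K$ admits a minimal Naimark dilation, i.e.\ there is a $k$-tuple ${\bf V}=(V_1,\dots,V_k)$ of commuting row isometries on a Hilbert space $\cK\supset\cH$ with $K(\boldsymbol\sigma,\boldsymbol\omega)=P_\cH{\bf V}_{\boldsymbol\sigma}^*{\bf V}_{\boldsymbol\omega}|_\cH$ and $\cK=\bigvee_{\boldsymbol\omega\in{\bf F}^+_{\bf n}}{\bf V}_{\boldsymbol\omega}\cH$, unique up to isomorphism. Setting ${\bf W}:={\bf V}$ and unwinding the definition of $K$ gives $\Gamma(\widetilde{\boldsymbol\sigma},\widetilde{\boldsymbol\omega})=P_\cH{\bf W}_{\boldsymbol\sigma}^*{\bf W}_{\boldsymbol\omega}|_\cH$ for all $\boldsymbol\sigma,\boldsymbol\omega$, which is exactly the Naimark dilation required for $\Gamma$, and the minimality and uniqueness statements transfer verbatim.

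The converse direction is identical: a Naimark dilation of $\Gamma$ is literally a Naimark dilation of $K$, so the converse half of Theorem~\ref{Naimark} forces $K$, hence $\Gamma$, to be positive semi-definite. The only real work is the bookkeeping with the reverse map in the second paragraph; once the conjugation identities $c_r^{\pm}(\widetilde{\boldsymbol\sigma},\widetilde{\boldsymbol\omega})=\widetilde{c_l^{\pm}(\boldsymbol\sigma,\boldsymbol\omega)}$ are in hand the argument is purely formal. Alternatively, one could repeat the GNS-type construction of Theorem~\ref{Naimark} directly, building $\cK$ from tensor monomials and defining the isometries via right insertion $e_{\sigma_i}\mapsto e_{\sigma_i g_j}$; the reverse-map reduction has the advantage of avoiding any duplication of that computation.
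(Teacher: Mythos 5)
Your proposal is correct and is essentially the route the paper takes: the paper's sketch re-runs the construction of Theorem~\ref{Naimark} on tensor monomials indexed by the reversed words and verifies, via the same identities $\widetilde{c_r^{\pm}(\boldsymbol\sigma,\boldsymbol\omega)}=c_l^{\pm}(\widetilde{\boldsymbol\sigma},\widetilde{\boldsymbol\omega})$, that this produces a dilation of $\Gamma(\widetilde{\boldsymbol\sigma},\widetilde{\boldsymbol\omega})$, which is exactly applying Theorem~\ref{Naimark} to the conjugated kernel $K(\boldsymbol\sigma,\boldsymbol\omega):=\Gamma(\widetilde{\boldsymbol\sigma},\widetilde{\boldsymbol\omega})$. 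Your packaging of this as an explicit reduction (checking that $K$ is a positive semi-definite left kernel iff $\Gamma$ is a positive semi-definite right one, then quoting Theorem~\ref{Naimark} together with its minimality and uniqueness statements) is the same argument, cleanly organized.
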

\begin{proof} We only sketch the proof which is very similar to that of Theorem \ref{Naimark}, pointing out the differences.
First, $\cK_0$ is  the vector space of all sums of tensor monomials  $\sum_{\boldsymbol\sigma\in {\bf F}_{\bf n}^+} e_{\widetilde{\boldsymbol\sigma}}\otimes h_{\boldsymbol\sigma}$, where $\{h_{\boldsymbol\sigma}\}_{\boldsymbol\sigma\in {\bf F}_{\bf n}^+}$ is a finitely supported sequence of vectors in $\cH$, while
the sesquilinear form $\left<\cdot, \cdot\right>_{\cK_0}$ on $\cK_0$  is define by setting
$$
\left<\sum_{\boldsymbol\omega\in {\bf F}_{\bf n}^+} e_{\widetilde{\boldsymbol\omega}}\otimes h_{\boldsymbol\omega},
\sum_{\boldsymbol\sigma\in {\bf F}_{\bf n}^+} e_{\widetilde{\boldsymbol\sigma}}\otimes h_{\boldsymbol\sigma}'\right>_{\cK_0}
:=\sum_{\boldsymbol\omega,\boldsymbol \sigma\in {\bf F}_{\bf n}^+}
\left< \Gamma(\boldsymbol \sigma, \boldsymbol\omega)h_{\boldsymbol \omega}, h'_{\boldsymbol\sigma}\right>_\cH.
$$
For each $i\in\{1,\ldots, k\}$ and $j\in\{1,\ldots, n_i\}$,  we define the operator
$W_{i,j}$ on $\cK_0$ by setting
$$
W_{i,j}\left(\sum_{\boldsymbol\sigma\in {\bf F}_{\bf n}^+} e_{\widetilde{\boldsymbol\sigma}}\otimes h_{\boldsymbol\sigma}\right)
:=\sum_{\boldsymbol\sigma=(\sigma_1,\ldots, \sigma_k)\in  {\bf F}_{\bf n}^+}
e_{\widetilde{\sigma}_1}\otimes\cdots \otimes e_{\widetilde{\sigma}_{i-1}}\otimes e_{g_j\widetilde{\sigma}_i}\otimes e_{\widetilde{\sigma}_{i+1}}\otimes \cdots \otimes e_{\widetilde{\sigma}_k}\otimes h_{\boldsymbol \sigma}.
$$
Taking into account the   relations
$$
\widetilde{c_r^+(\boldsymbol\sigma, \boldsymbol \omega)}=
c_l^+(\widetilde{\boldsymbol\sigma}, \widetilde{\boldsymbol \omega})\quad \text{ and } \quad
\widetilde{c_r^-(\boldsymbol\sigma, \boldsymbol \omega)}=
c_l^-(\widetilde{\boldsymbol\sigma}, \widetilde{\boldsymbol \omega}),
$$
we deduce that
\begin{equation*}
 \begin{split}
 P_\cH{\bf W}_{\boldsymbol \sigma}^*{\bf W}_{\boldsymbol \omega}|_\cH
 &=
 \begin{cases}
 P_\cH{\bf W}_{c_l^+(\boldsymbol\sigma, \boldsymbol \omega)}^* {\bf W}_{c_l^-(\boldsymbol\sigma, \boldsymbol \omega)}|_\cH &\quad \text{ if } \boldsymbol\sigma\sim_{lc}\boldsymbol \omega\\
 0& \quad \text{ otherwise}
 \end{cases}\\
 &=
 \begin{cases}
 \Gamma(\widetilde{c_l^+(\boldsymbol\sigma, \boldsymbol \omega)};\widetilde{c_l^-(\boldsymbol\sigma, \boldsymbol \omega))}&\quad \text{ if } \boldsymbol\sigma\sim_{lc}\boldsymbol \omega\\
 0& \quad \text{ otherwise}
 \end{cases}\\
 &=
 \begin{cases}
 \Gamma({c_r^+(\widetilde{\boldsymbol\sigma}, \widetilde{\boldsymbol \omega})};{c_r^-(\widetilde{\boldsymbol\sigma}, \widetilde{\boldsymbol \omega}))}&\quad \text{ if } \widetilde{\boldsymbol\sigma}\sim_{rc}\widetilde{\boldsymbol \omega}\\
 0& \quad \text{ otherwise}
 \end{cases}\\
 &=\Gamma(\widetilde{\boldsymbol\sigma},\widetilde{\boldsymbol\omega})
 \end{split}
 \end{equation*}
 for any $\boldsymbol \sigma, \boldsymbol \omega\in {\bf F}^+_{\bf n}$.
The rest of the proof is similar to that  of  Theorem \ref{Naimark}.  We leave it to the reader.
 \end{proof}

Let   $F$ be a free $k$-pluriharmonic on the polyball ${\bf B_n}$ with  operator-valued coefficients in $B(\cE)$      with representation
\begin{equation} \label{repre}
F({\bf X})= \sum_{m_1\in \ZZ}\cdots \sum_{m_k\in \ZZ} \sum_{{\alpha_i,\beta_i\in \FF_{n_i}^+, i\in \{1,\ldots, k\}}\atop{|\alpha_i|=m_i^-, |\beta_i|=m_i^+}}A_{(\alpha_1,\ldots,\alpha_k;\beta_1,\ldots, \beta_k)}
\otimes {\bf X}_{1,\alpha_1}\cdots {\bf X}_{k,\alpha_k}{\bf X}_{1,\beta_1}^*\cdots {\bf X}_{k,\beta_k}^*,
\end{equation}
where the series converge in the operator norm topology for any  ${\bf X}=(X_1,\ldots, X_k)\in {\bf B_n}(\cH)$, with $X_i:=(X_{i,1},\ldots, X_{i,n_i})$, and any Hilbert space $\cH$.
We associate with $F$ the kernel
$\Gamma_F:{\bf F}^+_{\bf n}\times {\bf F}^+_{\bf n} \to B(\cE)$  given by
\begin{equation}
\label{right-To}
 \Gamma_F(\boldsymbol\sigma, \boldsymbol \omega):=\begin{cases}
 A_{(c_r^+(\boldsymbol\sigma, \boldsymbol \omega);c_r^-(\boldsymbol\sigma, \boldsymbol \omega)})&\quad \text{ if } \boldsymbol\sigma\sim_{rc}\boldsymbol \omega\\
 0& \quad \text{otherwise}.
 \end{cases}
 \end{equation}
 One can easily see that $\Gamma_F$ is a right $k$-multi-Toeplitz kernel
 on  ${\bf F}_{\bf n}^+$.
 In what follows, we prove a Schur type results for positive $k$-pluriharmonic function in polyballs.

 \begin{theorem}\label{pluri-positive} Let $F$ be a $k$-pluriharmonic function on the regular polyball ${\bf B_n}$, with coefficients in $B(\cE)$. Then $F$ is positive on ${\bf B_n}$ if and only if the kernel $\Gamma_{F_r}$ is positive semi-definite for any $r\in [0,1)$, where $F_r$ stands for  the mapping ${\bf X}\mapsto F(r{\bf X})$.
 \end{theorem}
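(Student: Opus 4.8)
The plan is to tie the positive semi-definiteness of the kernel $\Gamma_{F_r}$ to the positivity of the single operator $F(r{\bf S})$ acting on $\cE\otimes\bigotimes_{i=1}^k F^2(H_{n_i})$, and then to propagate positivity from the universal model ${\bf S}$ to an arbitrary ${\bf X}\in{\bf B_n}(\cH)$ by means of the noncommutative Berezin transform. For each fixed $r\in[0,1)$, the element $r{\bf S}$ lies in the open polyball, so the norm-convergent series \eqref{repre} defines a bounded operator $F(r{\bf S})$; since it is a norm limit of polynomials whose monomials ${\bf S}_{\boldsymbol\alpha}{\bf S}_{\boldsymbol\beta}^*$ are $k$-multi-Toeplitz, Theorem \ref{characterization} guarantees that $F(r{\bf S})$ lies in $\boldsymbol{\cT_{\bf n}}$. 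Thus Theorem \ref{Fourier} applies and $F(r{\bf S})$ is recovered from its Fourier coefficients, which by \eqref{AA}--\eqref{xy} are exactly $r^{\sum_i(|\alpha_i|+|\beta_i|)}A_{(\boldsymbol\alpha;\boldsymbol\beta)}$.

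The central step is the identification of matrix entries. Using Lemma \ref{inner} together with the computation in the proof of Theorem \ref{Fourier}, I would show
\[
\left<F(r{\bf S})(h\otimes e_{\boldsymbol\gamma}),\ h'\otimes e_{\boldsymbol\omega}\right>
=\left<\Gamma_{F_r}(\boldsymbol\omega,\boldsymbol\gamma)h,h'\right>,\qquad h,h'\in\cE,
\]
for all $\boldsymbol\gamma,\boldsymbol\omega\in{\bf F}^+_{\bf n}$, where $\Gamma_{F_r}$ is the kernel \eqref{right-To} associated with $F_r$. Since $\{e_{\boldsymbol\gamma}\}$ is an orthonormal basis and finitely supported sums $\sum_{\boldsymbol\gamma}h_{\boldsymbol\gamma}\otimes e_{\boldsymbol\gamma}$ are dense, evaluating the quadratic form of $F(r{\bf S})$ on such a sum yields precisely $\sum_{\boldsymbol\gamma,\boldsymbol\omega}\left<\Gamma_{F_r}(\boldsymbol\omega,\boldsymbol\gamma)h_{\boldsymbol\gamma},h_{\boldsymbol\omega}\right>$. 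Hence $F(r{\bf S})\geq 0$ as an operator if and only if the kernel $\Gamma_{F_r}$ is positive semi-definite.

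With this equivalence in hand the two implications follow quickly. If $F$ is positive on ${\bf B_n}$, then in particular $F(r{\bf S})\geq 0$ for every $r\in[0,1)$ because $r{\bf S}\in{\bf B_n}(\otimes_{i=1}^k F^2(H_{n_i}))$, whence $\Gamma_{F_r}$ is positive semi-definite. Conversely, suppose $\Gamma_{F_r}$ is positive semi-definite for all $r$, so that $F(r{\bf S})\geq 0$. As in the proof of Theorem \ref{bounded}, applying the Berezin transform termwise to the norm-convergent series for $F(r{\bf S})$ gives the identity
\[
F(r{\bf X})=(I_\cE\otimes{\bf K}^*_{\bf X})\bigl[F(r{\bf S})\otimes I_\cH\bigr](I_\cE\otimes{\bf K}_{\bf X}),\qquad {\bf X}\in{\bf B_n}(\cH),
\]
which exhibits $F(r{\bf X})$ as a completely positive conjugation applied to $F(r{\bf S})\geq 0$; hence $F(r{\bf X})\geq 0$ for every ${\bf X}$ and every $r\in[0,1)$. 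Letting $r\to 1$ and using the norm continuity of $F$ on the polyball (so that $F(r{\bf X})\to F({\bf X})$), positivity passes to the limit and $F({\bf X})\geq 0$.

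The main obstacle will be the matrix-entry bookkeeping of the central step: one must verify carefully, via the comparability relation $\sim_{rc}$ and the words $c_r^\pm$ of \eqref{c+}, that the Fourier coefficients of $F(r{\bf S})$ reproduce the kernel \eqref{right-To} exactly, including the correct powers of $r$ and the vanishing of the inner product off the comparable locus (Lemma \ref{inner}). Everything else—the density argument identifying operator positivity with the kernel condition, the complete positivity of the Berezin conjugation, and the limit $r\to 1$—is routine once this identification is in place.
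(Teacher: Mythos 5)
Your proposal is correct and follows essentially the same route as the paper: the key step in both is the matrix-entry identification $\left<F(r{\bf S})(h\otimes e_{\boldsymbol\gamma}),h'\otimes e_{\boldsymbol\omega}\right>=\left<\Gamma_{F_r}(\boldsymbol\omega,\boldsymbol\gamma)h,h'\right>$ via Lemma \ref{inner}, reducing the kernel condition to $F(r{\bf S})\geq 0$, followed by transfer of positivity through a completely positive Berezin-type map. The only (immaterial) difference is at the end: the paper fixes ${\bf X}$, picks $r$ with $\tfrac{1}{r}{\bf X}\in{\bf B_n}(\cH)$, and applies $id\otimes\boldsymbol\cB_{\frac{1}{r}{\bf X}}$ to $F(r{\bf S})$ to land directly on $F({\bf X})$ with no limit, whereas you conjugate by ${\bf K}_{\bf X}$ to get $F(r{\bf X})\geq 0$ and then let $r\to 1$ using norm continuity of $F$ — both are valid.
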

 \begin{proof} For every  $\boldsymbol \gamma:=(\gamma_1,\ldots, \gamma_k)\in \FF_{n_1}^+\times \cdots \times \FF_{n_k}^+$, we set  $e_{\boldsymbol\gamma}:=e_{\gamma_1}^1\otimes \cdots \otimes e_{\gamma_k}^k$  and
 ${\bf S}_{\boldsymbol\gamma}:={\bf S}_{1,\gamma_1}\cdots {\bf S}_{k,\gamma_k}$. Let $F$ be  a $k$-pluriharmonic function with representation \eqref{repre}.
  Taking into account Lemma \ref{inner}, for each $\boldsymbol \gamma, \boldsymbol \omega \in \FF_{n_1}^+\times \cdots \times \FF_{n_k}^+$,   $r\in [0,1)$, and $h,h'\in \cE$, we have
 \begin{equation*}
 \begin{split}
 &\left<F(r{\bf S})(h\otimes e_{\boldsymbol\gamma}), h'\otimes e_{\boldsymbol\omega}\right>\\
 &\qquad =  \sum_{m_1\in \ZZ}\cdots \sum_{m_k\in \ZZ} \sum_{{\alpha_i,\beta_i\in \FF_{n_i}^+, i\in \{1,\ldots, k\}}\atop{|\alpha_i|=m_i^-, |\beta_i|=m_i^+}}
 \left<A_{(\alpha_1,\ldots,\alpha_k;\beta_1,\ldots, \beta_k)}h,h'\right>
 r^{\sum_{i=1}^k (|\alpha_i|+|\beta_i|} \left<{\bf S}_{\boldsymbol \alpha}
 {\bf S}_{\boldsymbol \beta}^* e_{\boldsymbol\gamma},e_{\boldsymbol\omega}\right>\\
 &\qquad =
 \begin{cases}
 r^{\sum_{i=1}^k (|\alpha_i|+|\beta_i|} \left<A_{(c_r^+(\boldsymbol\omega, \boldsymbol \gamma); c_r^-(\boldsymbol\omega, \boldsymbol \gamma)})h, h'\right>&\quad \text{ if } \boldsymbol\omega\sim_{rc}\boldsymbol \gamma\\
 0& \quad \text{otherwise}
 \end{cases}
 \\
 &\qquad=
 \left<\Gamma_{F_r}(\boldsymbol\omega, \boldsymbol \gamma)h,h'\right>.
\end{split}
 \end{equation*}
 Hence, we  deduce that the kernel $\Gamma_{F_r}$ is positive semi-definite for any $r\in [0,1)$ if and only if
 $F(r{\bf S}) \geq 0$ for any $r\in[0,1)$.
 Now,  let ${\bf X}\in {\bf B_n}(\cH)$ and let $r\in (0,1)$ be such that $\frac{1}{r}{\bf X}\in {\bf B_n}(\cH)$.
Since the noncommutative Berezin transform
$\boldsymbol\cB_{\frac{1}{r}{\bf X}}$ is continuous in the operator norm and completely positive, so is $id\otimes \boldsymbol\cB_{\frac{1}{r}{\bf X}}$. Consequently,  we obtain
$$
F({\bf X})=(id\otimes \boldsymbol\cB_{\frac{1}{r}{\bf X}})[F(r{\bf S})]\geq 0, \qquad {\bf X}\in {\bf B_n}(\cH).
$$
Note that if $F$ is positive on ${\bf B_n}$, then $F(r{\bf S})\geq 0$ for any $r\in [0,1)$.
 This  completes the proof.
 \end{proof}

\begin{corollary} Let $f:{\bf B_n}(\cH)\to B(\cE)\otimes_{min}B(\cH)$ be a free holomorphic function.  Then  the following statements are equivalent.
\begin{enumerate}
\item[(i)]$\Re f\geq 0$ on the polyball ${\bf B_n}$;
 \item[(ii)] $\Re f(r{\bf S})\geq 0$ for any $r\in [0,1)$;
 \item[(iii)] the right $k$-multi Toeplitz kernel
  $\Gamma_{\Re f_r}$ is positive semidefinite   for any $r\in [0,1)$.
  \end{enumerate}
\end{corollary}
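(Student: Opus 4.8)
The plan is to obtain this corollary as a direct specialization of Theorem \ref{pluri-positive} applied to the free $k$-pluriharmonic function $F:=\Re f$.

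First I would check that $\Re f$ genuinely belongs to the class of Definition \ref{pluri-def}. Since $f$ is free holomorphic it has the form
$$
f({\bf X})= \sum_{m_1\in \NN}\cdots \sum_{m_k\in \NN} \sum_{{\alpha_i\in \FF_{n_i}^+, i\in \{1,\ldots, k\}}\atop{|\alpha_i|=m_i}}A_{(\alpha_1,\ldots,\alpha_k)}\otimes {\bf X}_{1,\alpha_1}\cdots {\bf X}_{k,\alpha_k},
$$
with the series convergent in the operator norm topology and $A_{(\alpha_1,\ldots,\alpha_k)}\in B(\cE)$. Its adjoint $f^*({\bf X}):=f({\bf X})^*$ is anti-holomorphic; using the commutation relations between the blocks ${\bf X}_1,\ldots,{\bf X}_k$ one rewrites it in the pluriharmonic form with trivial holomorphic indices and coefficients $A_{(\alpha_1,\ldots,\alpha_k)}^*$. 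Consequently $\Re f=\tfrac12(f+f^*)$ is a free $k$-pluriharmonic function, with holomorphic coefficients $\tfrac12 A_{(\alpha_1,\ldots,\alpha_k)}$ and anti-holomorphic coefficients $\tfrac12 A_{(\alpha_1,\ldots,\alpha_k)}^*$, all in $B(\cE)$.

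Next I would record the identity $\Re f(r{\bf X})=\tfrac12(f(r{\bf X})+f(r{\bf X})^*)=\Re(f(r{\bf X}))$, valid for every ${\bf X}\in {\bf B_n}(\cH)$ and $r\in[0,1)$. In particular the kernel attached via \eqref{right-To} to the mapping ${\bf X}\mapsto \Re f(r{\bf X})$ is exactly $\Gamma_{\Re f_r}$, and $\Re f(r{\bf S})=\Re(f(r{\bf S}))$. Applying Theorem \ref{pluri-positive} to $F=\Re f$ then yields the equivalence (i)$\Leftrightarrow$(iii): $\Re f\geq 0$ on ${\bf B_n}$ if and only if $\Gamma_{\Re f_r}$ is positive semi-definite for every $r\in[0,1)$.

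Finally, the equivalence (ii)$\Leftrightarrow$(iii) is already isolated inside the proof of Theorem \ref{pluri-positive}, where for a $k$-pluriharmonic function $F$ it is shown that $\Gamma_{F_r}$ is positive semi-definite precisely when $F(r{\bf S})\geq 0$; taking $F=\Re f$ and using $\Re f(r{\bf S})=\Re(f(r{\bf S}))$ gives (ii)$\Leftrightarrow$(iii). Chaining the two equivalences establishes the mutual equivalence of (i), (ii), (iii). The only point requiring care, and the closest thing to an obstacle, is the verification that $\Re f$ is pluriharmonic, i.e. the rewriting of $f^*$ into pluriharmonic form using the commutativity of the blocks; once this is in hand the statement is carried entirely by Theorem \ref{pluri-positive}.
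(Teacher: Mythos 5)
Your proposal is correct and follows exactly the route the paper intends: the corollary is stated without proof as an immediate consequence of Theorem \ref{pluri-positive}, applied to the free $k$-pluriharmonic function $F=\Re f$ (the paper has already observed after Definition \ref{pluri-def} that free holomorphic functions are $k$-pluriharmonic, whence so is $\Re f=\tfrac12(f+f^*)$), with the equivalence (ii)$\Leftrightarrow$(iii) extracted from the first part of that theorem's proof. Nothing is missing.
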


 Let us  define  the {\it free $k$-pluriharmonic Poisson kernel}     by setting
\begin{equation*}
 {\boldsymbol\cP}({\bf Y}, {\bf X}):=\sum_{m_1\in \ZZ}\cdots \sum_{m_k\in \ZZ} \sum_{{\alpha_i,\beta_i\in \FF_{n_i}^+, i\in \{1,\ldots, k\}}\atop{|\alpha_i|=m_i^-, |\beta_i|=m_i^+}} { Y}_{1,\widetilde\alpha_1}^*\cdots { Y}_{k,\widetilde\alpha_k}^*{Y}_{1,\widetilde\beta_1}\cdots { Y}_{k,\widetilde\beta_k}
\otimes { X}_{1,\alpha_1}\cdots {X}_{k,\alpha_k}{X}_{1,\beta_1}^*\cdots {X}_{k,\beta_k}^*
\end{equation*}
for any ${\bf X}\in {\bf B_n}(\cH)$ and any ${\bf Y}=(Y_1,\ldots, Y_k)$ with $Y_i=(Y_{i,1},\ldots,Y_{i,n_i})\in B(\cK)^{n_i}$ such that the series above is convergent
in the operator norm topology.

 \begin{theorem}\label{pluri-structure} A map $F:{\bf B_n}(\cH)\to B(\cE)\otimes_{min} B(\cH)$, with $F(0)=I$,   is a positive free $k$-pluriharmonic function on the regular  polyball if and only if   it has the form
 $$
 F({\bf X})=
 \sum_{(\boldsymbol \alpha, \boldsymbol \beta)\in {\bf F}_{\bf n}^+\times {\bf F}_{\bf n}^+}
 P_\cE{\bf V}_{\widetilde{\boldsymbol\alpha}}^*{\bf V}_{\widetilde{\boldsymbol\beta}}|_\cE\otimes
 {\bf X}_{\boldsymbol \alpha} {\bf X}_{\boldsymbol\beta}^*,
 $$
 where ${\bf V}=(V_1,\ldots, V_k)$ is a
 $k$-tuple   of commuting row isometries on a space $\cK\supset \cE$ such that
  $$
  \sum_{(\boldsymbol \alpha, \boldsymbol \beta)\in {\bf F}_{\bf n}^+\times {\bf F}_{\bf n}^+}
 P_\cE{\bf V}_{\widetilde{\boldsymbol\alpha}}^*{\bf V}_{\widetilde{\boldsymbol\beta}}|_\cE\otimes r^{|\boldsymbol\alpha|+|\boldsymbol\beta|}
 {\bf S}_{\boldsymbol \alpha} {\bf S}_{\boldsymbol\beta}^*\geq 0,\qquad r\in [0,1),
  $$
and the series is convergent in the operator topology.
  \end{theorem}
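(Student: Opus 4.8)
The plan is to read off the coefficients of $F$ from a single Naimark dilation, which I will produce by passing to a limit in the Schur-type criterion of Theorem \ref{pluri-positive} and then feeding the resulting kernel into the dual Naimark theorem, Theorem \ref{Naimark2}.

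For the forward implication, suppose $F$ is a positive free $k$-pluriharmonic function with representation \eqref{repre} and $F(0)=I$. First I would invoke Theorem \ref{pluri-positive} to conclude that the right $k$-multi-Toeplitz kernel $\Gamma_{F_r}$ is positive semi-definite for every $r\in[0,1)$. Since the coefficient of $F_r$ at a pair $(\boldsymbol\alpha;\boldsymbol\beta)$ occurring in \eqref{repre} is $r^{\sum_i(|\alpha_i|+|\beta_i|)}A_{(\boldsymbol\alpha;\boldsymbol\beta)}$, the two kernels are related by $\Gamma_{F_r}(\boldsymbol\sigma,\boldsymbol\omega)=r^{t}\,\Gamma_F(\boldsymbol\sigma,\boldsymbol\omega)$ for a nonnegative integer $t=t(\boldsymbol\sigma,\boldsymbol\omega)$. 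Fixing any finite family $\boldsymbol\sigma^{(1)},\dots,\boldsymbol\sigma^{(m)}\in{\bf F}^+_{\bf n}$, the Hermitian block matrix $[\Gamma_F(\boldsymbol\sigma^{(i)},\boldsymbol\sigma^{(j)})]$ is the entrywise limit as $r\to1$ of the positive semi-definite matrices $[\Gamma_{F_r}(\boldsymbol\sigma^{(i)},\boldsymbol\sigma^{(j)})]$, and hence is itself positive semi-definite. Therefore $\Gamma_F$ is a positive semi-definite right $k$-multi-Toeplitz kernel; note also that $\Gamma_F({\bf g},{\bf g})=A_{({\bf g};{\bf g})}=F(0)=I_\cE$, which is exactly the normalization required by Theorem \ref{Naimark2}.

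Next I would apply Theorem \ref{Naimark2} to $\Gamma_F$, obtaining a $k$-tuple ${\bf V}=(V_1,\dots,V_k)$ of commuting row isometries on a Hilbert space $\cK\supset\cE$ with $\Gamma_F(\widetilde{\boldsymbol\sigma},\widetilde{\boldsymbol\omega})=P_\cE{\bf V}_{\boldsymbol\sigma}^*{\bf V}_{\boldsymbol\omega}|_\cE$ for all $\boldsymbol\sigma,\boldsymbol\omega$. Substituting $\boldsymbol\sigma=\widetilde{\boldsymbol\alpha}$, $\boldsymbol\omega=\widetilde{\boldsymbol\beta}$ and using $\widetilde{\widetilde{\boldsymbol\alpha}}=\boldsymbol\alpha$, I get, for each pair with $|\alpha_i|=m_i^-$ and $|\beta_i|=m_i^+$, the identity $A_{(\boldsymbol\alpha;\boldsymbol\beta)}=\Gamma_F(\boldsymbol\alpha,\boldsymbol\beta)=P_\cE{\bf V}_{\widetilde{\boldsymbol\alpha}}^*{\bf V}_{\widetilde{\boldsymbol\beta}}|_\cE$; here I use that $\Gamma_F(\boldsymbol\alpha,\boldsymbol\beta)=A_{(c_r^+(\boldsymbol\alpha,\boldsymbol\beta);c_r^-(\boldsymbol\alpha,\boldsymbol\beta))}=A_{(\boldsymbol\alpha;\boldsymbol\beta)}$ for such pairs, which is a direct check from the definitions of $c_r^\pm$ in the spirit of Lemma \ref{inner}. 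Inserting these coefficients into \eqref{repre} yields the asserted representation of $F$, while the required positivity of the associated $r$-series is precisely the statement that $F(r{\bf S})\geq0$ for $r\in[0,1)$, which holds by Theorem \ref{pluri-positive}.

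For the converse, suppose $F$ has the displayed form with commuting row isometries ${\bf V}$ and the stated positivity and convergence of the $r$-series. Setting $A_{(\boldsymbol\alpha;\boldsymbol\beta)}:=P_\cE{\bf V}_{\widetilde{\boldsymbol\alpha}}^*{\bf V}_{\widetilde{\boldsymbol\beta}}|_\cE$ exhibits $F$ as a free $k$-pluriharmonic function, the convergence of its defining series being transferred from the assumed operator-norm convergence of the $r$-series via the noncommutative Berezin transform; moreover $F(0)=P_\cE I|_\cE=I$. Since $F(r{\bf S})$ coincides with the assumed positive $r$-series, it is positive for every $r\in[0,1)$, and then applying $id\otimes\boldsymbol\cB_{\frac1r{\bf X}}$ as in the proof of Theorem \ref{pluri-positive} gives $F({\bf X})\geq0$ on ${\bf B_n}$. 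The main obstacle I anticipate is the first step: the Schur criterion only supplies positivity of the \emph{weighted} kernels $\Gamma_{F_r}$ for $r<1$, whereas the Naimark dilation must be built from the single \emph{unweighted} kernel $\Gamma_F$; the entrywise-limit argument on finite compressions is the crux that bridges this gap, with the remaining work being routine bookkeeping with the reverse operation $\widetilde{\,\cdot\,}$, the maps $c_r^\pm$, and the normalization $\Gamma_F({\bf g},{\bf g})=I_\cE$ guaranteed by $F(0)=I$.
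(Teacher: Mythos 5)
Your proposal is correct and follows essentially the same route as the paper's own proof: Theorem \ref{pluri-positive} gives positivity of $\Gamma_{F_r}$, a limit as $r\to1$ gives positivity of $\Gamma_F$, Theorem \ref{Naimark2} supplies the commuting row isometries, and the converse is handled by applying $id\otimes\boldsymbol\cB_{\frac{1}{r}{\bf X}}$ to the positive $r$-series. Your explicit entrywise-limit argument on finite compressions simply fills in a step the paper states in one line, and your check that $\Gamma_F({\bf g},{\bf g})=I_\cE$ is the same use of the normalization $F(0)=I$.
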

\begin{proof} Assume that $F$ is a positive free $k$-pluriharmonic function which has the representation \eqref{repre} and $F(0)=I$. Due to Theorem
\ref{pluri-positive}, $F(r{\bf S})\geq 0$ and the right $k$-multi-Toeplitz kernel $\Gamma_{F_r}$ is positive semi-definite for any $r\in [0,1)$.
Taking limits as $r\to \infty$, we deduce that $\Gamma_F$ is positive semi-definite as well. According to Theorem \ref{Naimark2}, $\Gamma_F$ has a Naimark type dilation. Therefore,  there is a $k$-tuple ${\bf V}=(V_1,\ldots, V_k)$ of commuting row isometries on a Hilbert space $\cK\supset\cE$ such that $\Gamma(\widetilde{\boldsymbol\sigma}, \widetilde{\boldsymbol\omega})=P_\cE{\bf V}_{\boldsymbol \sigma}^*{\bf V}_{\boldsymbol \omega}|_\cE$ for any $\boldsymbol \sigma, \boldsymbol \omega\in {\bf F}^+_{\bf n}$. Using relations
\eqref{repre} and \eqref{right-To}, we deduce that
$$
 F({\bf X})=
 \sum_{(\boldsymbol \alpha, \boldsymbol \beta)\in {\bf F}_{\bf n}^+\times {\bf F}_{\bf n}^+}
 P_\cE{\bf V}_{\widetilde{\boldsymbol\alpha}}^*{\bf V}_{\widetilde{\boldsymbol\beta}}|_\cE\otimes
 {\bf X}_{\boldsymbol \alpha} {\bf X}_{\boldsymbol\beta}^*,
 $$
where the convergence is in the norm topology. This shows, in particular, that $F(r{\bf S})$ is convergent.

To prove the converse, assume that ${\bf V}=(V_1,\ldots, V_k)$ is a
 $k$-tuple   of commuting row isometries on a space $\cK\supset \cE$ such that
  \begin{equation}
  \label{posi}
  \sum_{(\boldsymbol \alpha, \boldsymbol \beta)\in {\bf F}_{\bf n}^+\times {\bf F}_{\bf n}^+}
 P_\cE{\bf V}_{\widetilde{\boldsymbol\alpha}}^*{\bf V}_{\widetilde{\boldsymbol\beta}}|_\cE\otimes r^{|\boldsymbol\alpha|+|\boldsymbol\beta|}
 {\bf S}_{\boldsymbol \alpha} {\bf S}_{\boldsymbol\beta}^*\geq 0,\qquad r\in [0,1),
  \end{equation}
and the convergence is in the operator norm topology. Let ${\bf X}\in {\bf B_n}(\cH)$ and let $r\in (0,1)$ be such that $\frac{1}{r}{\bf X}\in {\bf B_n}(\cH)$.
Since the noncommutative Berezin transform
$\boldsymbol\cB_{\frac{1}{r}{\bf X}}$ is continuous in the operator norm and completely positive, so is $id\otimes \boldsymbol\cB_{\frac{1}{r}{\bf X}}$. Consequently,  we obtain
$$
F({\bf X}):=(id\otimes \boldsymbol\cB_{\frac{1}{r}{\bf X}})\left[\sum_{(\boldsymbol \alpha, \boldsymbol \beta)\in {\bf F}_{\bf n}^+\times {\bf F}_{\bf n}^+}
 P_\cE{\bf V}_{\widetilde{\boldsymbol\alpha}}^*{\bf V}_{\widetilde{\boldsymbol\beta}}|_\cE\otimes r^{|\boldsymbol\alpha|+|\boldsymbol\beta|}
 {\bf S}_{\boldsymbol \alpha} {\bf S}_{\boldsymbol\beta}^*\right]\geq 0, \qquad {\bf X}\in {\bf B_n}(\cH).
$$
This completes the proof.
\end{proof}

We remark that condition \eqref{posi} is equivalent to the condition that
the kernel  defined by relation $\Gamma_{r{\bf V}}(\boldsymbol\sigma, \boldsymbol \omega):=
r^{|\boldsymbol\sigma|+|\boldsymbol\omega|}P_\cE{\bf V}_{\boldsymbol \sigma}^*{\bf V}_{\boldsymbol \omega}|_\cE$, for any $\boldsymbol \sigma, \boldsymbol \omega\in {\bf F}^+_{\bf n}$, is positive semi-definite.
We should also  mention that one can find a version of the theorem above when the condition $F(0)=I$ is dropped. In this  case,  $F(0)=A\otimes I$ with $A\geq 0$ and  we set
$$
G_\epsilon:= [(A+\epsilon I_\cE)^{-1/2}\otimes I] (F+\epsilon I_\cE\otimes I)[(A+\epsilon I_\cE)^{-1/2}\otimes I],\qquad \epsilon >0.
$$
Since $G_\epsilon$ is a positive $k$-pluriharmonic function with $G_\epsilon(0)=I$,  we can apply Theorem \ref{pluri-structure} to
get a family   ${\bf V}(\epsilon)=(V_1(\epsilon),\ldots, V_k(\epsilon))$   of
 $k$-tuples   of commuting row isometries on a space $\cK_\epsilon\supset \cE$
 such that
 $$
 F({\bf X})=\lim_{\epsilon\to 0}
 \sum_{(\boldsymbol \alpha, \boldsymbol \beta)\in {\bf F}_{\bf n}^+\times {\bf F}_{\bf n}^+}(A+\epsilon I_\cE)^{1/2}[
 P_\cE{\bf V}_{\widetilde{\boldsymbol\alpha}}^*(\epsilon){\bf V}_{\widetilde{\boldsymbol\beta}}(\epsilon)|_\cE](A+\epsilon I_\cE)^{1/2}\otimes
 {\bf X}_{\boldsymbol \alpha} {\bf X}_{\boldsymbol\beta}^*,
 $$
  where the convergence is in the operator norm topology.

\begin{definition}
A $k$-tuple ${\bf V}=(V_1,\ldots,V_k)$ of commuting row isometries $V_i=(V_{i,1},\ldots, V_{i,n_i})$ is called {\it pluriharmonic} if the free $k$-pluriharmonic Poisson kernel  \  $\boldsymbol \cP({\bf V}, r{\bf S})$ is a   positive operator, for any $r\in [0,1)$.
\end{definition}

\begin{proposition} \label{part-case} Let  ${\bf V}=(V_1,\ldots,V_k)$, $V_i=(V_{i,1},\ldots, V_{i,n_i})$,  be a $k$-tuple of commuting row isometries. Then ${\bf V}$ is pluriharmonic
in each of  the following particular cases:
\begin{enumerate}
\item[(i)] if $k=1$ and  $n_1\in \NN$;
\item[(ii)] if ${\bf V}$ is doubly commuting, i.e. the $C^*$-algebra $C^*(V_i)$ commutes with $C^*(V_s)$ if \  $i,s\in\{1,\ldots, k\}$ with $i\neq s$;

\item[(iii)] if $n_1=\cdots=n_k=1$.

\end{enumerate}

\end{proposition}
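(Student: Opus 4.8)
The plan is to reduce all three cases to a single-variable identity together with a factorization and a compression argument. First I would settle case (i) by a direct computation. For a row isometry $V=(V_1,\dots,V_n)$ and the model $S=(S_1,\dots,S_n)$ on $F^2(H_n)$, set $A:=\sum_{j=1}^n V_j^*\otimes S_j$. Since $\sum_{|\alpha|=p}V_{\widetilde\alpha}^*\otimes S_\alpha=A^p$, the geometric series gives $\sum_{\alpha}V_{\widetilde\alpha}^*\otimes r^{|\alpha|}S_\alpha=(I-rA)^{-1}$, so that $\boldsymbol\cP(V,rS)=(I-rA)^{-1}+(I-rA^*)^{-1}-I$. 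Conjugating by $I-rA$ and $I-rA^*$ yields
\[
\boldsymbol\cP(V,rS)=(I-rA^*)^{-1}\left(I-r^2A^*A\right)(I-rA)^{-1}.
\]
Because the $S_j$ are isometries with orthogonal ranges, $S_i^*S_j=\delta_{ij}I$, whence $A^*A=\sum_jV_jV_j^*\otimes I\le I$ and $\|A\|\le1$; thus $I-r^2A^*A\ge(1-r^2)I\ge0$ for $r\in[0,1)$, and the displayed identity gives $\boldsymbol\cP(V,rS)\ge0$. (The same bound yields norm convergence of the defining series.)

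For case (ii), writing $A_i:=\sum_{j=1}^{n_i}V_{i,j}^*\otimes{\bf S}_{i,j}$, I would factor the Poisson kernel as $\boldsymbol\cP({\bf V},r{\bf S})=\prod_{i=1}^k\boldsymbol\cP_i(V_i,r{\bf S}_i)$, where each factor is the single-variable kernel of the previous step built from $V_i$ and the $i$-th model ${\bf S}_i$. The factorization rests on two commutation facts: the operators ${\bf S}_{i,\cdot}$ act on distinct tensor factors of $\bigotimes_lF^2(H_{n_l})$ and hence commute across $i$, while double commutativity ($C^*(V_i)$ commutes with $C^*(V_s)$ for $i\ne s$) both lets one rearrange ${\bf V}_{\widetilde{\boldsymbol\alpha}}^*{\bf V}_{\widetilde{\boldsymbol\beta}}=\prod_iV_{i,\widetilde\alpha_i}^*V_{i,\widetilde\beta_i}$ and makes the factors $\boldsymbol\cP_i$ pairwise commuting. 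Each $\boldsymbol\cP_i\ge0$ by case (i), and a product of commuting positive operators is positive, so $\boldsymbol\cP({\bf V},r{\bf S})\ge0$.

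For case (iii), where $n_1=\dots=n_k=1$, each $V_i$ is a single isometry and the $V_i$ merely commute. By the classical It\^o theorem the commuting isometries $V_1,\dots,V_k$ admit commuting unitary extensions $U_1,\dots,U_k$ on a space $\widetilde\cK\supseteq\cK$ with $\cK$ invariant under each $U_i$ and $U_i|_\cK=V_i$. Commuting unitaries are doubly commuting, so case (ii) gives $\boldsymbol\cP({\bf U},r{\bf S})\ge0$. I would then verify the compression identity $(P_\cK\otimes I)\,\boldsymbol\cP({\bf U},r{\bf S})\,(P_\cK\otimes I)=\boldsymbol\cP({\bf V},r{\bf S})$, which reduces to checking, for each multi-index, that
\[
P_\cK(U_1^*)^{m_1^-}\cdots(U_k^*)^{m_k^-}U_1^{m_1^+}\cdots U_k^{m_k^+}\big|_\cK=(V_1^*)^{m_1^-}\cdots(V_k^*)^{m_k^-}V_1^{m_1^+}\cdots V_k^{m_k^+};
\]
this holds because $\cK$ is invariant under the positive powers and $\langle(U_i^*)^ah,h'\rangle=\langle h,V_i^ah'\rangle$ for $h,h'\in\cK$. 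Positivity of $\boldsymbol\cP({\bf V},r{\bf S})$ then follows by compression of a positive operator.

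The step I expect to be the main obstacle is the bookkeeping underlying the second and third cases: tracking the noncommutative words ${\bf V}_{\widetilde{\boldsymbol\alpha}}^*{\bf V}_{\widetilde{\boldsymbol\beta}}$ under (double) commutativity to obtain a genuine operator factorization, and verifying that the mixed word $P_\cK(U_1^*)^{m_1^-}\cdots U_k^{m_k^+}|_\cK$ collapses exactly to the coefficient appearing in $\boldsymbol\cP({\bf V},r{\bf S})$. Once these identities are established, the positivity conclusions are immediate.
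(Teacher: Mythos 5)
Your proof is correct and follows essentially the same route as the paper: case (i) is the explicit factorization $(I-rA^{*})^{-1}(I-r^{2}A^{*}A)(I-rA)^{-1}$ with $A=\sum_j V_j^{*}\otimes S_j$, case (ii) is the product $W_1\cdots W_k$ of commuting positive single-variable kernels (the paper's Theorem \ref{Poisson-factor}(i) with ${\bf R}$ replaced by ${\bf V}$, with double commutativity used exactly where you say it is), and case (iii) is the It\^o/Sz.-Nagy unitary extension, the (here trivial, via conjugation by $U_i^{*}$, or Fuglede as the paper cites) double commutativity of commuting unitaries, and compression to the co-invariant subspace $\cK$. All the identities you flag as the "main obstacle" do check out, so no gaps.
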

\begin{proof} It is easy to see  that ${\bf V}$ is pluriharmonic if the condition in item (i) is satisfied. Under the condition (ii), the proof that  ${\bf V}$ is pluriharmonic is similar to the proof of item (i)  from Theorem \ref{Poisson-factor}, when we replace the universal operator ${\bf R}$ with ${\bf V}$. Now, we assume that $n_1=\cdots=n_k=1$. Then ${\bf V}=(V_1,\ldots, V_k)$, where $V_1,\ldots, V_k$ are commuting isometries on a Hilbert space $\cK$.
It is well-known \cite{SzFBK-book} that  there is a $k$-tuple ${\bf U}=(U_1,\ldots, U_k)$ of commuting unitaries on a Hilbert space $\cG\supset \cK$ such that $U_i|_\cK=V_i$ for $i\in \{1,\ldots, k\}$. Due to Fuglede's theorem (see \cite{Dou}), the unitaries are doubly commuting. Due to item (ii),
$\boldsymbol \cP({\bf U}, r{\bf S})$ is a well-defined  positive operator, for any $r\in [0,1)$, where the convergence defining the  free $k$-pluriharmonic  Poisson kernel $\boldsymbol \cP({\bf U}, r{\bf S})$ is in the operator norm topology. On the other hand, we have
$$
\boldsymbol \cP({\bf V}, r{\bf S})=(P_\cK\otimes I)\boldsymbol \cP({\bf U}, r{\bf S})|_{\cK\otimes_{i=1}^kF^2(H_{n_i})}\geq 0,
$$
which completes the proof.
\end{proof}

\begin{proposition}\label{pos-pluri} Let ${\bf V}=(V_1,\ldots,V_k)$ be  a pluriharmonic tuple of  commuting row isometries on a Hilbert space $\cK$ and let $\cE\subset \cK$ be a subspace. Then the map
$$
 F({\bf X}):=(P_\cE\otimes I)\boldsymbol\cP\left({\bf V}, {\bf X}\right)|_{\cE\otimes \cH}, \qquad {\bf X}\in {\bf B_n}(\cH)
 $$
is a  positive free $k$-pluriharmonic function on the polyball ${\bf B_n}$ with operator-valued coefficients in $B(\cE)$, and $F(0)=I$.

Moreover, in the particular cases  (i) and (iii) of Proposition \ref{part-case}, each positive  free $k$-pluriharmonic function $F$ with $F(0)=I$  has the form above.
\end{proposition}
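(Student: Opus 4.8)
The plan is to treat the two assertions separately, each reducing to tools already established: for the first, the positivity of the free pluriharmonic Poisson kernel combined with the complete positivity of the noncommutative Berezin transform; for the converse, Theorem \ref{pluri-structure} together with Proposition \ref{part-case}.

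For the forward direction I would first observe that the coefficients of the candidate function are $A_{(\boldsymbol\alpha;\boldsymbol\beta)}=P_\cE{\bf V}_{\widetilde{\boldsymbol\alpha}}^*{\bf V}_{\widetilde{\boldsymbol\beta}}|_\cE$, so that evaluation at the universal model gives
$$F(r{\bf S})=(P_\cE\otimes I)\boldsymbol\cP({\bf V}, r{\bf S})|_{\cE\otimes(\otimes_{i=1}^k F^2(H_{n_i}))}.$$
Because ${\bf V}$ is pluriharmonic, $\boldsymbol\cP({\bf V}, r{\bf S})$ is a positive operator whose defining series converges in the operator norm for every $r\in[0,1)$; hence $F(r{\bf S})$, being a compression of a positive operator, is itself positive with norm-convergent series. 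To pass to an arbitrary ${\bf X}\in{\bf B_n}(\cH)$, I would fix $r\in(0,1)$ with $\frac{1}{r}{\bf X}\in{\bf B_n}(\cH)$ and apply the Berezin transform $\boldsymbol\cB_{\frac{1}{r}{\bf X}}$ to the Fock-space tensor leg exactly as in the converse half of Theorem \ref{pluri-structure}, obtaining
$$F({\bf X})=(id\otimes\boldsymbol\cB_{\frac{1}{r}{\bf X}})[F(r{\bf S})].$$
Complete positivity and norm continuity of the Berezin transform then yield simultaneously the norm convergence of the series for $F({\bf X})$ (so $F$ is genuinely free $k$-pluriharmonic) and the inequality $F({\bf X})\geq 0$. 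Evaluating at ${\bf X}=0$ kills every term except $\boldsymbol\alpha=\boldsymbol\beta={\bf g}$, whose coefficient is $P_\cE{\bf V}_{\bf g}^*{\bf V}_{\bf g}|_\cE=I_\cE$, giving $F(0)=I$.

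For the converse under hypothesis (i) or (iii), I would invoke Theorem \ref{pluri-structure}: any positive free $k$-pluriharmonic $F$ with $F(0)=I$ has the form
$$F({\bf X})=\sum_{(\boldsymbol\alpha,\boldsymbol\beta)\in{\bf F}_{\bf n}^+\times{\bf F}_{\bf n}^+}P_\cE{\bf V}_{\widetilde{\boldsymbol\alpha}}^*{\bf V}_{\widetilde{\boldsymbol\beta}}|_\cE\otimes{\bf X}_{\boldsymbol\alpha}{\bf X}_{\boldsymbol\beta}^*$$
for some $k$-tuple ${\bf V}$ of commuting row isometries on a space $\cK\supset\cE$. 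The right-hand side is precisely $(P_\cE\otimes I)\boldsymbol\cP({\bf V},{\bf X})|_{\cE\otimes\cH}$, so only one point remains: that this particular ${\bf V}$ is pluriharmonic. This is exactly where the restriction to $k=1$ or to $n_1=\cdots=n_k=1$ enters, since Proposition \ref{part-case}(i) and (iii) guarantee that in these two cases \emph{every} $k$-tuple of commuting row isometries is pluriharmonic; thus $\boldsymbol\cP({\bf V}, r{\bf S})\geq 0$ for all $r$, and $F$ is of the asserted form with a pluriharmonic ${\bf V}$.

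The only genuine obstacle sits in the forward direction, namely justifying the interchange of the compression $(P_\cE\otimes I)$ with the transform $(id\otimes\boldsymbol\cB_{\frac{1}{r}{\bf X}})$ and confirming that the latter carries $\boldsymbol\cP({\bf V}, r{\bf S})$ to $\boldsymbol\cP({\bf V}, {\bf X})$. Since the compression acts on the coefficient leg $B(\cK)$ while the Berezin transform acts on the Fock leg, the two commute; and the reproduction of the Poisson kernel is the same coefficient-by-coefficient computation underlying Theorem \ref{pluri-structure}, which I would cite rather than repeat. Everything else — positivity, the normalization $F(0)=I$, and the converse — then follows immediately from the quoted results.
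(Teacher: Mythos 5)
Your proposal is correct and follows essentially the same route as the paper: positivity of $\boldsymbol\cP({\bf V}, r{\bf S})$ from the pluriharmonicity of ${\bf V}$, compression to $\cE$, transfer to arbitrary ${\bf X}$ via the completely positive, norm-continuous map $id\otimes\boldsymbol\cB_{\frac{1}{r}{\bf X}}$, and Theorem \ref{pluri-structure} combined with Proposition \ref{part-case} for the converse. Your explicit remark that the compression acts on the coefficient leg while the Berezin transform acts on the Fock leg (so the two commute) is a point the paper leaves implicit, but it does not change the argument.
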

\begin{proof} Since ${\bf V}$ is a  tuple of  commuting row isometries, the  free $k$-pluriharmonic  Poisson kernel $\boldsymbol \cP({\bf V}, r{\bf S})$  is a   positive operator for any  $r\in [0,1)$ and  so is
the compression $(P_\cE\otimes I)\boldsymbol \cP({\bf V}, r{\bf S})|_{\cE\otimes_{i=1}^k F^2(H_{n_i})}$.
Let ${\bf X}\in {\bf B_n}(\cH)$ and let $r\in (0,1)$ be such that $\frac{1}{r}{\bf X}\in {\bf B_n}(\cH)$.
Since the noncommutative Berezin transform $id\otimes \boldsymbol\cB_{\frac{1}{r}{\bf X}}$
 is continuous in the operator norm and completely positive, we deduce that
 $$
 F({\bf X}):=(P_\cE\otimes I)\boldsymbol \cP({\bf V}, {\bf X})|_{\cE\otimes \cH}\geq 0,\qquad {\bf X}\in {\bf B_n}(\cH),
 $$
 where the convergence of $\cP({\bf V}, {\bf X})$ is in the operator norm topology. Therefore, $F$  is a  positive free $k$-pluriharmonic function on the polyball ${\bf B_n}$ with operator-valued coefficients in $B(\cE)$, and $F(0)=I$.
 To prove that second part of this proposition, assume that $F$ is a  positive  free $k$-pluriharmonic function with $F(0)=I$.
 According to Theorem \ref{pluri-structure}, $F$ has the form
 $$
 F({\bf X})=
 \sum_{(\boldsymbol \alpha, \boldsymbol \beta)\in {\bf F}_{\bf n}^+\times {\bf F}_{\bf n}^+}
 P_\cE{\bf V}_{\widetilde{\boldsymbol\alpha}}^*{\bf V}_{\widetilde{\boldsymbol\beta}}|_\cE\otimes
 {\bf X}_{\boldsymbol \alpha} {\bf X}_{\boldsymbol\beta}^*,
 $$
 where ${\bf V}=(V_1,\ldots, V_k)$ is a
 $k$-tuple   of commuting row isometries on a space $\cK\supset \cE$  and the convergence of the series is in the operator norm topology. Since, in the particular cases (i) and (ii) of Proposition \ref{part-case}, ${\bf V}$ is pluriharmonic, one can easily complete the proof.
\end{proof}
We remark that the theorem above contains, in particular,    a structure theorem for  positive  $k$-harmonic functions on the regular  polydisk  included in $[B(\cH)]_1\times_c\cdots \times_c[B(\cH)]_1$,
    which extends the corresponding  classical result on scalar polydisks \cite{Ru1}. In the general case of the polyball it is unknown if all positive free $k$-pluriharmonic functions $F$ with $F(0)=I$ have the form of Proposition \ref{pos-pluri}.

\bigskip

\section{Berezin transforms of completely bounded maps in regular polyballs}

 We define  a class of   noncommutative Berezin transforms of   completely bounded linear maps   and give necessary an sufficient conditions for a function to be the Poisson transform of a completely bounded (resp. positive) map.

  Let $\cH$ be a Hilbert space and  identify $M_m(B(\cH))$, the set of
$m\times m$ matrices with entries from $B(\cH)$, with
$B( \cH^{(m)})$, where $\cH^{(m)}$ is the direct sum of $m$ copies
of $\cH$.
Thus we have a natural $C^*$-norm on
$M_m(B(\cH))$. If $\cX$ is an operator space, i.e. a closed
subspace of $B(\cH)$, we consider $M_m(\cX)$ as a subspace of
$M_m(B(\cH))$ with the induced norm.
Let $\cX, \cY$ be operator spaces and $u:\cX\to \cY$ be a linear
map. Define the map
$u_m:M_m(\cX)\to M_m(\cY)$ by
$ u_m ([x_{ij}]):=[u(x_{ij})]. $
We say that $u$ is completely bounded  if
$ \|u\|_{cb}:=\sup_{m\ge1}\|u_m\|<\infty. $
If $\|u\|_{cb}\leq1$
(resp. $u_m$ is an isometry for any $m\geq1$) then $u$ is completely
contractive (resp. isometric),
and if $u_m$ is positive for all $m$, then $u$ is called
 completely positive. For basic results concerning  completely bounded maps
 and operator spaces we refer to \cite{Pa-book}, \cite{Pi-book}, and \cite{ER}.

Let $\cK$ be a Hilbert space and let  $\mu: B(\otimes_{i=1}^kF^2(H_{n_i}))\to B(\cK)$
be a completely bounded map. It is well-known (see e.g. \cite{Pa-book})
that  there exists a completely bounded linear map
$$\widehat
\mu:=\mu\otimes \text{\rm id} : B(\otimes_{i=1}^kF^2(H_{n_i})) \otimes_{min} B(\cH)\to
B(\cK)\otimes_{min} B(\cH)
$$
such that $ \widehat \mu(f\otimes Y):= \mu(f)\otimes Y$ for $f\in
B(\otimes_{i=1}^kF^2(H_{n_i})) $ and  $Y\in B(\cH)$. Moreover, $\|\widehat
\mu\|_{cb}=\|\mu\|_{cb}$ and, if $\mu$ is completely positive, then
so is $\widehat \mu$.
 We introduce  the   {\it
noncommutative Berezin transform} associated with $\mu$ as the map
$$\boldsymbol\cB_\mu:
B(\otimes_{i=1}^kF^2(H_{n_i}))\times  {\bf B}_{\bf n}(\cH)\to B(\cK)\otimes_{min} B(\cH)$$
defined by
\begin{equation*}
 \boldsymbol\cB_\mu(A,{\bf X}):=\widehat{\mu}\left[ C_{\bf X}^*(A\otimes
I_\cH)C_{\bf X}\right], \qquad A\in B(\otimes_{i=1}^kF^2(H_{n_i})), \ {\bf X}\in {\bf B}_{\bf n}(\cH),
\end{equation*}
  where the operator  $C_{\bf X} \in B(\otimes_{i=1}^kF^2(H_{n_i})\otimes
\cH)$  is defined
  by
\begin{equation*}
 C_{\bf X} := (I_{\otimes_{i=1}^kF^2(H_{n_i})} \otimes \boldsymbol\Delta_{\bf X}(I)^{1/2})\prod_{i=1}^k
 \left(I-{\bf R}_{i,1}\otimes X_{i,1}^*-\cdots -{\bf R}_{i,n_i}\otimes X_{i,n_i}^*\right)^{-1}
\end{equation*}
and
 the {\it defect mapping} ${\bf \Delta_{X}}:B(\cH)\to  B(\cH)$ is given by
$$
{\bf \Delta_{X}}:=\left(id -\Phi_{X_1}\right)\circ \cdots \circ\left(id -\Phi_{ X_k}\right),
$$
 where
$\Phi_{X_i}:B(\cH)\to B(\cH)$  is the completely positive linear map defined by
$$\Phi_{X_i}(Y):=\sum_{j=1}^{n_i}   X_{i,j} Y X_{i,j} ^*, \qquad Y\in B(\cH).
$$
We need to show that the operator $I-{\bf R}_{i,1}\otimes X_{i,1}^*-\cdots -{\bf R}_{i,n_i}\otimes X_{i,n_i}^*$ is invertible.
Let  ${\bf Y}=({ Y}_1,\ldots, { Y}_k) $ with $Y_i:=(Y_{i,1},\ldots, Y_{i,n_i})\in B(\cH)^{n_i}$. We introduce the spectral radius of ${\bf Y}$ by setting
$$
 r({\bf Y}):=\limsup_{(p_1,\ldots, p_k)\in \ZZ_+^k} \|\sum\limits_{{ \alpha_i\in \FF_{n_i}^+, |\alpha_i|=p_i}\atop{i\in \{1,\ldots, k\} }}Y_{\boldsymbol\alpha}Y_{\boldsymbol\alpha}^*\|^{\frac{1}{2(p_1+\cdots +p_k)}},
 $$
where  ${Y}_{\boldsymbol\alpha}:= {Y}_{1,\alpha_1}\cdots {Y}_{k,\alpha_k}$ if  $\boldsymbol\alpha:=(\alpha_1,\ldots, \alpha_k)\in \FF_{n_1}^+\times \cdots \times\FF_{n_k}^+$ and   $Y_{i,\alpha_i}:=Y_{i,j_1}\cdots Y_{i,j_p}$
  if   $\alpha_i=g_{j_1}^i\cdots g_{j_p}^i\in \FF_{n_i}^+$. We remark that, when $k=1$, we recover the spectral radius of an $n_i$-tuple of operators, i.e. $r(Y_i)=\lim_{p\to \infty} \|\sum_{\beta_i\in \FF_{n_i}, |\beta_i|=p}Y_{i,\beta_i} Y_{i,\beta_i}^*\|^{1/2p}$.
  Note also that
  $$
  r(Y_i)=r({\bf R}_{i,1}\otimes Y_{i,1}^*+\cdots +{\bf R}_{i,n_i}\otimes Y_{i,n_i}^*)
  $$
and $ r(Y_i)\leq r({\bf Y})$ for any $i\in \{1,\ldots, k\}$.
Consequently, if $r({\bf Y})<1$, then $ r(Y_i)<1$ and     the spectrum of
${\bf R}_{i,1}\otimes Y_{i,1}^*+\cdots +{\bf R}_{i,n_i}\otimes Y_{i,n_i}^*$ is included in $ \DD:=\{z\in \CC:\ |z|<1\}$.  In particular, when ${\bf X}\in {\bf B}_{\bf n}(\cH)$, the noncommutative von Neumann inequality  \cite{Po-poisson}  implies   $r({\bf X})\leq r(t{\bf S})=t$ for some $t\in (0,1)$, which proves our assertion.

\begin{proposition}\label{berezin}
Let \ $\boldsymbol\cB_\mu$ be the  noncommutative Berezin transform  associated
with a completely bounded linear map $\mu: B(\otimes_{i=1}^kF^2(H_{n_i}))\to B(\cK)$.
\begin{enumerate}
\item[(i)] If ${\bf X}\in {\bf B}_{\bf n}(\cH)$ is fixed, then
$$\boldsymbol\cB_\mu(\cdot, {\bf X}):B(\otimes_{i=1}^kF^2(H_{n_i}))\to B(\cK)\otimes_{min} B(\cH)$$ is a
completely bounded linear map with $ \|\boldsymbol\cB_\mu(\cdot, {\bf X})\|_{cb}\leq
\|\mu\|_{cb}  \|C_{\bf X}\|^2. $
\item[(ii)] If $\mu$ is selfadjoint, then
$\boldsymbol\cB_\mu(A^*,{\bf X})=\boldsymbol\cB_\mu(A,{\bf X})^*. $
 Moreover, if $\mu$ is completely positive, then so is the map \ $\boldsymbol\cB_\mu(\cdot,
 {\bf X})$.
\item[(iii)] If $A\in B(\otimes_{i=1}^kF^2(H_{n_i}))$ is fixed, then the map
$$\boldsymbol\cB_\mu(A, \cdot):   {\bf B}_{\bf n}(\cH) \to B(\cK)\otimes_{min} B(\cH)$$ is
continuous  and $\|\boldsymbol\cB_\mu(A,{\bf X})\|\leq \|\mu\|_{cb}\|A\| \|C_{\bf X}\|^2 $
for any    $ {\bf X}\in  {\bf B}_{\bf n}(\cH)$.
\end{enumerate}
\end{proposition}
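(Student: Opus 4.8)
The plan is to factor $\boldsymbol\cB_\mu(\cdot,{\bf X})$ as a composition of three elementary maps, each easy to control, and then read off all three assertions. Fix ${\bf X}\in {\bf B}_{\bf n}(\cH)$ and set $\cA:=B(\otimes_{i=1}^kF^2(H_{n_i}))\otimes_{min}B(\cH)$. The first point I would record is that $C_{\bf X}$ itself lies in the $C^*$-algebra $\cA$: each factor $I-{\bf R}_{i,1}\otimes X_{i,1}^*-\cdots-{\bf R}_{i,n_i}\otimes X_{i,n_i}^*$ is a finite sum of elementary tensors and is invertible on the polyball (by the spectral radius argument preceding the statement), its inverse therefore stays in $\cA$, and $I\otimes\boldsymbol\Delta_{\bf X}(I)^{1/2}\in\cA$; hence $C_{\bf X}\in\cA$. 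Consequently $\boldsymbol\cB_\mu(\cdot,{\bf X})=\widehat\mu\circ\Psi_{\bf X}\circ\iota_\cH$, where $\iota_\cH(A):=A\otimes I_\cH$ is the ampliation $B(\otimes_{i=1}^kF^2(H_{n_i}))\to\cA$ and $\Psi_{\bf X}(T):=C_{\bf X}^*TC_{\bf X}$ is conjugation on $\cA$ (well-defined since $C_{\bf X}\in\cA$).

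For part (i), I would note that $\iota_\cH$ is a unital $*$-homomorphism, hence completely isometric; that $\Psi_{\bf X}$ is completely bounded with $\|\Psi_{\bf X}\|_{cb}\leq\|C_{\bf X}\|^2$, being conjugation by a fixed operator; and that $\|\widehat\mu\|_{cb}=\|\mu\|_{cb}$ by the property of $\widehat\mu$ recalled before the proposition. Since the cb-norm is submultiplicative under composition, this yields $\|\boldsymbol\cB_\mu(\cdot,{\bf X})\|_{cb}\leq\|\mu\|_{cb}\|C_{\bf X}\|^2$. Part (ii) follows from the same factorization: both $\iota_\cH$ and $\Psi_{\bf X}$ are $*$-preserving, and $\widehat\mu$ inherits selfadjointness from $\mu$ since $\widehat\mu(f\otimes Y)=\mu(f)\otimes Y$, so if $\mu$ is selfadjoint then $\boldsymbol\cB_\mu(A^*,{\bf X})=\boldsymbol\cB_\mu(A,{\bf X})^*$; and when $\mu$ is completely positive, each of the three factors is completely positive (a $*$-homomorphism, a conjugation, and $\widehat\mu$ respectively), whence so is their composition.

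For part (iii), the norm estimate is immediate from (i), since $\|\boldsymbol\cB_\mu(A,{\bf X})\|\leq\|\boldsymbol\cB_\mu(\cdot,{\bf X})\|_{cb}\|A\|\leq\|\mu\|_{cb}\|C_{\bf X}\|^2\|A\|$. The substantive point, and the step I expect to require the most care, is the continuity of ${\bf X}\mapsto\boldsymbol\cB_\mu(A,{\bf X})$. Since $\widehat\mu$ is bounded, hence norm continuous, and multiplication and adjunction are norm continuous on bounded subsets of $\cA$, it suffices to prove that ${\bf X}\mapsto C_{\bf X}$ is continuous in the operator norm on ${\bf B}_{\bf n}(\cH)$. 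Here I would argue factor by factor: ${\bf X}\mapsto\boldsymbol\Delta_{\bf X}(I)$ is a fixed noncommutative polynomial in the entries $X_{i,j},X_{i,j}^*$ and hence norm continuous, so ${\bf X}\mapsto\boldsymbol\Delta_{\bf X}(I)^{1/2}$ is continuous by the norm continuity of the square root on positive operators; each map ${\bf X}\mapsto I-\sum_{j}{\bf R}_{i,j}\otimes X_{i,j}^*$ is affine and continuous with invertible values on the polyball, so composing with the norm continuous inversion on the open set of invertibles gives continuity of ${\bf X}\mapsto(I-\sum_{j}{\bf R}_{i,j}\otimes X_{i,j}^*)^{-1}$. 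Taking the product of these continuous factors yields continuity of ${\bf X}\mapsto C_{\bf X}$, and therefore of $\boldsymbol\cB_\mu(A,\cdot)$, which completes the proof.
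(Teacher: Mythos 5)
Your proof is correct and follows essentially the same route as the paper: parts (i) and (ii) are read off from the definition (your factorization $\widehat\mu\circ\Psi_{\bf X}\circ\iota_\cH$ is just the precise form of what the paper dismisses as "following easily"), and part (iii) is reduced, exactly as in the paper, to the norm continuity of ${\bf X}\mapsto C_{\bf X}$, established factor by factor. The one detail you supply that the paper leaves implicit is the observation that $C_{\bf X}$ actually lies in the minimal tensor product $B(\otimes_{i=1}^kF^2(H_{n_i}))\otimes_{min}B(\cH)$, which is needed for $\widehat\mu$ to be applicable to $C_{\bf X}^*(A\otimes I_\cH)C_{\bf X}$; this is a worthwhile addition.
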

\begin{proof}
The items  (i) and (ii) follow easily from the definition of the
noncommutative Berezin transform associated with $\mu$. To prove part (iii), let ${\bf X}, {\bf Y}\in {\bf B}_{\bf n}(\cH)$ and note that
\begin{equation*}
\begin{split}
\|\boldsymbol\cB_\mu(A,{\bf X})-\boldsymbol\cB_\mu(A,{\bf Y})\|&\leq \|\mu\|\|C_{\bf X}^*(A\otimes
I_\cH)(C_{\bf X}-C_{\bf Y})\|+ \|\mu\|\|(C_{\bf X}^*-C_{\bf Y}^*)(A\otimes I_\cH)C_{\bf Y}\|\\
&\leq \|\mu\|\|A\|\|C_{\bf X}-C_{\bf Y}\|\left(\|C_{\bf X}\|+\|C_{\bf Y}\|\right).
\end{split}
\end{equation*}
The continuity of the map ${\bf X}\mapsto \boldsymbol\cB_\mu(A, {\bf X})$ will follow once
we prove that ${\bf X}\mapsto C_{\bf X}$ is  a continuous  map on
${\bf B}_{\bf n}(\cH)$. Note that
\begin{equation*}
\begin{split}
\|C_{\bf X}-C_{\bf Y}\|&\leq \boldsymbol\Delta_{\bf X}(I)^{1/2}\|
\|\prod_{i=1}^k
 \left(I-{\bf R}_{X_i^*} \right)^{-1}-\prod_{i=1}^k
 \left(I-{\bf R}_{Y_i^*}\right)^{-1}\|\\
 &+\|\boldsymbol\Delta_{\bf X}(I)^{1/2}-\boldsymbol\Delta_{\bf Y}(I)^{1/2}\|\|\prod_{i=1}^k
 \left(I-{\bf R}_{X_i^*}\right)^{-1}\|,
 \end{split}
\end{equation*}
where ${\bf R}_{X_i^*}:=I-{\bf R}_{i,1}\otimes X_{i,1}^*-\cdots -{\bf R}_{i,n_i}\otimes X_{i,n_i}^*$.
Since the maps ${\bf X}\mapsto \prod_{i=1}^k
 \left(I-{\bf R}_{X_i^*} \right)^{-1}$  and ${\bf X}\mapsto \boldsymbol\Delta_{\bf X}(I)^{1/2}$  are  continuous on ${\bf B}_{\bf n}(\cH)$ in the operator norm topology, our assertion follows.
     The inequality in  (iii) is obvious.
The proof is complete.
\end{proof}

We remark  that the noncommutative Poisson transform introduced
in \cite{Po-poisson} is in fact a particular case of the
noncommutative Berezin transform associated with a linear functional.
 Indeed, let $\tau$ be the
linear functional on $B(\otimes_{i=1}^kF^2(H_{n_i}))$ defined by $\tau(A):=\left<
A(1),1\right>$. If ${\bf X}\in {\bf B}_{\bf n}(\cH)$ is fixed, then
$\boldsymbol\cB_\tau(\cdot, {\bf X}):B(\otimes_{i=1}^kF^2(H_{n_i}))\to   B(\cH)$ is a completely
contractive linear map and
\begin{equation*}
\begin{split}
\left<\boldsymbol\cB_\tau(A, {\bf X})x,y\right> =\left< C_{\bf X}^*( A \otimes I_\cH)
C_{\bf X}(1\otimes x),1\otimes y\right>,\quad x,y\in \cH.
\end{split}
\end{equation*}
Hence, we have
 $$\boldsymbol\cB_\tau(A, {\bf X})={\bf K}_{\bf X}^*(A\otimes I){\bf K_X},
 $$
 where ${\bf K}_{\bf X}$ is the noncommutative Berezin kernel at ${\bf X}$.
Note also that if $A\in B(\otimes_{i=1}^kF^2(H_{n_i}))$  is fixed,
then $\boldsymbol\cB_\tau(A, \cdot\, ):
{\bf B}_{\bf n}(\cH) \to   B(\cH)$ is a bounded continuous map and
$\|\boldsymbol\cB_\tau (A,{\bf X})\|\leq  \|A\| $ for any    $ {\bf X}\in {\bf B}_{\bf n}(\cH)$.

We mention that if $n_1=\cdots =n_k=1$, $\cH=\CC$, ${\bf X}=\boldsymbol \lambda=(\lambda_1,\ldots, \lambda_k)\in \DD^k$, we recover the Berezin
transform of a bounded linear operator on the Hardy space
$H^2(\DD^k)$, i.e.
$$
\boldsymbol \cB_\tau(A,\lambda)=\prod_{i=1}^k(1-|\lambda_i|^2)\left<A k_{\boldsymbol\lambda},
k_{\boldsymbol\lambda}\right>,\quad A\in B(H^2(\DD^k)),
$$
where $k_\lambda({\bf z}):=\prod_{i=1}^k(1-\overline{\lambda}_i z_i)^{-1}$ and  ${\bf z}=(z_1,\ldots, z_k)\in \DD^k$.

Define the set
 \begin{equation}
 \label{La}
 \Lambda:=\{(\boldsymbol \sigma, \boldsymbol \omega)\in {\bf F}^+_{\bf n}\times
 {\bf F}^+_{\bf n}: \ \boldsymbol\sigma\sim_{lc}\boldsymbol \omega \text{ and }
 (\boldsymbol \sigma, \boldsymbol \omega)=
 ({c_l^+(\boldsymbol\sigma, \boldsymbol \omega)}, {c_l^-(\boldsymbol\sigma, \boldsymbol \omega))}\}.
 \end{equation}
 Set $\widetilde \Lambda:=\{(\widetilde{\boldsymbol \sigma}, \widetilde{\boldsymbol \omega}): \ (\boldsymbol \sigma, \boldsymbol \omega)\in \Lambda\}$
 and note that
 $$
 \widetilde\Lambda:=\{(\widetilde{\boldsymbol \sigma}, \widetilde{\boldsymbol \omega})\in {\bf F}^+_{\bf n}\times
 {\bf F}^+_{\bf n}: \ \widetilde{\boldsymbol\sigma}\sim_{rc}\widetilde{\boldsymbol \omega} \text{ and }
 (\widetilde{\boldsymbol \sigma}, \widetilde{\boldsymbol \omega})=
 ({c_r^+(\widetilde{\boldsymbol\sigma}, \widetilde{\boldsymbol \omega})}, {c_r^-(\widetilde{\boldsymbol\sigma}, \widetilde{\boldsymbol \omega}))}\}.
 $$
 Moreover, we have $\Lambda=\widetilde\Lambda$. In case
 $( {\boldsymbol \sigma},  {\boldsymbol \omega})\in \Lambda$, then one can easily see that
 ${c_l^+(\boldsymbol\sigma, \boldsymbol \omega)}={c_r^+(\boldsymbol\sigma, \boldsymbol \omega)}$ and
 $c_l^-(\boldsymbol\sigma, \boldsymbol \omega)=c_r^-(\boldsymbol\sigma, \boldsymbol \omega)$.

In what follows, we introduce the {\it noncommutative Poisson transform} of a completely positive  linear map on the operator system
$$
\boldsymbol\cR_{\bf n}^* \boldsymbol\cR_{\bf n}:=\text{\rm span}\{{\bf R}_{\boldsymbol\alpha}^* {\bf R}_{\boldsymbol\beta}: \ \boldsymbol\alpha, \boldsymbol\beta\in \FF_{n_1}^+\times\cdots \times \FF_{n_k}^+\},
$$
where  ${\bf R}:=({\bf R}_1,\ldots, {\bf R}_k)$ and   ${\bf R}_i:=({\bf R}_{i,1},\ldots,{\bf R}_{i,n_i})$ is the $n_i$-tuple
of right creation operators (see Section 1).
Regard
$M_m(\boldsymbol\cR_{\bf n}^* \boldsymbol\cR_{\bf n})$ as a subspace of $M_m(B(\otimes_{i=1}^k F^2(H_{n_i})))$. Let
$M_m(\boldsymbol\cR_{\bf n}^* \boldsymbol\cR_{\bf n})$ have the norm structure that it inherits from
the (unique) norm structure on the $C^*$-algebra $M_m(B(\otimes_{i=1}^kF^2(H_{n_i})))$.
We remark that
\begin{equation*}
\begin{split}
\boldsymbol\cR_{\bf n}^* \boldsymbol\cR_{\bf n}
&=
\text{\rm span}\{{\bf R}_{\boldsymbol\alpha}^* {\bf R}_{\boldsymbol\beta}: \ (\boldsymbol\alpha, \boldsymbol\beta)\in \Lambda\}\\
&=\text{\rm span}\{{\bf R}_{\widetilde{\boldsymbol\alpha}}^* {\bf R}_{\widetilde{\boldsymbol\beta}}: \ (\boldsymbol\alpha, \boldsymbol\beta)\in \Lambda\},
\end{split}
\end{equation*}
where $\Lambda=\widetilde\Lambda$ is given by relation \eqref{La}.
If $\mu:\boldsymbol\cR_{\bf n}^*\boldsymbol\cR_{\bf n}\to B(\cE)$ is a  completely  bounded linear
map, then there exists a unique completely bounded linear map
$$\widehat
\mu:=\mu\otimes id:\overline{\boldsymbol\cR_{\bf n}^* \boldsymbol\cR_{\bf n}}^{\|\cdot \|}\otimes_{min} B(\cH)\to
B(\cE)\otimes_{min} B(\cH)
$$
such that
$$ \widehat \mu(A\otimes Y)= \mu(A)\otimes Y, \qquad A\in
\boldsymbol\cR_{\bf n}^* \boldsymbol\cR_{\bf n}, \,Y\in B(\cH).
 $$
 Moreover, $\|\widehat
\mu\|_{cb}=\|\mu\|_{cb}$ and, if $\mu$ is completely positive, then
 so is $\widehat \mu$.

We define  the {\it free pluriharmonic Poisson kernel}     by setting
\begin{equation*}
 \boldsymbol{\cP}({\bf R}, {\bf X}):=\sum_{m_1\in \ZZ}\cdots \sum_{m_k\in \ZZ} \sum_{{\alpha_i,\beta_i\in \FF_{n_i}^+, i\in \{1,\ldots, k\}}\atop{|\alpha_i|=m_i^-, |\beta_i|=m_i^+}} {\bf R}_{1,\widetilde\alpha_1}^*\cdots {\bf R}_{k,\widetilde\alpha_k}^*{\bf R}_{1,\widetilde\beta_1}\cdots {\bf R}_{k,\widetilde\beta_k}
\otimes { X}_{1,\alpha_1}\cdots {X}_{k,\alpha_k}{X}_{1,\beta_1}^*\cdots {X}_{k,\beta_k}^*
\end{equation*}
for any ${\bf X}\in {\bf B_n}(\cH)$,
where the convergence is in the operator norm topology. We need to show that the latter convergence holds. Indeed, note that,
for each $i\in \{1,\ldots, k\}$ and $r\in [0,1)$, we have
\begin{equation*}
\begin{split}
W_i&:=\sum_{m_i\in \ZZ} \sum_{{\alpha_i,\beta_i\in \FF_{n_i}^+ }\atop{|\alpha_i|=m_i^-, |\beta_i|=m_i^+}}{\bf R}_{i,\widetilde\alpha_i}^* {\bf R}_{i,\widetilde\beta_i}
\otimes  r^{|\alpha_i|+|\beta_i|}{ \bf S}_{i,\alpha_i} {\bf S}_{i,\beta_i}^* \\
&= \lim_{p_i\to\infty} \left(\sum_{{\alpha_i\in \FF_{n_i}}\atop{0<|\alpha_i|\leq p_i}}
{\bf R}_{i,\widetilde\alpha_i}^*\otimes r^{|\alpha_i|}{ \bf S}_{i,\alpha_i}+ \sum_{{\beta_i\in \FF_{n_i}}\atop{0\leq|\beta_i|\leq p_i}}
{\bf R}_{i,\widetilde\beta_i}\otimes r^{|\beta_i|}{ \bf S}_{i,\beta_i}^*\right),
\end{split}
\end{equation*}
where the limit is in the operator norm topology.
One can easily see that
\begin{equation*}
\begin{split}
W_1\cdots W_k&=
\boldsymbol{\cP}({\bf R}, r{\bf S})\\
&:=
\lim_{p_1\to \infty}\cdots \lim_{p_k\to \infty}
\sum_{{m_1\in \ZZ}\atop{|m_1|\leq p_1}}\cdots \sum_{{m_k\in \ZZ}\atop{|m_k|\leq p_k}} \sum_{{\alpha_i,\beta_i\in \FF_{n_i}^+, i\in \{1,\ldots, k\}}\atop{|\alpha_i|=m_i^-, |\beta_i|=m_i^+}}\\
 &
\qquad\qquad  {\bf R}_{1,\widetilde\alpha_1}^*\cdots {\bf R}_{k,\widetilde\alpha_k}^*{\bf R}_{1,\widetilde\beta_1}\cdots {\bf R}_{k,\widetilde\beta_k}
\otimes r^{\sum_{i=1}^k (|\alpha_i|+|\beta_i|)}{\bf S}_{1,\alpha_1}\cdots {\bf S}_{k,\alpha_k}{\bf S}_{1,\beta_1}^*\cdots {\bf S}_{k,\beta_k}^*.
\end{split}
\end{equation*}
Therefore, the series defining $\boldsymbol{\cP}({\bf R}, r{\bf S})$, i.e.
 \begin{equation*}
 \sum_{m_1\in \ZZ}\cdots \sum_{m_k\in \ZZ} \sum_{{\alpha_i,\beta_i\in \FF_{n_i}^+, i\in \{1,\ldots, k\}}\atop{|\alpha_i|=m_i^-, |\beta_i|=m_i^+}} {\bf R}_{1,\widetilde\alpha_1}^*\cdots {\bf R}_{k,\widetilde\alpha_k}^*{\bf R}_{1,\widetilde\beta_1}\cdots {\bf R}_{k,\widetilde\beta_k}
\otimes r^{\sum_{i=1}^k (|\alpha_i|+|\beta_i|)}{\bf S}_{1,\alpha_1}\cdots {\bf S}_{k,\alpha_k}{\bf S}_{1,\beta_1}^*\cdots {\bf S}_{k,\beta_k}^*
\end{equation*}
 are convergent in the operator norm topology. We remark that
  due to the fact that the operators $W_1,\ldots, W_k$ are commuting, the order of the limits above is irrelevant.
  Fix ${\bf X}\in {\bf B_n}(\cH)$ and let $r\in (0,1)$ be such that $\frac{1}{r}{\bf X}$ is in ${\bf B_n}(\cH)$.
Since the noncommutative Berezin transform
$\boldsymbol\cB_{\frac{1}{r}{\bf X}}$ is continuous in the operator norm, so is $id\otimes \boldsymbol\cB_{\frac{1}{r}{\bf X}}$. Consequently, applying $id\otimes \boldsymbol\cB_{\frac{1}{r}{\bf X}}$
to the relation above, we deduce that
\begin{equation*}
\begin{split}
(id\otimes \boldsymbol\cB_{\frac{1}{r}{\bf X}})[\boldsymbol{\cP}({\bf R}, r{\bf S})]
&=
\lim_{p_1\to \infty}\cdots \lim_{p_k\to \infty}
\sum_{{m_1\in \ZZ}\atop{|m_1|\leq p_1}}\cdots \sum_{{m_k\in \ZZ}\atop{|m_k|\leq p_k}} \sum_{{\alpha_i,\beta_i\in \FF_{n_i}^+, i\in \{1,\ldots, k\}}\atop{|\alpha_i|=m_i^-, |\beta_i|=m_i^+}}\\
 &
\qquad\qquad  {\bf R}_{1,\widetilde\alpha_1}^*\cdots {\bf R}_{k,\widetilde\alpha_k}^*{\bf R}_{1,\widetilde\beta_1}\cdots {\bf R}_{k,\widetilde\beta_k}
\otimes  {X}_{1,\alpha_1}\cdots {X}_{k,\alpha_k}{X}_{1,\beta_1}^*\cdots {X}_{k,\beta_k}^*,
\end{split}
\end{equation*}
where the limits are in the operator norm topology. This proves our assertion.
Now, we  introduce   the {\it noncommutative Poisson transform} of a completely  bounded linear  map $\mu:\boldsymbol\cR_{\bf n}^*\boldsymbol\cR_{\bf n}\to B(\cE)$ to
be the map \ $\boldsymbol\cP\mu : {\bf B}_{\bf n}(\cH)\to B(\cE)\otimes_{min}B(\cH)$
defined by
$$
(\boldsymbol\cP \mu)({\bf X}):=\widehat \mu[\boldsymbol{\cP}({\bf R}, {\bf X})],\qquad
{\bf X} \in {\bf B}_{\bf n}(\cH).
$$

The next result contains some of the basic properties  of the noncommutative Poisson  kernel (resp. transform).

\begin{theorem}
\label{Poisson-factor} Let \ $\mu:\boldsymbol\cR_{\bf n}^*\boldsymbol\cR_{\bf n}\to B(\cE)$ be a
completely bounded linear map. The following statements hold.

\begin{enumerate}
\item[(i)]
The map ${\bf X}\mapsto \boldsymbol{\cP}({\bf R}, {\bf X})$ is a positive $k$-pluriharmonic function  on the polyball
${\bf B}_{\bf n}$, with coefficients in $B(\otimes_{i=1}^k F^2(H_{n_i}))$, and has the factorization
$ {\cP}({\bf R}, {\bf X})=C_{\bf X} ^* C_{\bf X}$,
  where
  $$ C_{\bf X} := (I_{\otimes_{i=1}^kF^2(H_{n_i})} \otimes \boldsymbol\Delta_{\bf X}(I)^{1/2})\prod_{i=1}^k
 \left(I-{\bf R}_{i,1}\otimes X_{i,1}^*-\cdots -{\bf R}_{i,n_i}\otimes X_{i,n_i}^*\right)^{-1}.
$$
 \item[(ii)] The noncommutative
Poisson transform $\boldsymbol\cP\mu$ is a free $k$-pluriharmonic function on the regular polyball ${\bf B_n}$, which coincides with the Berezin transform
$\boldsymbol\cB_\mu(I,\cdot\,)$.
\item[(iii)]
 If $\mu$ is a completely positive
linear map, then $\boldsymbol\cP\mu$ is a  positive free $k$-pluriharmonic function on   ${\bf B}_{\bf n}$.
\end{enumerate}
\end{theorem}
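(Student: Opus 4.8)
The whole theorem rests on the factorization asserted in item~(i): once $\boldsymbol{\cP}({\bf R},{\bf X})=C_{\bf X}^*C_{\bf X}$ is established, items~(ii) and~(iii) are short. I will therefore focus on~(i) and freely use the facts assembled just before the statement, namely that the series defining $\boldsymbol{\cP}({\bf R},r{\bf S})$ converges in the operator norm, that $r({\bf X})<1$ for every ${\bf X}\in{\bf B}_{\bf n}(\cH)$, and that applying $id\otimes\boldsymbol\cB_{\frac{1}{r}{\bf X}}$ to $\boldsymbol{\cP}({\bf R},r{\bf S})$ yields the norm-convergent series $\boldsymbol{\cP}({\bf R},{\bf X})$.

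First I would record the one-variable identity driving the computation. Put $A_i:=\sum_{j=1}^{n_i}{\bf R}_{i,j}\otimes X_{i,j}^*$ and $G_i:=I-A_i$. Since ${\bf R}_{i,s}^*{\bf R}_{i,t}=\delta_{st}I$, one has $A_i^*A_i=\sum_{j}I\otimes X_{i,j}X_{i,j}^*$, so that $I\otimes(id-\Phi_{X_i})(I)=I-A_i^*A_i$; multiplying through by $G_i^*$ and $G_i$ one checks the single-factor identity $(G_i^*)^{-1}(I-A_i^*A_i)G_i^{-1}=(G_i^*)^{-1}+G_i^{-1}-I$. Next, because $r({\bf X})<1$, each resolvent expands in norm as $G_i^{-1}=\sum_{\beta_i\in\FF_{n_i}^+}{\bf R}_{i,\widetilde\beta_i}\otimes X_{i,\beta_i}^*$, and since creation operators on distinct Fock legs commute while the entries (and the adjoints of the entries) of distinct $X_s$ commute, the factors $G_1^{-1},\dots,G_k^{-1}$ commute among themselves, as do $(G_1^*)^{-1},\dots,(G_k^*)^{-1}$; multiplying them out gives
\begin{equation*}
C_{\bf X}^*C_{\bf X}=\sum_{\boldsymbol\alpha,\boldsymbol\beta\in{\bf F}_{\bf n}^+}{\bf R}_{\widetilde{\boldsymbol\alpha}}^*{\bf R}_{\widetilde{\boldsymbol\beta}}\otimes X_{\boldsymbol\alpha}\,\boldsymbol\Delta_{\bf X}(I)\,X_{\boldsymbol\beta}^*.
\end{equation*}

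The plan is then to collapse this double sum onto the reduced Poisson series $\sum_{(\boldsymbol\mu,\boldsymbol\nu)\in\Lambda}{\bf R}_{\widetilde{\boldsymbol\mu}}^*{\bf R}_{\widetilde{\boldsymbol\nu}}\otimes X_{\boldsymbol\mu}X_{\boldsymbol\nu}^*$, where $\Lambda$ is the reduced index set of~\eqref{La}. For this I would group the pairs $(\boldsymbol\alpha,\boldsymbol\beta)$ according to the canonical form of the multi-Toeplitz monomial ${\bf R}_{\widetilde{\boldsymbol\alpha}}^*{\bf R}_{\widetilde{\boldsymbol\beta}}$ supplied by Lemma~\ref{inner}, and sum the attached $\cH$-coefficients; because $\boldsymbol\Delta_{\bf X}(I)=(id-\Phi_{X_1})\circ\cdots\circ(id-\Phi_{X_k})(I)$, this grouping telescopes one coordinate at a time, the single-factor identity above serving as the base step. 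This gives $\boldsymbol{\cP}({\bf R},{\bf X})=C_{\bf X}^*C_{\bf X}$. Positivity in~(i) is then immediate from the form $C_{\bf X}^*C_{\bf X}\ge0$, and $k$-pluriharmonicity holds because the Poisson series is of the shape in Definition~\ref{pluri-def}, converges in norm, and has coefficients ${\bf R}_{\widetilde{\boldsymbol\mu}}^*{\bf R}_{\widetilde{\boldsymbol\nu}}\in B(\otimes_{i=1}^kF^2(H_{n_i}))$.

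For~(ii), the factorization shows $C_{\bf X}^*C_{\bf X}=\boldsymbol{\cP}({\bf R},{\bf X})\in\overline{\boldsymbol\cR_{\bf n}^*\boldsymbol\cR_{\bf n}}^{\|\cdot\|}\otimes_{min}B(\cH)$, so that $\boldsymbol\cB_\mu(I,{\bf X})=\widehat\mu[C_{\bf X}^*(I\otimes I_\cH)C_{\bf X}]=(\boldsymbol\cP\mu)({\bf X})$; applying the norm-continuous map $\widehat\mu=\mu\otimes id$ term by term to the Poisson series exhibits $\boldsymbol\cP\mu$ as a norm-convergent series of pluriharmonic shape with coefficients $\mu({\bf R}_{\widetilde{\boldsymbol\mu}}^*{\bf R}_{\widetilde{\boldsymbol\nu}})\in B(\cE)$, hence a free $k$-pluriharmonic function. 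For~(iii), complete positivity of $\mu$ makes $\widehat\mu$ completely positive, so $(\boldsymbol\cP\mu)({\bf X})=\widehat\mu[C_{\bf X}^*C_{\bf X}]\ge0$, which with~(ii) yields a positive free $k$-pluriharmonic function. The main obstacle will be the telescoping: on the universal model ${\bf S}$ all operators on distinct legs doubly commute, so there the factorization is merely the product of the commuting single-factor kernels $W_i$; but for a general ${\bf X}$ the entries of $X_i$ need not commute with the adjoints of the entries of $X_s$, so the coordinatewise collapse must be performed directly on the expanded series, carefully tracking the reversals $\widetilde{(\cdot)}$ and the nested defect $\boldsymbol\Delta_{\bf X}(I)$.
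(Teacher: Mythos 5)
Your handling of items (ii) and (iii) is essentially the paper's: once $\boldsymbol{\cP}({\bf R},{\bf X})=C_{\bf X}^*C_{\bf X}$ is known, both follow by applying the norm-continuous (resp.\ completely positive) map $\widehat\mu$ termwise to the norm-convergent Poisson series. For item (i), however, you take a genuinely different route, and the step you yourself flag as ``the main obstacle'' is exactly where your argument stops being a proof. The paper never expands $C_{\bf X}^*C_{\bf X}$ for a general ${\bf X}$: it proves the factorization on the universal model, writing $\boldsymbol{\cP}({\bf R},r{\bf S})=W_1\cdots W_k$ with $W_i=(I-\Lambda_i)^{-1}-I+(I-\Lambda_i^*)^{-1}=(I-\Lambda_i^*)^{-1}\left[I\otimes\left(I-r^2\sum_j{\bf S}_{i,j}{\bf S}_{i,j}^*\right)\right](I-\Lambda_i)^{-1}$, where $\Lambda_i=\sum_j{\bf R}_{i,j}\otimes r{\bf S}_{i,j}^*$. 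Because ${\bf S}$ is doubly commuting across coordinates, the $W_i$ are commuting \emph{positive} operators, so the $k$ single-factor identities assemble into $C_{r{\bf S}}^*C_{r{\bf S}}$ with the nested defect $\boldsymbol\Delta_{r{\bf S}}(I)$ appearing automatically; the general case then follows in one stroke by applying the norm-continuous, completely positive map $id\otimes\boldsymbol\cB_{\frac1r{\bf X}}$, which sends ${\bf S}_{\boldsymbol\alpha}{\bf S}_{\boldsymbol\beta}^*$ to $r^{-|\boldsymbol\alpha|-|\boldsymbol\beta|}{\bf X}_{\boldsymbol\alpha}{\bf X}_{\boldsymbol\beta}^*$. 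The failure of double commutativity for ${\bf X}$ is never confronted. Since you already invoke this very transform to obtain norm convergence of $\boldsymbol{\cP}({\bf R},{\bf X})$, using it for the algebraic identity as well would cost nothing and would delete your obstacle.

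If you insist on the direct route, two things must actually be supplied, and neither is in your write-up. First, collapsing $\sum_{\boldsymbol\alpha,\boldsymbol\beta}{\bf R}_{\widetilde{\boldsymbol\alpha}}^*{\bf R}_{\widetilde{\boldsymbol\beta}}\otimes X_{\boldsymbol\alpha}\boldsymbol\Delta_{\bf X}(I)X_{\boldsymbol\beta}^*$ onto the reduced index set $\Lambda$ is a rearrangement of an iterated, not absolutely convergent, operator series and needs justification (e.g.\ spectral-radius tail estimates, or working at $r{\bf X}$ and letting $r\to1$). Second, your single-factor identity cannot serve as the ``base step'' of a coordinatewise telescoping for general ${\bf X}$: iterating it would require commuting $(I-A_i^*)^{-1}$ past $(I-A_s)^{-1}$ for $i\neq s$, which is precisely the doubly commuting hypothesis you lack. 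The telescoping that does work happens at the level of coefficients: the pairs contributing to a fixed reduced monomial ${\bf R}_{\widetilde{\boldsymbol\mu}}^*{\bf R}_{\widetilde{\boldsymbol\nu}}$ are exactly $(\boldsymbol\alpha,\boldsymbol\beta)=(\mu_1\gamma_1,\ldots,\mu_k\gamma_k;\nu_1\gamma_1,\ldots,\nu_k\gamma_k)$ with $\boldsymbol\gamma\in{\bf F}_{\bf n}^+$ arbitrary, and the resulting coefficient is $X_{\boldsymbol\mu}\left[\sum_{p_1,\ldots,p_k\geq0}\Phi_{X_1}^{p_1}\circ\cdots\circ\Phi_{X_k}^{p_k}(\boldsymbol\Delta_{\bf X}(I))\right]X_{\boldsymbol\nu}^*$, which equals $X_{\boldsymbol\mu}X_{\boldsymbol\nu}^*$ because the $\Phi_{X_i}$ commute and $\|\Phi_{X_i}^p(I)\|\to0$ on the open polyball. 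These facts make your plan viable, but they constitute the heart of item (i) and must be written out rather than deferred.
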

\begin{proof} The fact  that the map ${\bf X}\mapsto \boldsymbol{\cP}({\bf R}, {\bf X})$ is a free $k$-pluriharmonic function on the polyball ${\bf B_n}$ with coefficients in $\boldsymbol\cR_{\bf n}^*\boldsymbol\cR_{\bf n}$ was proved in the remarks preceding the theorem.
Setting $\Lambda_i:={\bf R}_{i,1}\otimes r{\bf S}_{i,1}^*-\cdots -
{\bf R}_{i,n_i}\otimes r{\bf S}_{i,n_i}^*$ for each $i\in\{1,\ldots, k\}$, we have
\begin{equation*}
\begin{split}
W_i&:=\sum_{m_i\in \ZZ} \sum_{{\alpha_i,\beta_i\in \FF_{n_i}^+ }\atop{|\alpha_i|=m_i^-, |\beta_i|=m_i^+}}{\bf R}_{i,\widetilde\alpha_i}^* {\bf R}_{i,\widetilde\beta_i}
\otimes  r^{|\alpha_i|+|\beta_i|}{ \bf S}_{i,\alpha_i} {\bf S}_{i,\beta_i}^*\\
&=(I-\Lambda_i)^{-1} -I +(I-\Lambda_i^*)^{-1}\\
&=(I-\Lambda_i^*)^{-1}[(I-\Lambda_i) -(I-\Lambda_i^*)(I-\Lambda_i)+(I-\Lambda_i^*)] (I-\Lambda_i)^{-1}\\
&=
(I-\Lambda_i^*)^{-1}\left[I_{\otimes_{i=1}^k F^2(H_{n_i})}\otimes
 \left(I_{\otimes_{i=1}^k F^2(H_{n_i})}-\sum_{j=1}^{n_i} r^2 {\bf S}_{i,j} {\bf S}_{i,j}^*\right)\right] (I-\Lambda_i)^{-1}.
\end{split}
\end{equation*}
Recall that ${\bf R}_{i,s}{\bf R}_{j,t}={\bf R}_{j,t}{\bf R}_{i,s}$ and
${\bf R}_{i,s}{\bf R}_{j,t}^*={\bf R}_{j,t}^*{\bf R}_{i,s}$ for any $i,j\in \{1,\ldots, k\}$ with $i\neq j$ and  for any $s\in \{1,\ldots, n_i\}$ and
$t\in \{1,\ldots, n_j\}$. Similar commutation relations hold for the universal model ${\bf S}$. Since
$\boldsymbol{\cP}({\bf R}, r{\bf S})= W_1\cdots W_k$ and $W_1,\ldots, W_k$ are commuting positive operators,  we deduce that
\begin{equation*}
\boldsymbol{\cP}({\bf R}, r{\bf S})=\left[\prod_{i=1}^k(I-{\bf R}_{i,1}^*\otimes r{\bf S}_{i,1}-\cdots
-{\bf R}_{i,n_i}^*\otimes r{\bf S}_{i,n_i})^{-1}\right] \left(I\otimes \boldsymbol\Delta_{r{\bf S}}\right)
\prod_{i=1}^k(I-{\bf R}_{i,1}\otimes r{\bf S}^*_{i,1}-\cdots
-{\bf R}_{i,n_i}\otimes r{\bf S}^*_{i,n_i})^{-1}
\end{equation*}
for any $r\in [0,1)$, and  $\boldsymbol{\cP}({\bf R}, r{\bf S})=C_{r{\bf S}}^* C_{r{\bf S}}\geq 0$.
Now,  let ${\bf X}\in {\bf B_n}(\cH)$ and let $r\in (0,1)$ be such that $\frac{1}{r}{\bf X}\in {\bf B_n}(\cH)$.
Since the noncommutative Berezin transform
$\boldsymbol\cB_{\frac{1}{r}{\bf X}}$ is continuous in the operator norm and completely positive, so is $id\otimes \boldsymbol\cB_{\frac{1}{r}{\bf X}}$. Consequently, applying $id\otimes \boldsymbol\cB_{\frac{1}{r}{\bf X}}$
to the relations above, we deduce that
\begin{equation*}
\begin{split}
\boldsymbol{\cP}({\bf R}, {\bf X})&=\left(id\otimes \boldsymbol\cB_{\frac{1}{r}{\bf X}}\right)\left[\boldsymbol{\cP}({\bf R}, r{\bf S})\right]\\
&=\prod_{i=1}^k
 \left(I-{\bf R}_{i,1}^*\otimes X_{i,1}-\cdots -{\bf R}_{i,n_i}^*\otimes X_{i,n_i}\right)^{-1}\left(I\otimes \boldsymbol\Delta_{\bf X}\right)
 \prod_{i=1}^k
 \left(I-{\bf R}_{i,1}\otimes X_{i,1}^*-\cdots -{\bf R}_{i,n_i}\otimes X_{i,n_i}^*\right)^{-1}\\
 &=C_{\bf X} ^* C_{\bf X},
\end{split}
\end{equation*}
which completes the proof of item (i).

 Using the results above and  the continuity of $\widehat \mu$ in the operator
norm, we deduce that the noncommutative Berezin transform $\boldsymbol\cB_\mu(I, \cdot \,)$ associated  with $\mu$  coincides with the Poisson transform $\boldsymbol\cP\mu$. Indeed,  we have
\begin{equation*}
\begin{split}
\boldsymbol\cB_\mu(I,{\bf X})&= \widehat \mu(C_{\bf X} ^* C_{\bf X})\\
&=
\sum_{m_1\in \ZZ}\cdots \sum_{m_k\in \ZZ} \sum_{{\alpha_i,\beta_i\in \FF_{n_i}^+, i\in \{1,\ldots, k\}}\atop{|\alpha_i|=m_i^-, |\beta_i|=m_i^+}} \mu\left({\bf R}_{1,\widetilde\alpha_1}^*\cdots {\bf R}_{k,\widetilde\alpha_k}^*{\bf R}_{1,\widetilde\beta_1}\cdots {\bf R}_{k,\widetilde\beta_k}\right)
\otimes { X}_{1,\alpha_1}\cdots {X}_{k,\alpha_k}{X}_{1,\beta_1}^*\cdots {X}_{k,\beta_k}^*\\
&=
\widehat \mu (\boldsymbol{\cP}({\bf R}, {\bf X}))\\
&=(\boldsymbol\cP\mu)({\bf X})
\end{split}
\end{equation*}
for any ${\bf X}\in {\bf B}_{\bf n}(\cH)$,
where the convergence is in the operator norm topology of
$B(\cK\otimes \cH)$. This proves item (ii).
Note also that   the Poisson transform $\boldsymbol\cP\mu$
is a free $k$-pluriharmonic function on ${\bf B}_{\bf n}$ with coefficients
in $B(\cE)$.
If $\mu$ is completely positive, then
 so is $\widehat \mu$. Using the fact that $\widehat \mu(C_{\bf X} ^* C_{\bf X})=(\boldsymbol\cP\mu)({\bf X})$, we deduced
item (iii). The proof is complete.
\end{proof}

 Consider  the particular case when $n_1=\cdots=n_k=1$,
$\cH=\cK=\CC$, ${\bf X}=(X_1,\ldots, X_k)$ with $X_j=r_je^{i\theta_j}\in \DD$,  and $\mu$ is a complex
Borel measure on $\TT^k$. Note that $\mu$ can be seen as a bounded linear
functional on $C(\TT^k)$. Consequently,  there is a unique bounded linear functional
$\hat\mu$ on the operator system  generated by the monomials
$S_1^{m_1^-}\cdots S_k^{m_k^-} S_1^{*\, m_1^+}\cdots S_k^{*\, m_k^+}$, where $m_1,\ldots, m_k\in \ZZ$, and  $S_1,\ldots, S_k$ are the unilateral shifts acting on the Hardy space $H^2(\TT^k)$, such that
$$
\hat \mu(S_1^{m_1^-}\cdots S_k^{m_k^-} S_1^{*\, m_1^+}\cdots S_k^{*\, m_k^+})=\mu(e^{im_1^- \varphi_1}\cdots e^{im_k^- \varphi_k}e^{-im_1^+ \varphi_1}\cdots e^{-im_k^+ \varphi_k}),\qquad m_1,\ldots, m_k\in \ZZ.
$$
Indeed, if $p$ is any polynomial function of the form
$$
p(z_1,\ldots, z_k,\bar z_1,\ldots, \bar z_k)=\sum a_{(m_1,\ldots, m_k)} z_1^{m_1^-}\cdots z_k^{m_k^-} \bar z_1^{m_1^+}\cdots \bar z_k^{m_k^+},\qquad (z_1,\ldots, z_k)\in \DD^k,
$$
where $a_{(m_1,\ldots, m_k)} \in \CC$, then, due to the noncommutative von Neumann inequality \cite{Po-poisson}, we have
\begin{equation*}
\begin{split}
|\hat \mu(p(S_1,\ldots, S_k, S_1^*,\ldots, S_k^*))|&=|\mu(p(e^{i\varphi_1},\ldots, e^{i\varphi_k},e^{-i\varphi_1},\ldots, e^{-i\varphi_k}))|\\
&\leq \|\mu\| \|p(S_1,\ldots, S_k, S_1^*,\ldots, S_k^*)\|.
\end{split}
\end{equation*}
Therefore, $\hat \mu$ is a bounded linear functional on the operator system $\text{\rm span} \{\boldsymbol\cA_{\bf n}^*\boldsymbol\cA_{\bf n}\}^{-\|\cdot \|}$.
Note that the noncommutative Poisson transform of $\hat\mu$, i.e. $B_{\hat\mu}(I, \cdot \,)$, coincides with the classical Poisson transform of $\mu$.
Indeed, for any $ z=(r_1e^{i\theta_1},\ldots, r_ke^{i\theta_k})\in \DD^k$, we have
\begin{equation*}
\begin{split}
\boldsymbol\cB_{\hat\mu}(I,z)&=
\sum_{(p_1,\ldots, p_k)\in \ZZ^k} \hat \mu(S_1^{p_1^-}\cdots p_k^{p_k^-} S_1^{*\, p_1^+}\cdots S_k^{*\, p_k^+})z_1^{p_1}\cdots z_k^{p_k}\\
&=
\sum_{(p_1,\ldots, p_k)\in \ZZ^k}\mu(\bar\zeta_1^{p_1}\cdots \bar \zeta^{p_k})z_1^{p_1}\cdots z_k^{p_k}\\
&=
\sum_{(p_1,\ldots, p_k)\in \ZZ^k}\left(\int_{\TT^k} \bar\zeta_1^{p_1}\cdots \bar \zeta^{p_k}d\mu(\zeta)\right) z_1^{p_1}\cdots z_k^{p_k}\\
&=
\int_{\TT^k} \left(\sum_{(p_1,\ldots, p_k)\in \ZZ^k} r_1^{|p_1|}\cdots r_k^{|p_k|} e^{ip_1(\theta_1-\varphi_1)}\cdots e^{ip_k(\theta_k-\varphi_k)}\right)d\mu(\zeta)\\
&=\int_{\TT^k} P(z,\zeta)d\mu(\zeta),
\end{split}
\end{equation*}
where

$$
  P(z,\zeta)=
P_{r_1}(\theta_1-\varphi_1)\cdots P_{r_k}(\theta_k-\varphi_k),  \qquad
\zeta=(e^{i\varphi_1},\ldots, e^{i\varphi_k})\in \TT^k,
$$
and
$P_{r}(\theta-\varphi)=\frac{1-r^2}{1-2r\cos(\theta- \varphi)+ r^2}$ is the Poisson kernel of the unit disc (see \cite{Ru1}).

 We recall that $\Lambda$ denotes  the set of all pairs $(\boldsymbol\alpha, \boldsymbol\beta)\in {\bf F}^+_{\bf n}\times {\bf F}^+_{\bf n}$, where ${\bf F}^+_{\bf n}:=\FF_{n_1}^+\times \cdots \times \FF_{n_k}^+$,  with the property that $\boldsymbol\alpha\sim_{lc}\boldsymbol\beta$ and $(\boldsymbol\alpha, \boldsymbol\beta)=(c_l^+(\boldsymbol\alpha, \boldsymbol\beta),c_l^-(\boldsymbol\alpha, \boldsymbol\beta))$.
We remark that $(\boldsymbol\alpha, \boldsymbol\beta)\in \Lambda$ if and only if $(\widetilde{\boldsymbol\alpha}, \widetilde{\boldsymbol\beta})\in \Lambda$. As before, we use the notation $\widetilde{\boldsymbol\alpha}=(\widetilde{\boldsymbol\alpha}_1,\ldots, \widetilde{\boldsymbol\alpha}_k)$, if $ {\boldsymbol\alpha}=( {\boldsymbol\alpha}_1,\ldots,  {\boldsymbol\alpha}_k)\in \FF_{\bf n}^+$.

Throughout  the rest of this section, we assume that $\cE$ is a separable Hilbert
space.
\begin{lemma}\label{mu-r}
Let $\mu: \boldsymbol\cR_{\bf n}^* \boldsymbol\cR_{\bf n}\to B(\cE)$ be a completely bounded linear map. For each
$r\in [0,1)$, define  the linear map $\mu_r: \boldsymbol\cR_{\bf n}^* \boldsymbol\cR_{\bf n}\to B(\cE)$
by
$$
\mu_r({\bf R}_{\boldsymbol\alpha}^* {\bf R}_{\boldsymbol\beta}):= r^{|\boldsymbol\alpha|+|\boldsymbol\beta|}\mu({\bf R}_{\boldsymbol\alpha}^* {\bf R}_{\boldsymbol\beta}),\qquad (\boldsymbol\alpha, \boldsymbol\beta)\in \Lambda,
$$
where $|\boldsymbol\alpha|:=|\alpha_1|+\cdots + |\alpha_k|$ if $\boldsymbol\alpha=(\alpha_1,\ldots, \alpha_k)\in \FF^+_{\bf n}$.
Then
\begin{enumerate}
\item[(i)]
$\mu_r$ is a completely bounded linear map;
\item[(ii)]
$ \|\mu\|_{cb}=\sup\limits_{0\leq r<1}
\|\mu_r\|_{cb}=\lim\limits_{r\to 1} \|\mu_r\|_{cb}$;
\item[(iii)] for any $A\in \boldsymbol\cR_{\bf n}^* \boldsymbol\cR_{\bf n}$, $\mu_r(A)\to \mu(A)$, as $r\to 1$, in
the operator norm topology;
\item[(iv)] If $\mu$ is completely positive, then so is $\mu_r$ for any $r\in [0,1)$.
\end{enumerate}
\end{lemma}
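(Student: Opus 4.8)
The plan is to realize $\mu_r$ as a composition $\mu\circ\Theta_r$, where $\Theta_r$ is a fixed unital completely positive ``radialization'' map on the operator system $\boldsymbol\cR_{\bf n}^*\boldsymbol\cR_{\bf n}$; once this factorization is in hand, all four items become formal consequences of standard properties of completely bounded and completely positive maps. For each $\boldsymbol\zeta=(\zeta_1,\ldots,\zeta_k)\in\TT^k$ I would introduce the gauge unitary $U_{\boldsymbol\zeta}$ on $\bigotimes_{i=1}^k F^2(H_{n_i})$ determined on the standard basis by $U_{\boldsymbol\zeta}(e_{\beta_1}^1\otimes\cdots\otimes e_{\beta_k}^k):=\zeta_1^{|\beta_1|}\cdots\zeta_k^{|\beta_k|}\,e_{\beta_1}^1\otimes\cdots\otimes e_{\beta_k}^k$. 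A one-line computation on basis vectors gives $U_{\boldsymbol\zeta}{\bf R}_{i,j}U_{\boldsymbol\zeta}^*=\zeta_i{\bf R}_{i,j}$, and hence $U_{\boldsymbol\zeta}{\bf R}_{\boldsymbol\alpha}^*{\bf R}_{\boldsymbol\beta}U_{\boldsymbol\zeta}^*=\big(\prod_{i=1}^k\zeta_i^{\,|\beta_i|-|\alpha_i|}\big){\bf R}_{\boldsymbol\alpha}^*{\bf R}_{\boldsymbol\beta}$, using $\overline{\zeta_i}=\zeta_i^{-1}$ on $\TT$.

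Next I would set $\Theta_r(A):=\int_{\TT^k}\big(\prod_{i=1}^k P_r(\theta_i)\big)\,U_{\boldsymbol\zeta}AU_{\boldsymbol\zeta}^*\,\frac{d\theta_1}{2\pi}\cdots\frac{d\theta_k}{2\pi}$, where $\zeta_i=e^{i\theta_i}$ and $P_r$ is the classical Poisson kernel of the disc. As a Bochner-type average of the $*$-automorphisms $A\mapsto U_{\boldsymbol\zeta}AU_{\boldsymbol\zeta}^*$ against the positive probability kernel $\prod_i P_r(\theta_i)$, the map $\Theta_r$ is unital and completely positive, hence completely contractive. Using the identity $\int_{\TT}P_r(\theta)e^{im\theta}\frac{d\theta}{2\pi}=r^{|m|}$ together with the fact that for $(\boldsymbol\alpha,\boldsymbol\beta)\in\Lambda$ one of $\alpha_i,\beta_i$ equals $g_0^i$ for each $i$, so that $\big||\beta_i|-|\alpha_i|\big|=|\alpha_i|+|\beta_i|$, I would evaluate the integral factorwise to obtain $\Theta_r({\bf R}_{\boldsymbol\alpha}^*{\bf R}_{\boldsymbol\beta})=r^{|\boldsymbol\alpha|+|\boldsymbol\beta|}{\bf R}_{\boldsymbol\alpha}^*{\bf R}_{\boldsymbol\beta}$ for every $(\boldsymbol\alpha,\boldsymbol\beta)\in\Lambda$. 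Since these elements span $\boldsymbol\cR_{\bf n}^*\boldsymbol\cR_{\bf n}$, the restriction of $\Theta_r$ maps this operator system into itself, and comparing with the defining formula for $\mu_r$ yields the key identity $\mu_r=\mu\circ\big(\Theta_r|_{\boldsymbol\cR_{\bf n}^*\boldsymbol\cR_{\bf n}}\big)$ (which in particular shows $\mu_r$ is well defined).

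From this factorization the lemma follows quickly. Item (i) holds because $\mu_r$ is the composition of the completely bounded map $\mu$ with the completely contractive map $\Theta_r$, so $\|\mu_r\|_{cb}\le\|\mu\|_{cb}$; item (iv) holds because a composition of completely positive maps is completely positive. Item (iii) is immediate, since every $A\in\boldsymbol\cR_{\bf n}^*\boldsymbol\cR_{\bf n}$ is a \emph{finite} combination $\sum c_{\boldsymbol\alpha,\boldsymbol\beta}{\bf R}_{\boldsymbol\alpha}^*{\bf R}_{\boldsymbol\beta}$, whence $\mu_r(A)=\sum c_{\boldsymbol\alpha,\boldsymbol\beta}r^{|\boldsymbol\alpha|+|\boldsymbol\beta|}\mu({\bf R}_{\boldsymbol\alpha}^*{\bf R}_{\boldsymbol\beta})\to\mu(A)$ in norm as $r\to1$. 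For item (ii), the bound $\sup_{0\le r<1}\|\mu_r\|_{cb}\le\|\mu\|_{cb}$ is contained in (i); for the reverse I would invoke lower semicontinuity of the cb norm under pointwise norm convergence: fixing $m$ and $[A_{pq}]\in M_m(\boldsymbol\cR_{\bf n}^*\boldsymbol\cR_{\bf n})$ with $\|[A_{pq}]\|\le1$, item (iii) gives $[\mu_r(A_{pq})]\to[\mu(A_{pq})]$ in the finite matrix norm, so $\|\mu_m([A_{pq}])\|=\lim_{r\to1}\|(\mu_r)_m([A_{pq}])\|\le\liminf_{r\to1}\|\mu_r\|_{cb}$; taking suprema over $[A_{pq}]$ and over $m$ gives $\|\mu\|_{cb}\le\liminf_{r\to1}\|\mu_r\|_{cb}$. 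The two inequalities force equality throughout, so $\|\mu\|_{cb}=\sup_{0\le r<1}\|\mu_r\|_{cb}=\lim_{r\to1}\|\mu_r\|_{cb}$.

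The main obstacle, and the only genuinely non-formal step, is the construction and analysis of $\Theta_r$: checking that the gauge action is implemented by honest unitaries, that the average against $\prod_i P_r(\theta_i)$ defines a bona fide unital completely positive map carrying $\boldsymbol\cR_{\bf n}^*\boldsymbol\cR_{\bf n}$ into itself, and that its action on the generators ${\bf R}_{\boldsymbol\alpha}^*{\bf R}_{\boldsymbol\beta}$ is exactly the prescribed radial scaling (this is where the structure of $\Lambda$ is used). Everything else—(i), (iii), (iv), and the easy half of (ii)—is immediate from the factorization, while the remaining half of (ii) is the standard lower-semicontinuity argument above.
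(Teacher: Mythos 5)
Your proof is correct, but it travels a genuinely different road from the paper's. The paper gets items (i) and (ii) by writing $\mu_{r_1}(p({\bf R}^*,{\bf R}))=\mu_{r_2}\bigl(p(\tfrac{r_1}{r_2}{\bf R}^*,\tfrac{r_1}{r_2}{\bf R})\bigr)$ and invoking the noncommutative von Neumann inequality of \cite{Po-poisson} (passed to matrices) to obtain that $r\mapsto\|\mu_r\|_{cb}$ is increasing and bounded by $\|\mu\|_{cb}$, so the limit in (ii) exists by monotonicity; for (iv) it factors $\mu_r$ through the noncommutative Berezin transform $\boldsymbol\cB_{r{\bf R}}$ together with the unitary equivalence of $p({\bf R}^*,{\bf R})$ and $p({\bf S}^*,{\bf S})$. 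You instead build a single unital completely positive ``radialization'' $\Theta_r$ by averaging the gauge automorphisms $A\mapsto U_{\boldsymbol\zeta}AU_{\boldsymbol\zeta}^*$ against the product Poisson kernel, check via the structure of $\Lambda$ (for each $i$ one of $\alpha_i,\beta_i$ is $g_0^i$, so $\bigl||\beta_i|-|\alpha_i|\bigr|=|\alpha_i|+|\beta_i|$) that $\Theta_r$ scales the generators exactly as required, and read off (i), (iii), (iv) from $\mu_r=\mu\circ\Theta_r$; for (ii) you replace the paper's monotonicity argument by lower semicontinuity of the cb norm under pointwise norm convergence, which still forces $\sup=\lim=\|\mu\|_{cb}$ by squeezing. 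Your route is more self-contained — it does not presuppose the von Neumann inequality or the Berezin kernel machinery, and in fact the complete contractivity of $\Theta_r$ reproves the instance $\|p(r{\bf R}^*,r{\bf R})\|\le\|p({\bf R}^*,{\bf R})\|$ that the paper imports — and it has the small bonus of exhibiting $\mu_r$ as manifestly well defined. What it gives up is the monotonicity of $r\mapsto\|\mu_r\|_{cb}$, which the paper's argument yields for free (though the lemma as stated does not require it). Both approaches are sound.
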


\begin{proof} Let $$p({\bf R}^*, {\bf R}):=\sum_{(\boldsymbol\alpha, \boldsymbol\beta)\in \Lambda'\subset \Lambda:\, \text{card}(\Lambda')<\aleph_0}
a_{(\boldsymbol\alpha, \boldsymbol\beta)} {\bf R}_{\boldsymbol\alpha}^* {\bf R}_{\boldsymbol\beta},\qquad a_{(\boldsymbol\alpha, \boldsymbol\beta)}\in \CC,
$$
and  $0\leq r_1<r_2\leq 1$.
Using the noncommutative von Neumann inequality \cite{Po-poisson},  we deduce that

\begin{equation*}
\begin{split}
\|\mu_{r_1}(p({\bf R}^*, {\bf R}))\|&=
\|\mu( p(r_1{\bf R}^*, r_1{\bf R}))\|\\
&=\left\|\mu_{r_2}\left(p(\frac{r_1}{r_2}{\bf R}^*, \frac{r_1}{r_2}{\bf R})\right) \right\|\\
&\leq \|\mu_{r_2}\| \|p({\bf R}^*, {\bf R})\|.
\end{split}
\end{equation*}
 In particular, we have $\|\mu_r\|\leq
\|\mu\|$ for any $r\in [0,1)$. Similarly, passing to matrices over
$\boldsymbol\cR_{\bf n}^* \boldsymbol\cR_{\bf n}$, one can show that $\|\mu_{r_1}\|_{cb}\leq
\|\mu_{r_2}\|_{cb}$ if $0\leq r_1<r_2\leq 1$, and $\|\mu_r\|_{cb}\leq
\|\mu\|_{cb}$ for any $r\in [0,1)$. Now, one can easily see that $\mu_r(A)\to\mu(A)$ in the operator norm topology, as $r\to 1$,
for any $A\in \boldsymbol\cR_{\bf n}^* \boldsymbol\cR_{\bf n}$, and
$\|\mu\|_{cb}=\sup_{0\leq r<1} \|\mu_r\|_{cb}$. Hence, and using the fact that
 the function
$r\mapsto \|\mu_r\|_{cb}$ is increasing for $r\in [0,1)$, we deduce that
    $\lim_{r\to 1} \|\mu_r\|_{cb}$
exists and it is equal to $\|\mu\|_{cb}$.

To prove item (iv), note that
$\mu_r(p({\bf R}^*, {\bf R}))=\mu\left(\boldsymbol\cB_{r{\bf R}}[p({\bf S}^*, {\bf S})]\right)$.   Since the noncommutative Berezin transform $\boldsymbol\cB_{r{\bf R}}$ and $\mu$ are completely positive linear maps and $p({\bf R}^*, {\bf R})$ is unitarily equivalent to $p({\bf S}^*, {\bf S})$, we deduce that  $\mu_r$ is a  completely positive linear map for each $r\in [0,1)$.
This completes the proof.
\end{proof}

Let   $F$ be a free $k$-pluriharmonic  function on the polyball ${\bf B_n}$, with  operator-valued coefficients in $B(\cE)$,      with representation
\begin{equation*}
F({\bf X})= \sum_{m_1\in \ZZ}\cdots \sum_{m_k\in \ZZ} \sum_{{\alpha_i,\beta_i\in \FF_{n_i}^+, i\in \{1,\ldots, k\}}\atop{|\alpha_i|=m_i^-, |\beta_i|=m_i^+}}A_{(\alpha_1,\ldots,\alpha_k;\beta_1,\ldots, \beta_k)}
\otimes {\bf X}_{1,\alpha_1}\cdots {\bf X}_{k,\alpha_k}{\bf X}_{1,\beta_1}^*\cdots {\bf X}_{k,\beta_k}^*.
\end{equation*}
We associate with $F$ and each $r\in[0,1)$ the linear map
$\nu_{F_r}:\boldsymbol\cR_{\bf n}^* \boldsymbol\cR_{\bf n}\to B(\cE)$ by setting
\begin{equation}
\label{nufr}
\nu_{F_r}(  {\bf R}_{\widetilde{\boldsymbol\alpha}}^* {\bf R}_{\widetilde{\boldsymbol\beta}}):=r^{|\boldsymbol\alpha|+|\boldsymbol\beta|}
A_{(\alpha_1,\ldots,\alpha_k;\beta_1,\ldots, \beta_k)},\qquad (\boldsymbol\alpha, \boldsymbol\beta)\in \Lambda.
\end{equation}
 We remark that $\nu_{F_r}$ is uniquely determined by the radial function $r\mapsto F(r{\bf S})$. Indeed, note that
if  $x:=x_1\otimes \cdots \otimes x_k$, $y=y_1\otimes \cdots \otimes y_k$ satisfy relation \eqref{xy}, and $h,\ell\in \cE$, we have
  \begin{equation*}
  \begin{split}
  \left<F(r{\bf S})(h\otimes x), \ell\otimes y\right>
  = \left<r^{|\boldsymbol\alpha|+|\boldsymbol\beta|} A_{(\alpha_1,\ldots,\alpha_k;\beta_1,\ldots, \beta_k)}h, \ell\right>
  =\left<\nu_{F_r}(  {\bf R}_{\widetilde{\boldsymbol\alpha}}^* {\bf R}_{\widetilde{\boldsymbol\beta}})h,\ell\right>,\qquad (\boldsymbol\alpha, \boldsymbol\beta)\in \Lambda.
  \end{split}
  \end{equation*}

In what follows, we denote by $C^*({\bf R})$ the $C^*$-algebra generated by the right creation operators ${\bf R}_{i,j}$, where  $i\in \{1,\ldots, k\}$ and  $j\in \{1,\ldots, n_i\}$.

\begin{theorem}\label{pluri-measure}
Let $F:{\bf B_n}(\cH)\to B(\cE)\otimes_{min} B(\cH)$ be   a free $k$-pluriharmonic function. Then the following statements are equivalent:
\begin{enumerate}
\item[(i)]
there exists a completely  bounded linear map $\mu:C^*({\bf R})\to B(\cE)$ such that $ F=\boldsymbol\cP\mu; $
\item[(ii)] the   linear maps \ $\{\nu_{F_r}\}_{r\in[0,1)}$
associate with   $F$ are completely bounded and
   $\sup\limits_{0\leq r<1}
\|\nu_{F_r}\|_{cb}<\infty$;
\item[(iii)] there exists a
$k$-tuple   ${\bf V}=(V_1,\ldots, V_k)$, $V_i=(V_{i,1},\ldots, V_{i,n_i})$, of doubly commuting row isometries acting on  a Hilbert space $\cK$  and  bounded linear operators $W_1,W_2:\cE\to \cK$ such that
$$
F({\bf X})=  (W_1^*\otimes I)\left[ C_{\bf X}({\bf V})^*
C_{\bf X}({\bf V}) \right] (W_2\otimes I),
$$
where $$C_{\bf X}({\bf V}):=(I\otimes \boldsymbol\Delta_{\bf X}(I)^{1/2})\prod_{i=1}^k(I-V_{i,1}\otimes
X_{i,1}^*-\cdots -{V}_{i,n_i}\otimes X_{i,n_i}^*)^{-1}.$$
\end{enumerate}
Moreover, in this case we can choose $\mu$ such that $\|\mu\|_{cb}=\sup\limits_{0\leq r<1}
\|\nu_{F_r}\|_{cb}$.
\end{theorem}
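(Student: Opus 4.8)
The plan is to prove the three equivalences through the cycle (i)$\Rightarrow$(ii)$\Rightarrow$(i) and (i)$\Rightarrow$(iii)$\Rightarrow$(i), keeping track of the completely bounded norms throughout so that the equality $\|\mu\|_{cb}=\sup_{0\le r<1}\|\nu_{F_r}\|_{cb}$ drops out at the end. The organizing principle, coming from Theorem \ref{Poisson-factor}(ii), is that $\boldsymbol\cP\mu$ is a free $k$-pluriharmonic function whose coefficients are exactly $\mu({\bf R}_{\widetilde{\boldsymbol\alpha}}^*{\bf R}_{\widetilde{\boldsymbol\beta}})$; hence, for $F$ with coefficients $A_{(\boldsymbol\alpha;\boldsymbol\beta)}$, one has $F=\boldsymbol\cP\mu$ \emph{iff} $A_{(\boldsymbol\alpha;\boldsymbol\beta)}=\mu({\bf R}_{\widetilde{\boldsymbol\alpha}}^*{\bf R}_{\widetilde{\boldsymbol\beta}})$ for every $(\boldsymbol\alpha,\boldsymbol\beta)\in\Lambda$. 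This coefficient criterion is what I would check in all the reproduction steps.

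For (i)$\Rightarrow$(ii), assuming $F=\boldsymbol\cP\mu$, a direct comparison with the definition \eqref{nufr} and the relation $A_{(\boldsymbol\alpha;\boldsymbol\beta)}=\mu({\bf R}_{\widetilde{\boldsymbol\alpha}}^*{\bf R}_{\widetilde{\boldsymbol\beta}})$ shows that $\nu_{F_r}$ coincides with the radial map $(\mu|_{\boldsymbol\cR_{\bf n}^*\boldsymbol\cR_{\bf n}})_r$ of Lemma \ref{mu-r}; that lemma then gives $\sup_r\|\nu_{F_r}\|_{cb}=\|\mu|_{\boldsymbol\cR_{\bf n}^*\boldsymbol\cR_{\bf n}}\|_{cb}\le\|\mu\|_{cb}<\infty$, which is (ii). The reverse implication (ii)$\Rightarrow$(i) is the heart of the matter. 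Each $\nu_{F_r}:\boldsymbol\cR_{\bf n}^*\boldsymbol\cR_{\bf n}\to B(\cE)$ is completely bounded, so by the Wittstock--Arveson extension theorem (\cite{Pa-book}) it extends to $\widetilde\nu_{F_r}:C^*({\bf R})\to B(\cE)$ with $\|\widetilde\nu_{F_r}\|_{cb}=\|\nu_{F_r}\|_{cb}\le M:=\sup_{0\le r<1}\|\nu_{F_r}\|_{cb}$. Since $\cE$ is separable, the ball of radius $M$ in $CB(C^*({\bf R}),B(\cE))$ is compact in the point-weak$^*$ (BW) topology, so I would extract a subnet $r_\lambda\to1$ with $\widetilde\nu_{F_{r_\lambda}}\to\mu$, giving a completely bounded $\mu$ with $\|\mu\|_{cb}\le M$. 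Evaluating on the generators and using $\widetilde\nu_{F_{r_\lambda}}({\bf R}_{\widetilde{\boldsymbol\alpha}}^*{\bf R}_{\widetilde{\boldsymbol\beta}})=r_\lambda^{|\boldsymbol\alpha|+|\boldsymbol\beta|}A_{(\boldsymbol\alpha;\boldsymbol\beta)}\to A_{(\boldsymbol\alpha;\boldsymbol\beta)}$ yields $\mu({\bf R}_{\widetilde{\boldsymbol\alpha}}^*{\bf R}_{\widetilde{\boldsymbol\beta}})=A_{(\boldsymbol\alpha;\boldsymbol\beta)}$, whence $F=\boldsymbol\cP\mu$ by the coefficient criterion.

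For (i)$\Rightarrow$(iii), I would apply the Wittstock--Paulsen structure theorem to $\mu:C^*({\bf R})\to B(\cE)$, producing a Hilbert space $\cK$, a $*$-representation $\pi:C^*({\bf R})\to B(\cK)$, and operators $W_1,W_2:\cE\to\cK$ with $\mu(A)=W_1^*\pi(A)W_2$ and $\|W_1\|\,\|W_2\|=\|\mu\|_{cb}$. Setting $V_{i,j}:=\pi({\bf R}_{i,j})$ gives a $k$-tuple of doubly commuting row isometries, since a $*$-representation preserves the isometry relations $R_{i,s}^*R_{i,t}=\delta_{st}I$ and the commutation of $C^*({\bf R}_i)$ with $C^*({\bf R}_j)$. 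As $\pi$ is a contractive $*$-homomorphism, it commutes with the norm-convergent Poisson series and with the operator-norm resolvents defining $C_{\bf X}$, so $(\pi\otimes\mathrm{id})[\boldsymbol\cP({\bf R},{\bf X})]=C_{\bf X}({\bf V})^*C_{\bf X}({\bf V})$ and
$$F({\bf X})=\widehat\mu[\boldsymbol\cP({\bf R},{\bf X})]=(W_1^*\otimes I)\big[C_{\bf X}({\bf V})^*C_{\bf X}({\bf V})\big](W_2\otimes I),$$
which is (iii). For (iii)$\Rightarrow$(i), I would invoke the universal property of the $C^*$-algebra generated by the doubly commuting row isometries ${\bf R}$ (cf.~\cite{Po-Berezin-poly}): the given doubly commuting row isometries ${\bf V}$ induce a $*$-homomorphism $\pi:C^*({\bf R})\to C^*({\bf V})$ with $\pi({\bf R}_{i,j})=V_{i,j}$, and then $\mu(A):=W_1^*\pi(A)W_2$ is completely bounded and reproduces $F=\boldsymbol\cP\mu$ by running the previous computation backwards.

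Finally, the norm identity follows by combining the bounds: the construction in (ii)$\Rightarrow$(i) yields $\|\mu\|_{cb}\le M$, while (i)$\Rightarrow$(ii) yields $\sup_r\|\nu_{F_r}\|_{cb}\le\|\mu\|_{cb}$, forcing $\|\mu\|_{cb}=\sup_{0\le r<1}\|\nu_{F_r}\|_{cb}$. I expect the main obstacle to be the compactness/limit step in (ii)$\Rightarrow$(i): one must install the correct point-weak$^*$ topology on $CB(C^*({\bf R}),B(\cE))$, exploit the separability of $\cE$ to guarantee the required compactness, and verify that the values of the limit map $\mu$ on the \emph{whole} $C^*$-algebra (not merely on the operator system $\boldsymbol\cR_{\bf n}^*\boldsymbol\cR_{\bf n}$) remain under control; the secondary delicate point is justifying that $\pi$ genuinely passes through the infinite Poisson series and the operator-norm resolvents in (i)$\Rightarrow$(iii).
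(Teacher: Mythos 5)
Your proposal is correct and follows essentially the same route as the paper: Lemma \ref{mu-r} for (i)$\Rightarrow$(ii), a weak$^*$-compactness limit combined with Wittstock's extension theorem for (ii)$\Rightarrow$(i), Paulsen's representation $\mu(A)=W_1^*\pi(A)W_2$ with $V_{i,j}:=\pi({\bf R}_{i,j})$ for (i)$\Rightarrow$(iii), and the von Neumann inequality to build the $*$-representation for the converse. The only (harmless) variation is in (ii)$\Rightarrow$(i), where you extend each $\nu_{F_r}$ to $C^*({\bf R})$ first and then take a BW-convergent subnet, whereas the paper first forms the WOT-limit $\nu$ on the operator system $\boldsymbol\cR_{\bf n}^*\boldsymbol\cR_{\bf n}$ (via Banach--Alaoglu and a diagonal sequence over a countable dense set, which is where separability of $\cE$ is actually used) and applies Wittstock once at the end.
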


\begin{proof} Assume that item (i) holds.    Then
$$F({\bf X})=\sum_{m_1\in \ZZ}\cdots \sum_{m_k\in \ZZ} \sum_{{\alpha_i,\beta_i\in \FF_{n_i}^+, i\in \{1,\ldots, k\}}\atop{|\alpha_i|=m_i^-, |\beta_i|=m_i^+}} \mu\left({\bf R}_{1,\widetilde\alpha_1}^*\cdots {\bf R}_{k,\widetilde\alpha_k}^*{\bf R}_{1,\widetilde\beta_1}\cdots {\bf R}_{k,\widetilde\beta_k}\right)
\otimes { X}_{1,\alpha_1}\cdots {X}_{k,\alpha_k}{X}_{1,\beta_1}^*\cdots {X}_{k,\beta_k}^*
$$
 for any ${\bf X}\in {\bf B_n}(\cH)$,  where the convergence is in the operator norm topology. Set
$A_{(\boldsymbol\alpha;\boldsymbol\beta)}:=\mu({\bf R}_{\widetilde{\boldsymbol\alpha}}^* {\bf R}_{\widetilde{\boldsymbol\beta}})$ for
any $(\boldsymbol\alpha, \boldsymbol\beta)\in \Lambda$. Consequently, for each $r\in [0,1)$, we have
$$
\nu_{F_r}(  {\bf R}_{\widetilde{\boldsymbol\alpha}}^* {\bf R}_{\widetilde{\boldsymbol\beta}}):=r^{|\boldsymbol\alpha|+|\boldsymbol\beta|}
\mu({\bf R}_{\widetilde{\boldsymbol\alpha}}^* {\bf R}_{\widetilde{\boldsymbol\beta}}),\qquad (\boldsymbol\alpha, \boldsymbol\beta)\in \Lambda.
$$
We recall  that $(\boldsymbol\alpha, \boldsymbol\beta)\in \Lambda$ if and only if $(\widetilde{\boldsymbol\alpha}, \widetilde{\boldsymbol\beta})\in \Lambda$.
Applying
Lemma \ref{mu-r}, we deduce that  to $\{\nu_{F_r}\}$ is a completely bounded map and
$$\|\mu|_{\boldsymbol\cR_{\bf n}^* \boldsymbol\cR_{\bf n}}\|_{cb}=\sup_{0\leq r<1}\|\nu_{F_r}\|_{cb}<\infty,
$$
which proves that item (i) implies (ii).

Now, we prove the implication (ii)$\implies $ (i).
Assume that $F$ is a free pluriharmonic function on ${\bf B_n}$ with
 coefficients
in $B(\cE)$ and such that condition  (ii) holds. Let $\{q_j\}$ be a countable
dense subset of $\boldsymbol\cR_{\bf n}^* \boldsymbol\cR_{\bf n}$. For instance, we can consider consider all
 the operators  of the form
 $$p({\bf R}^*, {\bf R}):=\sum_{(\boldsymbol\alpha, \boldsymbol\beta)\in \Lambda:\, |\boldsymbol\alpha|\leq m, |\boldsymbol\beta|\leq m}
a_{(\boldsymbol\alpha, \boldsymbol\beta)} {\bf R}_{\boldsymbol\alpha}^* {\bf R}_{\boldsymbol\beta},
$$
where $m\in \NN$ and the
    coefficients $a_{(\boldsymbol\alpha, \boldsymbol\beta)}$ lie in some countable dense
subset of the complex plane.
 For each $j$, we have $\|\nu_{F_r}(q_j)\|\leq M\|q_j\|$ for any $r\in[0,1)$, where $M:=\sup_{0\leq
r<1} \|\nu_{F_r}\|_{cb}$.

Due to Banach-Alaoglu theorem \cite{Dou},  the ball $[B(\cE)]_M^-$  is compact
 in the $w^*$-topology. Since $\cE$ is a separable Hilbert space,
  $[B(\cE)]_M^-$ is  a metric space in the $w^*$-topology which coincides with
   the weak operator topology on $[B(\cE)]_M^-$. Consequently,
 the diagonal process guarantees the
existence  of a sequence $\{r_m\}_{m=1}^\infty$ such that $r_m\to 1$
and WOT-$\lim_{m\to 1} \nu_{F_{r_m}}(q_j)$ exists for each $q_j$. Fix
$A\in \boldsymbol\cR_{\bf n}^* \boldsymbol\cR_{\bf n}$ and $x,y\in \cE$ and let us
 prove that $\{\left<\nu_{F_{r_m}}(A)x,y\right>\}_{m=1}^\infty$ is a Cauchy sequence. Let
 $\epsilon>0$ and choose  $q_j$ so that
 $\|q_j-A\|<\frac{\epsilon}{3M\|x\|\|y\|}$. Now, we choose $N$ so that
 $
 |\left<(\nu_{F_{r_m}}(q_j)-\nu_{F_{r_k}}(q_j))x, y\right>|
 <\frac{\epsilon}{3}\quad \text{ for
 any } \ m,k>N.
 $
Due to the fact that
\begin{equation*}
\begin{split}
|\left<(\nu_{F_{r_m}}(A)-\nu_{F_{r_k}}(A))x,y\right>| &\leq
|\left<(\nu_{F_{r_m}}(A-q_j)x,y\right>|+
|\left<(\nu_{F_{r_m}}(q_j)-\nu_{F_{r_k}}(q_j))x,y\right>| \\
&\qquad  + |\left<\nu_{F_{r_k}}(q_j-A)x,y\right>|\\
&\leq 2 M\|x\|\|y\| \|A-q_j\|+ |\left<(\nu_{F_{r_m}}(q_j)-\nu_{F_{r_k}}(q_j))x,y\right>|,
\end{split}
\end{equation*}
we deduce that $|\left<(\nu_{F_{r_m}}(A)-\nu_{F_{r_k}}(A))x,
y\right>|<\epsilon$ \ for $m,k>N$. Therefore,
$$
\Phi(x,y):=\lim_{m\to\infty} \left<\nu_{F_{r_m}}(A)x,y\right>
$$
 exists
for any $x,y\in \cE$ and defines a  functional  $\Phi:\cE\times \cE\to
\CC$ which is linear in the first variable and conjugate linear in
the second.
 Moreover, we have $|\Phi(x,y)|\leq M \|A\|\|x\|\|y\|$ for any $x,y\in \cE$.
 Due to Riesz representation theorem, there exists a unique bounded linear
 operator $B(\cE)$, which we denote by $\nu(A)$, such that
 $\Phi(x,y)=\left<\nu(A)x,y\right>$ for $x,y\in \cE$. Therefore,
 $$
 \nu(A)=\text{\rm WOT-}\lim_{r_m\to 1} \nu_{F_{r_m}}(A),
\qquad A\in  \boldsymbol\cR_{\bf n}^* \boldsymbol\cR_{\bf n},
 $$
 and $\|\nu(A)\|\leq M\|A\|$.
Note that $\nu: \boldsymbol\cR_{\bf n}^* \boldsymbol\cR_{\bf n}\to B(\cE)$ is a   completely bounded map. Indeed, if $[A_{ij}]_{m}$ is
 an $m\times m$ matrix over $ \boldsymbol\cR_{\bf n}^* \boldsymbol\cR_{\bf n}$, then
 $[\nu(A_{ij})]_{m}=\text{\rm WOT-}\lim_{r_k\to 1}
 [\nu_{F_{r_k}}(A_{ij})]_{m}.
 $
Hence, $\left\|[\nu(A_{ij})]_{m}\right\|\leq M
\left\|[A_{ij}]_{m}\right\|$ for all $m$, and so $\|\nu\|_{cb}\leq
M$.
Note  also that $\nu({\bf R}_{\widetilde{\boldsymbol\alpha}}^* {\bf R}_{\widetilde{\boldsymbol\beta}})=A_{(\boldsymbol\alpha;\boldsymbol\beta)}$ for
any $(\boldsymbol\alpha, \boldsymbol\beta)\in \Lambda$, where $A_{(\boldsymbol\alpha;\boldsymbol\beta)}$ are the coefficients of $F$.
According to
Wittstock's extension
 theorem (see \cite{W1}, \cite{W2}), there exists $\mu:C^*({\bf R})\to B(\cE)$ a completely bounded linear map
 which extends $\nu$ and
 such that $\|\mu\|_{cb}=\|\nu\|_{cb}$.
Since  $ F=\boldsymbol\cP\mu$, the proof  of item (i) is complete.

 Now, we prove the equivalence of (i) with  (iii). If item (i) holds, then
 according to Theorem 8.4 from
\cite{Pa-book},  there exists a Hilbert space $\cK$, a
$*$-representation $\pi:C^*({\bf R} )\to B(\cK)$, and bounded
operators $W_1,W_2:\cE\to \cK$,   with $\|\mu\|=\|W_1\|\|W_2\|$
such that
\begin{equation}
\label{pi*}
\mu(A)=W_1^*\pi(A) W_2,\qquad A\in C^*({\bf R} ).
\end{equation}
Set $V_{i,j}:=\pi({\bf R}_{i,j})$ for $i\in \{1,\ldots, k\}$ and $j\in \{1,\ldots, n_i\}$ and note that    ${\bf V}=(V_1,\ldots, V_k)$, with  $V_i=(V_{i,1},\ldots, V_{i,n_i})$, is a $k$-tuple  of doubly commuting row isometries. Using Theorem \ref{Poisson-factor}, one can easily see that
the equality  $ F=\boldsymbol\cP\mu$ implies  the one from item (iii).
Now, we prove the implication (iii) $\implies (i)$.
Since  the $k$-tuple   ${\bf V}=(V_1,\ldots, V_n)$, $V_i=(V_{i,1},\ldots, V_{i,n_i})$, consists of  doubly commuting row isometries on  a Hilbert space $\cK$, the noncommutative von Neumann inequality \cite{Po-poisson} implies that the map $\pi:C^*({\bf R})\to B(\cE)$ defined by
$$\pi({\bf R}_{\boldsymbol\alpha}{\bf R}_{\boldsymbol\beta}^*):=
{\bf V}_{\boldsymbol\alpha}{\bf V}_{\boldsymbol\beta}^*,\qquad \boldsymbol\alpha,\boldsymbol\beta\in {\bf F}_{\bf n}^+,
$$
is a $*$-representation of $C^*({\bf R})$. Define $\mu:C^*({\bf R})\to B(\cE)$  by setting $\mu(A):=W_1^* \pi(A) W_2$, $A\in C^*({\bf R})$,  and note  that $\mu$ is a completely bounded linear map. Using relation
$$
F({\bf X})=  (W_1^*\otimes I)\left[ C_{\bf X}({\bf V})^*
C_{\bf X}({\bf V}) \right] (W_2\otimes I)
$$
 and  the factorization
$ {P}({\bf V}, {\bf X})=C_{\bf X}({\bf V}) ^* C_{\bf X}({\bf V})$ (see also Theorem \ref{Poisson-factor}), we deduce that
$ F({\bf X})=\boldsymbol\cP\mu({\bf X})$ for ${\bf X}\in {\bf B_n}(\cH)$.
The proof is complete.
\end{proof}

We  introduce the space ${\text{\bf PH}}^1({\bf B_n})$ of all free $k$-pluriharmonic functions $F$ on ${\bf B_n}$  such that the   linear maps \ $\{\nu_{F_r}\}_{r\in[0,1)}$
associate with   $F$ are completely bounded and set
   $\|F\|_1:=\sup\limits_{0\leq r<1}
\|\nu_{F_r}\|_{cb}<\infty$. As a consequence of Theorem \ref{pluri-measure}, one can  see that $\|\cdot\|_1$ is a norm
on ${\bf PH}^1({\bf B_n})$  and  $({\text{\bf PH}}^1({\bf B_n}), \|\cdot\|_1)$ is a Banach space which can be identified with the Banach space
$\text{\rm CB}(
\boldsymbol\cR_{\bf n}^* \boldsymbol\cR_{\bf n}, B(\cE))$
   of all completely bounded linear maps from $\boldsymbol\cR_{\bf n}^* \boldsymbol\cR_{\bf n}$ to $ B(\cE)$.

\begin{corollary}\label{cp} Let $F:{\bf B_n}(\cH)\to B(\cE)\otimes_{min} B(\cH)$ be  a free $k$-pluriharmonic function. Then the following statements are equivalent:
\begin{enumerate}
\item[(i)]
there exists a completely positive linear map $\mu:C^*({\bf R})\to B(\cE)$ such that $ F=\boldsymbol\cP\mu; $
\item[(ii)]
the   linear maps \ $\{\nu_{F_r}\}_{r\in[0,1)}$
associate with   $F$ are completely positive;
\item[(iii)]
there exists a
$k$-tuple   ${\bf V}=(V_1,\ldots, V_k)$, $V_i=(V_{i,1},\ldots, V_{i,n_i})$, of doubly commuting row isometries acting on  a Hilbert space $\cK\supset \cE$  and a bounded operator $W:\cE\to \cK$ such  that
$$
F({\bf X})=  (W^*\otimes I)\left[ C_{\bf X}({\bf V})^*
C_{\bf X}({\bf V}) \right](W\otimes I).
$$
\end{enumerate}
 \end{corollary}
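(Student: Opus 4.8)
The plan is to deduce this completely positive version from Theorem~\ref{pluri-measure} by repeating its proof verbatim while tracking complete \emph{positivity} in place of complete boundedness at each step; concretely, I would invoke Lemma~\ref{mu-r}(iv), the Arveson extension theorem, and the Stinespring dilation theorem at the three places where Theorem~\ref{pluri-measure} used Lemma~\ref{mu-r}(i)--(iii) and Wittstock's theorem. Since the substantive analytic content (convergence of the series, the factorization $\boldsymbol\cP({\bf R},{\bf X})=C_{\bf X}^*C_{\bf X}$, and the diagonal weak-$*$-compactness argument) is already in place, the only thing to check is that positivity survives every limiting and extension step.

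First I would prove (i)$\implies$(ii). If $F=\boldsymbol\cP\mu$ with $\mu:C^*({\bf R})\to B(\cE)$ completely positive, then the coefficients of $F$ are $A_{(\boldsymbol\alpha;\boldsymbol\beta)}=\mu({\bf R}_{\widetilde{\boldsymbol\alpha}}^*{\bf R}_{\widetilde{\boldsymbol\beta}})$, so by the definition \eqref{nufr} the map $\nu_{F_r}$ coincides with the restriction to $\boldsymbol\cR_{\bf n}^*\boldsymbol\cR_{\bf n}$ of the map $\mu_r$ of Lemma~\ref{mu-r}. By Lemma~\ref{mu-r}(iv) each $\mu_r$ is completely positive, hence so is each $\nu_{F_r}$.

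Next, (ii)$\implies$(i). Because $\boldsymbol\cR_{\bf n}^*\boldsymbol\cR_{\bf n}$ is a unital operator system and each $\nu_{F_r}$ is completely positive, the standard identity $\|\nu_{F_r}\|_{cb}=\|\nu_{F_r}(I)\|$ holds; since $I={\bf R}_{\bf g}^*{\bf R}_{\bf g}$ the value $\nu_{F_r}(I)=A_{({\bf g};{\bf g})}$ is the constant, $r$-independent coefficient of $F$, so the uniform bound $\sup_{0\le r<1}\|\nu_{F_r}\|_{cb}<\infty$ demanded in Theorem~\ref{pluri-measure} is automatic. I would then run the diagonal/weak-$*$-compactness argument of that theorem unchanged to produce $\nu:=\text{\rm WOT-}\lim_{r_m\to1}\nu_{F_{r_m}}$ on $\boldsymbol\cR_{\bf n}^*\boldsymbol\cR_{\bf n}$ with $\nu({\bf R}_{\widetilde{\boldsymbol\alpha}}^*{\bf R}_{\widetilde{\boldsymbol\beta}})=A_{(\boldsymbol\alpha;\boldsymbol\beta)}$. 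The new point is that complete positivity passes to this limit: for any positive matrix $[A_{ij}]$ over $\boldsymbol\cR_{\bf n}^*\boldsymbol\cR_{\bf n}$ one has $[\nu_{F_{r_m}}(A_{ij})]\ge 0$ for every $m$, and the weak operator limit of positive operators is positive, so $[\nu(A_{ij})]\ge 0$ and $\nu$ is completely positive. Finally, instead of Wittstock's theorem I would extend $\nu$ by the Arveson extension theorem (see \cite{Pa-book}) to a completely positive $\mu:C^*({\bf R})\to B(\cE)$ with $\mu|_{\boldsymbol\cR_{\bf n}^*\boldsymbol\cR_{\bf n}}=\nu$; as $\mu$ and $F$ then share the same coefficients, $F=\boldsymbol\cP\mu$.

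For (i)$\Leftrightarrow$(iii) I would use the Stinespring representation of the completely positive $\mu$, which, in contrast with the general completely bounded case of Theorem~\ref{pluri-measure}, furnishes a single intertwining operator: there are a Hilbert space $\cK$, a $*$-representation $\pi:C^*({\bf R})\to B(\cK)$, and $W:\cE\to\cK$ with $\mu(A)=W^*\pi(A)W$. Setting $V_{i,j}:=\pi({\bf R}_{i,j})$ yields a $k$-tuple ${\bf V}=(V_1,\ldots,V_k)$ of doubly commuting row isometries, and applying $\pi$ through the factorization $\boldsymbol\cP({\bf R},{\bf X})=C_{\bf X}^*C_{\bf X}$ of Theorem~\ref{Poisson-factor} gives exactly $F({\bf X})=(W^*\otimes I)[C_{\bf X}({\bf V})^*C_{\bf X}({\bf V})](W\otimes I)$. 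Conversely, given ${\bf V}$ and $W$ as in (iii), the map $\pi({\bf R}_{\boldsymbol\alpha}{\bf R}_{\boldsymbol\beta}^*):={\bf V}_{\boldsymbol\alpha}{\bf V}_{\boldsymbol\beta}^*$ is a $*$-representation (well defined by the noncommutative von Neumann inequality \cite{Po-poisson} for doubly commuting row isometries), so $\mu(A):=W^*\pi(A)W$ is completely positive of Stinespring form and satisfies $\boldsymbol\cP\mu=F$ by Theorem~\ref{Poisson-factor}. I expect no genuine obstacle: the only care required is verifying that each limit and extension preserves positivity and that $\mu$ can be taken completely positive, which is precisely what the substitution of Arveson's and Stinespring's theorems for Wittstock's achieves.
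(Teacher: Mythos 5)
Your proposal is correct and follows essentially the same route as the paper: the author likewise deduces (i)$\implies$(ii) from Lemma \ref{mu-r}(iv), obtains the uniform bound for the converse from $\|\nu_{F_r}\|_{cb}=\|\nu_{F_r}(I)\|=\|A_{({\bf g};{\bf g})}\|$, reruns the WOT-limit construction of Theorem \ref{pluri-measure} noting that positivity survives weak operator limits, and replaces Wittstock's and Paulsen's decomposition theorems by Arveson's extension theorem and Stinespring's representation theorem, respectively. No gaps.
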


\begin{proof}
The proof is similar to that of Theorem \ref{pluri-measure}. Note that for the  implication (i)$\implies $ (ii) we have to use  Lemma  \ref{mu-r}, part (iv).
For the converse,  note that if  $\nu_{F_r}$, $r\in [0,1)$,  are completely positive linear maps, then
$$
\|\nu_{F_r}\|_{cb}=\|\nu_{F_r}(1)\|=\|\nu_{F_r}\|=\|A_{({\bf g}; {\bf g})}\|,
$$
where ${\bf g}=(g_0^1,\ldots, g_0^k)$ is the identity element in ${\bf F}_{\bf n}^+$.
As in the proof of Theorem \ref{pluri-measure}, we find  a   completely bounded map
$\nu: \boldsymbol\cR_{\bf n}^* \boldsymbol\cR_{\bf n}\to B(\cE)$ such that
$$
 \nu(A)=\text{\rm WOT-}\lim_{r_m\to 1} \nu_{F_{r_m}}(A),
\qquad A\in  \boldsymbol\cR_{\bf n}^* \boldsymbol\cR_{\bf n}.
 $$
Since $\nu_{F_r}$, $r\in [0,1)$,  are completely positive linear maps, one can easily see that $\nu$ is completely positive. Using Arveson's extension theorem \cite{Ar}, we find a completely positive map
$\mu:C^*({\bf R})\to \CC$
 which extends $\nu$ and
 such that $\|\mu\|_{cb}=\|\nu\|_{cb}$. We also have that
  $F=\boldsymbol\cP\mu$.
  Now, the  proof that (iii) is equivalent to (i) uses Stinespring's representation theorem \cite{St} and is similar to the same equivalence from Theorem \ref{pluri-measure}. We leave it to the reader.
\end{proof}

An open question remains. Is any positive free $k$-pluriharmonic function on the regular polyball ${\bf B_n}$ the Poisson transform of a completely positive linear map ? The answer is positive if $k=1$ (see \cite{Po-pluriharmonic}) and also when $n_1=\cdots=n_k$ (see Section 3).

\bigskip

\section{Herglotz-Riesz representations for free holomorphic functions with positive real parts}

In this section, we introduce the { noncommutative Herglotz-Riesz
transform} of  a completely positive linear map $\mu: \boldsymbol\cR_{\bf n}^* \boldsymbol\cR_{\bf n}\to B(\cE)$ and
  obtain Herglotz-Riesz representation theorems for free holomorphic functions  with positive real parts in regular polyballs.

Define the space
$$
{\text{\bf RH}}({\bf B_n}):=\text{\rm span} \left\{  \Re f: \  f\in Hol_\cE({\bf B_n}) \right\},
$$
where $Hol_\cE({\bf B_n})$ is the set of all free holomorphic functions in the polyball ${\bf B_n}$, with coefficients in $B(\cE)$.
 Let $\tau:B(\otimes_{i=1}^k F^2(H_{n_i}))\to \CC$ be the
bounded  linear functional defined by $\tau(A)=\left<A1,1\right>$.
 We remark that the radial function associated with $\varphi \in {\bf RH}({\bf B_n})$, i.e.
$[0,1)\ni r\mapsto \varphi(r{\bf R})$,
uniquely determines the family  $\{\nu_{\varphi_r}\}_{r\in [0, 1)}$ of
linear maps
$\nu_{\varphi_r}: \boldsymbol\cR_{\bf n}^*\boldsymbol\cR_{\bf n}\to  B(\cE)$ defined by  relation \eqref{nufr}. Indeed, note that
\begin{equation*}
\begin{split}
\nu_{\varphi_r}({\bf R}_{\widetilde{\boldsymbol\alpha}}^*)&:= (\text{\rm id}\otimes
\tau)\left[(I\otimes {\bf R}_{\boldsymbol\alpha}^*)
 \varphi(r{\bf R} )\right],\\
 \nu_{\varphi_r}({\bf R}_{\widetilde{\boldsymbol\alpha}})&:= (\text{\rm id}\otimes \tau)\left[ \varphi(r{\bf R)}(I\otimes {\bf R}_{\boldsymbol\alpha}) \right]  \end{split}
\end{equation*}
for any $ \boldsymbol\alpha=(\alpha_1,\ldots, \alpha_k)\in {\bf F}_{\bf n}^+:= \FF_{n_1}^+\times \cdots \times \FF_{n_k}^+$,
where
$ \widetilde{\boldsymbol\alpha}=(\widetilde\alpha_1,\ldots, \widetilde\alpha_k)$ and ${\bf R}_{\boldsymbol\alpha}:={\bf R}_{1,\alpha_1}\cdots {\bf R}_{k,\alpha_k}$,
and
$\nu_{\varphi_r}({\bf R}_{\boldsymbol\alpha}^*{\bf R}_{\boldsymbol\beta})=0$ if ${\bf R}_{\boldsymbol\alpha}^*{\bf R}_{\boldsymbol\beta}$ is different from ${\bf R}_{\boldsymbol\gamma}$ or ${\bf R}_{\boldsymbol\gamma}^*$   for some ${\boldsymbol\gamma}\in {\bf F}_{\bf n}^+$.
Consider the space
$$
{\text{\bf RH}}^1({\bf B_n}):=  \left\{\varphi \in {\text{\bf RH}}({\bf B_n})  : \ \nu_{\varphi_r} \text{ is bounded  and } \  \sup\limits_{0\leq r<1}
\|\nu_{\varphi_r}\|<\infty\right\}.
$$
If $\varphi\in {\text{\bf RH}}^1({\bf B_n})$, we define
$\|\varphi\|_1:=\sup_{0\leq r<1}\|\nu_{\varphi_r}\|_{cb}$.
Denote by $\text{\rm CB}_0(
\boldsymbol\cR_{\bf n}^* \boldsymbol\cR_{\bf n}, B(\cE))$
 the space of all completely bounded linear maps
 $ \lambda:\boldsymbol\cR_{\bf n}^* \boldsymbol\cR_{\bf n}\to B(\cE)$ such that
 $\lambda({\bf R}_{\boldsymbol\alpha}^* {\bf R}_{\boldsymbol\beta})=0$ if
 ${\bf R}_{\boldsymbol\alpha}^* {\bf R}_{\boldsymbol\beta}$ is not equal to ${\bf R}_{\boldsymbol\gamma}$ or ${\bf R}_{\boldsymbol\gamma}^*$ for some $\boldsymbol\gamma \in {\bf F}_{\bf n}^+$.

\begin{theorem}\label{Har-1}
$\left({\text{\bf RH}}^1({\bf B_n}), \|\cdot \|_1\right)$ is a Banach space
which can be identified with the Banach space
 $\text{\rm CB}_0(
\boldsymbol\cR_{\bf n}^* \boldsymbol\cR_{\bf n}, B(\cE))$.
Moreover, if $\varphi:{\bf B_n}(\cH)\to B(\cE)\otimes_{min} B(\cH)$ is a function,   then the following statements are equivalent:
\begin{enumerate}
\item[(i)]
$\varphi$ is in ${\text{\bf RH}}^1({\bf B_n})$;
\item[(ii)]
there is a unique completely bounded linear  map $\mu_\varphi \in \text{\rm CB}_0(
\boldsymbol\cR_{\bf n}^* \boldsymbol\cR_{\bf n}, B(\cE))$ such that \ $\varphi=\boldsymbol\cP\mu_\varphi$;
\item[(iii)] there exists a
$k$-tuple   ${\bf V}=(V_1,\ldots, V_k)$, $V_i=(V_{i,1},\ldots, V_{i,n_i})$, of doubly commuting row isometries on  a Hilbert space $\cK$, and  bounded linear operators $W_1,W_2:\cE\to \cK$, such that
$$
\varphi({\bf X})=  (W_1^*\otimes I)\left[ C_{\bf X}({\bf V})^*
C_{\bf X}({\bf V}) \right](W_2\otimes I)
$$
  and
$W_1^*{\bf V}_{\boldsymbol\alpha}^* {\bf V}_{\boldsymbol\beta}W_2=0$ if
 ${\bf R}_{\boldsymbol\alpha}^* {\bf R}_{\boldsymbol\beta}$   is not equal to $ {\bf R}_{\boldsymbol\gamma}$   or ${\bf R}_{\boldsymbol\gamma}^*$ for some ${\boldsymbol\gamma}\in {\bf F}_{\bf n}^+$.
\end{enumerate}
\end{theorem}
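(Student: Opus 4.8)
The plan is to recognize Theorem \ref{Har-1} as the restriction of Theorem \ref{pluri-measure} to the subspaces cut out by a single support condition on Fourier coefficients, and then to run the weak-$*$ compactness argument of that proof while tracking this condition through the limit. The decisive observation is that a free $k$-pluriharmonic function $\varphi$ with representation \eqref{repre} lies in ${\text{\bf RH}}({\bf B_n})$ — the span of real parts of free holomorphic functions — precisely when its coefficients $A_{(\boldsymbol\alpha;\boldsymbol\beta)}$ vanish unless $\boldsymbol\alpha={\bf g}$ or $\boldsymbol\beta={\bf g}$; indeed, for holomorphic $f$ the real part $\frac12(f+f^*)$ contributes only the pure monomials ${\bf X}_{\boldsymbol\gamma}$ and ${\bf X}_{\boldsymbol\gamma}^*$, and conversely such a coefficient pattern displays $\varphi$ as a holomorphic function plus its adjoint plus a constant. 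In view of \eqref{nufr} and the evaluation formulas for $\nu_{\varphi_r}$ preceding the theorem, this support condition is exactly the requirement that $\nu_{\varphi_r}({\bf R}_{\boldsymbol\alpha}^*{\bf R}_{\boldsymbol\beta})=0$ whenever ${\bf R}_{\boldsymbol\alpha}^*{\bf R}_{\boldsymbol\beta}$ is neither ${\bf R}_{\boldsymbol\gamma}$ nor ${\bf R}_{\boldsymbol\gamma}^*$, i.e. that each $\nu_{\varphi_r}$ lands in $\text{\rm CB}_0(\boldsymbol\cR_{\bf n}^*\boldsymbol\cR_{\bf n}, B(\cE))$.

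With this dictionary, I would derive the equivalences as follows. For (i)$\Rightarrow$(ii) I invoke the extraction procedure from the proof of Theorem \ref{pluri-measure}: since $\sup_r\|\nu_{\varphi_r}\|_{cb}<\infty$ and $\cE$ is separable, a diagonal argument over a countable dense subset of $\boldsymbol\cR_{\bf n}^*\boldsymbol\cR_{\bf n}$ produces $\mu_\varphi(A):=\text{\rm WOT-}\lim_{r_m\to1}\nu_{\varphi_{r_m}}(A)$, a completely bounded map with $\|\mu_\varphi\|_{cb}\le\sup_r\|\nu_{\varphi_r}\|_{cb}$; because WOT-limits preserve the vanishing on the ``bad'' monomials, $\mu_\varphi\in\text{\rm CB}_0$, and Theorem \ref{Poisson-factor}(ii) gives $\varphi=\boldsymbol\cP\mu_\varphi$. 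Uniqueness is immediate, since $\varphi=\boldsymbol\cP\mu_\varphi$ forces $\mu_\varphi({\bf R}_{\widetilde{\boldsymbol\alpha}}^*{\bf R}_{\widetilde{\boldsymbol\beta}})=A_{(\boldsymbol\alpha;\boldsymbol\beta)}$ on the spanning monomials. For (ii)$\Rightarrow$(i), Theorem \ref{Poisson-factor}(ii) shows $\varphi=\boldsymbol\cP\mu_\varphi$ is free $k$-pluriharmonic, the $\text{\rm CB}_0$ condition forces the coefficient support that places $\varphi$ in ${\text{\bf RH}}({\bf B_n})$, and Lemma \ref{mu-r} yields $\sup_r\|\nu_{\varphi_r}\|_{cb}=\|\mu_\varphi\|_{cb}<\infty$, so $\varphi\in{\text{\bf RH}}^1({\bf B_n})$.

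For (ii)$\Leftrightarrow$(iii) I would use the equivalence (i)$\Leftrightarrow$(iii) of Theorem \ref{pluri-measure} and carry the support condition along. Representing $\mu_\varphi=W_1^*\pi(\cdot)W_2$ by Wittstock's (resp. Stinespring's) theorem as in that proof, with $V_{i,j}:=\pi({\bf R}_{i,j})$ a doubly commuting row isometry, gives $\mu_\varphi({\bf R}_{\boldsymbol\alpha}^*{\bf R}_{\boldsymbol\beta})=W_1^*{\bf V}_{\boldsymbol\alpha}^*{\bf V}_{\boldsymbol\beta}W_2$; the $\text{\rm CB}_0$ condition then reads precisely $W_1^*{\bf V}_{\boldsymbol\alpha}^*{\bf V}_{\boldsymbol\beta}W_2=0$ on the bad monomials, while the factorization $\boldsymbol\cP({\bf V},{\bf X})=C_{\bf X}({\bf V})^*C_{\bf X}({\bf V})$ of Theorem \ref{Poisson-factor}(i) delivers the stated form of $\varphi$. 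The converse reverses these steps, defining $\pi$ on $C^*({\bf R})$ via the noncommutative von Neumann inequality.

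Finally, for the Banach-space assertion I would check that $\varphi\mapsto\mu_\varphi$ is a linear bijection of ${\text{\bf RH}}^1({\bf B_n})$ onto $\text{\rm CB}_0(\boldsymbol\cR_{\bf n}^*\boldsymbol\cR_{\bf n},B(\cE))$, isometric for $\|\varphi\|_1=\sup_r\|\nu_{\varphi_r}\|_{cb}$ and the cb-norm: bijectivity is (i)$\Leftrightarrow$(ii), and the norm equality is Lemma \ref{mu-r}(ii), while definiteness of $\|\cdot\|_1$ follows since $\varphi=0$ forces all coefficients, hence $\mu_\varphi$, to vanish. Since $\text{\rm CB}_0$ is the closed subspace of the Banach space of all completely bounded maps $\boldsymbol\cR_{\bf n}^*\boldsymbol\cR_{\bf n}\to B(\cE)$ on which each norm-continuous evaluation $\lambda\mapsto\lambda({\bf R}_{\boldsymbol\alpha}^*{\bf R}_{\boldsymbol\beta})$ vanishes on the bad monomials, it is complete, and the isometry transports completeness to $({\text{\bf RH}}^1({\bf B_n}),\|\cdot\|_1)$. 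I expect the main obstacle to be the bookkeeping ensuring this single support condition is transported faithfully in all three directions — identifying ${\text{\bf RH}}$ membership with the coefficient pattern and confirming it is equivalent to $\nu_{\varphi_r}$ and its limit annihilating every ${\bf R}_{\boldsymbol\alpha}^*{\bf R}_{\boldsymbol\beta}$ that is not a single ${\bf R}_{\boldsymbol\gamma}$ or ${\bf R}_{\boldsymbol\gamma}^*$ — rather than any new analytic difficulty, the latter being already absorbed into Theorem \ref{pluri-measure}.
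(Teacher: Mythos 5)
Your proposal is correct and follows essentially the same route as the paper: both reduce the theorem to Theorem \ref{pluri-measure} (weak-$*$ extraction of $\mu_\varphi$, Wittstock/Paulsen representation for item (iii)) while tracking the single support condition that characterizes membership in ${\text{\bf RH}}({\bf B_n})$ and in $\text{\rm CB}_0(\boldsymbol\cR_{\bf n}^*\boldsymbol\cR_{\bf n},B(\cE))$. Your write-up is in fact slightly more explicit than the paper's on two points it leaves implicit — that WOT-limits preserve the vanishing on the ``bad'' monomials, and that completeness of ${\text{\bf RH}}^1({\bf B_n})$ is transported from the closed subspace $\text{\rm CB}_0$ via the isometry — but these are elaborations, not deviations.
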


\begin{proof}
Define the map $\Psi:\text{\rm CB}_0\,( \boldsymbol\cR_{\bf n}^* \boldsymbol\cR_{\bf n}, B(\cE))\to
{\text{\bf RH}}^1({\bf B_n})$   by $\Psi(\mu):=\boldsymbol\cP\mu$. To prove injectivity of
$\Psi$, let $\mu_1,\mu_2$ be in $\text{\rm CB}_0\,( \boldsymbol\cR_{\bf n}^* \boldsymbol\cR_{\bf n}, B(\cE))$ such that $\Psi(\mu_1)=\Psi(\mu_2)$.  Due to the
uniqueness  of the representation of a free $k$-pluriharmonic function
and the definition of the noncommutative Poisson transform of a
completely bounded map on $ \boldsymbol\cR_{\bf n}^* \boldsymbol\cR_{\bf n}$, we deduce that
$\mu_1({\bf R}_{\boldsymbol\alpha})=\mu_2({\bf R}_{\boldsymbol\alpha})$ and
$\mu_1({\bf R}_{\boldsymbol\alpha}^*)=\mu_2({\bf R}_{\boldsymbol\alpha}^*)$ for  $\boldsymbol\alpha\in \FF_{n_1}^+\times \cdots \times \FF_{n_k}^+$, and
$\mu_1({\bf R}_{\boldsymbol\alpha}^* {\bf R}_{\boldsymbol\beta})=\mu_2({\bf R}_{\boldsymbol\alpha}^* {\bf R}_{\boldsymbol\beta})=0$ if ${\bf R}_{\boldsymbol\alpha}^* {\bf R}_{\boldsymbol\beta}$ is not equal to ${\bf R}_{\boldsymbol\gamma}$ or ${\bf R}_{\boldsymbol\gamma}^*$ for some ${\boldsymbol\gamma}\in {\bf F}_{\bf n}^+$.
Hence, we deduce that  $\mu_1=\mu_2$.

According to  Theorem \ref{pluri-measure}, for any $\varphi\in {\text{\bf RH}}^1({\bf B_n})$, there is   a completely bounded linear  map $\mu_\varphi \in \text{\rm CB}(
\boldsymbol\cR_{\bf n}^* \boldsymbol\cR_{\bf n}, B(\cE))$ such that \ $\varphi=\boldsymbol\cP\mu_\varphi$ and $\|\varphi\|_1=\|\mu_\varphi\|_{cb}$. This proves that
  the map
$\Psi$ is surjective and  $\|\boldsymbol\cP\mu_\varphi\|_1=\|\mu_\varphi\|_{cb}$.   Therefore, item (i) is equivalent to (ii).

Now, the latter equivalence  and Theorem
\ref{Poisson-factor} imply
\begin{equation}
\label{HBP} \varphi({\bf X} )=(\boldsymbol\cP\mu_\varphi)({\bf X})=\widehat \mu_\varphi(C_{\bf X}^* C_{\bf X}),\qquad {\bf X}\in {\bf B_n}(\cH),
\end{equation}
where
$$ C_{\bf X} := (I_{\otimes_{i=1}^kF^2(H_{n_i})} \otimes \boldsymbol\Delta_{\bf X}(I)^{1/2})\prod_{i=1}^k
 \left(I-{\bf R}_{i,1}\otimes X_{i,1}^*-\cdots -{\bf R}_{i,n_i}\otimes X_{i,n_i}^*\right)^{-1}.
$$
 Due to  Wittstock's extension theorem
\cite{W2}, there exists a completely bounded map
$\Phi:C^*({\bf R} )\to B(\cE)$ that extends $\mu_\varphi$ and
$\|\mu_\varphi\|_{cb}=\|\Phi\|_{cb}$. According to Theorem 8.4 from
\cite{Pa-book},  there exists a Hilbert space $\cK$, a
$*$-representation $\pi:C^*({\bf R} )\to B(\cK)$, and bounded
operators $W_1,W_2:\cE\to \cK$,   with $\|\Phi\|=\|W_1\|\|W_2\|$
such that
\begin{equation}
\label{pi**}
\Phi(A)=W_1^*\pi(A) W_2,\qquad A\in C^*({\bf R} ).
\end{equation}
Set $V_{i,j}:=\pi({\bf R}_{i,j})$ for $i\in \{1,\ldots, k\}$ and $j\in \{1,\ldots, n_i\}$ and note that    ${\bf V}=(V_1,\ldots, V_n)$, is a $k$-tuple  of doubly commuting row isometries $V_i=(V_{i,1},\ldots, V_{i,n_i})$.
  Using
now  relations \eqref{HBP} and \eqref{pi**}, one can deduce item (iii).
The proof of  the  implication (iii)$\implies$ (i) is similar to the proof of the same implication from Theorem \ref{pluri-measure}.
\end{proof}

Consider now the subspace of free holomorphic functions
${\bf H}^1({\bf B_n}):=Hol({\bf B_n})\bigcap {\text{\bf PH}}^1( {\bf B_n})$ together
with the norm $\|\cdot\|_1$. Using  Theorem \ref{Har-1}, we can obtain the following   weak  analogue of the F. and M.~Riesz
Theorem \cite{H} in our setting.

\begin{corollary}\label{FMR}
$\left({\bf H}^1({\bf B_n}), \|\cdot \|_1\right)$ is a Banach space which
can be identified with the annihilator of $\boldsymbol\cR_n$ in $\text{\rm
CB}_0\,(\boldsymbol\cR_{\bf n}^* \boldsymbol\cR_{\bf n}, B(\cE))$, i.e.,
$$
(\boldsymbol\cR_n)^{\perp}:=\{\mu\in \text{\rm CB}_0\,( \boldsymbol\cR_{\bf n}^* \boldsymbol\cR_{\bf n},
B(\cE)): \ \mu({\bf R}_{\boldsymbol\alpha})=0 \text{ for all } \boldsymbol\alpha\in {\bf F}_{\bf n}^+,  |\boldsymbol\alpha|\geq 1\}.
$$
Moreover,  for each $f\in {\bf H}^1({\bf B_n})$, there is a unique
completely bounded linear map $\mu_f\in (\boldsymbol\cR_n)^{\perp}$ such that
$f=\boldsymbol\cP\mu_f$.
\end{corollary}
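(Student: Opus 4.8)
The plan is to deduce the statement from the isometric identification established in Theorem \ref{Har-1}, namely that $\Psi\colon\mu\mapsto\boldsymbol\cP\mu$ is an isometric isomorphism of $\text{\rm CB}_0(\boldsymbol\cR_{\bf n}^*\boldsymbol\cR_{\bf n},B(\cE))$ onto $\left({\text{\bf RH}}^1({\bf B_n}),\|\cdot\|_1\right)$, and then to show that under $\Psi$ the subspace $(\boldsymbol\cR_n)^\perp$ corresponds exactly to ${\bf H}^1({\bf B_n})$. First I would record that every free holomorphic function lies in ${\text{\bf RH}}({\bf B_n})$: writing $f=\tfrac12(f+f^*)+i\,\tfrac1{2i}(f-f^*)=\Re f+i\,\Re(-if)$ exhibits $f$ as a complex combination of real parts of free holomorphic functions. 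Consequently $Hol({\bf B_n})\cap{\text{\bf PH}}^1({\bf B_n})=Hol({\bf B_n})\cap{\text{\bf RH}}^1({\bf B_n})$, so ${\bf H}^1({\bf B_n})$ is genuinely a subspace of the space handled by Theorem \ref{Har-1}, carrying the same norm $\|\cdot\|_1$.

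The core step is the equivalence: for $\mu\in\text{\rm CB}_0(\boldsymbol\cR_{\bf n}^*\boldsymbol\cR_{\bf n},B(\cE))$, the Poisson transform $\boldsymbol\cP\mu$ is holomorphic if and only if $\mu\in(\boldsymbol\cR_n)^\perp$. From the definition of $\boldsymbol\cP\mu$ the coefficient of the monomial ${\bf X}_{\boldsymbol\alpha}{\bf X}_{\boldsymbol\beta}^*$ is $\mu({\bf R}_{\widetilde{\boldsymbol\alpha}}^*{\bf R}_{\widetilde{\boldsymbol\beta}})$ for $(\boldsymbol\alpha,\boldsymbol\beta)\in\Lambda$, and, by uniqueness of the pluriharmonic representation, $\boldsymbol\cP\mu$ is holomorphic precisely when every coefficient with $\boldsymbol\beta\neq{\bf g}$ vanishes. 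For the forward implication, specializing to $\boldsymbol\alpha={\bf g}$ gives $\mu({\bf R}_{\widetilde{\boldsymbol\beta}})=0$ for every $\boldsymbol\beta$ with $|\boldsymbol\beta|\geq1$, which is exactly membership in $(\boldsymbol\cR_n)^\perp$. For the converse I would split the pairs $(\boldsymbol\alpha,\boldsymbol\beta)\in\Lambda$ with $\boldsymbol\beta\neq{\bf g}$ into the case $\boldsymbol\alpha={\bf g}$, where the annihilator condition kills the coefficient, and the case $\boldsymbol\alpha\neq{\bf g}$. In the latter case I would use that $\Lambda$ forces $\alpha_i=g_0^i$ or $\beta_i=g_0^i$ for each $i$ together with the cross-commutation ${\bf R}_{i,s}{\bf R}_{j,t}^*={\bf R}_{j,t}^*{\bf R}_{i,s}$ for $i\neq j$ to regroup ${\bf R}_{\widetilde{\boldsymbol\alpha}}^*{\bf R}_{\widetilde{\boldsymbol\beta}}$ factor by factor; since $\boldsymbol\alpha\neq{\bf g}$ produces a creation-adjoint factor on one tensor copy and $\boldsymbol\beta\neq{\bf g}$ a creation factor on a different copy, the resulting word is genuinely mixed, equal to neither ${\bf R}_{\boldsymbol\gamma}$ nor ${\bf R}_{\boldsymbol\gamma}^*$, so the $\text{\rm CB}_0$ hypothesis already forces $\mu({\bf R}_{\widetilde{\boldsymbol\alpha}}^*{\bf R}_{\widetilde{\boldsymbol\beta}})=0$.

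With this equivalence, $\Psi$ restricts to a bijection of $(\boldsymbol\cR_n)^\perp$ onto ${\bf H}^1({\bf B_n})$; since $(\boldsymbol\cR_n)^\perp$ is cut out of $\text{\rm CB}_0(\boldsymbol\cR_{\bf n}^*\boldsymbol\cR_{\bf n},B(\cE))$ by the norm-continuous linear conditions $\mu({\bf R}_{\boldsymbol\alpha})=0$, it is a closed, hence complete, subspace, and its isometric image ${\bf H}^1({\bf B_n})$ is therefore a Banach space. The uniqueness of $\mu_f\in(\boldsymbol\cR_n)^\perp$ with $f=\boldsymbol\cP\mu_f$ is inherited from the injectivity of $\Psi$ proved in Theorem \ref{Har-1}. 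The main obstacle I anticipate is the combinatorial bookkeeping in the converse of the core step: one must check carefully, using the factorization dictated by $\Lambda$ and the cross-commutation of the right creation operators across distinct tensor factors, that every nonanalytic monomial ${\bf R}_{\widetilde{\boldsymbol\alpha}}^*{\bf R}_{\widetilde{\boldsymbol\beta}}$ occurring in the series falls outside $\{{\bf R}_{\boldsymbol\gamma},{\bf R}_{\boldsymbol\gamma}^*\}$, so that the $\text{\rm CB}_0$ hypothesis together with the annihilator condition leaves no surviving antiholomorphic or mixed terms.
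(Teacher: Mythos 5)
Your argument is correct and follows exactly the route the paper intends: the corollary is stated as a direct consequence of Theorem \ref{Har-1}, and your reduction — showing that for $\mu\in \text{\rm CB}_0(\boldsymbol\cR_{\bf n}^*\boldsymbol\cR_{\bf n},B(\cE))$ the transform $\boldsymbol\cP\mu$ is holomorphic precisely when $\mu$ annihilates $\boldsymbol\cR_{\bf n}$, using that the mixed monomials ${\bf R}_{\widetilde{\boldsymbol\alpha}}^*{\bf R}_{\widetilde{\boldsymbol\beta}}$ with $\boldsymbol\alpha\neq{\bf g}\neq\boldsymbol\beta$ live on distinct tensor factors and hence are already killed by the $\text{\rm CB}_0$ condition — is the intended mechanism. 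The remaining points (closedness of the annihilator, isometry and injectivity of $\Psi$, and $Hol\subset{\bf RH}$ via $f=\Re f+i\,\Re(-if)$) are all handled correctly.
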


Given a  a
completely  bounded linear  map   $\mu:\boldsymbol\cR_{\bf n}^* \boldsymbol\cR_{\bf n}\to
B(\cE)$,
we introduce  the {\it noncommutative Fantappi\` e transform }  of  $\mu$ to be the map $\boldsymbol\cF\mu: {\bf B}_{\bf n}(\cH) \to
B(\cE)\otimes_{min}B(\cH)$ defined by
$$
(\boldsymbol\cF\mu)({\bf X} ):= \widehat\mu\left[\prod_{i=1}^k(I-{\bf R}_{i,1}^*\otimes X_{i,1}-\cdots
-{\bf R}_{i,n_i}^*\otimes X_{i,n_i})^{-1}\right]
$$
for ${\bf X}:=(X_1,\ldots, X_k)\in {\bf B}_{\bf n}(\cH)$. We remark that the
noncommutative Fantappi\` e transform  is a   linear map and
$\boldsymbol\cF\mu$ is a free holomorphic function in the open polyball
${\bf B}_{\bf n}$,  with coefficients in $B(\cE)$.

Let $\mu: \boldsymbol\cR_{\bf n}^* \boldsymbol\cR_{\bf n}\to B(\cE)$ be a completely positive linear map. We introduce the {\it noncommutative Herglotz-Riesz
transform} of $\mu$ on the regular polyball to be the map ${\bf H}\mu: {\bf B}_{\bf n}(\cH) \to B(\cE)\otimes_{min}B(\cH)$  defined by
$$
({\bf H}\mu)({\bf X} ):=\widehat\mu\left[ 2\prod_{i=1}^k(I-{\bf R}_{i,1}^*\otimes X_{i,1}-\cdots
-{\bf R}_{i,n_i}^*\otimes X_{i,n_i})^{-1}-I\right]
$$
for ${\bf X}:=(X_1,\ldots, X_k)\in {\bf B}_{\bf n}(\cH)$.
Note that
$({\bf H}\mu)({\bf X} )=2(\boldsymbol\cF\mu)({\bf X} )-\mu(I)\otimes I$.

\begin{theorem}\label{Herglotz}
Let $f$ be a function from the polyball ${\bf B_n}(\cH)$ to  $B(\cE)\otimes_{min}B(\cH)$. Then the following statements are equivalent.
\begin{enumerate}
\item[(i)] $f$ is a free holomorphic function  with $\Re f \geq 0$ and  the linear maps $\{\nu_{\Re f_r}\}_{r\in [0,1)}$ associated with $\Re f$  are completely positive.

\item[(ii)] The function $f$ admits a Herglotz-Riesz representation
$$f({\bf X})=({\bf H}\mu)({\bf X} )+i\Im f(0),
$$
where
  $\mu:C^*({\bf R})\to B(\cE)$ is a completely positive linear map with the property that
    $\mu({\bf R}_{\boldsymbol\alpha}^* {\bf R}_{\boldsymbol\beta})=0$   if \
 ${\bf R}_{\boldsymbol\alpha}^* {\bf R}_{\boldsymbol\beta}$ is not equal to ${\bf R}_{\boldsymbol\gamma}$ or ${\bf R}_{\boldsymbol\gamma}^*$ for some ${\boldsymbol\gamma}\in {\bf F}_{\bf n}^+$.

\item[(iii)] There exist a
$k$-tuple   ${\bf V}=(V_1,\ldots, V_k)$, $V_i=(V_{i,1},\ldots, V_{i,n_i})$, of doubly commuting row isometries on  a Hilbert space $\cK$, and  a bounded linear operator $W:\cE\to \cK$, such that
$$
f({\bf X})=(W^*\otimes I)\left[ 2\prod_{i=1}^k(I-V_{i,1}^*\otimes X_{i,1}-\cdots
-V_{i,n_i}^*\otimes X_{i,n_i})^{-1}-I\right](W\otimes I)+i \Im f(0)
$$
and $W^* {\bf V}_{\boldsymbol\alpha}^*{\bf V}_{\boldsymbol\beta }W=0$   if \
${\bf R}_{\boldsymbol\alpha}^* {\bf R}_{\boldsymbol\beta}$ is not equal to ${\bf R}_{\boldsymbol\gamma}$ or ${\bf R}_{\boldsymbol\gamma}^*$ for some $\boldsymbol\gamma\in {\bf F}_{\bf n}^+$.
\end{enumerate}

\end{theorem}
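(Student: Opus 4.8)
The plan is to route the whole statement through the real part of $f$ and the completely positive Poisson representations already available, and then to upgrade from $\Re f$ to $f$ by a harmonic-conjugate uniqueness argument. Concretely, I will first record a single identity relating the Herglotz-Riesz transform to the Poisson transform, then prove (i)$\Leftrightarrow$(ii) using Corollary \ref{cp} together with this identity, and finally deduce (ii)$\Leftrightarrow$(iii) by the Stinespring dilation exactly as in Corollary \ref{cp}.

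The cornerstone is the identity
$$
\Re({\bf H}\mu)({\bf X})=(\boldsymbol\cP\mu)({\bf X}),\qquad {\bf X}\in {\bf B_n}(\cH),
$$
valid for every completely positive map $\mu:C^*({\bf R})\to B(\cE)$ whose restriction to $\boldsymbol\cR_{\bf n}^*\boldsymbol\cR_{\bf n}$ lies in $\text{\rm CB}_0(\boldsymbol\cR_{\bf n}^*\boldsymbol\cR_{\bf n},B(\cE))$. To establish it I would expand $\prod_{i=1}^k(I-{\bf R}_{i,1}^*\otimes X_{i,1}-\cdots-{\bf R}_{i,n_i}^*\otimes X_{i,n_i})^{-1}=\sum_{\boldsymbol\alpha}{\bf R}_{\widetilde{\boldsymbol\alpha}}^*\otimes {\bf X}_{\boldsymbol\alpha}$, so that $\boldsymbol\cF\mu$ is the free holomorphic function with coefficients $\mu({\bf R}_{\widetilde{\boldsymbol\alpha}}^*)$ and, by selfadjointness of $\mu$, $(\boldsymbol\cF\mu)^*$ has coefficients $\mu({\bf R}_{\widetilde{\boldsymbol\alpha}})$. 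Applying $\widehat\mu$ to the Poisson series $\boldsymbol\cP({\bf R},{\bf X})$ of Theorem \ref{Poisson-factor}, the constraints $|\alpha_i|=m_i^-$, $|\beta_i|=m_i^+$ already force, in each coordinate, one of $\alpha_i,\beta_i$ to be trivial, so each word ${\bf R}_{\widetilde{\boldsymbol\alpha}}^*{\bf R}_{\widetilde{\boldsymbol\beta}}$ is a product of purely holomorphic or purely antiholomorphic factors; the $\text{\rm CB}_0$ vanishing then kills every mixed word and leaves only the all-holomorphic terms ($\boldsymbol\alpha={\bf g}$), the all-antiholomorphic terms ($\boldsymbol\beta={\bf g}$), and the constant. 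This collapses $\boldsymbol\cP\mu$ to $\boldsymbol\cF\mu+(\boldsymbol\cF\mu)^*-\mu(I)\otimes I=2\Re(\boldsymbol\cF\mu)-\mu(I)\otimes I$, and since ${\bf H}\mu=2\boldsymbol\cF\mu-\mu(I)\otimes I$ with $\mu(I)$ selfadjoint, taking real parts yields the identity.

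For (i)$\implies$(ii) I would apply Corollary \ref{cp} to the free $k$-pluriharmonic function $\Re f$: since the maps $\{\nu_{\Re f_r}\}$ are completely positive, there is a completely positive $\mu:C^*({\bf R})\to B(\cE)$ with $\Re f=\boldsymbol\cP\mu$, obtained as a weak operator limit of the $\nu_{\Re f_r}$. Because $f$ is holomorphic, the coefficients $A_{(\boldsymbol\alpha;\boldsymbol\beta)}$ of $\Re f$ vanish unless $\boldsymbol\alpha={\bf g}$ or $\boldsymbol\beta={\bf g}$, so each $\nu_{\Re f_r}$ and hence $\mu|_{\boldsymbol\cR_{\bf n}^*\boldsymbol\cR_{\bf n}}$ lies in $\text{\rm CB}_0$, which is precisely the stated vanishing property of $\mu$. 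The cornerstone identity now gives $\Re({\bf H}\mu)=\boldsymbol\cP\mu=\Re f$, so $f-{\bf H}\mu$ is free holomorphic with vanishing real part; by uniqueness of the pluriharmonic expansion it is the constant $i\,\Im f(0)$ (note ${\bf H}\mu(0)=\mu(I)\otimes I$ is selfadjoint), giving $f=({\bf H}\mu)+i\,\Im f(0)$. The converse (ii)$\implies$(i) is immediate: ${\bf H}\mu=2\boldsymbol\cF\mu-\mu(I)\otimes I$ is free holomorphic, $\Re f=\boldsymbol\cP\mu\geq 0$ by Theorem \ref{Poisson-factor}(iii), and $\nu_{\Re f_r}$ coincides with $\mu_r$ on $\boldsymbol\cR_{\bf n}^*\boldsymbol\cR_{\bf n}$, hence is completely positive by Lemma \ref{mu-r}(iv).

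Finally, (ii)$\Leftrightarrow$(iii) I would obtain verbatim as in the proof of Corollary \ref{cp}. Given $\mu$ as in (ii), Stinespring's theorem furnishes a $*$-representation $\pi$ of $C^*({\bf R})$ and an operator $W$ with $\mu(A)=W^*\pi(A)W$; setting $V_{i,j}:=\pi({\bf R}_{i,j})$ produces a $k$-tuple of doubly commuting row isometries, the factorization $\boldsymbol\cP({\bf V},{\bf X})=C_{\bf X}({\bf V})^*C_{\bf X}({\bf V})$ converts $f=({\bf H}\mu)+i\,\Im f(0)$ into the representation of (iii), and $\mu({\bf R}_{\boldsymbol\alpha}^*{\bf R}_{\boldsymbol\beta})=0$ becomes $W^*{\bf V}_{\boldsymbol\alpha}^*{\bf V}_{\boldsymbol\beta}W=0$. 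Conversely, given ${\bf V}$ and $W$ as in (iii), the noncommutative von Neumann inequality shows ${\bf R}_{i,j}\mapsto V_{i,j}$ extends to a $*$-representation, so $\mu(\cdot):=W^*\pi(\cdot)W$ is completely positive and lies in $\text{\rm CB}_0$, recovering (ii). The main obstacle I anticipate is the bookkeeping in the cornerstone identity: matching term by term the collapse of the pluriharmonic Poisson series under a $\text{\rm CB}_0$ map against $2\Re(\boldsymbol\cF\mu)$, and confirming that the constant term is subtracted exactly once.
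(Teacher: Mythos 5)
Your proposal is correct and follows essentially the same route as the paper: both directions of (i)$\Leftrightarrow$(ii) rest on Corollary \ref{cp} applied to $\Re f$, the collapse of $\widehat\mu[\boldsymbol{\cP}({\bf R},{\bf X})]$ to $2\Re(\boldsymbol\cF\mu)-\mu(I)\otimes I$ under the $\text{\rm CB}_0$ vanishing condition, and (ii)$\Leftrightarrow$(iii) via Stinespring and the von Neumann inequality. The only cosmetic difference is that the paper computes ${\bf H}\mu=f-i\Im f(0)$ directly from the coefficient identification $\mu({\bf R}^*_{\widetilde{\boldsymbol\alpha}})=\tfrac12 A_{(\boldsymbol\alpha)}$, whereas you match real parts first and then invoke uniqueness of the pluriharmonic expansion to conclude that $f-{\bf H}\mu$ is the imaginary constant $i\Im f(0)$ — both steps rely on the same coefficient bookkeeping.
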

\begin{proof} We prove that (i)$\implies$(ii).
Let $f$ have the representation
$f({\bf X})=\sum_{{\boldsymbol \alpha}\in {\bf F}_{\bf n}^+} A_{(\boldsymbol\alpha)}\otimes {\bf X}_{\boldsymbol \alpha}$.
Due to Corollary \ref{cp}, there exists a completely positive linear map $\mu:C^*({\bf R})\to B(\cE)$ such that $\Re f=\boldsymbol\cP\mu$.
Consequently, we have
$$\mu(I)=\frac{1}{2}(A_{({\bf g})}+ A_{({\bf g})}^*),\quad \mu({\bf R}_{\widetilde{{\boldsymbol \alpha}}})=\frac{1}{2} A^*_{({\boldsymbol \alpha})},
\quad
\mu({\bf R}^*_{\widetilde{{\boldsymbol \alpha}}})=\frac{1}{2}A_{({\boldsymbol \alpha})},\qquad  \text{ for all } \boldsymbol\alpha\in {\bf F}_{\bf n}^+, |\boldsymbol\alpha|\geq 1,
$$
and
$\mu({\bf R}^*_{\widetilde{{\boldsymbol \alpha}}}{\bf R}_{\widetilde{{\boldsymbol \beta}}})=0$   if \
${\bf R}_{\boldsymbol\alpha}^* {\bf R}_{\boldsymbol\beta}$ is not equal to ${\bf R}_{\boldsymbol\gamma}$ or ${\bf R}_{\boldsymbol\gamma}^*$ for some $\boldsymbol \gamma\in {\bf F}_{\bf n}^+$.
Using the definition of the  Herglotz-Riesz transform, we obtain
\begin{equation*}
\begin{split}
({\bf H}\mu)({\bf X} )&:=\widehat\mu\left[ 2\prod_{i=1}^k(I-{\bf R}_{i,1}^*\otimes X_{i,1}-\cdots
-{\bf R}_{i,n_i}^*\otimes X_{i,n_i})^{-1}-I\right]\\
&=
\sum_{{\boldsymbol \alpha}\in {\bf F}_{\bf n}^+} A_{(\boldsymbol\alpha)}\otimes {\bf X}_{\boldsymbol \alpha}+
A_{({\bf g})}\otimes I- \frac{1}{2}(A_{({\bf g})}+ A_{({\bf g})}^*)\otimes I\\
&=f({\bf X})-\frac{1}{2}(A^*_{({\bf g})}- A_{({\bf g})})\otimes I\\
&=f({\bf X})-i\Im f(0),
\end{split}
\end{equation*}
which proves item (ii). Now we prove that (ii) implies (i). Assume that item (ii) holds.
Then
$$
f({\bf X})=2(\boldsymbol\cF\mu)({\bf X} )-\mu(I)\otimes I-i\Im f(0)
$$
is a free holomorphic function on the polyball ${\bf B_n}$. Taking into account that $\mu({\bf R}_{\boldsymbol\alpha}^* {\bf R}_{\boldsymbol\beta})=0$
 if \
${\bf R}_{\boldsymbol\alpha}^* {\bf R}_{\boldsymbol\beta}$ is not equal to ${\bf R}_{\boldsymbol\gamma}$ or ${\bf R}_{\boldsymbol\gamma}^*$ for some $\boldsymbol\gamma\in {\bf F}_{\bf n}^+$, and using Theorem \ref{Poisson-factor}, we deduce that
\begin{equation*}
\begin{split}
&\frac{1}{2} (f({\bf X})+f({\bf X})^*)=\frac{1}{2}\left(({\bf H}\mu)({\bf X} )+({\bf H}\mu)({\bf X} )^*\right)\\
&=\widehat\mu\left[ \prod_{i=1}^k(I-{\bf R}_{i,1}^*\otimes X_{i,1}-\cdots
-{\bf R}_{i,n_i}^*\otimes X_{i,n_i})^{-1}-I+\prod_{i=1}^k(I-{\bf R}_{i,1}\otimes X_{i,1}^*-\cdots
-{\bf R}_{i,n_i}\otimes X_{i,n_i}^*)^{-1}\right]\\
&=\widehat\mu\left[ \boldsymbol\cP({\bf R},{\bf X})\right]\geq 0.
\end{split}
\end{equation*}
Therefore, $\Re f$ is free $k$-pluriharmonic function such that $\Re f=\boldsymbol\cP \mu$. Due to Corollary \ref{cp}, we deduce that
the linear maps $\{\nu_{\Re f_r}\}_{r\in [0,1)}$ associated with $\Re f$  are completely positive.

Now, we prove the implication (ii) $\implies$ (iii). Assume that (ii) holds.
According to Stinespring's representation theorem \cite{St}, there is a Hilbert space $\cK$, a $*$-representation $\pi: C^*({\bf R})\to B(\cK)$ and a bounded
$W:\cE\to \cK$ with $\|\mu(I)\|=\|W\|^2$ such that
$\mu(A)=W^*\pi(A)W$ for all $A\in C^*({\bf R})$. Setting $V_{i,j}:=\pi ({\bf R}_{ij})$ for all $i\in \{1,\ldots, k\}$ and $j\in \{1,\ldots, n_i\}$, it is clear that the $k$-tuple ${\bf V}=(V_1,\ldots, V_k)$, $V_i=(V_{i,1},\ldots, V_{i,n_i})$, consists of doubly commuting row isometries.
Note that,  if \
${\bf R}_{\boldsymbol\alpha}^* {\bf R}_{\boldsymbol\beta}$ is not equal to ${\bf R}_{\boldsymbol\gamma}$ or ${\bf R}_{\boldsymbol\gamma}^*$ for some $\boldsymbol\gamma\in {\bf F}_{\bf n}^+$, then
$$
W^*{\bf V}_{\boldsymbol\alpha}^* {\bf V}_{\boldsymbol\beta}W=
W^*\pi({\bf R}_{\boldsymbol\alpha}^* {\bf R}_{\boldsymbol\beta})W=
\mu({\bf R}_{\boldsymbol\alpha}^* {\bf R}_{\boldsymbol\beta})=0.
$$
Now, one can easily see that the relation $f({\bf X})=({\bf H}\mu)({\bf X} )+i\Im f(0)$ leads to the representation in item (iii), which completes the proof.
It remains to prove that (iii) $\implies $ (ii). To this end, assume that (iii) holds.
Since  the $k$-tuple   ${\bf V}=(V_1,\ldots, V_k)$, $V_i=(V_{i,1},\ldots, V_{i,n_i})$, consists of  doubly commuting row isometries on  a Hilbert space $\cK$, the noncommutative von Neumann inequality \cite{Po-poisson} implies that the map $\pi:C^*({\bf R})\to B(\cE)$ defined by
$$\pi({\bf R}_{\boldsymbol\alpha}{\bf R}_{\boldsymbol\beta}^*):=
{\bf V}_{\boldsymbol\alpha}{\bf V}_{\boldsymbol\beta}^*,\qquad \boldsymbol\alpha,\boldsymbol\beta\in {\bf F}_{\bf n}^+,
$$
is a $*$-representation of $C^*({\bf R})$. Define $\mu:C^*({\bf R})\to B(\cE)$  by setting $\mu(A):=W^* \pi(A) W$. It is clear that $\mu$ is a completely positive linear map and item (iii) implies
$$
f({\bf X})=\widehat\mu\left[ 2\prod_{i=1}^k(I-{\bf R}_{i,1}^*\otimes X_{i,1}-\cdots
-{\bf R}_{i,n_i}^*\otimes X_{i,n_i})^{-1}-I\right] +i\Im f(0).
$$
Note also that
$$\mu({\bf R}_{\boldsymbol\alpha}^* {\bf R}_{\boldsymbol\beta})=W^*\pi({\bf R}_{\boldsymbol\alpha}^* {\bf R}_{\boldsymbol\beta})W=
W^*{\bf V}_{\boldsymbol\alpha}^* {\bf V}_{\boldsymbol\beta}W=0
$$
if \
${\bf R}_{\boldsymbol\alpha}^* {\bf R}_{\boldsymbol\beta}$ is not equal to ${\bf R}_{\boldsymbol\gamma}$ or ${\bf R}_{\boldsymbol\gamma}^*$ for some $\boldsymbol\gamma\in {\bf F}_{\bf n}^+$.
This shows that item (ii) holds.
The proof is complete.
\end{proof}
We remark that in the particular  case when $n_1=\cdots =n_k=1$,  we obtain an operator-valued extention of Kor\' annyi-Puk\' anszky   integral representation  for holomorphic functions with positive real part on polydisks \cite{KP}.

In what follows, we say that $f$ has a Herglotz-Riesz representation if item (ii) of Theorem \ref{Herglotz} is satisfied.

\begin{theorem} Let $f:{\bf B_n}(\cH)\to B(\cE)\otimes_{min}B(\cH)$ be a function,   where ${\bf n}=(n_1,\ldots,n_k)\in \NN^k$. If $f$   admits a Herglotz-Riesz representation, then
$f$ is a free holomorphic function  with $\Re f \geq 0$.

Conversely,
  if $f$ is  a free holomorphic function such that $\Re f\geq 0$, then there is a unique completely positive linear map
$\mu:\boldsymbol\cR_{\bf n}^* + \boldsymbol\cR_{\bf n} \to B(\cE)$ such that
$$
f({\bf Y})=({\bf H}\mu)(k{\bf Y})+i \Im f(0), \qquad {\bf Y}\in \frac{1}{k} {\bf B_n}(\cH).
$$
Moreover,
$$
f({\bf X})=2\sum_{{\boldsymbol \alpha}\in {\bf F}_{\bf n}^+} k^{|{\boldsymbol \alpha}|} \mu({\bf R}_{\widetilde{{\boldsymbol \alpha}}}^*)\otimes {\bf X}_{\boldsymbol \alpha}-\mu(I)\otimes I,\qquad {\bf X}\in {\bf B_n}(\cH).
$$
\end{theorem}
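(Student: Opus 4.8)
The forward implication is immediate from Theorem \ref{Herglotz}. To say that $f$ admits a Herglotz--Riesz representation is to say that $f({\bf X})=({\bf H}\mu)({\bf X})+i\Im f(0)$ for some completely positive $\mu$ with the stated annihilation property, which is precisely item (ii) of Theorem \ref{Herglotz}; the implication (ii)$\implies$(i) there yields that $f$ is free holomorphic and $\Re f\geq 0$.

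For the converse, write $f({\bf X})=\sum_{\boldsymbol\alpha\in {\bf F}_{\bf n}^+}A_{(\boldsymbol\alpha)}\otimes {\bf X}_{\boldsymbol\alpha}$ and define a self-adjoint linear map $\mu$ on the operator system $\boldsymbol\cR_{\bf n}^*+\boldsymbol\cR_{\bf n}$ by $\mu(I):=\Re A_{({\bf g})}$ and, for $|\boldsymbol\alpha|\geq 1$,
$$
\mu({\bf R}_{\widetilde{\boldsymbol\alpha}}^*):=\frac{1}{2k^{|\boldsymbol\alpha|}}A_{(\boldsymbol\alpha)},\qquad
\mu({\bf R}_{\widetilde{\boldsymbol\alpha}}):=\frac{1}{2k^{|\boldsymbol\alpha|}}A_{(\boldsymbol\alpha)}^*.
$$
These values are forced by matching Taylor coefficients in the identity to be proved, so $\mu$ is unique. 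Expanding $2\prod_{i=1}^k(I-{\bf R}_{i,1}^*\otimes X_{i,1}-\cdots)^{-1}-I=2\sum_{\boldsymbol\alpha}{\bf R}_{\widetilde{\boldsymbol\alpha}}^*\otimes {\bf X}_{\boldsymbol\alpha}-I$ and evaluating at $k{\bf Y}$, the factors $k^{|\boldsymbol\alpha|}$ cancel the damping and give $({\bf H}\mu)(k{\bf Y})=\Re A_{({\bf g})}\otimes I+\sum_{|\boldsymbol\alpha|\geq 1}A_{(\boldsymbol\alpha)}\otimes {\bf Y}_{\boldsymbol\alpha}$, whence $f({\bf Y})=({\bf H}\mu)(k{\bf Y})+i\Im f(0)$ on $\tfrac{1}{k}{\bf B_n}(\cH)$; the same substitution yields the displayed series on all of ${\bf B_n}(\cH)$. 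Note $\mu(I)=\Re f(0)\geq 0$ since $\Re f\geq 0$. Thus everything reduces to the single point that $\mu$ is \emph{completely positive} on $\boldsymbol\cR_{\bf n}^*+\boldsymbol\cR_{\bf n}$.

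The plan for complete positivity is to realise $\mu$ through doubly commuting row isometries, which by Arveson's extension theorem and Stinespring's theorem is equivalent to complete positivity on the operator system. First I would exploit $\Re f\geq 0$: by Theorem \ref{pluri-positive} the kernels $\Gamma_{(\Re f)_r}$ are positive semi-definite, and letting $r\to 1$ produces a positive semi-definite right $k$-multi-Toeplitz kernel $\Gamma_{\Re f}$. Since $f$ is holomorphic, its genuinely mixed coefficients vanish, so $\Gamma_{\Re f}$ is carried by its first-order entries, and the Naimark dilation of Theorem \ref{Naimark2} furnishes \emph{commuting} row isometries ${\bf V}$ with $P_\cE{\bf V}_{\widetilde{\boldsymbol\alpha}}^*|_\cE=\tfrac12 A_{(\boldsymbol\alpha)}$ and $P_\cE|_\cE=\Re A_{({\bf g})}$. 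The decisive remaining step is to pass from this commuting realisation to a \emph{doubly commuting} one realising the \emph{damped} data $k^{-|\boldsymbol\alpha|}\tfrac12 A_{(\boldsymbol\alpha)}$; equivalently, to show that the damped kernel $(\boldsymbol\sigma,\boldsymbol\omega)\mapsto k^{-(|\boldsymbol\sigma|+|\boldsymbol\omega|)}P_\cE{\bf V}_{\boldsymbol\sigma}^*{\bf V}_{\boldsymbol\omega}|_\cE$ admits a Naimark dilation by doubly commuting row isometries, and then to set $\tilde V_{i,j}:=\pi({\bf R}_{i,j})$ from the associated $*$-representation $\pi$ of $C^*({\bf R})$, so that $\mu(A)=W^*\pi(A)W$ on $\boldsymbol\cR_{\bf n}^*+\boldsymbol\cR_{\bf n}$.

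The hard part will be exactly this last step, and it is where the factor $k$ is indispensable: for the undamped data (scaling $1$) the corresponding assertion is precisely the still-open question recorded after Corollary \ref{cp}, so the argument must genuinely use the $k^{-|\boldsymbol\alpha|}$ damping. I expect the mechanism to be that each of the $k$ non-commuting coordinate directions can absorb one factor $k^{-1}$ per letter, neutralising the failure of the $V_i$ to doubly commute; concretely I would build the doubly commuting dilation on a Fock-type enlargement of $\cK$ and verify, using the completely positive Berezin contraction $\boldsymbol\cB_{\frac1k{\bf R}}$ of Lemma \ref{mu-r} together with the doubly commuting pluriharmonicity of the universal model (Proposition \ref{part-case}(ii) and Theorem \ref{Poisson-factor}), that the compression against $W$ reproduces $\mu$. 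Once $\mu$ is shown completely positive, the representation $f({\bf Y})=({\bf H}\mu)(k{\bf Y})+i\Im f(0)$ and its uniqueness follow from the coefficient computation above, completing the proof.
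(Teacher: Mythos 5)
Your forward implication and the coefficient bookkeeping for the converse are fine, and you have correctly located the crux: everything reduces to showing that the map $\mu$ determined by $\mu({\bf R}_{\widetilde{\boldsymbol\alpha}}^*)=\frac{1}{2k^{|\boldsymbol\alpha|}}A_{(\boldsymbol\alpha)}$ is completely positive, and the factor $k^{-|\boldsymbol\alpha|}$ is what makes this possible. But at exactly that point your argument stops being a proof and becomes a research plan: ``I expect the mechanism to be\dots'', ``I would build the doubly commuting dilation on a Fock-type enlargement and verify\dots''. The passage from the commuting row isometries ${\bf V}$ furnished by Theorem \ref{Naimark2} (via Theorem \ref{pluri-structure}) to a doubly commuting realisation of the damped data is the entire content of the converse, and you have not supplied it. The paper's route is concrete and you should know it: set $T_{i,j}:=\frac1k V_{i,j}$; then by Proposition 1.9 of \cite{Po-Berezin-poly} the tuple ${\bf T}$ lies in the \emph{closed regular polyball} ${\bf B_n}(\cK)^-$ (this is where the $1/k$ is used --- it makes the defect $\boldsymbol\Delta_{\bf T}(I)\ge 0$, which fails for the undamped ${\bf V}$ and is why the undamped question remains open); and by Theorem 7.2 of \cite{Po-Berezin-poly} every element of ${\bf B_n}(\cK)^-$ dilates to a tuple ${\bf W}$ of doubly commuting row isometries with $W_{i,j}^*|_{\cK}=T_{i,j}^*$. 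Then $\mu(\xi):=P_\cE[P_\cK\,\pi(\xi)|_\cK]|_\cE$ with $\pi({\bf R}_{i,j})=W_{i,j}$ is manifestly completely positive and reproduces your formulas. Your proposed substitute --- invoking Lemma \ref{mu-r} and Proposition \ref{part-case}(ii) --- does not do this job: Lemma \ref{mu-r} damps by a radius $r<1$ against an already-given completely bounded map, and Proposition \ref{part-case}(ii) assumes the isometries are already doubly commuting, which is precisely what must be produced.

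A second, smaller gap: Theorem \ref{pluri-structure} and the definition of a right $k$-multi-Toeplitz kernel both require the normalisation $F(0)=I$ (equivalently $\Gamma({\bf g},{\bf g})=I_\cE$), whereas you only have $\Re f(0)\ge 0$. You need the regularisation step the paper carries out: apply the normalised argument to $g_\epsilon=(C_{({\bf g})}+\epsilon I)^{-1/2}[f+\epsilon I](C_{({\bf g})}+\epsilon I)^{-1/2}$, conjugate the resulting completely positive maps back by $(C_{({\bf g})}+\epsilon I)^{1/2}$, and take a WOT-limit along a sequence $\epsilon_k\to 0$ as in the proofs of Theorem \ref{pluri-measure} and Corollary \ref{cp}. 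Without both of these ingredients the converse is not established.
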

\begin{proof} The direct implication was proved in Theorem \ref{Herglotz}.
We prove the converse.  Assume that $f$ has the representation
\begin{equation}
\label{free-hol}
f({\bf X})=\sum_{{\boldsymbol \alpha}\in {\bf F}_{\bf n}^+} A_{(\boldsymbol\alpha)}\otimes {\bf X}_{\boldsymbol \alpha},\qquad {\bf X}\in {\bf B_n}(\cH).
\end{equation}
First we consider  the case when $\frac{1}{2}(A_{({\bf g})}+A_{({\bf g})}^*)=I_\cE$. Since $\Re f\geq 0$ and $\Re f (0)=I$, Theorem \ref{pluri-structure} shows that there is  a $k$-tuple  ${\bf V}=(V_1,\ldots, V_k)$ of commuting row isometries on a space $\cK\supset \cE$ such that
$$
 \Re f({\bf X})=
 \sum_{(\boldsymbol \sigma, \boldsymbol \omega)\in {\bf F}_{\bf n}^+\times {\bf F}_{\bf n}^+}
 P_\cE{\bf V}_{\widetilde{\boldsymbol\alpha}}^*{\bf V}_{\widetilde{\boldsymbol\beta}}|_\cE\otimes
 {\bf X}_{\boldsymbol \alpha} {\bf X}_{\boldsymbol\beta}^*,
 $$
The uniqueness of the representation of  free $k$-pluriharmonic functions  on ${\bf B_n}$ implies
\begin{equation}
\label{VV}
P_\cE{\bf V}_{\widetilde{\boldsymbol\alpha}}^*{\bf V}_{\widetilde{\boldsymbol\beta}}|_\cE
=
\begin{cases} \frac{1}{2}A_{(\boldsymbol\alpha)}
  &\quad \text{ if } \boldsymbol\alpha\in {\bf F}_{\bf n}^+, \boldsymbol\beta={\bf g}\\
   \frac{1}{2}A_{(\boldsymbol\beta)}^*
  &\quad \text{ if } \boldsymbol\beta\in {\bf F}_{\bf n}^+, \boldsymbol\alpha={\bf g}\\
 0& \quad \text{ otherwise}.
 \end{cases}
\end{equation}
Set $T_{i,j}:=\frac{1}{k} V_{i,j}$ for $i\in \{1,\ldots, k\}$ and $j\in \{1,\ldots, n_i\}$.
According to Proposition 1.9 from \cite{Po-Berezin-poly}, the $k$-tuple
${\bf T}:=(T_1,\ldots, T_k)$, with $T_i:=(T_{i,1},\ldots, T_{i,n_i})$, is in the closed polyball
${\bf B_n}(\cK)$. Using Theorem 7.2 from \cite{Po-Berezin-poly}, we find a $k$-tuple ${\bf W}:=(W_1,\ldots, W_k)$ of doubly commuting row isometries on a Hilbert space $\cG\supset \cK$ such that
$W_{i,j}^*|_\cK=T_{i,j}^*$  for all  $i\in \{1,\ldots, k\}$ and $j\in \{1,\ldots, n_i\}$.
Define the linear map $\mu:C^*({\bf R})\to B(\cE)$ by setting
$$
\mu({\bf R}_{\widetilde{\boldsymbol\beta}}{\bf R}_{\widetilde{\boldsymbol\alpha}}^*)=
P_\cE \left[P_\cK {\bf W}_{\widetilde{\boldsymbol\beta}}{\bf W}_{\widetilde{\boldsymbol\alpha}}^*|_\cK \right]|_\cE, \qquad \boldsymbol\alpha, \boldsymbol\beta\in {\bf F}_{\bf n}^+.
$$
Note that $\mu$ is a completely positive linear map with the property that
$\mu({\bf R}_{\widetilde{\boldsymbol\beta}})= \frac{1}{2k^{|\boldsymbol\beta|}}A_{(\boldsymbol\beta)}^*
$
and $\mu({\bf R}_{\widetilde{\boldsymbol\alpha}}^*)= \frac{1}{2k^{|\boldsymbol\alpha|}}A_{(\boldsymbol\alpha)}
$
if  $\boldsymbol\alpha, \boldsymbol\beta\in {\bf F}_{\bf n}^+$ with $\boldsymbol\alpha\neq {\bf g}$ and $\boldsymbol\beta\neq {\bf g}$, and $\mu(I)=I_\cE$.
Consequently, relations  \eqref{free-hol} and \eqref{VV} imply
$$
f({\bf X})=2\sum_{{\boldsymbol \alpha}\in {\bf F}_{\bf n}^+} k^{|{\boldsymbol \alpha}|} \mu({\bf R}_{\widetilde{{\boldsymbol \alpha}}}^*)\otimes {\bf X}_{\boldsymbol \alpha}-\mu(I)\otimes I,\qquad {\bf X}\in {\bf B_n}(\cH).
$$
Setting ${\bf Y}:=\frac{1}{k} {\bf X}$, we deduce that
\begin{equation*}
\begin{split}
f({\bf Y})&=2\sum_{{\boldsymbol \alpha}\in {\bf F}_{\bf n}^+}   \mu({\bf R}_{\widetilde{{\boldsymbol \alpha}}}^*)\otimes k^{|{\boldsymbol \alpha}|}{\bf Y}_{\boldsymbol \alpha}-\mu(I)\otimes I\\
&=
\widehat\mu\left[ 2\prod_{i=1}^k(I-{\bf R}_{i,1}^*\otimes kY_{i,1}-\cdots
-{\bf R}_{i,n_i}^*\otimes kY_{i,n_i})^{-1}-I\right]\\
&=({\bf H}\mu)(k{\bf Y})
\end{split}
\end{equation*}
for any ${\bf Y}\in \frac{1}{k} {\bf B_n}(\cH)$, which completes the proof
when $A_{({\bf g})}=I_\cE$.
Now, we consider the case when  $C_{({\bf g})}:=\frac{1}{2}(A_{({\bf g})}+A_{({\bf g})}^*)\geq 0$. For each $\epsilon>0$, define the free holomorphic function
 $$g_\epsilon:= (C_{({\bf g})}+\epsilon I)^{-1/2}[f+\epsilon I_\cE\otimes I_\cH](C_{({\bf g})}+\epsilon I)^{-1/2}
 $$
  and note that $\Re g_\epsilon (0)= I$.
  Applying the first part of the proof to $g_\epsilon$, we find  a completely positive linear map
  $\mu_\epsilon:C^*({\bf R})\to B(\cE)$
  with the property that
$$
\mu_\epsilon({\bf R}_{\widetilde{\boldsymbol\beta}})= \frac{1}{2k^{|\boldsymbol\beta|}}(C_{({\bf g})}+\epsilon I)^{-1/2}A_{(\boldsymbol\beta)}^*(C_{({\bf g})}+\epsilon I)^{-1/2}
$$
and
$$\mu_\epsilon({\bf R}_{\widetilde{\boldsymbol\alpha}}^*)= \frac{1}{2k^{|\boldsymbol\alpha|}}(C_{({\bf g})}+\epsilon I)^{-1/2}A_{(\boldsymbol\alpha)}(C_{({\bf g})}+\epsilon I)^{-1/2}
$$
if  $\boldsymbol\alpha, \boldsymbol\beta\in {\bf F}_{\bf n}^+$ with $\boldsymbol\alpha\neq {\bf g}$ and $\boldsymbol\beta\neq {\bf g}$, and $\mu_\epsilon(I)=I_\cE$.
Setting $\nu_\epsilon(\xi):=(C_{({\bf g})}+\epsilon I)^{1/2}\mu_\epsilon(\xi)(C_{({\bf g})}+\epsilon I)^{1/2}$ for all $\xi\in C^*({\bf R})$,  one can easily see that $\nu_\epsilon$ is a completely positive linear map with the property that
$\nu_\epsilon({\bf R}_{\widetilde{\boldsymbol\beta}})= \frac{1}{2k^{|\boldsymbol\beta|}}A_{(\boldsymbol\beta)}^*
$
and $\nu_\epsilon({\bf R}_{\widetilde{\boldsymbol\alpha}}^*)= \frac{1}{2k^{|\boldsymbol\alpha|}}A_{(\boldsymbol\alpha)}
$
if  $\boldsymbol\alpha, \boldsymbol\beta\in {\bf F}_{\bf n}^+$ with $\boldsymbol\alpha\neq {\bf g}$ and $\boldsymbol\beta\neq {\bf g}$, and $\nu_\epsilon(I)=C_{({\bf g})}+\epsilon I_\cE$.
Following the proof of Theorem \ref{pluri-measure} and  Corollary \ref{cp}, we find a completely positive linear map
$\nu:C^*({\bf R})\to B(\cE)$ such that $\nu(\xi)=\text{\rm WOT-}\lim_{\epsilon_k\to 0} \nu_{\epsilon_k}(\xi)$ for $\xi\in C^*({\bf R})$, where $\{\epsilon_k\}$ is a sequence of positive numbers converging to  zero. Consequently, we have
$\nu({\bf R}_{\widetilde{\boldsymbol\beta}})= \frac{1}{2k^{|\boldsymbol\beta|}}A_{(\boldsymbol\beta)}^*
$
and $\nu({\bf R}_{\widetilde{\boldsymbol\alpha}}^*)= \frac{1}{2k^{|\boldsymbol\alpha|}}A_{(\boldsymbol\alpha)}
$
if  $\boldsymbol\alpha, \boldsymbol\beta\in {\bf F}_{\bf n}^+$ with $\boldsymbol\alpha\neq {\bf g}$ and $\boldsymbol\beta\neq {\bf g}$, and $\nu(I)=C_{({\bf g})}$. As in the first part of this proof, one can easily see that
$$
f({\bf Y})=({\bf H}\nu)(k{\bf Y})+i \Im f(0), \qquad {\bf Y}\in \frac{1}{k} {\bf B_n}(\cH).
$$
and
$$
f({\bf X})=2\sum_{{\boldsymbol \alpha}\in {\bf F}_{\bf n}^+} k^{|{\boldsymbol \alpha}|} \nu({\bf R}_{\widetilde{{\boldsymbol \alpha}}}^*)\otimes {\bf X}_{\boldsymbol \alpha}-\nu(I)\otimes I,\qquad {\bf X}\in {\bf B_n}(\cH).
$$
The proof is complete.
\end{proof}

\bigskip

       %

      \end{document}